\newcommand{\RemoveAlgoNumber}{\renewcommand{\fnum@algocf}{\AlCapSty{\AlCapFnt\algorithmcfname}}}
\newcommand{\RevertAlgoNumber}{\algocf@resetfnum}
\numberwithin{equation}{section}
\def \be{\begin{equs}}
\def \ee{\end{equs}}
\def \P{\mathbb{P}}
\def \E{\mathbb{E}}
\def \wep{{\sl w.e.p.}}
\def \Vol{\mathrm{Vol}}
\newcommand \bad[2]{\mathcal{B}(#1,#2)}
\def \wit{\mathcal{W}}
\def \err{\mathfrak{D}}
\def \eurt{\sigma_{\mathrm{eurt}}}
\def \alg{\mathfrak{F}}
\def \csketch{5}
\def \cbad{}
\def \cat{2}
\def \cafo{}
\def \caft{2}
\def \cafi{4}
\def \cafti{3}
\def \cub{1}
\def \cfin{5}
\def \czeta{4}
\def \cS{3}
\newcommand \GA[1]{G_{#1}^{(2)}}
\newcommand \WA[1]{w_{#1}^{(2)}}
\newtheorem{theorem}{Theorem}[section]
\newtheorem{lemma}[theorem]{Lemma}
\newtheorem{remarks}[theorem]{Remarks}
\newtheorem{corollary}[theorem]{Corollary}
\newtheorem{prop}[theorem]{Proposition}
\newtheorem{assumptions}[theorem]{Assumptions}
\newtheorem{assumption}[theorem]{Assumption}
\theoremstyle{plain}
\newtheorem{thm}{Theorem}
\newtheorem*{thm-non}{Theorem}
\theoremstyle{definition}
\newtheorem{defn}[theorem]{Definition}
\newtheorem{remark}[theorem]{Remark}
\begin{document}

\bibliographystyle{plain}

%
%
\title{Reconstruction of Line-Embeddings of Graphons}
%
%
%
%
%
%
%
%
%
%
%
%
\author{Jeannette Janssen$^{\flat}$}
\thanks{$^{\flat}$ janssen@mathstat.dal.ca,
Department of Mathematics and Statistics
Dalhousie University, Chase Building, Halifax
NS B3H 4J1, Canada}

\author{Aaron Smith$^{\sharp}$}
\thanks{$^{\sharp}$smith.aaron.matthew@gmail.com,
   Department of Mathematics and Statistics,
University of Ottawa, 585 King Edward Avenue, Ottawa
ON K1N 7N5, Canada}
%
%
%
%
%
\begin{abstract}
Consider a random graph process with $n$ vertices corresponding to points $v_{i} \stackrel{i.i.d.}{\sim} \mathrm{Unif}[0,1]$ embedded randomly in the interval, and where edges are inserted between $v_{i}, v_{j}$ independently with probability given by the \textit{graphon} $w(v_{i},v_{j}) \in [0,1]$.  Following \cite{chuangpishit2015linear}, we call a graphon $w$ \textit{diagonally increasing} if, for each $x$, $w(x,y)$ decreases as $y$ moves away from $x$. We call a permutation $\sigma \in S_{n}$ an \textit{ordering} of these vertices if $v_{\sigma(i)} < v_{\sigma(j)}$ for all $i < j$, and ask: how can we accurately estimate $\sigma$ from an observed graph? {We present a randomized algorithm with output $\hat{\sigma}$ that, for a large class of graphons, achieves error $\max_{1 \leq i \leq n} | \sigma(i) - \hat{\sigma}(i)| = \tilde{O}(\sqrt{n})$ with high probability; we also show that this is the best-possible convergence rate for a large class of algorithms and proof strategies. Under an additional assumption that is satisfied by some popular graphon models, we break this ``barrier" at $\sqrt{n}$ and obtain the vastly better rate $\tilde{O}(n^{\epsilon})$ for any $\epsilon > 0$. These improved seriation bounds can be combined with previous work to give more efficient and accurate algorithms for related tasks, including \textit{estimating diagonally increasing graphons}  \cite{gao2015rate,ghandehari2020sequences} and \textit{testing whether a graphon is diagonally increasing}  \cite{chuangpishit2015linear}.}
\end{abstract}
\maketitle

\section{Introduction}

In this paper, we propose and analyze new algorithms for estimating \textit{latent vertex labellings} given an \textit{observed random graph.} We first discuss a motivating simpler problem, often called the \textit{seriation problem} (this dates back at least to the 1899 paper \cite{petries99prehistoric}; see also \textit{e.g.} \cite{Liiv2010}, \cite{laurent2016similarityfirst}). The basic seriation problem considers a \emph{Robinsonian similarity matrix}, i.e.~a symmetric $n$ by $n$ matrix $A = [a_{i,j}]_{1 \leq i,j \leq n}$ with the  property: there exists a permutation $\sigma \in S_{n}$ so that every row of the permuted matrix $A_{\sigma} = [a_{\sigma(i), \sigma(j)}]_{1 \leq i,j \leq n}$  is unimodal with the maximum occurring at the diagonal. It then asks: \textbf{how can we find such a permutation $\sigma$?}

This problem turns out to have an elegant and computationally-tractable solution. For any embedding $\phi \, : \, \{1,2,\ldots,n\} \mapsto [0,1]$ of the indices of $A$ into the line segment $[0,1]$, we define the \textit{induced} permutation $\sigma \in S_{n}$ by the equation
\be \label{EqInducedPerm}
\sigma(i) = \sigma(i,\phi) \equiv | \{ j \in [1:n] \, : \, \phi(j) \leq \phi(i) \} |,
\ee
breaking ties arbitrarily.  It turns out that, under mild conditions, the correct permutation $\sigma$ is of the form
\be
\sigma(i) = \sigma(i,\hat{\phi}),
\ee
where $\hat{\phi}$ is an eigenvector of a matrix related to $A$ \cite{atkins1998spectral}.

This seriation problem occurs in a number of contexts, where we have some collection of objects $\{1,2,\ldots,n\}$ which we would like to order, (\textit{e.g.} ordering types of artifacts by their ages), and some measure $A = [a_{ij}]$ as to whether a pair of objects are similar (\textit{e.g.} their co-occurrence in tombs). A special case is graph seriation, where the aim is to determine whether a permutation of the vertices exists so that the corresponding adjacency matrix has the Robinsonian property, and to find such a permutation.

Of course, real data is noisy. Even if the \textit{expected value} of the similarity matrix is Robinsonian, the \textit{observed} similarity matrix may not be and a ``perfect" ordering may not exist. Our aim is to solve a very generic ``noisy" version of the graph seriation problem. 
Following  \cite{chuangpishit2015linear}, we introduce noise by studying a natural generalization of the above example based on some models from Bayesian nonparametrics. Recall that a \textit{graphon} is just a symmetric measurable function $w \, : \, [0,1]^{2} \mapsto [0,1]$ (these were introduced in \cite{lovasz2006limits}; see \cite{lovasz2012book} for a broader survey).
Under suitable measurability conditions, a graphon defines an algorithm for sampling a random graph of any size $n$:
\begin{enumerate}
\item Sample i.i.d. sequences $\{U_{i}\}_{i=1}^{n}$, $\{U_{ij}\}_{1 \leq i < j \leq n} \sim \mathrm{Unif}([0,1])$.
\item Define the edge set by
\be \label{EqGraphonDef}
\{(i,j) \in E \} \Longleftrightarrow \{ U_{i,j} < w(U_{i},U_{j})\}.
\ee
\end{enumerate}
We write $G \sim w$ if $G$ is a random graph obtained from graphon $w$ in this way. In \cite{chuangpishit2015linear}, the authors say that a graphon $w$ \textit{is diagonally increasing} if it satisfies
\be \label{IneqEmbBasic}
w(x,y) &\leq w(z,y) \\
w(x,z) &\geq w(x,y)
\ee
for all $x<z<y$. Note that this implies that $w(x,\cdot )$ is unimodal with the maximum occurring at $x$, so this definition matches that of the Robinsonian property for  matrices. 

We think of the following simple and well-studied family of graphons as our prototypical examples, and will use this family as a running example in the text:
\be \label{EqSimpleGraphon}
w(x,y) = \begin{cases}
p \qquad |x-y| < d \\
q \qquad |x-y| \geq d.
\end{cases}
\ee
These graphons satisfy \eqref{IneqEmbBasic} as long as $0 \leq q \leq p \leq 1$ (though reconstructing an ordering is clearly impossible if $p=q$ or $d \in \{0,1\}$). Very similar random graphs have been extensively studied (see \textit{e.g.} \cite{RePEc:inm:oropre:v:68:y:2020:i:1:p:53-70, 7924316, DingSmallWorld,rocha2018recovering}).

Special cases of this noisy seriation problem (with points sometimes embedded on a circle instead of an interval) appear in many areas. In some instances, an approximate ordering with an error rate similar to that presented in this paper has been obtained. However, generally such results only apply to a narrow class of random graphs, and the restriction of the problem to this specific class allows for the use of highly specialized methods. A few examples beyond archaeology include genomics \cite{RePEc:inm:oropre:v:68:y:2020:i:1:p:53-70, Putnam_2016}, ranking \cite{fogel2016spectralranking}, data visualization \cite{doi:10.1057/palgrave.ivs.9500042}, and ecology and sociology \cite{Liiv2010}.

Graphs sampled from a diagonally increasing graphon have a natural line-embedded structure, defined by the latent variables $(U_{1},\ldots,U_{n})$. Namely, vertex $i$ will have a higher probability of linking to vertex $j$ if $U_j$ is closer to $U_i$. Consequently, the permutation that reveals the ``almost" Robinsonian property of the adjacency matrix will be the permutation $\sigma(\cdot, (U_{1},\ldots,U_{n}))$ induced by the values $U_i$. 
We call this permutation the \emph{line-embedded permutation.}
This raises the main question of our paper: \textbf{how accurately can we recover the line-embedded permutation  based on an observed graph $G$?}

\begin{remark} \label{RemReverse}
The embedding $\phi=(U_{1},\ldots,U_{n})$ and its ``reverse" $1-\phi=(1-U_{1},\ldots,1-U_{n})$ are both equally valid line-embeddings for a sampled graph $G$, and it is impossible to distinguish between the line-embedded permutation $\sigma(\cdot,\phi)$ and its reverse $\sigma^{rev} =  \sigma(\cdot,1-\phi)$ based on an observed graph. 
In informal discussion we will often implicitly mod out by this reverse operation and talk about ``the unique" line-embedded permutation without further comment.
\end{remark}

A natural idea is to apply a spectral method as used in \cite{atkins1998spectral}  for the noiseless matrix seriation problem. A related approach was studied in the recent work \cite{rocha2018recovering} in the special case that $w$ is of the form \eqref{EqSimpleGraphon} 
and $d=0.5$, $q=0$. Theorem 3 of this paper says that, with high probability, there exists a set $I \subset \{1,2,\ldots,n\}$ of size $|I| = n(1-o(1))$ so that
\be \label{IneqRochaSum}
\max_{i \in I} | \hat{\sigma}(i) - \sigma(i, (U_{1},\ldots,U_{n})) | = \tilde{O}(n^{0.5}).
\ee
 However, the authors indicate that their methods cannot be easily extended to other graphons or, indeed, other parameters of \eqref{EqSimpleGraphon}. It was also not clear if this bound could be improved by other methods.

Our paper introduces and analyzes two new algorithms for the noisy graph seriation problem, which approximate the line-embedded permutation for graphs sampled from a large general class of diagonally increasing graphons. The paper has two main results, which bound the error in the approximate permutation returned by our algorithms. In Theorem \ref{ThmReconMainRes1}, we extend the result from  \eqref{IneqRochaSum} to a very general class of graphons, and obtain an error term of
\be 
\max_{1 \leq i \leq n} | \hat{\sigma}(i) - \sigma(i, (U_{1},\ldots,U_{n})) | = \tilde{O}(n^{0.5}).
\ee

In Theorem \ref{ThmReconMainRes2}, we greatly improve this bound for a slightly smaller class of graphons, showing
\be \label{IneqOtherThm}
\max_{1 \leq i \leq n} | \hat{\sigma}(i) - \sigma(i, (U_{1},\ldots,U_{n})) | = \tilde{O}(n^{\epsilon})
\ee
for any fixed $\epsilon > 0$. This smaller class still contains the most popular statistical models of graphons, including those of the form \eqref{EqSimpleGraphon} in the special case $q=0$.

\begin{remark} [Errors in Positions and Orderings]

We pause to explain why this result may be surprising. In practice, many latent-position models are analyzed by algorithms that \textit{first} estimate all latent positions and \textit{then} plug this estimator into a formula for a quantity of interest (in seriation this quantity of interest is the ordering, but see also \cite{little2018analysis} for applications of the same approach to other problems).  This approach seems sensible under the condition that the first step is not much less accurate than the second step.

However, this condition turns out to \textit{fail} in the context of seriation. In a fairly strong sense that we make precise in Section \ref{SecLargeError}, it is not possible to reconstruct the latent positions $U_{1},\ldots,U_{n}$ themselves with an error that is similar to the error on the positions given in \eqref{IneqOtherThm}. We believe that such a discrepancy  between the achievable error in reconstructing \textit{ordering} and the achievable error in reconstructing \textit{latent positions} is important for practical algorithm development in the area, as it suggests that beginning with an embedding step could result in a statistically inefficient algorithm.

See  Section \ref{SecLargeError} for further discussion and a more precise version of this heuristic.
\end{remark}

\subsection{High-Level Algorithm Descriptions}

We sketch our main algorithms and give heuristics for why they work. Our first algorithm (Algorithm \ref{AlgMerge}) proceeds as follows:

\begin{enumerate} 
\item We begin by computing a new graph  $\GA{\alpha}$ from the observed graph $G$, as follows: 
\begin{enumerate} 
\item We square the adjacency matrix; its entries represent the number of common neighbours of pairs of vertices. Since the number of common neighbours are the sum of many independent Bernouilli trials, the entries of this squared matrix concentrate around their expected values.
\item We convert the square matrix into a binary matrix, and thus a graph, by thresholding at some level roughly $\alpha n$. We show the thresholded matrix is ``almost" Robinsonian, in the sense that any violations occur in the very narrow bad region where the expected value of the squared matrix is very close to the threshold value $\alpha n$.
\end{enumerate}
\item We then take many small random subgraphs of $\GA{\alpha}$ and attempt to order them using a deterministic algorithm. Since $\GA{\alpha}$ is ``almost" Robinsonian, most of these subgraphs will be exactly Robinsonian. 
\item We then align our orderings of these small subgraphs. (Recall from Remark \ref{RemReverse} that individual graphs can't distinguish between $\sigma_{true}$ and $\sigma_{eurt}$, so our subgraph orderings will generally not all be aligned to the same one.) This turns out to be straightforward as long as there are enough small subgraphs to guarantee substantial overlap.
\item Finally, we merge all of our subgraph orderings into a large ordering, essentially by a voting procedure. All of the steps up to this point preserve quite a bit of randomness, and so we again obtain concentration bounds for the result. 

\end{enumerate} 

Sections  \ref{SecMainEstHeuristic} and \ref{Sec:ProofSketch} give a more detailed sketch of our analysis of  Algorithm \ref{AlgMerge}, giving precise error bounds in place of informal phrases such as ``almost." We believe it is possible for the reader to have a nearly-complete understanding of the algorithm by reading these two short sections; the remainder of Section \ref{SecAlgAnalysis} is concerned with filling in unsurprising details or ruling out failure modes that ``obviously" should not occur.

The second algorithm (Algorithm \ref{AlgIterationHolder}), which achieves much smaller error bounds, proceeds in a sequence of stages. 
\begin{enumerate}
\item The process  starts with a coarse ordering achieved using Algorithm \ref{AlgMerge} on a small random subgraph. 
\item In each subsequent stage, we use the approximate ordering on a small graph to obtain an ordering on a slightly larger graph with a slightly smaller error. These iterative steps use the graph itself (not its square). The additional condition of Theorem \ref{ThmReconMainRes2} guarantees that for every pair of vertices $i,j$, there is an interval $I_{i,j}\subset [0,1]$ so that any vertex $k$ with $U_k\in I_{i,j}$ may link to $i$, but cannot link to $j$. Vertices of the previous iteration with $U_k$-values estimated to be in this interval provide a signal of the true ordering of $i$ and $j$.
\item In the final stage we return the permutation based on a final ordering on the full graph.
\end{enumerate}

\subsection{Basic Notation}
 We say that a sequence of events $\mathcal{A} = \{\mathcal{A}^{(n)}\}$ indexed by $n \in \mathbb{N}$ hold \textit{with extreme probability} if
\be
\P[\mathcal{A}^{(n)}] \geq 1 - n^{-c_{1} \log(n)^{c_{2}}}
\ee
for some $c_{1},c_{2} > 0$ and all $n > N_{0}$ sufficiently large. We use the shorthand \textit{$\mathcal{A}$ holds w.e.p.}, and note that for any fixed $C < \infty$ the intersection of $n^{C}$ events that occur w.e.p. also occurs w.e.p.  This property motivates our definition. In particular, we will often show that a collection of events $\mathcal{A}_{i,j}$ hold w.e.p. for all $1 \leq i,j \leq n$, then cite this to conclude that $\cap_{i,j} \mathcal{A}_{i,j}$ occurs w.e.p.

We will measure the error of a given ordering by measuring the maximum size of a `local' neighbourhood around a vertex where errors can occur:

\begin{defn} [Ordering Error] \label{DefErrorOrder}
Let $\{ U_i\}_{i=1}^{n}$ be a collection of distinct points in $[0,1]$. 
Let $\sigma_{\mathrm{true}}$ be the  permutation induced by $(U_1,\dots ,U_n)$ and
let  $\eurt $ be its reverse. 
Say that a permutation $\sigma\in S_{n}$ is \textit{correct} for this latent collection of points if $\sigma =\sigma_{\mathrm{true}}$ or $\sigma =\eurt$, i.e.~it satisfies either
\be
\{ \sigma(i)<\sigma(j)\} \leftrightarrow \{U_i<U_j\}
\ee
for all $i,j$, or satisfies
\be
\{ \sigma(i)>\sigma(j)\} \leftrightarrow \{U_i<U_j\}
\ee
for all $i,j$. Say that a permutation $\sigma$ has error less than $\err$ if there exists a correct ordering $\sigma_{\mathrm{correct}}$ so that, for all $1 \leq i,j \leq n$ satisfying $\sigma_{\mathrm{correct}}(i) > \sigma_{\mathrm{correct}}(j) + \err$, we also have $\sigma(i) > \sigma(j)$.
\end{defn}

\subsection{A Simple Assumption}

In this section, we give a simple-to-state sufficient condition for our first main result, Theorem \ref{ThmReconMainRes1}. This sufficient condition is satisfied for many graphons (including those of the form \eqref{EqSimpleGraphon}), but is far from tight; we relax these conditions in the statement of Theorem \ref{ThmMaxOrd} in Section \ref{SectionSlightlyStronger}.


We begin by restricting our attention to the simple class of \textit{uniformly embedded} graphons:

\begin{defn} [Uniformly embedded graphons]\label{DefBoundedIncrease}
A graphon $w$ is {\sl uniformly embedded} if there exists a function $f[0,1]\rightarrow [0,1]$ so that, for all $x,y$,
\[
w(x,y)=f(|x-y|).
\]
In this context, $f$ is called the {\sl link probability function} of $w$.
\end{defn}

We then make the following technical assumption on the link function:

\begin{assumption} \label{AssumptionsSimpleWeakIdentifiabilityAssumptions}

Let $w$ be a uniformly embedded graphon with link probability function $f:[0,1]\rightarrow [0,1]$. In addition, $f$ is decreasing, and there exist constants $0<d <0.5$ and $0 \leq c<f(0)$ so that $f(z)=c$ for all $z\geq d$. Finally, there exists $\alpha \in (0,1)$ so that $f$ satisfies
\be\label{eq:alpha}
\inf_{s\in [0,d]}\int_0^1 f(z)f(|s-z|) dz  > \alpha > \int_0^1 f\left( \left|z-\frac{1-d'}{2}\right|\right) f\left(\left|z-\frac{1+d'}{2}\right|\right) dz,
\ee
where $d'=\min\{ 0.5,2d\}$.
\end{assumption}

\begin{remark}
Denote by $w^{(2)}$ the usual ``square" of a graphon (see Definition \ref{DefGraphPower}). We note that the left hand side of Equation \eqref{eq:alpha} equals $\inf_{s\in [0,d]}w^{(2)}(0,s)$, and the right hand side equals $w^{(2)}(\frac{1-d'}2,\frac{1+d'}{2})$. Also, since $w(x,y)\geq c$ for all $x,y$, the lower bound on $\alpha $ implies that $\alpha >c^2$.

Condition \eqref{eq:alpha} is needed to guarantee that 
the thresholded squared graph used in our algorithm is close to diagonally increasing.  It is not generally true that the square of a diagonally increasing graphon is diagonally increasing. See Figure \ref{fig:example} for an illustration of a simple counterexample of type \eqref{EqSimpleGraphon}. However, even if the squared graphon itself is not diagonally increasing, we can often assure that the thresholded squared graphon is diagonally increasing for some well-chosen $\alpha$. See the appendix for details on the example in Figure \ref{fig:example}, including the choice of $\alpha$. 

We note that this obstacle is real (not merely an artifact of our proof technique), and other approaches to seriation have similar obstacles. For example, spectral seriation requires the Fiedler eigenvector of a matrix associated with $A$ related to the graph to be monotone. However, the Fiedler eigenfunction associated with a graphon satisfying \eqref{IneqEmbBasic} is \textit{not} always monotone - \textit{some} additional conditions really are necessary.
\end{remark}

\begin{figure}[h]
\label{fig:example}
\begin{center}
\begin{tabular}{cc}
\begin{tikzpicture}[scale=0.8]
\draw[step=1cm,gray!50,very thin] (-1,-1) grid (6,6);
\draw[thick,->] (0,0) -- (5.5,0) node[anchor=north west] {$y$};
\draw[thick,->] (0,0) -- (0,5.5) node[anchor=south east] {$w(0,y)$};
\node at (5,0) [anchor=north ] { 1};
\node at (1.3,0) [anchor=north ] {$d$};
\node at (0,0) [anchor=north east] {0};
\node at (0,1) [anchor=east] {};
\node at (0,3.7) [anchor=east] {};
\node at (0,5) [anchor=east] {};
\draw[thick] (1.3,1) -- (5,1);
\draw[thick] (0,4) -- (1.3,4);
\draw[thick, dashed] (1.3,4) -- (1.3,1);
\end{tikzpicture}
&
\begin{tikzpicture}[scale=0.8]
\draw[step=1cm,gray!50,very thin] (-1,-1) grid (6,6);
\draw[thick,->] (0,0) -- (5.5,0) node[anchor=north west] {$y$};
\draw[thick,->] (0,0) -- (0,5.5) node[anchor=south east] {$w^{(2)}(0,y)$};
\node at (5,0) [anchor=north ] {1};
\node at (1.3,0) [anchor=north ] {$d$};
\node at (2.6,0) [anchor=north ] {$2d$};
\node at (0,0) [anchor=north east] {0};
\node at (0,2.5) [anchor=east] {};
\node at (0,3.7) [anchor=east] {{\color{red} $\alpha$}};
\node at (0,5) [anchor=east] {};
\draw[thick] (0,4.3) -- (1.3,4.8) -- (2.6,3.2) -- (4,3.2) -- (5,1.5);
\draw[very thick, red, dashed] (2.2,3.7) -- (2.2,0);
\draw[very thick, red] (0,3.7)--(2.2,3.7);
\draw[very thick, red] (2.2,0)--(5,0);
\end{tikzpicture}
\end{tabular}
\end{center}
\caption{This shows a sketch of $w(0,\cdot ) $ and $w^{(2)}(0,\cdot)$ for a graphon of type \eqref{EqSimpleGraphon} that satisfies Assumption \ref{AssumptionsSimpleWeakIdentifiabilityAssumptions}. Note that $w^{(2)}$ violates the diagonally increasing condition, but $w^{(2)}_{\alpha}$ (shown in red) does not. }
\end{figure}
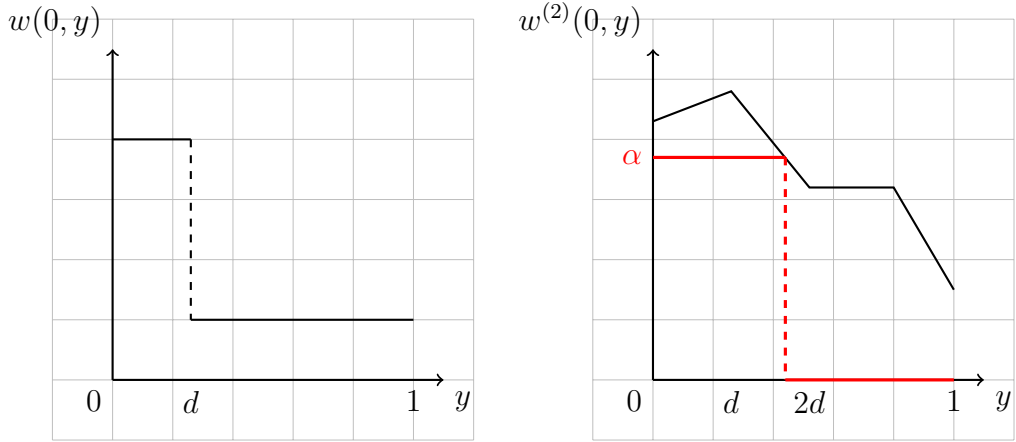

\begin{remark}\label{rem:examples}
Simple examples of classes of uniformly embeddded graphons for which Assumption \ref{AssumptionsSimpleWeakIdentifiabilityAssumptions} holds are:
\begin{itemize}
\item[{\sl (i)}] Any uniformly embedded graphon for which there exists a value $0<d<0.25$ so that $f(x)=0$ for all $x>d$,
\item[{\sl (ii)}] Graphons of type \eqref{EqSimpleGraphon}, where $0.25\leq d<0.5$, and  $d<1-p(1-3q)/(2p^2+2q^2)$.
\item[{\sl(iii)}] Uniformly embedded graphons where $f$ decreases linearly between $0$ and $d$ from $f(0)=p$ to $f(d)=c<f(0)$, and $f(x)=c$ for all $x\geq d$, and $d\in (0.25,0.5)$. 
\item[{\sl (iv)}] Uniformly embedded graphons where $f$ is concave from $0$ to $d<0.5$ and constant from $d$ to $1$, and in addition,
\[
\int_0^1 f(z)^2 dz   > \int_0^1 f(|z-\frac{1}{4}|) f(|z-\frac{3}{4}|) dz.
\]
\end{itemize}

We also mention one simple graphon type for which the more general Assumption \ref{AssumptionGoodGraphon} holds but which is not uniformly embedded:
\begin{itemize}
\item[{\sl (v)}] Graphons of the form
\[
w(x,y)=\left\{ \begin{array}{cl} p&\text{if } \ell (x)\leq y\leq r(x)\\
0 & \text{otherwise,}\\
\end{array}\right.
\]
where $\ell (x)$ and $r(x)$ are differentiable boundary functions that are each other's inverse, have slope bounded away from zero on the appropriate domain, and satisfy 
\begin{align*}
0<\inf_x (r(x) - x) &\leq \sup_x (r(x) - x)<0.25 \\
\end{align*}

\end{itemize}
Note that graphs sampled from graphons of type ({\sl v}) can be interpreted as graphs where points are sampled from $[0,1]$ according to a non-uniform distribution, while links are formed according to a simple graphon of type \ref{EqSimpleGraphon}. If this non-uniform distribution has invertible CDF $F$, then the associated graphon may be written as $w(x,y) = w_{unif}(F(x), F(y))$, where $w_{unif}$ is a graphon of type \ref{EqSimpleGraphon}. 

\end{remark}

We include these simple examples primarily as illustration. We suspect that most real-world data will be much more complicated and will not resemble graphs generated by \textit{any} graphon with a simple formula - that is, most datasets will look quite weird and irregular in some way. We also note that most papers on ``noisy" or ``statistical" seriation have studied small parametric families of random graphs, but we allow a much larger (nonparametric) class of graphons.  See Section \ref{SSSSeriation} for further discussion. 

\subsection{Main Results}

In the next section, we will present our main new algorithm (Algorithm \ref{AlgMerge}). The first of our main results is the following bound on its error:

\begin{thm} [Reconstruction for General Graphons] \label{ThmReconMainRes1}
Fix a graphon $w$ and constant $\alpha$ that satisfy Assumption \ref{AssumptionsSimpleWeakIdentifiabilityAssumptions}.
Let $G_{n} \sim w$ be a graph of size $n \in \mathbb{N}$. Then, when Algorithm \ref{AlgMerge} is executed with parameters as in Equation \eqref{def:GoodParameters} on input  graph $G_{n}$ and value $\alpha$,
the output is a permutation ``$\sigma$" on $\{1,2,\ldots,n\}$ with error $\err$ that satisfies
\be
\err \leq  \sqrt{n} \log(n)^{\cfin}
\ee
w.e.p.
\end{thm}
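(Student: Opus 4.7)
The plan is to break the analysis of Algorithm \ref{AlgMerge} into four stages, matching the four high-level steps of the algorithm, and to show that each stage either preserves or introduces only $O(\sqrt{n}\log(n)^{O(1)})$ error, with the dominant contribution coming from the initial squaring-and-thresholding stage.

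First I would analyze the thresholded squared graph $G^{(2)}_{\alpha}$. Each entry of $A^2$ is a sum of $n-2$ independent Bernoulli random variables (conditionally on the latent positions), with mean approximately $n \cdot w^{(2)}(U_i,U_j)$. A standard Bernstein/Hoeffding bound, together with a union bound over all $\binom{n}{2}$ pairs, gives w.e.p.~deviations of order $\sqrt{n \log(n)^{c}}$. After thresholding at level $\alpha n$, the edge $(i,j)$ of $G^{(2)}_\alpha$ therefore agrees with the indicator $\mathbf{1}[w^{(2)}(U_i, U_j) > \alpha]$ \emph{except} on the ``bad band'' where $|w^{(2)}(U_i,U_j) - \alpha| \leq O(\sqrt{\log(n)^c/n})$. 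Under Assumption \ref{AssumptionsSimpleWeakIdentifiabilityAssumptions}, the two inequalities in \eqref{eq:alpha} force the level set $\{w^{(2)}(\cdot,\cdot) = \alpha\}$ to lie strictly away from the diagonal and to have nonzero transverse gradient in $|U_i - U_j|$; consequently the bad band has width $O(\sqrt{\log(n)^c/n})$ in the $U$-coordinate, and the thresholded graphon $w^{(2)}_{\alpha}$ is diagonally increasing. Translating back to vertex space, we get that at most $O(\sqrt{n \log(n)^c})$ vertices are ``close to the boundary,'' and the remaining pairs give a truly Robinsonian subgraph.

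Next I would handle the random sub-sampling and deterministic ordering step. A small random subgraph of $G^{(2)}_\alpha$ of size $s$ (with $s$ a moderate polylog power of $n$, as in \eqref{def:GoodParameters}) avoids the bad band w.e.p., in which case its induced adjacency matrix is exactly Robinsonian, and any correct deterministic seriation algorithm (e.g.\ the spectral method of \cite{atkins1998spectral} applied after confirming Robinsonian structure) recovers the restriction of $\sigma_{\mathrm{true}}$ up to reversal. A union bound over the polynomially many sampled subgraphs keeps the w.e.p.\ guarantee intact. The alignment step is essentially combinatorial: as long as any two sampled subgraphs share $\Omega(\log(n)^c)$ vertices that lie outside the bad band, the orientation of one can be matched to the other by comparing the induced orderings on the overlap, and the reversal ambiguity can be resolved consistently across all subgraphs.

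Finally I would analyze the merging/voting stage. For each pair of vertices $i, j$ that both lie outside the bad band and are separated by more than $\sqrt{n}\log(n)^{\cfin}$ in the true ordering, a majority of the subgraphs containing both will report the correct relative order (since each such subgraph gives the correct order w.e.p.). A Chernoff bound on the vote count, combined with a union bound over all pairs $(i,j)$, yields w.e.p.\ that the merged permutation preserves the relative order of such pairs. The vertices in the bad band contribute at most $O(\sqrt{n \log(n)^c})$ to the error in Definition \ref{DefErrorOrder}, and all other errors are bounded by the width of the bad band plus a polylogarithmic term from the concentration above.

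The main obstacle I expect is controlling the merging step carefully, because the subgraph samples, the squared-matrix noise, and the vertex-level bad-band membership are all coupled through the latent positions $\{U_i\}$; ruling out correlated failure modes will require conditioning on the $U_i$'s and using the w.e.p.\ machinery (noting that the intersection of $n^C$ such events remains w.e.p.) to handle the polynomially many union bounds without blowing up the error. A secondary technical issue is verifying that the specific constants in \eqref{eq:alpha} are enough to ensure \emph{both} that $w^{(2)}_{\alpha}$ is diagonally increasing and that the implicit transverse gradient of $w^{(2)}$ at level $\alpha$ is bounded below, which is what converts probability-scale concentration into latent-coordinate bandwidth.
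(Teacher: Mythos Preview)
Your overall architecture matches the paper's, and your treatment of the squaring/thresholding step and the bad-band analysis is essentially correct. However, there is a genuine gap in your merging/voting argument that the paper closes with an additional algorithmic step you have omitted.

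The issue is your claim that for pairs $i,j$ separated by more than $\sqrt{n}\log(n)^{\cfin}$ in rank, ``a majority of the subgraphs containing both will report the correct relative order.'' A Robinsonian subgraph recovers the true ordering only up to permutation of vertices with \emph{identical neighbourhoods} in that subgraph. Two vertices $i,j$ have distinct neighbourhoods in a size-$m$ subsample only if the sample contains a \emph{witness}: some $k$ with $w^{(2)}_\alpha(U_i,U_k)\neq w^{(2)}_\alpha(U_j,U_k)$. The witness set has measure $\Theta(|U_i-U_j|)$, so when $|U_i-U_j|\approx \log(n)^{\cfin}/\sqrt{n}$ and $m=\log(n)^{\csketch}$, the expected number of witnesses in the subsample is $\Theta(\log(n)^{O(1)}/\sqrt{n})=o(1)$. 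Thus the subgraph ordering carries essentially no information about such pairs, and voting cannot rescue this.

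In the paper this is handled by a two-stage argument: the sketch from Algorithm~\ref{AlgSketch} (subsampling plus voting) is only shown to be correct at the coarse precision level $|U_i-U_j|>1/\log(n)$ (Lemma~\ref{LemmaSketchCorrectness}), and a separate refinement step, Algorithm~\ref{AlgUseSketch} (\textit{LocalRefinement}), then uses the \emph{full} graph $G^{(2)}_\alpha$ rather than subsamples to push the precision down to $\log(n)^4/\sqrt{n}$. The refinement compares, for each pair $(p,q)$, the numbers of neighbours in $N(p)\setminus N(q)$ and $N(q)\setminus N(p)$ lying above or below according to the coarse sketch; the $\epsilon$-separation of $w^{(2)}_\alpha$ guarantees $\Theta(\sqrt{n}\log(n)^3)$ such witnesses in the full graph even when $|U_p-U_q|\approx\log(n)^4/\sqrt{n}$, which is what drives the final $\sqrt{n}\log(n)^{\cfin}$ bound (Lemma~\ref{LemmaLocalRefinementCorrect}). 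Your proposal needs this or an equivalent mechanism.
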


This matches (up to logarithmic terms) the rate found in \cite{rocha2018recovering} for the special case of graphons of the form \eqref{EqSimpleGraphon}. However, it turns out that a slightly different algorithm can give much better rates for graphons of that form, and indeed any graphons satisfying the additional assumption: 

\begin{assumption} [Sharp Boundaries] \label{AssumptionSharpBoundary}
Say that a graphon $w$ has \textit{sharp boundaries} if:

\begin{enumerate}
\item There exists some $\delta > 0$ so that
\be
w(x,y) \in \{0 \} \cup (\delta,1]
\ee
for all $x,y \in [0,1]$.
\item There exists some $B > 0$ so that for all $x,y \in [0,1]$,
\be
\mathrm{Vol}(\{z \in [0,1] \, : \, w(x,z) = 0 \neq w(y,z) \text{ or } w(y,z) = 0 \neq w(x,z) \}) \geq B |x-y|.
\ee
\end{enumerate}
\end{assumption}

Note that this additional assumption holds for \textit{some} examples of each type given in Remark \ref{rem:examples}. In particular, it holds for type (ii) when $q=0$ and for type (iii) when $c=0$. We don't know if this condition is close to optimal; see the discussion following Algorithm \ref{AlgIterationHolder} for an explanation of where it is used and discussion in Section \ref{SSSecEstGraph} for an explanation of a similar condition that turns out to be required for a closely-related problem.

We have:
\begin{thm} [Reconstruction For Graphons with Sharp Boundaries] \label{ThmReconMainRes2}

Fix a graphon $w$ and constant $\alpha$ that satisfy Assumptions \ref{AssumptionsSimpleWeakIdentifiabilityAssumptions} and \ref{AssumptionSharpBoundary}, and let $G_{n}\sim w$ be a graph of size $n$.  Fix $\epsilon >0$. When  Algorithm \ref{AlgIterationHolder} is executed with parameters given by \eqref{DefParameters} on input $G_n$ and value $\alpha$,
the output is a permutation ``$\sigma$" on $\{1,2,\ldots,n\}$ with error
\be
\err \leq  n^{\epsilon}
\ee
w.e.p.

\end{thm}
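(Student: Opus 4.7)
The plan is to bootstrap from Theorem \ref{ThmReconMainRes1} via the iterative scheme sketched after Algorithm \ref{AlgIterationHolder}. Fix $\epsilon > 0$ and choose an increasing sequence of sample sizes $n_0 < n_1 < \cdots < n_T = n$ with $T = O(\log(1/\epsilon))$ stages; at stage $k$ we work with an induced subgraph $H_k$ on a vertex set $S_k \supset S_{k-1}$ of size $n_k$. For the base case I would take $n_0$ to be a small power of $n$ and apply Algorithm \ref{AlgMerge} to $H_0$, so that Theorem \ref{ThmReconMainRes1} delivers an ordering $\hat{\sigma}_0$ on $S_0$ of error $\err_0 \leq \sqrt{n_0}\log(n_0)^{\cfin}$ w.e.p.

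For the inductive step, suppose we already have an ordering $\hat{\sigma}_k$ of $S_k$ with error $\err_k$; we must decide, for each pair $\{i,j\} \subset S_{k+1}$, whether $U_i < U_j$. Assumption \ref{AssumptionSharpBoundary}(2) furnishes a set $I_{i,j} \subset [0,1]$ of Lebesgue measure at least $B|U_i - U_j|$ whose points link to exactly one of $i,j$ with probability at least $\delta$ (by Assumption \ref{AssumptionSharpBoundary}(1)) and to the other with probability zero, and the side of $\max(U_i,U_j)$ on which $I_{i,j}$ sits encodes the true ordering. Using $\hat{\sigma}_k$ to estimate positions of vertices in $S_k$, I would identify candidate witnesses whose \emph{estimated} position lies in a suitable shrinkage of $I_{i,j}$, then vote on the ordering of $\{i,j\}$ based on the sign of (edges from candidates to $i$) minus (edges from candidates to $j$). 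Crucially, candidates mis-assigned to $I_{i,j}$ --- vertices whose true position lies outside it but whose estimate places them inside --- contribute roughly equal expected edge counts to $i$ and $j$ and so cancel in the difference, leaving only a second-order Bernstein fluctuation, while the honest witnesses contribute a linear-in-$|U_i - U_j|$ signal.

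The signal-to-noise balance then shows that the vote is correct w.e.p.\ whenever $|U_i - U_j|$ exceeds a threshold of order $\sqrt{\err_k}/n_k$, up to polylogarithmic factors, yielding an error recursion of the form
\[
\err_{k+1} \;\leq\; \frac{n_{k+1}}{n_k}\, \sqrt{\err_k}\, \log(n)^{C}
\]
for some absolute constant $C$. With geometrically growing $n_k$, iterating this recursion roughly halves the exponent $\log \err_k / \log n$ at each step and hence drives $\err_T$ below $n^{\epsilon}$ within $T = O(\log(1/\epsilon))$ steps.

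The main obstacle is the single-stage analysis: controlling the mis-assignment of witnesses and the Bernstein deviation of edge counts \emph{simultaneously} and \emph{uniformly} over all $O(n^2)$ pairs. Assumption \ref{AssumptionSharpBoundary} is exactly what is needed here --- the measure lower bound $B|U_i - U_j|$ guarantees enough honest witnesses, while the gap $\delta$ guarantees a nontrivial per-witness signal; together they make the ratio of signal to mis-assignment noise grow linearly in the separation even when the separation is small. A secondary design point is that the edges involving $S_{k+1} \setminus S_k$ used in the vote at stage $k+1$ should be independent of $\hat{\sigma}_k$; I would expect Algorithm \ref{AlgIterationHolder} to reveal edges in batches to preserve this conditional independence, after which the union bound over all pairs and all stages is automatic from the definition of w.e.p.
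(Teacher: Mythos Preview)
Your proposal is essentially correct and follows the paper's architecture: bootstrap from Theorem~\ref{ThmReconMainRes1} on a small subsample, then iterate a refinement whose error recursion roughly square-roots the precision at each of $O(\log(1/\epsilon))$ stages, using fresh edges from $V_{k+1}\setminus V_k$ to preserve independence. Your recursion $\err_{k+1}\lesssim (n_{k+1}/n_k)\sqrt{\err_k}\,\mathrm{polylog}(n)$ is equivalent, after translating between rank-error and precision level, to the paper's $d_{i+1}=\sqrt{d_i/(p_i n)}\log n$.

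Two places where the paper is more concrete than your sketch, and where you would need to be careful. First, you propose to locate witnesses whose estimated position lies in a shrinkage of $I_{i,j}$, but $I_{i,j}$ depends on the unknown $U_i,U_j$ (and on the unknown boundary $r_\delta$); the paper's device is purely data-driven: take $R(i,j)$ to be the $C_1$ highest-ranked elements of $V_1\cap(N(i)\cup N(j))$ under $\sigma_1$, and prove (Lemma~\ref{LemmaSignalContainedinR}) that this automatically contains the distinguishing set. Second, your claim that mis-assigned witnesses ``contribute roughly equal expected edge counts to $i$ and $j$'' is not what carries the argument; rather, since w.e.p.\ every $k\in R(i,j)$ has $U_k>U_j>U_i$, the diagonally-increasing property gives $w(U_j,U_k)\geq w(U_i,U_k)$, so the expected noise already has the correct sign and only the $\sqrt{C_1}$ fluctuation needs bounding (Lemma~\ref{LemmaSmallNoise}). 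Finally, Algorithm~\ref{AlgRefine} has a second phase (steps 13--25) that extends the new ordering from $V_{k+1}\setminus V_k$ back to all of $V_{k+1}$, costing an extra $\log(n)^2$ in precision; your sketch omits this, but it is needed for the induction to proceed.
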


Algorithm \ref{AlgIterationHolder} consists of two parts: first a coarse ordering of a small subset of the vertices is obtained using the method from Theorem \ref{ThmReconMainRes1}, then a refinement algorithm (Algorithm \ref{AlgRefine}) is used to obtain an ordering of all vertices with the desired error level. These parts are independent, and the refinement algorithm could be used to improve and extend \emph{any} coarse ordering of a subset of the vertices. 

To be more precise, we will use the definition:

\begin{defn} \label{AssumptionEff}
Fix a function $\alg$ which takes a finite graph as input and gives an ordering of the vertices as output. Fix a diagonally-increasing graphon $w$, and let $G_{1},G_{2},\ldots \sim w$ be a sequence of graphs on $1,2,\ldots$ vertices. Say that the function $\alg$ is \textit{efficient} for  $w$ if $\alg(G_{n})$ has error less than $\sqrt{n} \log(n)^{5}$ w.e.p.
\end{defn}

For shorthand, we write ``$\alg$-Algorithm \ref{AlgIterationHolder}" for Algorithm \ref{AlgIterationHolder}, with step 2 replaced by the assignment:
\be 
\sigma_{1} = \alg(G_{1}).
\ee 

\begin{corollary} \label{CorExperts}
Fix a graphon $w$ that satisfies Assumptions  \ref{AssumptionSharpBoundary}, let $\alg$ be \textit{efficient} for $w$ in the sense of Definition \ref{AssumptionEff}, and let $G_{n}\sim w$ be a graph of size $n$.  Fix $\epsilon >0$. When  $\alg$-Algorithm \ref{AlgIterationHolder} is executed with parameters given by \eqref{DefParameters} on input $G_n$ and value $\alpha$,
the output is a permutation ``$\sigma$" on $\{1,2,\ldots,n\}$ with error
\be
\err \leq  n^{\epsilon}
\ee
w.e.p.

\end{corollary}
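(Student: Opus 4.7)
The plan is to observe that the proof of Theorem \ref{ThmReconMainRes2} factors cleanly into two independent pieces: an initial ``seed'' step that produces a coarse ordering of the vertices, and a refinement step (Algorithm \ref{AlgRefine}) that iteratively improves this ordering. The corollary follows by replacing the seed step while leaving the refinement analysis intact, so the whole argument is essentially a substitution lemma.

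First, I would isolate what the refinement analysis in the proof of Theorem \ref{ThmReconMainRes2} actually uses about its input ordering. In Algorithm \ref{AlgIterationHolder} the seed ordering $\sigma_1$ is produced by Algorithm \ref{AlgMerge} applied to a subsample $G_1$, and its quality is controlled by Theorem \ref{ThmReconMainRes1}, which guarantees error at most $\sqrt{n}\log(n)^{\cfin}$ w.e.p. The key observation is that this is exactly the bound packaged into Definition \ref{AssumptionEff}. So, for any algorithm $\alg$ that is efficient for $w$, the seed ordering $\alg(G_1)$ satisfies the same w.e.p.\ quantitative guarantee that is used as the starting point in the proof of Theorem \ref{ThmReconMainRes2}.

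Second, I would verify that Assumption \ref{AssumptionsSimpleWeakIdentifiabilityAssumptions} is invoked \emph{only} in the analysis of Algorithm \ref{AlgMerge}, and not in the analysis of the refinement Algorithm \ref{AlgRefine}. The latter relies on the structural features of the graphon captured by Assumption \ref{AssumptionSharpBoundary}: the existence of ``separating'' vertices whose link patterns to any pair $i,j$ witness their true relative order. Since Assumption \ref{AssumptionSharpBoundary} is precisely what the corollary hypothesizes, the refinement portion of the argument transfers verbatim, with no appeal to the uniform-embedding or squared-graphon hypotheses used by Algorithm \ref{AlgMerge}.

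Third, given a seed ordering satisfying the required w.e.p.\ error bound and given Assumption \ref{AssumptionSharpBoundary}, I would conclude by running the identical iterative argument used for Theorem \ref{ThmReconMainRes2}: apply Algorithm \ref{AlgRefine} stage by stage with the parameters in \eqref{DefParameters}, and inductively control the error, terminating with error at most $n^{\epsilon}$ w.e.p. Since each stage succeeds w.e.p.\ and there are only polylogarithmically many stages, the intersection of these events, together with the event that $\alg(G_1)$ satisfies its guarantee, is also w.e.p. The main obstacle is not mathematical but a bookkeeping check: I must confirm that every step in the proof of Theorem \ref{ThmReconMainRes2} which references $\sigma_1$ uses \emph{only} the pointwise error bound from Definition \ref{DefErrorOrder}, and not auxiliary structural features specific to the output of Algorithm \ref{AlgMerge} (for instance, alignment to a particular reverse class, or distributional symmetries of the estimator). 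If such an extra property were silently used, it would have to be either derived from the error bound alone or added to Definition \ref{AssumptionEff}; since that definition is phrased exclusively in terms of $\err$ and since the reverse ambiguity of Remark \ref{RemReverse} is already absorbed into the definition of ``correct,'' I expect no such extension is needed.
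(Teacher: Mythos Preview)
Your approach is essentially the paper's own: the proof of Theorem \ref{ThmReconMainRes2} is modular, and Corollary \ref{CorExperts} follows by swapping out the call to Algorithm \ref{AlgMerge} for the black-box $\alg$, then rerunning the refinement analysis unchanged. The paper makes this explicit in the Remarks immediately preceding Section \ref{SecErrorRootingDisc}, which spell out exactly the substitution you describe.

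There is one point where your bookkeeping claim is not quite right. You assert that the refinement analysis ``transfers verbatim, with no appeal to the \ldots\ squared-graphon hypotheses.'' In fact Lemma \ref{LemmaSmallNoise} does invoke part (3) of Assumption \ref{AssumptionGoodGraphon}, the $(\epsilon,\alpha)$-connectedness of $\WA{}$, in order to guarantee that each vertex $j$ has at least $C_1$ neighbours with latent position exceeding $U_j$ (so that $R(i,j)$ lies entirely to the right of $j$). This is why the paper's own substitution recipe in those Remarks requires retaining part (3) of Assumption \ref{AssumptionGoodGraphon} alongside Assumption \ref{AssumptionSharpBoundary} and the efficiency of $\alg$. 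Your verification step, had you carried it out, would have surfaced this; it does not change the structure of the argument, but you should not claim the refinement is independent of all squared-graphon hypotheses.
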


We can think of two situations in which Corollary \ref{CorExperts} might be useful:

\begin{enumerate}
\item Naturally, other mathematical work may provide algorithms that are \textit{efficient} in the sense of  Assumption \ref{AssumptionEff} (see \textit{e.g.} \cite{rocha2018recovering,natik2020consistency}). 
\item In some situations we may use \textit{expert knowledge}. For example, one might send a small random sample to experts who can order them by hand (likely based on features not captured by the graph). The large remainder of the data could be ordered automatically as in Algorithm \ref{AlgIterationHolder} after Step 2. It is difficult to model expert knowledge precisely, but we know that in practice it is often very useful to be able to incorporate strong (but expensive) evidence about a (small) subsample. 
\end{enumerate}

\begin{remark} [Optimistic Conjecture on Optimal Reconstruction]
We conjecture that the best possible reconstruction error is $O(\sqrt{n} \log(n)^{C})$ in the situation of Theorem \ref{ThmReconMainRes1} and $O(\log(n)^{C'})$ in  the situation of Theorem \ref{ThmReconMainRes2}. However, the arguments in the present paper seem to have no hope of finding the optimal constants $C, C'$.

It is natural to ask when it is possible to attain a ``super-small" error that is much less than $O(\sqrt{n} \log(n)^{C})$. We conjecture that this can occur in many situations where Assumption \ref{AssumptionSharpBoundary} fails, but also that recovering such a super-small error would require modifications to our main refinement algorithm. We would be interested in knowing if it is possible to come close to covering all situations in which a ``super-small" error occurs by using a single algorithm, but make no conjectures on this question.

\end{remark}

\begin{remark} [Dependence on Parameters]

We note that Theorems \ref{ThmReconMainRes1} and \ref{ThmReconMainRes2} have assumptions with constants $c,d,\delta,B$ that don't appear explicitly in the theorem statements. Recall that both theorems say that a certain inequality occurs \wep - that is, it occurs for all $n$ larger than some random integer $N$. The distribution of this random integer $N$ depends on the constants $c,d,\delta,B$. In general, $N$ may become very large if these constants are close to the edge of their ranges (e.g. if $\delta$ is very close to 0). This dependence is essentially unavoidable. For example, when $c$ is very close to $f(0)$, the graphon is very close to the constant graphon $w_{const}(x,y) \equiv c$. It is obviously not possible to reconstruct the ordering of vertices given observations from a constant graphon, and it is straightforward to check that one requires at least $n \gtrsim (f(0)-c)^{-0.5}$ to reliably distinguish between a graphon $w$ and the constant graphon $w_{const}(x,y) \equiv c$. 
\end{remark} 

\subsection{Running Time}

It is straightforward to check that both algorithms have running times that are polynomial in $n$ with parameters as stated. We have not made a serious effort to optimize the parameters for running time, and in several places we make choices in order to make our (already-long) proofs slightly shorter. With that in mind:

\begin{enumerate}

\item  Algorithm \ref{AlgMerge} has running time that is $O(n^{4} \mathrm{polylog}(n))$, dominated by  Algorithm \ref{AlgGlobal}. See Remark \ref{RemSpeedingUpSlow} for a quick discussion on how this may be improved to $O(n^{3} \mathrm{polylog}(n))$.

\item For the parameters appearing in Theorem \ref{ThmReconMainRes2}, Algorithm \ref{AlgIterationHolder} has running time that is $O(n^{2} \mathrm{polylog}(n))$. Furthermore, the running time is dominated by steps 3--6; these can be substantially parallelized. More generally, this $O(n^{2} \mathrm{polylog}(n))$ running time holds as long as the parameter $p_{1}$ appearing in Algorithm \ref{AlgIterationHolder} is less than $n^{-0.5}$.


\end{enumerate}

\subsection{Main Contributions and Related Work}

We highlight what we view as the main contributions of this paper, in the context of the enormous literature on latent position models:

\begin{enumerate}
\item Many of the most-similar previous papers studied parametric families of graphons, often with few parameters (see \textit{e.g.} \cite{7924316},\cite{ rocha2018recovering}). Often, graphs drawn from the family can be seen as slightly perturbed versions of graphs with a highly symmetric structure. The current paper studies a large nonparametric family, and importantly the algorithm does not require the user to identify a specific subfamily. This is important from a practical point of view, since simple-to-describe parametric families such as \eqref{EqSimpleGraphon} are typically not realistic models and it is often difficult to identify a reasonable model.

One consequence of this generality is that the conditions for our main result are longer and more complicated. This may cause our conditions to appear more technical or restrictive than previous work, even though the opposite is true. This is largely a consequence of the fact that it  takes more space to write down a collection of inequalities for generic graphons than it does to write down a collection of inequalities for a small number of real-valued parameters.

\item Many previous works on latent position models proceed by first estimating latent positions, then plugging this estimate into the definition of a discrete object of interest (a permutation in the case of seriation, a community structure in the case of \textit{e.g.} spectral clustering \cite{little2018analysis}). It is clear that estimating the full embedding is not any easier than estimating the discrete structure, since the discrete structure is a deterministic function of the true embedding. In this paper, we show that there is in fact a large gap in the difficulty of these two problems -- estimating the latent positions is sometimes \textit{much harder} than estimating the discrete structure. We hope that this observation is useful for the design of future algorithms.

\end{enumerate}

In the rest of the section, we give a more detailed look at some specific parts of the literature on latent position models.

\subsubsection{Consequences for Efficiency of Downstream Tasks} \label{RemCons}
We note that the problem considered in this paper has the following form: there is some ``true" latent position $U_{1},\ldots,U_{n}$ of each vertex, and we wish to estimate some explicit and reasonably nice function of these latent positions (in this case, the induced permutation as in Equation \eqref{EqInducedPerm}). It is natural and very common to try to do this sort of task by the following two-step procedure:  \textit{first} finding an estimate  $\hat{U}_{1},\ldots,\hat{U}_{n}$ of the true latent positions $U_{1},\ldots,U_{n}$, and \textit{then} plug this estimate into the function (typical algorithms for the closely-related problem of spectral clustering are of this form; see \textit{e.g.} the famous overview \cite{von2007tutorial}).

The algorithms studied in this paper are \textit{not} of this two-step form, and this is not an accident. In Section \ref{SecLargeError}, we prove that the best estimate $\hat{U}_{1},\ldots,\hat{U}_{n}$ of $U_{1},\ldots,U_{n}$ will have error $\Omega(n^{-0.5} \log(n)^{-C''})$ for a broad class of models. This implies that \textit{there is no way to recover the error bound in Theorem \ref{ThmReconMainRes2} by estimating the latent positions $\hat{U}_{1},\ldots,\hat{U}_{n}$ and then propagating this error bound to an estimate $\hat{\sigma} = \hat{\sigma}(\hat{U}_{1},\ldots,\hat{U}_{n})$ of $\sigma_{\mathrm{true}}$ that depends only on these estimated latent positions}. This fact was somewhat surprising to us, as it is common to analyze the performance of algorithms on ``downstream tasks" such as clustering or ordering using this sort of two-step error bound  (see \textit{e.g.} the recent paper  \cite{little2018analysis}, which include such a  ``downstream analysis" for a closely-related embedding problem). Our general observation that two-step procedures may not be optimal (even where they are common in practice) is not new - see \textit{e.g.} \cite{moitra2010settling} and the references therein.

\subsubsection{Seriation} \label{SSSSeriation}

A graph for which the adjacency matrix is a Robinsonian similarity matrix is known as a \emph{unit interval graph}.
The graph seriation problem for unit interval graphs can be solved efficiently using a graph-theoretical algorithm.  In particular, in \cite{corneil2004}, the author gives a linear time algorithm that returns the line-embedded permutation. The algorithm uses three sweeps of \emph{LexBFS}, a special Breadth-First Search algorithm first proposed in \cite{Rose1976LexBFS}. Corneil's 3-sweep LexBFS algorithm is used as an important subroutine in our algorithm.
The matrix seriation problem can also be solved efficiently. either with a combinatorial algorithm (see \cite{laurent2016similarityfirst} for an $O(n\log n)$ algorithm) or with spectral methods, as proposed in \cite{atkins1998spectral}.

If the matrix is not exactly Robinsonian, the seriation problem becomes intractable. Previous work on seriation-with-error considers an  
 \emph{optimization} problem: find a Robinson matrix and a permutation so that the $\ell_p$ distance between the permuted matrix and the Robinson approximation is minimized. This problem is NP-hard in general (see \cite{barthelemy2001overlappingclustering} for the case  $p<\infty$ and  \cite{chepoi2009seriationhardness} for the case  $p=\infty$). In \cite{chepoi2011seriationapproximation} the authors propose a (deterministic) approximation algorithm for the $\ell_\infty$ case. If the matrix exhibits certain additional nice properties, then the seriation problem can be solved in polynomial time as a quadratic assignment problem, see  \cite{laurent2015ToeplitzRobinsonian,cela2015QAProbinson}. In \cite{fogel2013convex,lim2014convex,evangelopoulos2017non-convex}, noisy seriation is presented as a convex optimization problem over a polytope related to the set of permutations. An approximate solution is reached using a relaxation of this optimization problem. Finally, \cite{vuokko2010spectralordering,fortin2015signchanges} study related problems in the special case of $\{0,1\}$-valued matrices.

The article \cite{flammarion2019optimal}, like our work, considers a \textit{statistical} problem where we must recover the ordering associated with some generating process.  In  \cite{flammarion2019optimal}, the given matrix is assumed to be a permuted version of a Robinson matrix, plus an additive error matrix of Gaussian noise. This approach falls in the general categories of shape constrained regression and latent permutation learning.  In  \cite{fogel2016spectralranking}, the authors use spectral seriation to reconstruct a ranking from a set of pairwise comparisons.

The articles \cite{RePEc:inm:oropre:v:68:y:2020:i:1:p:53-70, 7924316, DingSmallWorld,rocha2018recovering} are most similar to the present work. All of them study essentially the same problem as the current paper - reconstructing an ordering based on an observed graph. The major difference is that they all consider graphons that lie in small-dimensional parametric families, all quite similar to our example \eqref{EqSimpleGraphon}. \footnote{The details are somewhat different. Our points are embedded uniformly, while some of theirs are placed at positions $U_{i} = \frac{i}{n}$; our graphs have $\Theta(n^{2})$ edges, while theirs might have fewer; etc. We suspect that these minor differences could be accounted for using the approach set forth in this paper.} In most cases, it is not clear how their algorithms or analyses could be extended to the more general setting considered in this paper. This is important in practice, since we do not expect many real-life datasets to be generated by any graphon with a simple formula such as \eqref{EqSimpleGraphon} - real graphs are often quite messy!

The article  \cite{natik2020consistency} appeared after this paper and studies essentially the same problem. Like this paper, it also considers a nonparametric family of graphons. Its main result is that the popular spectral clustering algorithm gives a consistent estimate of the ordering under certain conditions. The main difference between our papers is that the approach in \cite{natik2020consistency} works for a different class of graphons and achieves a slower rate of convergence.


\subsubsection{Testing Graphons} \label{SecTestingGraphons}

Our initial motivation for this problem was finding efficient tests as to whether a random graph was simulated from a graphon that could be embedded (either in a one-dimensional space as in here, or more generally). Naive approaches fall into the trap discussed in \ref{RemCons}. 

In \cite{chuangpishit2015linear} and \cite{ghandehari2020sequences}, the authors construct a test  of line-embeddability. Precisely, they define a function $\Gamma^{\ast}$ on the space of graphs so that, for a sequence of random graphs $G_{n} \sim w$ sampled from a graphon $w$, we have $\lim_{n \rightarrow \infty} \Gamma^{\ast}(G_{n})=0$ if and only if $w$ is a.e.~diagonally increasing. Furthermore, $\Gamma^{\ast}$ is obtained from a function $\Gamma$ that is a \emph{testable parameter} in the sense of \cite{lovasz2012book}, and thus it can be efficiently estimated from small samples. Unfortunately, a problem remained: \textit{computing} $\Gamma^{\ast}$ exactly requires the computation of a vertex ordering that is ``optimal," in some sense made precise in \cite{chuangpishit2015linear}. The algorithms from this paper provide an approximate solution to a very similar seriation problem, which seems to be useful in providing a rigorous and efficient method for estimating $\Gamma^*$; the details will be contained in future work.

\subsubsection{Estimating Graphons} \label{SSSecEstGraph}

Consider a graph sampled from a graphon $w$ as in Equation \eqref{EqGraphonDef} with driving randomness $\{U_{i}\}$, $\{U_{i,j}\}$. In the present paper, we focus on estimating the \textit{ordering} $\sigma$ for which $U_{\sigma(1)} < U_{\sigma(2)} < \ldots < U_{\sigma(n)}$. There is a larger literature on estimating the \textit{graphon} - the full $n$ by $n$ matrix $\{ w(U_{i},U_{j})\}_{i,j=1}^{n}$. The papers \cite{gao2015rate,chatterjee2015matrix,mao2018breaking,mao2018towards,ghandehari2020sequences} all define and analyze estimators that can be used to estimate graphons (some, especially \cite{chatterjee2015matrix}, also apply to much larger classes of random matrices). We summarize some main points on the relationship between these results and ours.

It is clear that good estimates of a graphon give \textit{some} information about its  ordering - if you had a perfect estimate of \textit{e.g.} a graphon of the form \eqref{EqSimpleGraphon}, it would be trivial to recover a perfect ordering. However, this relationship is not very robust, and breaks down quickly in the presence of even small error. Due to either the weak norm \cite{gao2015rate,chatterjee2015matrix,mao2018breaking,mao2018towards} or the slow convergence rate \cite{ghandehari2020sequences}, 
existing graphon estimates can't be used to obtain good order estimates, even with unlimited computational resources. 

Although our results are not equivalent, we suspect that our estimates of the \textit{ordering} may lead to improved \textit{graphon} estimates in some situations. The two papers with the best bounds on the convergence rates of their estimators, \cite{gao2015rate,ghandehari2020sequences}, both use estimators that seem to be computationally intractable. Furthermore, both are intractable for essentially the same reason: they require the computation of an ordering of the vertices that is ``optimal" in a sense that is very similar to our problem.\footnote{To be more precise: \cite{gao2015rate} asks that the vertices be partitioned into roughly $\sqrt{n}$ sets, and that these sets (rather than individual vertices) be optimally ordered. Ordering partitions is easier than ordering vertices, but in this case it doesn't seem to be \textit{significantly} easier. } Our improved bounds on the ordering problem may lead to computationally-tractable estimators that come close to matching the efficiency of \cite{gao2015rate,ghandehari2020sequences}.

Although there is no general equivalence between estimating graphons and orderings, it seems likely to us that \textit{sufficiently good} graphon estimates could be used to obtain \textit{consistent} order estimates, even when this ``translation" is not statistically efficient. Even inefficient estimates could be used as initial steps in Algorithm \ref{AlgIterationHolder}, as discussed in and immediately before Corollary \ref{CorExperts}.

As one final remark, these papers indicate that the  rate at which graphon estimation procedures can converge depends on the ``smoothness" of the underlying graphon. This suggests that Assumption \ref{AssumptionSharpBoundary} may not be entirely an artifact of our proof technique.

\subsection{Paper Guide}

In Section \ref{SectionAlgorithms}, we state the main algorithms studied in this paper. In Section \ref{SecAlgAnalysis}, we prove a version of Theorem \ref{ThmReconMainRes1} with weaker conditions; we defer a proof that Theorem \ref{ThmReconMainRes1} actually follows from this result until Appendix \ref{SubsecWeakStrongAssumptionsCheck}. In Section \ref{SecErrorRooting} we discuss why it is possible to beat the $\tilde{O}(\sqrt{n})$ reconstruction barrier and prove Theorem \ref{ThmReconMainRes2}.

In the remainder of the paper, we discuss some auxillary results that are helpful in applying and understanding our main results. In Section \ref{SecSeekingAlpha}, we give simple guidelines for finding a ``good" thresholding value $\alpha$.  In Section \ref{SecLargeError}, we show that it is generally not possible to beat the $\tilde{O}(\sqrt{n})$ reconstruction barrier by using an ``induced" permutation of the form \eqref{EqInducedPerm}.

\section{Algorithms for General Graphons}\label{SectionAlgorithms}

In this section, we state the main algorithms studied in this paper, interspersed with comments on what is ``typically" happening at various important stages.

\subsection{Some Further Notation}

Much of our work will be related to taking powers of graphs and graphons, then thresholding the result:

\begin{defn}[Powers of Graphs and Graphons] \label{DefGraphPower}
For a graph $G$, denote by $A = A(G)$ its adjacency matrix. We will use $A^{(2)}$ to denote the square of the adjacency matrix. Entry $A^{(2)}[i,j]$ denotes the number of common neighbours of vertices $i$ and $j$, i.e.~the number of vertices adjacent to both $i$ and $j$.

Similarly, define the ``product" $w_{1} \star w_{2}$ of two graphons $w_{1},w_{2}$ by
\be
(w_{1} \star w_{2})(x,y) = \int_{u} w_{1}(x,u) w_{2}(u,y) du
\ee
and define $w^{(2)} = w \star w$.
\end{defn}

For any graphon $w$ and $\alpha\in [0,1]$, define the thresholded graphon $w_{\alpha}$ by
\be
w_{\alpha}(u,v) &= \textbf{1}_{w(u,v) \geq \alpha}.
\ee

For parameter $0 < \alpha <1$ and graph $G = (V,E)$, define the ``threshold-square" graph $\GA{\alpha} = (V,E_{\alpha})$ by
\be \label{EqThreshDef}
\{(u,v) \in E_{\alpha} \} \longleftrightarrow \{ A^{(2)}[u,v] > \alpha \, (n-2)\}.
\ee

Similarly, define the thresholded graphon $\WA{\alpha}$ by
\be
\WA{\alpha}(u,v) &= \textbf{1}_{w^{(2)}(u,v) > \alpha}.
\ee

Although our goal is to estimate a permutation $\sigma$, most of our calculations will be for pairwise comparison functions $F \, : \, V^{2} \mapsto \{-1,0,1\}$. The idea is that a comparison $F$ corresponds closely to a permutation $\gamma$ if $F$ is close to the following function $F_{\gamma}$:

\be \label{EqDefOrderingMap}
F_\gamma(u,v) = \left\{ \begin{array}{ll}1 &\text{if } \gamma(u) < \gamma(v)\\
-1 &\text{if }\gamma(u) > \gamma(v).\end{array}\right.
\ee
To go from a comparison function to a permutation, define:
\be
\gamma_{F}(i) &= \sum_{j \in V} F_{\gamma}(i,j) \\
\sigma_{F}(i) &= |\{ j \in V \, : \, \gamma_{F}(j) \leq \gamma_{F}(i) \}|
\ee
when the values of $\gamma_{F}$ are distinct; when there are ties, break them arbitrarily and use the above formula. We note that, for any permutation $\sigma$, we have $\sigma_{F_{\sigma}} = \sigma$.

We can use these formulae to move between permutations and comparison functions, and we extend notation in the obvious way. To make one frequently-used example explicit: we say that a comparison function $F \, : \, V^{2} \mapsto \{-1,0,1\}$ has error less than $\err$ if there exists a correct permutation $\sigma_{\mathrm{true}}$ so that, for all $1 \leq i,j \leq n$ satisfying $|\sigma_{\mathrm{true}}(i) - \sigma_{\mathrm{true}}(j)|> \err$,
\be
F(u,v) = 1 - 2 \textbf{1}_{\sigma_{\mathrm{true}}(i) < \sigma_{\mathrm{true}}(j)}.
\ee

We will also often define permutations on some subset $S \subset V$. We say that a permutation $\sigma$ on $S$ agrees with a permutation $\eta$ on $V$ if $F_{\sigma}(i,j) = F_{\eta}(i,j)$ for all $i,j \in S$. Abusing notation slightly, we will also say that $\sigma$ agrees with $\eta$ on some $D \subset S^{2}$ if $F_{\sigma}(i,j) = F_{\eta}(i,j)$ for all $(i,j) \in D$ (typically, $D$ will be pairs that are sufficiently far apart in the order $\eta$).

Finally, for a vertex set $V \subset \mathbb{N}$, denote by
\be
\tilde{V} = \{ (u,v) \in V^{2} \, : \, u < v \}
\ee
the list of ordered pairs of vertices. {We note that a function $\tilde{F} \, : \, \tilde{V} \mapsto \{-1,0,1\}$ can be extended to a unique antisymmetric function $F \, : \, V^{2} \mapsto \{-1,0,1\}$; we use this extension without comment.}

\subsection{Algorithms for Theorem \ref{ThmReconMainRes1}}

Our main algorithm, Alg.~\ref{AlgMerge}, uses Alg.\ \ref{AlgSketch} to find an initial coarse comparison $F$ based on $\GA{\alpha}$, and then ``fills in" the remaining comparisons by taking larger subsamples and merging them by a voting procedure (Alg.\ \ref{AlgUseSketch}). This will give a comparison $F'$ that agrees with the line-embedded permutation $\sigma_{true}$ or its reversal $\eurt$ for all pairs sufficiently far apart, which implies that $\sigma_{F'}$ is close to either $\sigma_{\mathrm{true}}$ or $\eurt$.

\setcounter{algocf}{0}
\RevertAlgoNumber

\smallskip
\begin{algorithm}[H]
\caption{Main Estimation Algorithm \label{AlgMerge}}
\flushleft
\textbf{parameters:} Sample size $m \in \mathbb{N}$, running time $t$, threshold $\zeta $, and truncation level $\alpha$.\\
 \textbf{input:}  Graph $G = (V,E)$ of size $|V| = n \geq m$.\\
 \textbf{output:}  A  full order  $\sigma$ on $V$.\\
\begin{algorithmic}[1]
\STATE Compute $\GA{\alpha}$ according to Equation \eqref{EqThreshDef}. \\
\STATE Run Alg.\ \ref{AlgSketch} with parameters $t$, $m$, and $\zeta$.  
Set $F =$ \textit{SparseSketch}$(\GA{\alpha})$. \\
\COMMENT{With high probability, $F$ has error $\lesssim \log(n)^{-1}$.}
\STATE Run Alg.~\ref{AlgUseSketch}. 
Set $F' =$ \textit{LocalRefinement}$(\GA{\alpha},F)$.
\STATE Return $\sigma_{F'}$.

\end{algorithmic}
\end{algorithm}
\smallskip

Most of the work is done in the function \textit{SparseSketch} (Alg. ~\ref{AlgSketch}). The most important step of Algorithm  ~\ref{AlgSketch} is the repeated call to \textit{OrderedSubsample} (Alg.~\ref{AlgSubsamp}), which samples small subgraphs of $\GA{\alpha}$ and then estimates orders on these small subgraphs.

The motivation for repeated subsampling is that, with high probability, the vast majority of these subgraphs typically have a special property that $\GA{\alpha}$ does not have: they will be \textit{unit interval graphs} (see Section \ref{Sec:UnitIntervalGraphs}). This special property allows us to use an existing efficient algorithm for finding the line-embedded permutation, referred to here as LexBFS, that only applies to unit interval graphs. 

Having used this efficient algorithm for our subgraphs, it is then necessary to stitch together their estimated orderings. The function \textit{GlobalOrder} (Alg.~\ref{AlgGlobal}) is needed because we expect some of the small samples to be ordered according to the true line-embedded permutation, and others according to its total reversal;  they must be aligned to  either all agree with the true line-embedded permutation, or all agree with its total reversal. 

The number of iterations is chosen such that each pair of vertices appears together in many of the samples. After aligning all the samples, we then simply count how often each pair is ordered `up' or `down', and decide the final ordering by majority vote. 

\smallskip
\begin{algorithm}[H]
\caption{\textit{SparseSketch} \label{AlgSketch}}
\flushleft
\textbf{parameters:} Stopping time $t$, size $m \in \mathbb{N}$, and alignment threshold $\zeta$.\\ 
 \textbf{input:}  ``Square-thresholded" graph $H = (V,E)$ of size $|V| = n$.\\
 \textbf{output:}  A comparison function $F$ on $V$. \\
\begin{algorithmic}[1]
\STATE Initialize the list of pairwise comparisons and number of comparisons $\mathcal{C}\, : \, \tilde{V} \mapsto \mathbb{Z}$ by $\mathcal{C} \equiv 0$. Also set $i=0$.
\WHILE{$i < t$}
\STATE Call Alg.~\ref{AlgSubsamp} with parameter $m$ and input $H$. If algorithm succeeds, set $(\sigma, S)=$\textit{OrderedSubsample}$(H)$ and $Z=1$; otherwise $Z=0$.
\IF{$Z=1$}
\STATE Set $i=i+1$.
\STATE Set $(\sigma^{(i)}, S^{(i)}) = (\sigma, S)$.
\ENDIF
\ENDWHILE
\STATE Run Alg. \ref{AlgGlobal} with parameter $\zeta$. Set $a=$\textit{GlobalOrder}$((\sigma^{(1)}, S^{(1)}),\ldots,(\sigma^{(t)}, S^{(t)}))$. For $1 \leq j \leq t$, set:
\be
\sigma^{(j)}  &= \left\{ \begin{array}{ll}
\sigma^{(j)}, & a(j) = 1 \\
m +1 - \sigma^{(j)}, & a(j) = -1.
\end{array}\right.
\ee
\FOR{$j = 1,2,\ldots,t$}
\STATE For vertices $u,v \in S^{(j)}$, update $\mathcal{C}$ by
\be
\mathcal{C}(u,v) &= \left\{ \begin{array}{ll}
\mathcal{C}(u,v) +  1, &  \sigma^{(j)}(u)<\sigma^{(j)}(v) \\
\mathcal{C}(u,v) -  1, & \sigma^{(j)}(u)>\sigma^{(j)}(v)
\end{array}\right.\\
\ee
\ENDFOR
\STATE Return the function $F \, : \, \tilde{V} \mapsto \{-1,0,1\}$ given by
\be
F(u,v)  &= \left\{ \begin{array}{rl}
-1, &  \mathcal{C}(u,v) <  0, \\
1, & \mathcal{C}(u,v) > 0, \\
0 & \text{otherwise.}\\
\end{array}\right.
\ee

\end{algorithmic}
\end{algorithm}

\textit{OrderedSubsample} (Alg. ~\ref{AlgSubsamp}) uses a graph algorithm to order the vertices of a small random subgraph of $G$:

\begin{algorithm}[H]
\caption{\textit{OrderedSubsample} \label{AlgSubsamp}}
\flushleft
\textbf{parameter:} Size $m \in \mathbb{N}$.\\ 
 \textbf{input:}  ``Square-thresholded" graph $H = (V,E)$ of size $|V| = n$.\\
 \textbf{output:}  A set $S \subset V$ of size $m$, and a permutation  $\sigma$ on $S$.\\
\begin{algorithmic}[1]
\STATE Sample $S \sim \mathrm{Unif}(\{ T \subset V \, : \, |T| = m \})$. \\
\STATE Apply the {\sl LexBFS} algorithm to the sampled graph $H(S)$.
\STATE If $H(S)$ is a connected proper interval graph, say the algorithm succeeds and return $S$ and a line-embedded permutation $\sigma$ of $S$.

\end{algorithmic}
\end{algorithm}

The {\sl LexBFS} algorithm, which quickly recognizes and orders unit interval graphs, was first described in \cite{corneil2004}. The version of the algorithm in that paper doesn't detect if a graph is disconnected, but (since the algorithm is based on {\sl BFS}) it can be easily modified to do so.  More information about unit interval graphs and this algorithm is given below in Section \ref{Sec:UnitIntervalGraphs}.

\textit{GlobalOrder} (Alg. ~\ref{AlgGlobal}) is used to globally align the estimated orderings for individual subgraphs:

\begin{algorithm}[H]
\caption{\textit{GlobalOrder} \label{AlgGlobal}}
\flushleft
\textbf{parameter:} Threshold $\zeta >0$.\\
\textbf{input:}  List of pairs $(\sigma^{(j)}, S^{(j)})_{j=1}^t$ of permutations $\sigma^{(i)}$ on sets $S^{(i)} \subset [n]$ of common size $|S^{(i)}| = m$. \\
 \textbf{output:}  A global alignment $a \, : \, \{1,2,\ldots,t\} \mapsto \{-1,+1\}$.\\
\begin{algorithmic}[1]
\STATE Initialize the number of pairwise comparisons $\mathcal{N} \, : \, \tilde{V} \mapsto \{0,1,\ldots\}$ by $\mathcal{N} \equiv 0$.
\FOR{$j \in \{1,2,\ldots,t\}$}
 \STATE Set
\be
\mathcal{L}^{(j)} &= \{ u \in S^{(j)} \, : \, \sigma^{(j)}(u) < \zeta \} \\
\mathcal{R}^{(j)} &= \{ u \in S^{(j)} \, : \, \sigma^{(j)}(u) > m - \zeta \}. \\
\ee
\ENDFOR
\STATE Initialize our estimated pairwise alignment $H \, : \, \{1,\ldots,i\}^{2} \mapsto \{-1,0,+1\}$ by $H \equiv 0$.
\FOR {$j,k \in \{1,2,\ldots,t\}$}
\STATE Set
\be
H(j,k) &= \begin{cases}
1, \, \qquad (\mathcal{L}^{(j)} \cap \mathcal{L}^{(k)}) \cup (\mathcal{R}^{(j)} \cap \mathcal{R}^{(k)}) \neq \emptyset, \, \text{otherwise}\\
-1, \qquad (\mathcal{L}^{(j)} \cap \mathcal{R}^{(k)}) \cup (\mathcal{R}^{(j)} \cap \mathcal{L}^{(k)}) \neq \emptyset, \, \text{otherwise}\\
0.
\end{cases}
\ee
\ENDFOR
\STATE Find a function $a \, : \, \{1,2,\ldots,t\} \mapsto \{-1,1\}$ so that, for all $i,j$ with $H(i,j)\in \{ -1,1\}$, $H(i,j)=a(i)a(j)$. \\
\COMMENT{We will show that \wep\ such a function always exists.}

\STATE Return $a$.
\end{algorithmic}
\end{algorithm}

The sets $\mathcal{L}^{(j)}$ and $\mathcal{R}^{(j)}$ capture the vertices in sample $j$ with highest and lowest $U$-values, respectively. Sample size is chosen so that there is sufficient overlap between these sets between samples. If  $\mathcal{L}^{(i)}$ overlaps with  $\mathcal{L}^{(j)}$, then this is an indication that the samples $i$ and $j$ are aligned, and $H(i,j)$ is set to 1, whereas if  $\mathcal{L}^{(i)}$ overlaps with  $\mathcal{R}^{(j)}$, then this indicates that the samples have opposite alignment, and $H(i,j)$ is set to $-1$.

When it exists, the function $a$ can be found very quickly by a greedy search (step 9). In practical implementations, it may be better to choose an algorithm that is more robust to small errors (e.g. solve the usual convex relaxation of the problem, then round the solution to the closest value in $\{-1,1\}$).

\begin{remark} \label{RemSpeedingUpSlow}

The cost of Algorithm \ref{AlgGlobal} is dominated by the cost $t^{2}$ of constructing the $t$ by $t$ matrix $H$. We note that the choice of number of samples $t$ and their sizes $m$ are driven by two concerns: the total number of samples $mt$ must be at least $n^{2} \mathrm{polylog}(n)$ to ensure that each pair of vertices appears together in many samples, and the size of the sample $m$ must be small enough that the subsampled graphs have certain good properties with high probability (primarily, they must be Robinsonian). 

We expect that in fact subgraphs up to size $m' = \frac{\sqrt{n}}{\mathrm{polylog(n)}}$ would have these good properties. If true, this would allow us to replace $t$ by $t' = O(n^{1.5} \mathrm{polylog}(n))$, reducing the computational cost of the algorithm by a factor of $n$. 
\end{remark}

Finally, we describe \textit{LocalRefinement} (Alg.~\ref{AlgUseSketch}), which refines the initial sketch provided by Algorithm \ref{AlgSketch}.  We use the notation $N_G(u)=\{ w\in V(G)\,:\, (u,w)\in E(G)\}$ to denote the {\sl neighbourhood} of $u$ in $G$, omitting the subscript $G$ when it is clear from context:

\begin{algorithm}[H]
\caption{\textit{LocalRefinement} \label{AlgUseSketch}}
\flushleft
\textbf{input:}  ``Square-thresholded" graph $H = (V,E)$ of size $|V| = n$, and a comparison $F$.\\
 \textbf{output:}  A comparison $F'$.\\
\begin{algorithmic}[1]

\STATE Initialize $F' \, : \, \tilde{V} \mapsto \{-1,0,1\}$ by $F' \equiv 0$.

\FORALL{$(u,v) \in  \tilde{V}$}
\STATE Set
\be
D_{u,v} &=|\{ x\in N(u)\setminus N(v)\,:\, F(x,v)=1\}|-|\{ x\in N(u)\setminus N(v)\,:\, F(x,v)=-1\}| \\
D_{v,u} &=|\{ x\in N(v)\setminus N(u)\,:\, F(x,u)=1\}|-|\{ x\in N(v)\setminus N(u)\,:\, F(x,u)=-1\}|
\ee
\IF{$|D_{u,v}|>|D_{v,u}|$}
\STATE Set
\be
F'(u,v) &= \begin{cases}
1, \, \qquad D_{u,v} > 0\\
-1, \qquad D_{u,v} \leq 0\\
\end{cases}
\ee
\ELSE
\STATE Set
\be
F'(u,v) &= \begin{cases}
1, \, \qquad D_{v,u} < 0\\
-1, \qquad D_{v,u} \geq 0.\\
\end{cases}
\ee
\ENDIF
\ENDFOR
\STATE  Return $F'$.
\end{algorithmic}
\end{algorithm}

Algorithm \ref{AlgUseSketch} is based on the following observations. 
Recall that Algorithm \ref{AlgUseSketch} takes as input a ``rough'' ordering from some comparison $F$. Now consider $i,j\in V$ so that $U_i<U_j$. Then any vertex $k$ with $U_k>U_j$ is closer to $U_j$ than to $U_i$,  so more likely to be a neighbour of $j$ than of $i$. Thus, we expect that $j$ will have more neighbours than $i$ that are higher up in the ordering than $j$. Similarly, we expect that $j$ will have fewer neighbours than $i$  that are lower down in the ordering than $i$.  Therefore, we expect to be able to order $i,j$ by counting the number of elements of the  neighbourhoods $N(i)\setminus N(j)$ and $N(j)\setminus N(i)$ that are higher than $i,j$ according to the rough ordering $F$.

\subsection{Algorithms for Theorem \ref{ThmReconMainRes2}}

Algorithm \ref{AlgMerge} described in the previous subsection gives an ordering with error approximately $\sqrt{n}$. Here we describe the iterative refinement algorithm, which sequentially reduces this error.

Algorithm \ref{AlgIterationHolder} creates a coupled sequence of random graphs $G_1 \subset G_{2} \subset \ldots  \subset G_k=G$. Algorithm \ref{AlgMerge} is used with input $G_1$ to obtain an initial ordering of the vertices in $G_1$. Then, in each successive step,  Algorithm \ref{AlgRefine} is called to replace the order $F_{i}$ on $G_{i}$ by the more-accurate order $F_{i+1}$ on the larger graph $G_{i+1}$. The iterative step is based on $G$ itself, not $\GA{\alpha}$.

\begin{algorithm}[H]
\caption{Iterative-Improvement Estimate \label{AlgIterationHolder}}
\flushleft
\textbf{parameter:} Sequence of rates $0 < p_{1} < p_{2} < \ldots < p_{k}=1 \in [0,1]$, errors $d_1>d_2>\dots >d_k$,
truncation level $\alpha$.\\
 \textbf{input:}  Graph $G = (V,E)$ of size $|V| = n$.\\
 \textbf{output:}  A permutation  $\sigma$ on $V$.\\
\begin{algorithmic}[1]
\STATE Let $\{B_{i}\}_{i=1}^{n} \stackrel{i.i.d.}{\sim} \mathrm{Unif}[0,1]$.
\STATE Run Algorithm \ref{AlgMerge} on the induced subgraph $G_{1}$ of $G$ with vertex set $V_{1} = \{j \in V \, : \, B_{j} \leq p_{1}\}$, $n_1=|V_1|$, and parameters $\alpha$ and $m= m(|V_{1}|), t = t(|V_{1}|) \zeta = \zeta(|V_{1}|)$ as in  Equation \eqref{def:GoodParameters}. Let $\sigma_{1}$ be the returned ordering of $V_1$.

\FOR{$i \in \{1,2,\ldots,k-1\}$}
\STATE Let $G_{i+1}$ be the induced subgraph of $G$ with vertex set $V_{i+1} = \{j \in V \, : \, B_{j} \leq p_{i+1}\}$.
\STATE Run Algorithm \ref{AlgRefine} with parameters $C_1=\lceil p_id_in\log (n)^4\rceil$, $C_2=\lfloor \sqrt{d_ip_in}\log n^{6}\rfloor$, $C_3=\lfloor \sqrt{d_ip_i n}\log (n)^{2}\rfloor$. Let $\sigma_{i+1} =\textit{Refine}(G_{i+1},V_i, \sigma_i)$.
\ENDFOR
\STATE Return $\sigma_{k}$.
\end{algorithmic}
\end{algorithm}

All of the work  in Algorithm \ref{AlgIterationHolder} is done in Algorithm \ref{AlgRefine}, the refinement algorithm that is called in the loop. It makes use of the following consequence of Assumption \ref{AssumptionSharpBoundary}: if $U_i<U_j$, then there is a region $R \subset [0,1]$ so that any vertex with $U$-value in $R$ can only be a neighbour of one of $i,j$. If $U_i$ and $U_j$ are sufficiently far apart, then $R$ is large and will contain many vertices; these will provide the signal that indicates the true ordering of $i$ and $j$. Of course, we don't know which vertices have $U$-values in $R$. But we know that the vertices that provide the signal must be at the extremes of the ordering in the neighbourhood of $i$ or $j$, so we can use the ordering from the previous iteration to find these vertices. The key idea is that, 
in each loop, we use our current estimate of $\sigma$ to improve our estimate of $R$, then use our improved estimate of $R$ to further improve our estimate of $\sigma$.

To make this precise, we introduce some definitions.  Given an ordering $\sigma$ of a set $V$,  a set $S\subset V$, and a parameter $c<|S|$, we define the sets of  the $c$ elements of $S$ with the highest and lowest rank according to $\sigma$:

\be
\label{Def:RL}
R(S,\sigma,c)&=\{ k\,:\,|\{ p\in S\,:\, \sigma(k)\leq \sigma (p)\}|\leq c\},\\
L(S,\sigma,c)&=\{ k\,:\,|\{ p\in S\,:\, \sigma(p)\leq \sigma (k)\}|\leq c\}.\\
\ee

Using these and an estimated ordering to approximate the sets $R,L$, we can then estimate a new ordering of $i,j$ by counting the number of neighbours $N(i), N(j)$ in these approximations and comparing the results. We note that this comparison, occurring in steps 4-25 of Algorithm \ref{AlgRefine}, is  more complicated than one might expect from this description. The additional steps allow us to ``prune out" and then ignore comparisons that we are not sufficiently sure of, preventing this uncertainty from ``infecting" other calculations.

\begin{algorithm}[h]
\caption{\textit{Refine} \label{AlgRefine}}
\flushleft
\textbf{parameters:} thresholds $C_1$, $C_2$, $C_3$.\\
 \textbf{input:}  Graph $G = (V,E)$, sets $V_1\subset V_2\subset V$, order $\sigma_1$ on $V_1$.\\
 \textbf{output:}  An order  $\sigma_2$ on $V_2$.
\begin{algorithmic}[1]
\FOR{$i<j$ in $V_2\setminus V_1$}
\STATE Set
\be
R = R(i,j)=R(V_1\cap(N(i)\cup N(j)),\sigma_1,C_1),\\
L = L(i,j)=L(V_1\cap(N(i)\cup N(j)),\sigma_1,C_1).
\ee
\STATE Initialize $F^{(2)}(i,j)=0$.
\IF{$|N(j)\cap R|> |N(i)\cap R| + C_2$ or $|N(i)\cap L|>|N(j)\cap L|+C_2$}
\STATE Set $F^{(2)}(i,j)=1$.
\ELSE\IF{$|N(i)\cap R|> |N(j)\cap R| + C_2$ or $|N(j)\cap L|>|N(i)\cap L|+C_2$}
\STATE Set $F^{(2)}(i,j)=-1$.
\ENDIF
\ENDIF
\ENDFOR
\STATE Let $\sigma_2' $  be a permutation of $V_2\setminus V_1$ that is compatible with $F^{(2)}$, i.e. $\sigma_2'=\sigma_{F^{(2)}}$ as in \eqref{EqDefOrderingMap}.
\FOR{$i$ in $V_2$}
\STATE Set
\be
t(i)&=\max\{ \sigma_2'(k)\,:\, k\in N(i)\cap (V_2\setminus V_1)\}\\
b(i)&=\min\{ \sigma_2'(k)\,:\, k\in N(i)\cap (V_2\setminus V_1)\}.
\ee
\ENDFOR
\FOR{$i\in V_1$, $j\in V_2$}
\STATE Initialize $F^{(2)}(i,j)=0$.
\IF{$t(j)-t(i)>C_3$ or $b(j)-b(i)>C_3$}
\STATE Set $F^{(2)}(i,j)=1$
\ELSE\IF{$t(i)-t(j)>C_3$ or $b(i)-b(j)>C_3$}
\STATE Set $F^{(2)}(i,j)=-1$
\ENDIF
\ENDIF
\ENDFOR
\STATE Return a permutation $\sigma_2 =\sigma_{F^{(2)}}$ of $V_2$ that extends $F^{(2)}$.

\end{algorithmic}
\end{algorithm}

\section{Proof of Strengthened Version of Theorem \ref{ThmReconMainRes1}} \label{SecAlgAnalysis}

In this section, we will prove a slightly stronger version of Theorem \ref{ThmReconMainRes1}. Since this section is fairly long and the proof does not follow the algorithm line-by-line, we give a quick guide to the subsections of this section in order. Sections 3.3 and 3.4 are particularly important, as they give the ``big-picture" explanation of why the algorithm works and what needs to be checked:

\begin{enumerate}
\item[3.1] We state a version of Theorem \ref{ThmReconMainRes1} that holds under weaker conditions, and state those (highly technical) conditions.
\item[3.2] We describe the probability space that will be used for the remainder of the proof.
\item[3.3] We give a detailed description of several key properties of ``typical" realizations of the random graph $G$, as well as key properties of several derived objects that appear in Algorithm \ref{AlgMerge}, and explain why those properties ensure that Algorithm \ref{AlgMerge} works. This section also includes technical lemmas showing that these key properties occur with high probability. 
\item[3.4] We give a sketch of the most important parts of our proof, and how it depends on the typical properties described in the previous section. 

\item[3.5] With the preliminaries out of the way, we analyze the output of Algorithm \ref{AlgSubsamp}, the sub-algorithm that  orders small subgraphs of $G$.
\item[3.6] We analyze the output of Algorithm \ref{AlgGlobal}, the sub-algorithm that aligns the orderings of the small subgraphs returned by Algorithm \ref{AlgSubsamp} (this alignment is necessary because even ``good" outputs of \ref{AlgSubsamp} will be randomly aligned with either $\sigma_{true}$ or $\sigma_{eurt}$).
\item[3.7] We analyze the output of Algorithm \ref{AlgSketch}, the sub-algorithm that calls Algorithms \ref{AlgSubsamp} and \ref{AlgGlobal} to obtain an estimated order of the full graph $G$, and give bounds on the error that hold \emph{whp} (though these bounds may still be quite large).
\item[3.8] We complete the analysis of the remaining sub-algorithm of Algorithm \ref{AlgMerge}, which refines the estimate obtained in Algorithm \ref{AlgSketch}.
This analysis shows that \emph{whp} the refined ordering has error bounded as stated in Theorem \ref{ThmMaxOrd}. This completes the proof of Theorem \ref{ThmMaxOrd}.
\end{enumerate}

\subsection{A Stronger Result}\label{SectionSlightlyStronger}

Rather than proving Theorem \ref{ThmReconMainRes1} directly, we prove a related result with  weaker (but more complicated) assumptions. This theorem applies to graphons in general, not just graphons with a uniform embedding. We verify that the stronger assumptions of Theorem \ref{ThmReconMainRes1} imply the more general assumptions presented in this section in Appendix \ref{SubsecWeakStrongAssumptionsCheck}.
While weakening the assumptions in this way will provide a stronger final result, our main motivation was to replace the assumptions of Theorem \ref{ThmReconMainRes1} (which are easy to read but hard to work with directly) with the weaker assumptions of Theorem \ref{ThmMaxOrd} (which are harder to read but easier to work with in our proof).

Before we discuss the weaker assumptions, we give an alternative characterization of  $\{ 0,1\}$-valued graphons.
It is not hard to verify (see also the discussion following Definition 2.1 of \cite{chuangpishit2017uniform}) that, if  $w$ is a $\{ 0,1\}$-valued graphon, then $w$ is diagonally increasing if and only if there exist two non-decreasing functions $\ell:[0,1]\rightarrow [0,1]$ and $r:[0,1]\rightarrow [0,1]$ which demarkate the regions in $[0,1]^2$ where $w$ assumes the value 1. The functions $\ell$ and $r$ will be called the \emph{boundaries} of $w$. Precisely, $w$ is  of the form
\be
w(x,y)=\left\{ \begin{array}{ll} 1&\text{if }y\in I(x)\\
0&\text{otherwise, }\end{array}\right.
\ee
where $(\ell (x),r(x)) \subset  I(x)\subset [\ell(x),r(x)]$. Thus, the boundaries define $w$ up to the values exactly on points of the form $(\ell(x),x)$ and $(x,r(x))$; of course this collection of points has measure 0, and thus does not influence the distribution of samples from $w$.

In this section, which leads up to the proof of Theorem \ref{ThmMaxOrd}, we will apply this characterization in particular to $\WA{\alpha}$, which is almost everywhere defined by the boundaries $r_\alpha$ and $\ell_\alpha$ defined as follows:
\be\label{Eq:Boundaries}
r_\alpha(x) &= \sup\{ y \in [0,1]\,:\, \WA{}(x,y) {\geq} \alpha\},\\
\ell_\alpha (x)&= \inf\{  y \in [0,1 ]\,:\, \WA{}(x,y) {\geq} \alpha\}.
\ee

We now state the definitions required for our weaker assumptions. For a set $S \subset \mathbb{R}^{d}$, denote by $\Vol(S)$ the $d$-dimensional volume $S$. Then define:

\begin{defn}[Goodness] \label{DefGoodness}
Fix $0 < \alpha, \delta  < 1$ and $0 < A < \infty$.
Say that $w$ is \textit{uniformly $(A,\delta)$-good at $\alpha$} if it satisfies
\be
\sup_{x \in [0,1]} \, \Vol (\{ y \in [0,1] \, : \, |\alpha - w(x,y)| \leq  \delta' \}) \leq A \delta'
\ee
for all $0 < \delta' < \delta$.

\end{defn}

In other words: if $w$ is uniformly $(A,\delta)$-good at $\alpha$, then the values of $w(x,\cdot)$ don't concentrate around $\alpha$. We will not apply this assumption to $w$ directly; we apply it to $w^{(2)}$.
Next, we use a simple notion of ``connectedness" for graphons:

\begin{defn} [Connectedness of Graphons] \label{DefGraphonComponents}
Fix $0 < \epsilon < 1$. We say that a graphon $w$ is $(\epsilon,\alpha)$-connected if
\be
\inf\{ w(u,v)\,:\, u, v \in [0,1],\, |u - v | \leq \epsilon \}  >\alpha.
\ee
\end{defn}

For  graphons $w$, to be $(\epsilon,\alpha)$-connected means that the region where $w$ achieves values greater than $\alpha$ contains a strip of width $\epsilon$ around the diagonal. We will see later that this implies that, asymptotically almost surely, large graphs sampled from the $\alpha$-thresholded version of such a graphon are connected.

The following ``separation" property implies that the neighbourhoods of far-apart vertices are easy to distinguish:

\begin{defn} [Separation of Graphons] \label{DefGraphonSeparation}
Fix $0 < \epsilon < \infty$. We say that a $\{ 0,1\}$-valued graphon $w$ is $\epsilon$-separated if, for all $x,y$,
\be
\mathrm{Vol}(\{ z  \in [0,1] \, : \, |w(x,z) - w(y,z)| = 1 \} ) \geq \epsilon |x-y|.
\ee

\end{defn}

Diagonally increasing graphs may have many indistinguishable rows - as an extreme example, the complete graph is Robinsonian! 
The following assumption rules out this sort of near-complete indistinguishability by requiring very different points to have disjoint neighbourhoods: 

\begin{defn} [Splitting graphons] \label{DefGraphonContainment}
Fix $0<\epsilon <1$. We say that  a $\{ 0,1\}$-valued graphon $w$ has an  $\epsilon$-split if for all $x \leq y - (1-\epsilon)$, 
\be
\{ z  \in [0,1] \, : \, w(x,z) = w(y,z) = 1 \} = \emptyset.
\ee
\end{defn}

In graphs sampled from a graphon with an $\epsilon$-split, vertices with $U_i$-values that are far apart cannot have any common neighbours. This guarantees that a certain ``line-embeddedness" is preserved in the graph and its samples, and a correct permutation can be extracted from the samples.

We combine these properties:

\begin{assumptions} [Weak Assumptions] \label{AssumptionGoodGraphon}
Let $w$ be a graphon, and $\alpha, \epsilon \in (0,1)$ and $A > 0$ be constants, so that:
\begin{enumerate}
\item $\WA{\alpha}$ is diagonally increasing, and
\item $w^{(2)}$ is uniformly $(A,\epsilon)$-good at $\alpha$, and
\item $\WA{}$ is $(\epsilon ,\alpha )$-connected, and
\item $\WA{\alpha}$ has an $\epsilon$-split, and
\item $\WA{\alpha}$ is $\epsilon $-separated.

\end{enumerate}
\end{assumptions}

Note that most of these assumptions are properties of $\WA{\alpha}$, not $\WA{}$. In particular, we do not need that $\WA{}$ is diagonally increasing, only that the thresholded $\{0,1\}$-valued graphon $\WA{\alpha} $ is. This is an important distinction: it is not true, in general, that the square of a diagonally increasing graphon is itself also diagonally increasing (see Appendix \ref{SubsecWeakStrongAssumptionsCheck}, Remark \ref{rem:counterexample} for a simple example). Even if $\WA{}$ is itself not diagonally increasing, there still may be a value of $\alpha$ for which $\WA{\alpha}$ is diagonally increasing.

Finally, we define the set of parameters for our main algorithm that gives the desired result for Theorem \ref{ThmReconMainRes1}.

\be \label{def:GoodParameters}
m&= m(n) \equiv \log(n)^\csketch,\\
t&= t(n) \equiv (n\log (n))^2,\\
\zeta &= \zeta(n) \equiv 4\log(n)^4\\
\ee

We are now ready to state the stronger theorem.

\begin{thm} [Ordering on Large Sample] \label{ThmMaxOrd}
Fix a graphon $w$ and constant $\alpha$ that satisfy Assumption \ref{AssumptionGoodGraphon}.
Let $G_{n} \sim w$ be a graph of size $n \in \mathbb{N}$. Then when Algorithm \ref{AlgMerge} is executed with parameters as in \eqref{def:GoodParameters} on input  graph $G_{n}$ and value $\alpha$, then
the output is an ordering ``$\sigma$" on $\{1,2,\ldots,n\}$ with error that satisfies
\be
\err \leq  \sqrt{n} \log(n)^{\cfin}
\ee
w.e.p.
\end{thm}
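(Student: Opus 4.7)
The overall plan is to bootstrap through the nested sub-algorithms of Algorithm \ref{AlgMerge} in order, at each stage passing to the next a comparison whose error we control \wep. We work on the canonical probability space containing the latent points $\{U_i\}$ and edge variables; after relabeling, we may assume $\sigma_{\mathrm{true}} = \mathrm{id}$, i.e.\ $U_1 < \cdots < U_n$. The starting point is a careful analysis of $\GA{\alpha}$: by Bernstein's inequality applied to $A^{(2)}[u,v]/(n-2)$, which is a sum of near-independent Bernoullis with mean approximately $w^{(2)}(U_u,U_v)$, we get \wep\ that $\GA{\alpha}$ coincides with the sample of the diagonally-increasing graphon $\WA{\alpha}$ at the points $(U_i)$ everywhere except in a ``bad strip'' of pairs where $|w^{(2)}(U_u,U_v)-\alpha| = O(\sqrt{\log n / n})$. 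The uniformly $(A,\epsilon)$-good assumption, together with a union bound, forces this strip to have width $\tilde{O}(n^{-1/2})$ around the boundaries $\ell_\alpha,r_\alpha$, hence to involve $\tilde{O}(\sqrt{n})$ vertices per row.

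Next I would analyze \emph{SparseSketch}. For $m=\log(n)^{\csketch}$, a uniformly random subset $S$ of size $m$ avoids the bad strip \wep\ (volume times $\binom{m}{2}$ is still negligible). Conditional on this, the induced subgraph of $\GA{\alpha}$ on $S$ coincides with a sample of the diagonally-increasing $\WA{\alpha}$; the $\epsilon$-split property makes it a proper interval graph, and the $(\epsilon,\alpha)$-connectedness combined with $m \gg 1/\epsilon$ makes it connected \wep, so \textit{LexBFS} returns an ordering of $S$ that matches $\sigma_{\mathrm{true}}$ or $\eurt$ on $S$. For \emph{GlobalOrder}, the choice of $\zeta = 4\log(n)^4$ forces $\mathcal{L}^{(j)}, \mathcal{R}^{(j)}$ to be the subsets of $S^{(j)}$ whose $U$-values lie in the leftmost/rightmost intervals of width $\Theta(1/\log n)$; with $t=(n\log n)^2$ samples, every pair of samples overlaps in at least one of these intervals \wep, so the pairwise alignment matrix $H$ is consistent and admits the required 2-coloring. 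Finally, each pair $(u,v)$ appears in $\Theta(t m^2/n^2) = \Theta(\log(n)^{12})$ joint samples (by Chernoff), so majority voting yields a comparison $F$ that is correct \wep\ for every pair outside the bad strip; in the language of Definition \ref{DefErrorOrder} this means $F$ has error $\err_F = \tilde{O}(\sqrt{n})$.

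The main obstacle is the final step, the analysis of \emph{LocalRefinement}. For a pair $(u,v)$ with $U_u < U_v$ and $U_v - U_u = \Delta$, the $\epsilon$-separation property of $\WA{\alpha}$ gives $\mathrm{Vol}(\{z : \WA{\alpha}(u,z)=1, \WA{\alpha}(v,z)=0\}) \geq \epsilon \Delta$, and this set lies to the left of $U_u$. Hence a typical vertex $x \in N(u)\setminus N(v)$ (computed in $\GA{\alpha}$) satisfies $U_x < U_u < U_v$, so $F(x,v) = -1$ whenever $F$ correctly orders the pair $(x,v)$. Among the $\gtrsim \epsilon n \Delta$ such vertices, at most $\err_F = \tilde{O}(\sqrt{n})$ are mis-oriented by $F$; Bernstein-type concentration adds a further $O(\sqrt{n \log n})$ fluctuation from the edge variables of $G$ and from the deviation of $\GA{\alpha}$ from the sampled $\WA{\alpha}$. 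Thus the signal $D_{u,v}$ satisfies $|D_{u,v}| \geq \epsilon n\Delta - \tilde{O}(\sqrt{n})$ \wep, which is positive as soon as $\Delta > C\log(n)^{\cfin}/\sqrt{n}$. A matching argument controls $D_{v,u}$ and the tie-breaking between them. Finally, concentration of the empirical c.d.f.\ of $(U_i)$ converts a gap of $\Delta$ in $U$-values into a rank gap of $n\Delta(1+o(1))$, giving the stated bound $\err \leq \sqrt{n}\log(n)^{\cfin}$. A union bound over the $O(n^2)$ pairs completes the proof.
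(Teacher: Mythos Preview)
Your overall architecture matches the paper's, but there are two genuine gaps that would break the argument as written.

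\textbf{First gap: the precision of the sketch $F$.} You assert that after \emph{SparseSketch}, the comparison $F$ has error $\err_F = \tilde{O}(\sqrt{n})$, i.e.\ is correct for every pair outside the $\tilde{O}(n^{-1/2})$-wide bad strip. This is false. Each subsample $S$ has only $m = \log(n)^{5}$ vertices, so two points $i,j$ with $|U_i-U_j|$ smaller than roughly $1/m$ will typically have \emph{identical} neighbourhoods in $H_\alpha(S)$ and hence cannot be distinguished by any interval ordering of $S$. The paper accordingly only proves (Lemma~\ref{LemmaSubSampleCorrect}) that each good sample agrees with $\sigma_{\mathrm{true}}$ at precision level $\log(n)^{-1}$, and the majority vote inherits at best this precision; the sketch $F$ therefore has error of order $n/\log(n)$, not $\tilde{O}(\sqrt{n})$.

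\textbf{Second gap: why \emph{LocalRefinement} still works.} Your analysis of $D_{u,v}$ relies on the incorrect bound $\err_F = \tilde{O}(\sqrt{n})$: you claim at most $\tilde{O}(\sqrt{n})$ of the vertices $x \in N(u)\setminus N(v)$ are mis-oriented by $F$. With the true error $n/\log(n)$, this count could swamp the $\epsilon n\Delta$ signal when $\Delta \sim \log(n)^{4}/\sqrt{n}$. The paper's actual mechanism is different and crucial: any $x \in N(u)\setminus N(v)$ outside the bad witness set has $H_\alpha(v,x)=0$, and by $(\epsilon,\alpha)$-connectedness this forces $|U_v - U_x| \ge \epsilon \ge \log(n)^{-1}$. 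Thus every such $x$ lies \emph{well beyond} the coarse precision of $F$, and $F(x,v)$ is automatically correct. The only mis-oriented witnesses come from the bad set $B = \bad{n,u}{\alpha}\cup\bad{n,v}{\alpha}$, of size $\tilde{O}(\sqrt{n})$ by $\mathcal{A}_2^c$.

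\textbf{A smaller but real issue:} you claim each sample avoids the bad strip \wep, but the probability is only $1-O(n^{-0.49})$ (volume $\tilde{O}(n^{-1/2})$ times $\binom{m}{2}$); with $t=(n\log n)^2$ samples a union bound fails. The paper handles this by proving separately (Lemma~\ref{LemmaAllRoughlyAgree}) that even samples hitting the bad strip still \emph{roughly agree} with $\sigma_{\mathrm{true}}$ or $\eurt$ \wep, which is what makes the alignment in \emph{GlobalOrder} consistent across \emph{all} $t$ samples.
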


\subsection{Construction of Probability Spaces} \label{SubsecInitNot}

We set some notation that will be used throughout the remainder of Section \ref{SecAlgAnalysis}. Fix notation as in the statement of Theorem \ref{ThmMaxOrd}. Fix two i.i.d. sequences $\{U_{i}\}_{i \in \mathbb{N}}$, $\{U_{i,j}\}_{i < j \in \mathbb{N}}$ with uniform distribution on $[0,1]$. We will use these two sequences to couple all of the graphs described in this section as follows: for \textit{any} graph of size $n \in \mathbb{N}$ sampled from a graphon in this section, we will \textit{always} assume that the graph is sampled by using the elements of \textit{this fixed} pair of sequences with indices $1 \leq i, j \leq n$, in the representation \eqref{EqGraphonDef}. In particular, if $G, G' \sim w$ are graphs of sizes $n < n'$, then this coupling gives a natural embedding $G\subset G'$ of the graphs. For fixed $n$, we denote by $\sigma_{\mathrm{true}}$ the line-embedded permutation associated with the sequence $\{U_{i}\}_{i=1}^{n}$, and denote by $F_{\mathrm{true}}$ the associated comparison function. Permutation $\eurt$ is the total reversal of $\sigma_{\rm true}$.

Denote by $H_{ \alpha} \sim  \WA{ \alpha}$ a ``correct" thresholded graph, drawn from $ \WA{\alpha}$ (the thresholded square graphon as in Definition \ref{DefGraphPower}) using the same random variables $\{U_{i}\}$, $\{ U_{i,j}\}$ as $G$. (Since $\WA{\alpha}$ is a $\{ 0,1\}$-valued graphon, the variables $\{ U_{i,j}\}$ are irrelevant.)
Thus, for all $i,j$, 
\[
H_\alpha (i,j)=\WA{\alpha}(U_i,U_j).
\]
Recall the definition of the threshold-square graph $\GA{\alpha}$ from Equation \eqref{EqThreshDef}. For $S \subset \{1,2,\ldots,n\}$, denote by $G(S)$  (respectively $\GA{ \alpha}(S)$, $H_{ \alpha}(S)$) the induced subgraph of $G$ (respectively $\GA{ \alpha}$, $H_{ \alpha}$) on vertex set $S$.

We define $\mathcal{F}_{1}(n)$ (respectively $\mathcal{F}_{2}(n)$) to be the $\sigma$-algebra generated by the sequence $\{U_{i}\}_{i =1}^{n}$ (respectively $\{U_{i,j}\}_{1 \leq i < j \leq n}$). We define $\mathcal{F}_{k} \equiv \mathcal{F}_{k}(\infty)$ for $k \in \{1,2\}$. Finally, we set $\mathcal{F} = \mathcal{F}_{1} \cap \mathcal{F}_{2}$ to be the $\sigma$-algebra generated by all of these random variables.

\subsection{Proof Sketch and Main Estimates for Bad Sets} \label{SecMainEstHeuristic}

In this section, we will show that the graphs $G, G^{(2)}_\alpha$  resulting from the random draw $\{U_{i}\}$, $\{ U_{i,j}\}$ will have certain good properties with extreme probability, and sketch out how these properties are used to guarantee that the algorithm succeeds.

\subsubsection{Unit interval graphs and the {\sl LexBFS} Algorithm}\label{Sec:UnitIntervalGraphs}

The heart of Algorithm \ref{AlgSubsamp} is the 3-sweep LexBFS unit interval graph recognition algorithm of \cite{corneil2004}. We recall the basic properties of unit interval graphs and this algorithm that will be used. A graph $G$ is a unit interval graph if and only if there exists a total order of the vertices so that the adjacency matrix $A$  of $G$ with respect to that order is diagonally increasing (where we set the diagonal elements $A_{i,i} = 1$). We will refer to such orders as {\sl interval orders}. If the graph is connected, then the interval order is unique, up to its total reversal and to permutation of any duplicated neighbourhoods. More precisely, for a total order of the vertices represented by a permutation $\sigma$, let $A^\sigma$ be the matrix defined as:
\be
A^\sigma_{i,j}=A_{\sigma^{-1} (i),\sigma^{-1} (j)}.
\ee
Then $\sigma$ is an interval order if and only if $A^\sigma$ is diagonally increasing. If $G$ is connected, then $A$ is irreducible. In that case, if $\sigma$ and $\tau$ are both interval orders, then either $A^\tau =A^\sigma$ (which allows for the possibility that some rows are identical), or $A^\tau = A^{\sigma^{trev}}$, where $\sigma^{trev}$ is the total reversal of $\sigma$.

\subsubsection{Properties of Latent Variables} \label{SubsubsecLatProp}

Algorithm \ref{AlgMerge} is based on the assumption that the square thresholded graph $\GA{\alpha}$ has similar properties to $H_\alpha$, and that the vertices in $V$ are fairly uniformly spread over the interval $[0,1]$. This, together with our assumptions on $\WA{}$ and $\WA{\alpha}$ will then guarantee that small samples from $\GA{\alpha}$ have similar good properties. This will imply that, with high probability, the ``sketch" returned by Algorithm \ref{AlgSketch} has no incorrect comparisons. In particular, we want to show that the following properties  hold w.e.p.:

\begin{enumerate}
\item For vertices $i,j \in V$ with $|U_{i} - U_{j}|\gg \frac{1}{\sqrt{n}}$, we have $\GA{ \alpha}(i,j) = H_{ \alpha}(i,j)$. That is, on a coarse scale, $\GA{\alpha}$ looks like it was sampled from $w^{(2)}_\alpha$.
\item Any random uniform subsample $S \subset V$ that is independent of $\mathcal{F}$ and has $|S| \geq \log(n)^{\csketch}$ will induce a connected graph $\GA{  \alpha}(S)$.
\item For all vertices $i,j \in V$ with $|U_{i} - U_{j}| \gg  \frac{1}{\sqrt{n}}$, there are many vertices that are connected to $i$ but not $j$ in $\GA{\alpha}$ (and vice-versa).
\item Pointwise, the empirical CDF of the collection $\{ U_i \}_{i=1}^{n}$ is never too far from its expected value. Specifically, for all vertices $i, j \in V$ with $|U_{i} - U_{j}| \gg  \frac{1}{\sqrt{n}}$, we have $|\{ k \in V \, : \, U_{k} \in (U_{i},U_{j}) \}| \approx n \, |U_{i}-U_{j}|$.

\end{enumerate}

Note that these are all purely properties of $\{U_{i}\}$, $\{ U_{i,j}\}$ (even property {(2)}, which is about \textit{typical draws of $S$ after observing the $\sigma$-algebra $\mathcal{F}$}). When these conditions hold, we know that when $i,j$ appear in the same subsample $S$ of size $|S|= \log(n)^{\csketch} $, the following will all occur with probability quite a bit larger than $\frac{1}{2}$:

\begin{enumerate}
\item[{(\sl i)}] The neighbourhood of $i,j$ in $\GA{\alpha}(S)$ will agree with their ``correct" neighbourhood in $H_{  \alpha}(S)$.
\item[{(\sl ii)}] The subgraph $\GA{ \alpha}(S)$ will be connected.
\item[{(\sl iii)}] If $U_i$ and $U_j$ are significantly far apart, the neighbourhoods of $i,j$ can be distinguished in $\GA{ \alpha}(S)$, and so in particular it is possible to compare them locally.

\end{enumerate}

When \textit{these} three events all occur for a given $i,j\in S$, we will have that $\GA{ \alpha}(S)$ is a connected unit interval graph whose interval orderings agree with one of the correct line-embedded permutations of $w^{(2)}_\alpha(S)$ for all vertices that are sufficiently far apart.
The interval ordering can be retrieved with a standard, efficient graph-theoretic algorithm as discussed in Section \ref{Sec:UnitIntervalGraphs}; moreover, the algorithm will detect when $\GA{\alpha} (S)$ is not a connected unit interval graph. Thus, when ${\sl (i)-(iii)}$ occur, we will be able to reconstruct the correct ordering of sufficiently distant pairs $i,j\in S$.

A precise statement of Conditions (1)-(4) above will be given in Definition \ref{DefBadEvents}, and Conditions (i)-(iii) will be given in Definition \ref{BadEventsSub} in the following subsection. First we introduce the necessary notation. For fixed $n$ and $\alpha$, define the ``bad set"
\be \label{EqDefBadSet}
\bad{n}{\alpha} = \left\{ (x,y) \in [0,1]^{2} \, : \, |w^{(2)}(x,y) - \alpha| \leq \frac{\log(n)^{\cbad}}{\sqrt{n }} \right\}. 
\ee
Thus, $\bad{n}{\alpha}$ is the subset of $[0,1]^2$ where $w^{(2)}$ takes value very close to $\alpha $.

We define the related ``bad witnesses" of a vertex $1 \leq i \leq n$ by
\be \label{EqDefBadWitness}
\bad{n,i}{\alpha} = \{ j \in \{1,2,\ldots,n\} \backslash \{i\} \, : \, \GA{\alpha}(i,j)\neq w^{(2)}_{ \alpha} (U_i,U_j)\}.
\ee
We will see that {\sl w.e.p.}\  $j\notin \bad{n,i}{\alpha}$  if $(U_i,U_j)\notin \bad{n}{\alpha}$  (see Lemma \ref{LemmaGoodPointsEq}).

We also define the collection of ``good witnesses" for a \textit{subset} $S \subset \{1,2,\ldots,n\}$ and \textit{pair} of vertices $i,j \in S$ by:
\be
\wit_{S}(i,j) &= \{ k \in S \, : \, \WA{ \alpha}(U_{i},U_{k}) \neq \WA{\alpha}(U_{j},U_{k}) \} \\
\nu_{S}(i,j) &= | \wit_{S}(i,j) |.
\ee
When $S = \{1,2,\ldots,n\}$, we will drop the subscript. These are the vertices that will allow us to distinguish $i,j$.

Finally, we define a cover of $[0,1]$ which will be useful in showing that subgraphs of $\GA{\alpha}$ are connected.
Let $\epsilon >0$ be as in Assumption \ref{AssumptionGoodGraphon}. Fix $R \in \mathbb{N} \cap [\epsilon^{-1}, 2 \epsilon^{-1}]$ and define the sets
\be
\label{DefIntervals}
I_{k} = \left[\frac{k }{4R}, \frac{(k+3)}{4R} \right] \cap [0,1], \quad 0 \leq k \leq 4R - 3.
\ee
Note that these intervals form a cover of $[0,1]$. Moreover, if $U_i\in I_k$ and $U_j\in I_k\cup I_{k+1}$, then $H_\alpha (i,j)=1$. This implies that, for any set $S\subset V$,
\be
\label{EqConnected}
\bigcap_{k=0}^{4R-3}\, \{ S\cap \{ i: U_i\in I_k\}\not= \emptyset\}\subset \{ H_{\alpha} \text{ is connected}\} .
\ee

We now give formal definitions of the collection of bad events:

\begin{defn}[Bad Events] \label{DefBadEvents}

Let graphon $w$, value $\alpha$, random variables $\{ U_i\}$, $\{ U_{i,j}\}$, and graphs $\GA{\alpha}$ and $H_{\alpha}$ be as defined above. Moreover, let $R$, $\{ I_k\}_{k=0}^{4R-3}$ be as in (\ref{DefIntervals}).
We define:
\be
\mathcal{A}_{1} &= \cup_{1 \leq i < j \leq n}(\{(U_{i},U_{j}) \notin \bad{n}{\alpha}\} \cap \{ \GA{ \alpha}(i,j) \neq H_{ \alpha}(i,j)\}) \\
\mathcal{A}_{2} &= \left\{\max_{1 \leq i \leq n} | \bad{n,i}{\alpha} | > \sqrt{n } \, \log(n)^{\cat}\right\} \\
\mathcal{A}_{3} &= \left\{ \min_{0 \leq k  \leq 4R-3} |\{ i\,:\, U_i\in I_k\}|< \frac{n}{2R} \right\} \\
\mathcal{A}_{4} &= \left\{\min_{1 \leq i,j \leq n \, : \, |U_{i}-U_{j}| > \frac{1}{\log(n)^{\cafo}}} \nu (i,j) < \frac{2n}{\log (n)^{\caft}}\right\} \\
&\quad \cup \left\{\min_{1 \leq i,j \leq n \, : \, |U_{i}-U_{j}| > \frac{\log(n)^{\cafi}}{\sqrt{n}}} \nu (i,j) < \sqrt{n} \log(n)^{\cafti} \right\} \\
\mathcal{A}_{5} &= \left\{  \max_{1 \leq i < j \leq n} |\{ k \, : \, U_{k} \in (U_{i},U_{j}) \}| \geq n |U_{i}-U_{j}| +  \sqrt{n}\log(n)  \right\}.\\
\ee

\end{defn}

The event $\mathcal{A}_1^c\cap \mathcal{A}_2^c$ implies property (1) above, while $\mathcal{A}_3^c$, $\mathcal{A}_4^c$ and $\mathcal{A}_5^c$  imply properties (2), (3) and (4), respectively. The bad events mostly involve random variables that are the sum of the independent variables $\{ U_i\}$,$\{ U_{i,j}\}$, and by straightforward application of standard concentration inequalities we can show that \wep\ none of these bad events occur. The lemmas and proofs establishing this fact can be found in the Appendix. Here, we only state their corollary:

\begin{corollary} \label{IneqLatentCor}
Let $\mathcal{A}= \mathcal{A}_1\cup \mathcal{A}_2\cup \mathcal{A}_3\cup \mathcal{A}_4\cup \mathcal{A}_5$. Then $\mathcal{A}^{c}$ holds \wep.
\end{corollary}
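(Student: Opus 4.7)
The plan is to bound each of the five events $\mathcal{A}_{1},\ldots,\mathcal{A}_{5}$ individually by a w.e.p.\ tail estimate and then intersect; since a finite intersection of w.e.p.\ events is w.e.p., this will give $\mathcal{A}^{c}$ w.e.p.

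I would handle the ``graph-level'' events $\mathcal{A}_{1},\mathcal{A}_{2}$ first. Fix $i\neq j$ with $(U_{i},U_{j})\notin \bad{n}{\alpha}$. Conditional on $\mathcal{F}_{1}$, the quantity $A^{(2)}[i,j]$ is a sum of $n-2$ independent Bernoullis with conditional mean $\sum_{k\neq i,j}w(U_{i},U_{k})w(U_{j},U_{k})$, and this conditional mean is itself a bounded average of the i.i.d.\ $U_{k}$'s and so concentrates around $(n-2)w^{(2)}(U_{i},U_{j})$. Two successive applications of Hoeffding then give
\[
\P\!\left(\left|\tfrac{A^{(2)}[i,j]}{n-2}-w^{(2)}(U_{i},U_{j})\right|>\tfrac{\log(n)^{\cbad}}{2\sqrt{n}}\right)\leq \exp(-c\log(n)^{2\cbad}),
\]
and outside $\bad{n}{\alpha}$ this forces the threshold to be crossed on the correct side; a union bound over $\binom{n}{2}$ pairs settles $\mathcal{A}_{1}^{c}$. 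For $\mathcal{A}_{2}$, note that on $\mathcal{A}_{1}^{c}$ we have $\bad{n,i}{\alpha}\subset \{j:(U_{i},U_{j})\in\bad{n}{\alpha}\}$. By Assumption \ref{AssumptionGoodGraphon}(2) together with Definition \ref{DefGoodness}, each $x$-slice of $\bad{n}{\alpha}$ has Lebesgue measure at most $A\log(n)^{\cbad}/\sqrt{n}$, so $|\bad{n,i}{\alpha}|$ is stochastically dominated by a $\mathrm{Bin}(n,A\log(n)^{\cbad}/\sqrt{n})$ variable with mean $O(\sqrt{n}\log(n)^{\cbad})$; Chernoff then gives $|\bad{n,i}{\alpha}|\leq \sqrt{n}\log(n)^{\cat}$ w.e.p.\ (since $\cat$ is chosen strictly larger than $\cbad$), and a union bound over $i$ completes $\mathcal{A}_{2}^{c}$.

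The remaining events $\mathcal{A}_{3},\mathcal{A}_{4},\mathcal{A}_{5}$ are purely about the i.i.d.\ uniforms $\{U_{i}\}$. $\mathcal{A}_{3}^{c}$ follows by applying Chernoff to each of the $O(R)=O(1)$ binomial counts $|\{i:U_{i}\in I_{k}\}|\sim\mathrm{Bin}(n,3/(4R))$ and union-bounding. For $\mathcal{A}_{4}^{c}$, condition on $(U_{i},U_{j})$; the $\epsilon$-separation clause of Assumption \ref{AssumptionGoodGraphon} guarantees that $\nu(i,j)$ is binomial with mean at least $(n-2)\epsilon|U_{i}-U_{j}|$, so in each of the two stated regimes Chernoff plus the choices $\caft>\cafo$ and $\cafi>\cafti$ delivers the required lower bound w.e.p., and one union-bounds over $\binom{n}{2}$ pairs. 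Finally, $\mathcal{A}_{5}^{c}$ is exactly the statement that the empirical CDF of the $U_{i}$'s is within $\log(n)/\sqrt{n}$ of the identity in sup-norm, which is the Dvoretzky--Kiefer--Wolfowitz inequality at that level.

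The main thing to keep track of is not a real obstacle but a bookkeeping issue: the internal constants $\cbad,\cat,\cafo,\caft,\cafi,\cafti$ must be compatible so that each concentration step has enough logarithmic slack to produce an $\exp(-c\log(n)^{c'})$ tail, and so that the resulting w.e.p.\ statements feed cleanly into the downstream analysis of Algorithms \ref{AlgSubsamp}--\ref{AlgUseSketch}. Apart from this, every step is a routine Chernoff/Hoeffding/DKW application, which is why the detailed verifications are naturally deferred to an appendix.
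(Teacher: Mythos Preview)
Your proposal is correct and matches the paper's approach: the paper's proof simply cites Lemmas \ref{LemmaGoodPointsEq}--\ref{LemmaCompOrders}, and those appendix proofs are exactly the Hoeffding/Chernoff arguments you outline, using the same assumption clauses (goodness for $\mathcal{A}_2$, $\epsilon$-separation for $\mathcal{A}_4$) and the same union-bound structure. The only cosmetic differences are that the paper handles $\mathcal{A}_1$ with a single Hoeffding step by conditioning only on $(U_i,U_j)$ (so each $\ell_{i,a}\ell_{a,j}$ already has conditional mean $w^{(2)}(U_i,U_j)$, avoiding your second concentration step), and it phrases $\mathcal{A}_5$ as Hoeffding-plus-union-bound rather than DKW.
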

\begin{proof}
This follows from Lemmas \ref{LemmaGoodPointsEq}, \ref{LemmaFewBadWitnesses}, \ref{LemmaTightlyConnected}, \ref{LemmaManyWitnesses}, and \ref{LemmaCompOrders}.
\end{proof}

We can therefore assume that our graph $G$ and the derived graph $\GA{\alpha}$ have the properties we need for the algorithm to succeed. 

\subsubsection{Properties of Subsampled Graphs}\label{sec:PropertiesSamples}

In Algorithm \ref{AlgSketch}, small subgraphs are sampled repeatedly from the square thresholded graph $\GA{\alpha}$. In this section, we show that, if $\mathcal{A}^c$ holds and thus $\GA{\alpha}$ has properties as expected, with high probability these samples will have similar good properties. To this end, we define bad events for the sample, and will then proceed to bound the probability that they occur.

Let $S$ be a uniformly chosen subset of $V$, of size  $m = \log(n)^{\csketch}$.
We denote by
\be
\label{DefS''}
S'' = \{ i \in S \, : \, \forall j \in S, \, i \notin  \bad{n,j}{\alpha} \}
\ee
the collection of elements with no ``bad" comparisons.

Choosing indices to roughly match the ``global" bad events  $\mathcal{A}_{1},\ldots,\mathcal{A}_{5}$, we define:

\begin{defn}[Bad Events for Samples]  \label{BadEventsSub}

Let graphon $w$, value $\alpha$, random variables $\{ U_i\}$, $\{ U_{i,j}\}$, and graphs $\GA{\alpha}$ and $H_{\alpha}$ be as defined above.  Moreover, let $R$, $\{ I_k\}_{k=0}^{4R-3}$ be as in (\ref{DefIntervals}). Finally, we let $S \subset V$ and $m=|S|$.

\be
\mathcal{A}_{1}' &= \{\GA{ \alpha}(S) \not= H_{\alpha}(S)\}\\
\mathcal{A}_{3}' &= \left\{ \min_{0 \leq k  \leq 4R-3} |\{ i\in S\,:\, U_i\in I_k\}|< \frac{m}{4R}  \right\} \cup
\{ |S \backslash S''| > \log(n)^{\cS}\} \\
%
\mathcal{A}_{4}' &= \left\{\min_{i,j \in S \, : \, |U_{i}-U_{j}| > \frac{1}{\log(n)}} \nu_{S}(i,j) <  \log(n)^3 \right\} \\
\mathcal{A}_{5}' &= \left\{  \max_{i,j \in S} |\{ k \, : \, U_{k} \in (U_{i},U_{j}) \}| \geq m |U_{i}-U_{j}| + 2 \log(n)^{3.5}  \right\}.\\
\ee
\end{defn}

We think of these events as functions of $S$, viewing the variables $\{ U_i\}$ and $\{ U_{i,j}\}$ that determine the graph $G$ as ``fixed."
We have seen that $\mathcal{A}^c$ holds \wep \, in Corollary \ref{IneqLatentCor}. We will see here that events $\mathcal{A}_{i}'$ are also all unlikely. The proofs are again simple, and are deferred to Appendix \ref{AppSecBadSmallSets}. We need slightly more detailed results (\textit{e.g.} that these events remain rare \textit{even conditional on} the occurrence of certain vertices), so we keep the lemma statements here.

\begin{lemma}\label{LemmaSampleEqual}
Let $S$ be a uniformly chosen subset of $V$, of size  $m = \log(n)^{\csketch}$. Then on the $\mathcal{F}$-measurable event $\mathcal{A}^{c}$,
\[
\P [ \mathcal{A}_1'\,|\,\mathcal{F}]=O( n^{-0.49}).
\]
Furthermore, for any fixed $p,q\in V(G)$ satisfying $(U_p,U_q)\not\in\bad{n}{\alpha}$, we have
\[
\frac{\P [ \mathcal{A}_1'\cap\{ p,q\in S\}\,|\,\mathcal{F}]}{\P [\{ p,q\in S\} | \mathcal{F}]}=O( n^{-0.49}),
\]
where again both probabilities are on $\mathcal{A}^{c}$.
\end{lemma}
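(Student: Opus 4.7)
The plan is to reduce $\mathcal{A}_1'$ to a combinatorial event about the random sample $S$ meeting a small $\mathcal{F}$-measurable set, and then apply a first-moment (union) bound. First, note that since $\WA{\alpha}$ is $\{0,1\}$-valued, we have $H_{\alpha}(i,j) = \WA{\alpha}(U_i,U_j) = w^{(2)}_{\alpha}(U_i,U_j)$, so by the definition of $\bad{n,i}{\alpha}$ in \eqref{EqDefBadWitness},
\[
\{\GA{\alpha}(S)\neq H_{\alpha}(S)\} \;=\; \{\exists\, i,j\in S \text{ with } i\neq j \text{ and } j\in\bad{n,i}{\alpha}\}.
\]
Consequently $\mathcal{A}_1'$ is just the event that the random sample $S$ meets the $\mathcal{F}$-measurable set $B := \bigcup_{i\in S}\bad{n,i}{\alpha}$ in a ``bad'' pair. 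On the event $\mathcal{A}^{c}\subset \mathcal{A}_2^{c}$ we have the deterministic bound $|\bad{n,i}{\alpha}|\leq \sqrt{n}\,\log(n)^{\cat}$ for every $i$, so there are at most $n\cdot\sqrt{n}\log(n)^{\cat}$ ordered pairs $(i,j)$ with $j\in\bad{n,i}{\alpha}$.

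For the first statement, apply a union bound over all ordered pairs $(i,j)$ with $j\in\bad{n,i}{\alpha}$: since $S$ is a uniform $m$-subset of $V$, we have $\P[i,j\in S\mid\mathcal{F}] = m(m-1)/(n(n-1))$, so
\[
\P[\mathcal{A}_1'\mid\mathcal{F}] \;\leq\; \sum_{i=1}^{n} |\bad{n,i}{\alpha}|\cdot \frac{m^{2}}{n^{2}}
\;\leq\; \frac{m^{2}\log(n)^{\cat}}{\sqrt{n}} \;=\; O\!\left(\frac{\log(n)^{2\csketch+\cat}}{\sqrt{n}}\right),
\]
which is $O(n^{-0.49})$ for $n$ large enough.

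For the conditional statement, the ratio equals $\P[\mathcal{A}_1'\mid p,q\in S,\,\mathcal{F}]$, and conditional on $\{p,q\in S\}$ the remaining $m-2$ vertices of $S$ are a uniform $(m-2)$-subset of $V\setminus\{p,q\}$. The key observation is that the ``forced'' pair $(p,q)$ does not contribute to $\mathcal{A}_1'$: because $(U_p,U_q)\notin\bad{n}{\alpha}$ and $\mathcal{A}^{c}\subset\mathcal{A}_1^{c}$, we have $\GA{\alpha}(p,q)=H_{\alpha}(p,q)$, i.e.\ $q\notin\bad{n,p}{\alpha}$. Hence $\mathcal{A}_1'$ requires a witnessing pair $(i,j)$ with at least one of $i,j$ lying in $V\setminus\{p,q\}$. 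Splitting the union bound by whether $\{i,j\}\cap\{p,q\}$ is empty (each such pair is in $S$ with probability $\approx m^{2}/n^{2}$) or has exactly one element (probability $\approx m/n$), and using $|\bad{n,i}{\alpha}|\leq \sqrt{n}\log(n)^{\cat}$ with $\sum_i|\bad{n,i}{\alpha}|\leq n\sqrt{n}\log(n)^{\cat}$, both contributions are $O(\log(n)^{2\csketch+\cat}/\sqrt{n}) = O(n^{-0.49})$.

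There is no real obstacle here: the entire argument is a first-moment bound that trades the deterministic cardinality estimate on $|\bad{n,i}{\alpha}|$ coming from $\mathcal{A}_2^{c}$ against the inclusion probabilities of a uniform $m$-subset. The only point requiring a bit of care is the conditional version, where one must observe that the pair $(p,q)$ is not itself a witness and then redo the same union bound with the slightly altered inclusion probabilities under the conditional measure; this is where the hypothesis $(U_p,U_q)\notin\bad{n}{\alpha}$ is used.
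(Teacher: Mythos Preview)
Your proposal is correct and follows essentially the same approach as the paper: reduce $\mathcal{A}_1'$ to the event that $S$ contains some pair $(i,j)$ with $j\in\bad{n,i}{\alpha}$, use the cardinality bound $|\bad{n,i}{\alpha}|\leq \sqrt{n}\log(n)^{\cat}$ from $\mathcal{A}_2^c$, and apply a first-moment bound. The only cosmetic difference is that the paper organizes the union bound as ``for each $i\in S$, bound $\P[\bad{n,i}{\alpha}\cap S\neq\emptyset]\leq (m-1)|\bad{n,i}{\alpha}|/n$ and sum over the $m$ vertices in $S$,'' whereas you sum directly over all bad ordered pairs in $V$ and multiply by the inclusion probability $m^2/n^2$; these are the same computation. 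Your treatment of the conditional statement likewise matches the paper's: both use $\mathcal{A}_1^c$ together with $(U_p,U_q)\notin\bad{n}{\alpha}$ to conclude that the forced pair $(p,q)$ is not a witness, and then repeat the union bound under the conditional measure.
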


For $0\leq k\leq 4R-3$, let $V_k=\{ i\,:\, U_i\in I_k\}$. 

\begin{lemma}\label{LemmaIndGraphConnected}
Let $S$ be a uniformly chosen subset of $V$, of size  $m = \log(n)^{\csketch}$, and let $S''$ be as defined in \eqref{DefS''}.
We have $\P [ \mathcal{A}_3'\,|\,\mathcal{F}]=n^{-\Omega(\log(n))}$ on the $\mathcal{F}$-measurable event $\mathcal{A}^{c}$. That is, \wep\ for all $0\leq k\leq 4R-3$, both $|S\cap V_k|\geq \frac{\log (n)^{\csketch}}{4R}$ and also $|S\setminus S''|\leq \log(n)^{\cS}$ on $\mathcal{A}^{c}$. Moreover, for any fixed $p,q\in V(G)$,
\[
\frac{\P [ \mathcal{A}_3'\cap\{ p,q\in S\}\,|\,\mathcal{F}]}{\P [\{ p,q\in S \} | \, \mathcal{F}]}=O( n^{-\Omega (\log (n)) }),
\]
where again both probabilites are on the event $\mathcal{A}^{c}$.
\end{lemma}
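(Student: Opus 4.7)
The plan is to decompose $\mathcal{A}_3'$ into two simpler bad events and handle each by concentration. Set $\mathcal{E}_1 = \{\min_{0 \leq k \leq 4R-3} |S \cap V_k| < m/(4R)\}$ and $\mathcal{E}_2 = \{|S \setminus S''| > \log(n)^{\cS}\}$, so that $\mathcal{A}_3' = \mathcal{E}_1 \cup \mathcal{E}_2$; I will bound each conditional probability on $\mathcal{A}^c$ and combine via a union bound.

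For $\mathcal{E}_1$: on $\mathcal{A}^c \subseteq \mathcal{A}_3^c$, every $|V_k| \geq n/(2R)$, so under uniform size-$m$ sampling, $|S \cap V_k|$ is hypergeometric with mean at least $m/(2R)$. A standard hypergeometric Chernoff/Hoeffding bound yields $\P[|S \cap V_k| < m/(4R) \mid \mathcal{F}] \leq \exp(-c m)$ for a universal $c > 0$; with $m = \log(n)^{\csketch}$ and $\csketch \geq 2$, this is $\exp(-c\log(n)^{\csketch})$, which is $n^{-\Omega(\log n)}$. A union bound over the $O(1)$ intervals preserves the rate.

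For $\mathcal{E}_2$: first exploit the symmetry $j \in \bad{n,i}{\alpha} \iff i \in \bad{n,j}{\alpha}$ (inherited from the symmetry of $\GA{\alpha}$ and $w^{(2)}$) to view $B = \{(i,j) : j \in \bad{n,i}{\alpha}\}$ as an undirected graph; on $\mathcal{A}_2^c$ its maximum degree is at most $\Delta = \sqrt{n}\log(n)^{\cat}$. Every $i \in S \setminus S''$ is incident in $B[S]$ to at least one witness $j \in S$, so by handshaking $|S \setminus S''| \leq 2 |E(B[S])|$. Writing $T = |E(B[S])| = \sum_{e \in E(B)} X_e$ with $X_e = \mathbf{1}[e \subseteq S]$, a quick calculation gives $\mathbb{E}[T \mid \mathcal{F}] \leq |E(B)|(m/n)^2 = O(\log(n)^{2\csketch + \cat}/\sqrt{n}) = o(1)$. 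Since $\mathbb{E}[T]$ is tiny and the dependence between the $X_e$'s is controlled by $\Delta$ (shared-vertex pairs contribute $\Delta_* = O(n\Delta^2 p^3) = o(\mathbb{E}[T])$), I will invoke Janson--Suen's inequality --- or equivalently a direct $k$-th moment calculation with $k = \lfloor \log n \rfloor$ --- to obtain a Poisson-type upper tail $\P[T \geq t \mid \mathcal{F}] \leq \exp(-\Omega(t))$ for $t \gg \mathbb{E}[T]$. Taking $t = \log(n)^{\cS}/2$ yields $\P[\mathcal{E}_2 \mid \mathcal{F}] \leq n^{-\Omega(\log n)}$.

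The main obstacle is the Poisson-like tail for $T$: a naive Chebyshev gives only polynomial-in-$n^{-1/2}$ bounds, and a naive union bound over $k$-edge subgraphs of $B$ fails because $B$ may contain small dense subgraphs in which $k$ edges share as few as $O(\sqrt{k})$ vertices, which spoils the exponent in $p$. Properly combining the sparsity estimate $\mathbb{E}[T] = o(1)$ with the max-degree bound on $B$ is required to rule out these ``clustered'' configurations; this is precisely what Janson--Suen, or a careful direct moment bookkeeping, accomplishes. For the moreover clause, conditioning on $\{p,q \in S\}$ simply replaces the underlying sample by a uniform size-$(m-2)$ draw from $V \setminus \{p,q\}$, and all of the above bounds apply verbatim with $m, n$ replaced by $m-2, n-2$, yielding the same asymptotic rate.
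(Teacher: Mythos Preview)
Your treatment of $\mathcal{E}_1$ is correct and matches the paper's argument.

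For $\mathcal{E}_2$ you take a genuinely different route from the paper, and there is a real gap. The reduction $|S\setminus S''|\le 2|E(B[S])|=2T$ is fine, but the claimed Poisson-type upper tail $\P[T\ge t\mid\mathcal F]\le\exp(-\Omega(t))$ is not delivered by Janson--Suen: those are \emph{lower}-tail inequalities, and the upper tail of such dependent edge counts is exactly the ``infamous upper tail'' obstacle. In fact the $\exp(-\Omega(t))$ form is false for general $B$ with maximum degree $\Delta$: if $B$ contained a clique on $\Delta+1$ vertices, then $T\ge t$ occurs as soon as $|S\cap K_{\Delta+1}|\ge\sqrt{2t}$, giving a tail of order $\exp(-\Theta(\sqrt t\,\log n))$ rather than $\exp(-\Theta(t))$. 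Your proposed $k$-th moment route with $k=\lfloor\log n\rfloor$ fails for the same reason you yourself flag: $k$ edges of $B$ can sit on as few as $\Theta(\sqrt k)$ vertices, so the factor $p^{v}$ cannot beat the entropy of choosing those edges; the maximum-degree bound $\Delta=\sqrt n\log(n)^{\cat}$ is far too large to rule out such clustered configurations.

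The paper avoids the detour through $T$ altogether and bounds $|S\setminus S''|$ directly, which is much cheaper. Enumerate $S$ as $s_1,\dots,s_m$ and set $W_k=\mathbf 1[s_k\in\bigcup_{j<k}\bad{n,s_j}{\alpha}]$. On $\mathcal A_2^c$ one has $\P[W_k=1\mid s_1,\dots,s_{k-1}]\le (k-1)\Delta/(n-k+1)\le m\Delta/(n-m)=o(1)$, so $\sum_k W_k$ is stochastically dominated by $\mathrm{Bin}(m,\,m\Delta/(n-m))$ and $\P[\sum_k W_k\ge \tfrac12\log(n)^{\cS}]=n^{-\Omega(\log n)}$ by Chernoff. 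Using the symmetry $i\in\bad{n,j}{\alpha}\Leftrightarrow j\in\bad{n,i}{\alpha}$, every $s_k\in S\setminus S''$ satisfies $W_k=1$ or $W''_k=1$, where $W''_k$ is the time-reversed indicator; since $\sum_k W''_k\stackrel{d}{=}\sum_k W_k$, this yields $|S\setminus S''|\le\sum_kW_k+\sum_kW''_k\le\log(n)^{\cS}$ \wep. The paper compresses this to the one line ``$\P[\{i\in S''\}\mid\sigma(S\setminus\{i\}),\mathcal F]\ge 1-|S|\log(n)^{\cat}/\sqrt n$; applying Azuma'', but the substance is this per-vertex sequential bound, not any control of $T$. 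Your handling of the ``moreover'' clause (condition on $\{p,q\in S\}$ and rerun the argument on the residual sample) is correct and transfers unchanged to this argument.
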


We see that a number of other bad events are ``almost" contained in $\mathcal{A}_3'$.

\begin{lemma}
\label{LemmaDegreeConnected}
Let $S$ be a uniformly chosen subset of $V$, of size  $m = \log(n)^{\csketch}$, and let $S''$ as defined in \eqref{DefS''}. For each $i\in S$, let $D_S(i)=|\{ j\in S: \GA{\alpha}(i,j)=1\}|$ be the number of neighbours of $i$ in $\GA{\alpha}(S)$. Then for all $n$ sufficiently large,
\[
\{ H_{\alpha} (S) \text{ is not connected}\}\, \cup \, \{ H_{\alpha} (S'') \text{ is not connected}\}\cup \left\{ \min_{i\in S} D_S(i) < \frac{m}{8R} \right\}\subset \mathcal{A}_3'.
\]

\end{lemma}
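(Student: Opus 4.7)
The plan is to work throughout on the event $(\mathcal{A}_3')^c$, which by Definition \ref{BadEventsSub} yields two convenient facts: $|S \cap V_k| \geq m/(4R)$ for every $0 \leq k \leq 4R-3$, and $|S \setminus S''| \leq \log(n)^{\cS}$. Both of the connectivity claims then follow immediately from Equation \eqref{EqConnected}, which reduces connectedness of an $H_\alpha$-subgraph to the condition that every $I_k$ contains at least one vertex of the relevant vertex set. For $H_\alpha(S)$ itself this is trivial since $m/(4R) \geq 1$ for large $n$. For $H_\alpha(S'')$, I would estimate $|S'' \cap V_k| \geq |S \cap V_k| - |S \setminus S''| \geq m/(4R) - \log(n)^{\cS}$, which is positive for large $n$ because $\csketch = 5$ dominates $\cS = 3$.

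The degree bound is the substantive part. For each $i \in S$, choose $k$ with $U_i \in I_k$ (possible since the $I_k$ cover $[0,1]$). Any $j \in S \cap V_k \setminus \{i\}$ satisfies $|U_i - U_j| \leq 3/(4R) < \epsilon$ (because $R \geq \epsilon^{-1}$), so the $(\epsilon,\alpha)$-connectedness of $\WA{}$ assumed in Assumption \ref{AssumptionGoodGraphon} gives $w^{(2)}(U_i, U_j) > \alpha$, whence $H_\alpha(i,j) = 1$. Therefore $\GA{\alpha}(i,j) = 1$ as soon as $j \notin \bad{n,i}{\alpha}$, and we obtain the key inequality
\[ D_S(i) \geq |S \cap V_k| - 1 - |\bad{n,i}{\alpha} \cap S| \geq \frac{m}{4R} - 1 - |\bad{n,i}{\alpha} \cap S|. \]
The main obstacle at this stage is to control $|\bad{n,i}{\alpha} \cap S|$: the event $(\mathcal{A}_3')^c$ does not bound this directly, and a priori it could be large for $i \in S \setminus S''$.

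The remedy is a small symmetry observation. Both $\GA{\alpha}$ and $\WA{\alpha}$ are symmetric, so $j \in \bad{n,i}{\alpha}$ is equivalent to $i \in \bad{n,j}{\alpha}$. Hence any $j \in \bad{n,i}{\alpha} \cap S$ satisfies $i \in \bad{n,j}{\alpha} \cap S$, which by the definition of $S''$ forces $j \notin S''$, i.e., $j \in S \setminus S''$. Thus $\bad{n,i}{\alpha} \cap S \subseteq S \setminus S''$ for every $i \in S$, so $|\bad{n,i}{\alpha} \cap S| \leq \log(n)^{\cS}$. Substituting into the previous display with $m = \log(n)^{\csketch}$, $\csketch = 5$ and $\cS = 3$, we conclude $D_S(i) \geq m/(4R) - 1 - \log(n)^{\cS} \geq m/(8R)$ for all $n$ sufficiently large, which completes the argument.
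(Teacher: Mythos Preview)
Your proof is correct and follows essentially the same route as the paper: both use $(\mathcal{A}_3')^c$ to guarantee each $V_k$ meets $S$ (and $S''$) in enough points, invoke \eqref{EqConnected} for the two connectivity claims, and then bound $D_S(i)$ from below by counting $H_\alpha$-neighbours in $V_k \cap S''$. One small remark: the symmetry detour is unnecessary, since the definition of $S''$ already gives directly that any $j \in \bad{n,i}{\alpha} \cap S$ (with $i \in S$) lies in $S \setminus S''$; the paper exploits this by simply lower-bounding $D_S(i)$ by $|S'' \cap V_k|$.
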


The next lemma shows that \wep, for all pairs of vertices $i,j$ that are sufficiently far apart, we can distinguish $i$ and $j$ since they have distinct neighbourhoods in $\GA{\alpha}$.

\begin{lemma} \label{LemmaCondProbGoodSub}
Let $S$ be a uniformly chosen subset of $V$, of size  $m = \log(n)^{\csketch}$. Then
\[
\P [\mathcal{A}_4'\,|\,\mathcal{F}]= n^{-\Omega (\log n)}
\]
on the $\mathcal{F}$-measurable event $\mathcal{A}^c$.
That is, \wep\ for all $i,j\in S$ so that $|U_{i}-U_{j}| > \displaystyle\frac{1}{\log (n)^{\cafo}}$, $ \nu_{S}(i,j) \geq \log(n)^3  $.
\end{lemma}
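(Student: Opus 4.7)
The plan is to reduce the claim to a straightforward concentration estimate for the hypergeometric distribution, leveraging the global witness bound already packaged into $\mathcal{A}_4^{c}$. We work on the $\mathcal{F}$-measurable event $\mathcal{A}^{c}$ throughout, so in particular the global count satisfies $\nu(i,j) \geq 2n/\log(n)^{\caft}$ for every pair with $|U_{i}-U_{j}| > 1/\log(n)^{\cafo}$. The only remaining randomness is the uniform subsample $S \subset V$ of size $m = \log(n)^{\csketch}$, and the key identity is the simple observation that
\[
\nu_{S}(i,j) = |\wit(i,j) \cap S|,
\]
i.e.\ $\nu_{S}(i,j)$ is just the number of global witnesses that happen to land in $S$.

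Fix a pair $(i,j) \in V^{2}$ with $|U_{i}-U_{j}| > 1/\log(n)^{\cafo}$. Conditional on $\mathcal{F}$, the quantity $|\wit(i,j) \cap S|$ follows the hypergeometric distribution with population $n$, success population $\nu(i,j)$, and sample size $m$. Its mean is
\[
\mu_{i,j} \;=\; \frac{m \, \nu(i,j)}{n} \;\geq\; \frac{2 \log(n)^{\csketch}}{\log(n)^{\caft}} \;=\; 2 \log(n)^{\csketch - \caft} \;=\; 2 \log(n)^{3},
\]
using the parameter choices from \eqref{def:GoodParameters}. A standard Chernoff bound for the hypergeometric distribution (which dominates the Bernoulli case; see e.g.\ Hoeffding's inequality for sampling without replacement) then gives
\[
\P\!\left[\,\nu_{S}(i,j) \leq \log(n)^{3}\,\big|\,\mathcal{F}\,\right] \;\leq\; \P\!\left[|\wit(i,j)\cap S| \leq \tfrac{1}{2} \mu_{i,j}\,\big|\,\mathcal{F}\right] \;\leq\; \exp\!\left(-\tfrac{1}{8}\mu_{i,j}\right) \;\leq\; \exp\!\left(-\tfrac{1}{4}\log(n)^{3}\right),
\]
which is $n^{-\Omega(\log(n)^{2})}$.

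Finally, a union bound over the at most $n^{2}$ ordered pairs $(i,j) \in V^{2}$ (we can simply ignore the restriction $i,j \in S$, since the event being bounded is vacuous when either vertex is absent) gives
\[
\P[\mathcal{A}_{4}' \,|\, \mathcal{F}] \;\leq\; n^{2} \exp\!\left(-\tfrac{1}{4}\log(n)^{3}\right) \;=\; n^{-\Omega(\log(n)^{2})} \;=\; n^{-\Omega(\log n)},
\]
as required. There is essentially no hard step here; the only thing to double-check is that the constant gap $\csketch - \caft = 3$ in \eqref{def:GoodParameters} is precisely what is needed to make the expected witness count in $S$ match the target threshold $\log(n)^{3}$ up to a factor of $2$, leaving enough room for a $1/2$-multiplicative Chernoff deviation. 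The massive exponent $\log(n)^{3}$ produced by the Chernoff step easily absorbs the $n^{2}$ union bound and delivers the claimed \wep\ rate.
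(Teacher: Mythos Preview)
Your proof is correct and follows essentially the same approach as the paper: on $\mathcal{A}^{c}$ the global witness count $\nu(i,j) \geq 2n/\log(n)^{\caft}$ yields a conditional mean of at least $2\log(n)^{3}$ for $\nu_{S}(i,j)$, and a standard concentration inequality (the paper cites Azuma, you use the hypergeometric Chernoff bound) plus a union bound finishes. Your write-up is slightly more explicit about the hypergeometric structure and the constants, but the argument is the same.
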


Similarly, it follows directly from $\mathcal{A}_5^c$ that the number of vertices between $i,j$ in a uniformly chosen subset $S'$ is roughly proportional to $|U_{i}-U_{j}|$:

\begin{lemma} \label{LemmaCompOrdersS}
On the $\mathcal{F}$-measurable event $\mathcal{A}^{c}$,
\[
\P [\mathcal{A}_5'\,|\,\mathcal{F}]= n^{-\Omega (\log n)}.
\]
That is, \wep\ for all $i,j\in S$, $|\{ k\,:\, U_k\in (U_i,U_j)\}|\leq  m |U_{i}-U_{j}| + 2 \log(n)^{3.5}$.
\end{lemma}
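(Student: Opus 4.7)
My plan is to prove the bound by conditioning on $\mathcal{F}$ and applying a pairwise union bound: with $\{U_i\}$ fixed on $\mathcal{A}^c$, the only remaining randomness is the uniform sample $S \subset V$ of size $m$, and for each pair $i,j \in V$ the number of elements of $S$ lying strictly between $U_i$ and $U_j$ is a hypergeometric random variable that concentrates sharply around its expectation.

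Fixing an ordered pair $i<j$ and conditioning on $\{i,j \in S\}$, I would note that $X_{ij} := |\{k \in S \setminus \{i,j\} \, : \, U_k \in (U_i, U_j)\}|$ is hypergeometric with population size $n-2$, sample size $m-2$, and number of ``good'' items $N_{ij} := |\{k \in V \setminus \{i,j\} \, : \, U_k \in (U_i, U_j)\}|$. Its conditional mean equals $(m-2) N_{ij}/(n-2)$, and on $\mathcal{A}^c \subseteq \mathcal{A}_5^c$ the bound $N_{ij} \leq n|U_i - U_j| + \sqrt{n}\log n$ yields $\E[X_{ij} \mid i,j\in S, \mathcal{F}] \leq m |U_i - U_j| + O(m\log(n)/\sqrt{n})$. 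Since $m = \log(n)^{\csketch}$, this additive error is $o(1)$ and hence bounded by $\log(n)^{3.5}$ for all $n$ large.

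The standard Hoeffding-type concentration inequality for hypergeometric distributions then gives $\P[X_{ij} \geq \E[X_{ij}\mid i,j\in S, \mathcal{F}] + t \mid i,j\in S, \mathcal{F}] \leq \exp(-2t^2/(m-2))$; setting $t = \log(n)^{3.5}$ yields an exponent of $\Omega(\log(n)^{2})$, hence a tail of $n^{-\Omega(\log n)}$. Combining this with the mean estimate, each pair $i<j$ contributes at most $n^{-\Omega(\log n)} \cdot \P[i,j\in S \mid \mathcal{F}]$ to $\P[\mathcal{A}_5' \mid \mathcal{F}]$. Summing over the $\binom{n}{2}$ pairs and using the identity $\sum_{i<j}\P[i,j\in S \mid \mathcal{F}] = \binom{m}{2}$, the final bound is $\binom{m}{2} \cdot n^{-\Omega(\log n)} = n^{-\Omega(\log n)}$, which is exactly what is required.

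I do not expect any serious obstacle: the argument is a standard concentration-plus-union-bound calculation. The main care required is the routine arithmetic of constants, to verify that the $O(m\log(n)/\sqrt{n})$ shift in the mean is safely absorbed into the available $\log(n)^{3.5}$ cushion, and that the polylogarithmic prefactor $\binom{m}{2}$ does not erode the super-polynomially small tail bound.
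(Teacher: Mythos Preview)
Your proposal is correct and follows essentially the same approach as the paper: bound the mean using $\mathcal{A}_5^c$, apply a concentration inequality for sampling without replacement, and take a union bound over pairs. The only cosmetic differences are that the paper applies Azuma's inequality to the indicator sum $\sum_{k\in W_{i,j}}\mathbf{1}_{\{k\in S\}}$ and union-bounds over all $i,j\in V$ (without conditioning on $\{i,j\in S\}$), whereas you phrase it via the hypergeometric Hoeffding bound and weight the union bound by $\P[i,j\in S]$; the arithmetic and the resulting $n^{-\Omega(\log n)}$ bound are the same.
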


\subsection{Proof Sketch}\label{Sec:ProofSketch}

In the next sections, we will prove the correctness of all the Algorithms that are used to prove Theorem \ref{ThmMaxOrd}. The proofs involve four major ideas, which we outline here. We will assume that $\mathcal{A}^c$ holds, so $G$ and $\GA{\alpha}$ have good properties.

\begin{itemize}
 \item {\bf Most of the small samples can be correctly ordered.} With probability $1 - o(1)$, the random subgraphs of size $m=\log (n)^{\csketch}$ sampled in Algorithm \ref{AlgSketch} have certain good properties (\textit{e.g.}~they are connected, Robinsonian, and have many vertices with distinct neighbourhoods); when this occurs, Algorithm \ref{AlgSketch} will return an ordering that agrees with $\sigma_{\mathrm{true}}$ or $\eurt$ for all vertex pairs sufficiently far apart.


\item {\bf The ordered small samples can be aligned.} At this point, we have many subsamples that each \textit{individually} agree with either  $\sigma_{\mathrm{true}}$ or $\eurt$. The next step is to ``align" these subsamples by reversing some of them, so that either the \textit{entire collection} agrees with $\sigma_{\mathrm{true}}$, or the \textit{entire collection} agrees with $\eurt$. The main observation is that it is easy to align a \textit{particular pair} of samples if they share vertices with latent position close to either 0 or 1; it turns out to be possible to align all samples by greedily aligning pairs in this way.

\item {\bf Aligned sketches give a coarse ordering.} Once the small samples are aligned, say with $\sigma_{\mathrm{true}}$, then  a majority vote is used in Algorithm \ref{AlgSketch} to determine a {\sl sketch}, or coarse ordering, of all the vertices, represented by a comparison $F$. The number of small samples taken is such that \wep\ each pair of vertices is sampled many times, and in the majority of the samples containing this pair, the order agrees with $\sigma_{\mathrm{true}}$. Thus, pairs of vertices that are sufficiently far apart will be ordered according to $\sigma_{\mathrm{true}}$ in the vast majority of the samples in which they occur. 

\item {\bf Coarse ordering is refined using good witnesses.}
Finally, in Algorithm \ref{AlgUseSketch} , the comparison $F$ returned by Alg.~\ref{AlgSketch}, is refined. This refinement is based on the {\sl good witnesses} of the vertices. Since $\WA{\alpha}$ is diagonally increasing, and $\GA{\alpha}$ is very similar to a sample from $\WA{\alpha}$, we can use these good witnesses to infer the ordering of two vertices $s,t$. Namely, if $\sigma_{\rm true}(s)<\sigma_{\rm true} (t)$, i.e.~if $s$ comes before $t$ in the true ordering, then $t$ will have more neighbours than $s$ higher up in the ordering, while $s$ will have more neighbours lower in the true ordering. Thus, by considering the vertices that are neighbours of $s$ but not of $t$, or vice versa, and using the coarse ordering as a proxy for the true ordering, we can infer the correct ordering of $s$ and $t$.

\end{itemize}

We note that the organization of this description doesn't line up exactly with the organization of Algorithm \ref{AlgMerge}. This is somewhat unavoidable -- one step of the proof may apply to many different parts of the algorithm, and the length of a part of the algorithm does not correspond closely to its importance.

\subsection{Analysis of Algorithm \ref{AlgSubsamp}} \label{SecAnalysisAlgGlobal}

In Algorithm \ref{AlgSubsamp}, a relatively small subgraph $S$ of size $m = \log (n)^{\csketch}$ is sampled from $\GA{\alpha}$. Algorithm \ref{AlgSubsamp} applies a graph algorithm to determine whether the sample is a unit interval graph. If the algorithm succeeds, it will return an interval ordering. We prove in this section that, if no bad events occur, then with high probability the algorithm will succeed and give an ordering which agrees with one of the two correct line-embedded permutations of the vertices.

Throughout this section, we will assume that $\mathcal{A}^c$ holds, and study the properties of the subset $S$ on this event. We will not condition on $(\mathcal{A}_3'\cup\mathcal{A}_4'\cup\mathcal{A}_5')^c$, but we recall here that, by the lemmas in Section \ref{sec:PropertiesSamples}, this event holds \wep. Since this holds \wep, we will be able to use the following properties of $S$ when required in our proofs, incurring only a negligible penalty in our probability estimates each time: 
\be \label{Eq:PropertiesS}
&|S\setminus S''|\leq \log(n)^3 \text{, where }S'' \text{ as defined in \eqref{DefS''},}\\
&H_\alpha(S)\text{ and }H_\alpha(S'') \text{ are connected,}\\
&|\{ j\in S: \GA{\alpha}(i,j)=1\}|\geq \frac{m}{8R}\text{ for all }i\in S,\\
&\GA{\alpha}(S'')=H_\alpha (S'')\\
&\nu_S(i,j) \geq \log(n)^3 \text{ for all }i,j\in S\text{ such that } |U_i-U_j|>\frac{1}{\log(n)},\\
&|\{ k\,:\, U_i<U_k<U_j\}|< m|U_i-U_j| + 2\log (n)^{3.5} \text{ for all }i,j\in S.\\
\ee
Note that the second and third property above follow from the event $(\mathcal{A}_3')^c$, together with Lemma \ref{LemmaDegreeConnected}. The fourth property follows from our assumption that we are on $\mathcal{A}^c$: If $\mathcal{A}_1^c$ holds, then this implies that discrepancies between $\GA{\alpha}(S)$ and $H_\alpha (S)$ can only occur between vertices $i,j$ where $i\in \bad{n,j}{\alpha} $, and, by definition, $S''$ does not include any such pairs.

We do not assume that we are on $(\mathcal{A}_1')^c$, and do \textit{not} use this event freely in the following calculations. We do this because Lemma \ref{LemmaSampleEqual} only implies that  $(\mathcal{A}_1')^c$ occurs with probability going to 1, but does not show it holds \wep\ Algorithm \ref{AlgSubsamp} is called many times in Algorithm \ref{AlgSketch}, and it may indeed happen that  $\mathcal{A}_1'$ occurs for a small number of sketches.


We will show that the ordering returned by Algorithm \ref{AlgSubsamp} agrees with $\sigma_{\mathrm{true}}$ or $\eurt$ for all pairs of vertices that are sufficiently far apart, and hence we define:

\begin{defn}
\label{DefPrecision}
Given $S\subset V(G)$, a permutation $\sigma $ of $S$, and a permutation $\tau $ of $V$, we say that $\sigma $ \emph{agrees with $\tau$ to precision level $d$}, if, for all $i,j\in S$ so that $|U_i-U_j|\geq d $, $\sigma (i)<\sigma (j)$ if and  only if $\tau (i)<\tau (j)$.

\end{defn}

\begin{lemma}
\label{LemmaSubSampleCorrect}
Let $\GA{\alpha}$ be as defined in Equation \ref{EqThreshDef}, and assume $\mathcal{A}^c$ holds. Let $m= \log(n)^{\csketch}$, and let $S\sim Unif(\{ T\subset V\,:\, |T|=m\})$.  Let $\mathcal{B}_1'$ be the event that Algorithm
\ref{AlgSubsamp} with input $\GA{\alpha}$ and parameter $m$ succeeds, \emph{and furthermore} returns a total order $\sigma $ of the sampled set $S$ which agrees at precision level $\log (n)^{-1}$ with $\sigma_{\mathrm{true}}$ or with $\sigma_{eurt}$. Then
\[
\P [ \mathcal{B}_1'\, |\,\mathcal{F}]=1-O(n^{-0.49})
\]
on the $\mathcal{F}$-measurable event $\mathcal{A}^{c}$. Moreover, for $1\leq p,q\leq n$ so that $|U_p-U_q|\geq \log (n)^{-1}$,
\[
\frac{\P [ \mathcal{B}_1'\cap \{ p,q\in S\}|\mathcal{F}]}{\P [\{ p,q\in S\}\,|\,\mathcal{F}]}=1-O(n^{-0.49}),
\]
again on $\mathcal{A}^{c}$.
\end{lemma}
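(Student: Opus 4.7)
The plan is to condition on the event $\mathcal{E}' = (\mathcal{A}_1')^c \cap (\mathcal{A}_3')^c \cap (\mathcal{A}_4')^c \cap (\mathcal{A}_5')^c$, which, by Lemmas \ref{LemmaSampleEqual}, \ref{LemmaIndGraphConnected}, \ref{LemmaCondProbGoodSub}, and \ref{LemmaCompOrdersS}, has probability $1 - O(n^{-0.49})$ given $\mathcal{F}$ on $\mathcal{A}^c$ (since $(\mathcal{A}_3')^c$, $(\mathcal{A}_4')^c$, and $(\mathcal{A}_5')^c$ hold \wep). On $\mathcal{E}'$, we have exact equality $\GA{\alpha}(S) = H_\alpha(S)$, so all graph-theoretic reasoning may be carried out on the cleaner object $H_\alpha(S)$.

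Next, I would verify the three facts needed for Algorithm \ref{AlgSubsamp} to succeed and return an ordering agreeing with $\sigma_{\mathrm{true}}$ or $\eurt$ at precision $\log(n)^{-1}$. First, $H_\alpha(S)$ is a connected unit interval graph: it is connected by Lemma \ref{LemmaDegreeConnected} applied on $(\mathcal{A}_3')^c$, and its adjacency matrix, reordered by $\sigma_{\mathrm{true}}|_S$, is diagonally increasing (with $1$'s on the diagonal) because $\WA{\alpha}$ is diagonally increasing by hypothesis. Thus $\sigma_{\mathrm{true}}|_S$ is an interval order of $H_\alpha(S)$, and the LexBFS subroutine described in Section \ref{Sec:UnitIntervalGraphs} therefore succeeds and returns \emph{some} interval order $\sigma$ of $S$.

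Now I would invoke the uniqueness property for interval orders of connected unit interval graphs recalled in Section \ref{Sec:UnitIntervalGraphs}: any two interval orders $\sigma$ and $\tau$ of such a graph satisfy either $A^\sigma = A^\tau$ or $A^\sigma = A^{\tau^{\mathrm{trev}}}$. Applied to $\sigma$ and $\sigma_{\mathrm{true}}|_S$, this means that after possibly reversing, $\sigma$ and $\sigma_{\mathrm{true}}|_S$ differ only by permutations \emph{within classes of vertices having identical neighbourhoods in $H_\alpha(S)$}. So it suffices to show that, for every $i,j \in S$ with $|U_i - U_j| \geq \log(n)^{-1}$, the vertices $i$ and $j$ have distinct neighbourhoods in $H_\alpha(S)$. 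This follows immediately from $(\mathcal{A}_4')^c$: such a pair has $\nu_S(i,j) \geq \log(n)^3 > 0$, meaning at least one vertex $k \in S$ witnesses $H_\alpha(i,k) \neq H_\alpha(j,k)$. Hence on $\mathcal{E}'$ the returned order $\sigma$ agrees with $\sigma_{\mathrm{true}}$ or $\eurt$ at precision level $\log(n)^{-1}$, proving the unconditional statement.

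For the conditional statement, I would re-examine the four subsampling lemmas to confirm that each also gives the corresponding bound for the conditional probability $\P[\mathcal{A}_i' \cap \{p,q \in S\} \,|\, \mathcal{F}] / \P[\{p,q \in S\} \,|\, \mathcal{F}]$ on $\mathcal{A}^c$; Lemmas \ref{LemmaSampleEqual} and \ref{LemmaIndGraphConnected} explicitly state this, and the proofs of Lemmas \ref{LemmaCondProbGoodSub} and \ref{LemmaCompOrdersS} (which are union bounds over pairs in $S$ chosen uniformly from the remaining $n-2$ vertices) go through identically after fixing $p,q$. Since the hypothesis $|U_p - U_q| \geq \log(n)^{-1}$ together with $\mathcal{E}'$ guarantees that $p$ and $q$ are ordered correctly relative to one another, the same $1 - O(n^{-0.49})$ bound applies. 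The only subtle point is confirming that the condition $(U_p, U_q) \notin \bad{n}{\alpha}$ required by Lemma \ref{LemmaSampleEqual} is implicit: on $\mathcal{A}^c$, separation $|U_p - U_q| \geq \log(n)^{-1} \gg \log(n)^{\cbad}/\sqrt{n}$ combined with the monotonicity of $w^{(2)}$ away from the diagonal (which is what puts the $\log(n)^{\cbad}/\sqrt{n}$ bad strip around the level set) places $(U_p, U_q)$ outside $\bad{n}{\alpha}$; this is the one technical check I expect to be the main obstacle, and if it does not go through directly, I would instead absorb the $\bad{n}{\alpha}$ issue into the failure event and bound it separately using Lemma \ref{LemmaFewBadWitnesses}.
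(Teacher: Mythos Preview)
Your approach is essentially the same as the paper's: condition on $(\mathcal{A}_1')^c \cap (\mathcal{A}_3')^c \cap (\mathcal{A}_4')^c$, deduce that $\GA{\alpha}(S)=H_\alpha(S)$ is a connected unit interval graph so LexBFS succeeds, then use uniqueness of interval orders up to duplicated neighbourhoods together with $(\mathcal{A}_4')^c$ to get agreement at precision $\log(n)^{-1}$. The paper argues in exactly this order (and, like you, does not separately write out the conditional version).

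One correction on the technical point you flag at the end. Your proposed monotonicity argument does \emph{not} show $(U_p,U_q)\notin\bad{n}{\alpha}$: by \eqref{EqBadRegionWidth} the bad set consists of thin strips around the $\alpha$-level curves $y=r_\alpha(x)$ and $y=\ell_\alpha(x)$, and these curves sit at distance $r_\alpha(x)-x\in[d^-,d^+]$ from the diagonal (a positive constant, see Lemma~\ref{Lemma:d-d+}). So pairs in $\bad{n}{\alpha}$ typically have $|U_p-U_q|\approx d\gg\log(n)^{-1}$, and the hypothesis $|U_p-U_q|\ge\log(n)^{-1}$ gives no information about membership in $\bad{n}{\alpha}$. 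Your fallback via Lemma~\ref{LemmaFewBadWitnesses} also does not directly help, since the pair $(p,q)$ is fixed rather than randomly sampled, and if $\GA{\alpha}(p,q)\neq H_\alpha(p,q)$ then $\mathcal{A}_1'$ holds \emph{deterministically} once $p,q\in S$. The clean resolution is simply to observe that the only place this conditional statement is invoked is Lemma~\ref{LemmaSketchCorrectness}, where the pairs are taken from $Q_1\cap Q_2$ and $Q_2$ is precisely the condition $(U_p,U_q)\notin\bad{n}{\alpha}$; so you may add that hypothesis to the conditional claim without loss, and then Lemma~\ref{LemmaSampleEqual} applies directly.
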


\begin{proof}
Algorithm \ref{AlgSubsamp} starts by sampling $S\sim Unif(\{ T\subset V\,:\, |T|=m\})$. By \eqref{Eq:PropertiesS}, we have that $\GA{\alpha} (S)=H_\alpha(S)$ is connected \wep, and thus $\GA{\alpha}(S)$ is a connected unit interval graph. On this event, Algorithm \ref{AlgSubsamp} does not fail, and returns an interval ordering $\sigma$ of $\GA{\alpha }(S)=H_\alpha (S)$. Since $H_{\alpha}(S)$ is a subgraph of the interval graph $H_{\alpha}$, the interval ordering of $H_\alpha (S)$ agrees with  $\sigma_{\mathrm{true}}$ or $\eurt$; assume \emph{wlog} that it agrees with $\sigma_{\mathrm{true}}$. This means that $\sigma $ agrees with $\sigma_{\mathrm{true}}$, except possibly for pairs of vertices with identical neighbourhoods in $\GA{\alpha} (S)$.

Applying \eqref{Eq:PropertiesS} again, \wep\ for all $ i,j\in S$ so that
 $|U_{i}-U_{j}| > \log (n)^{-1}$, we have that $ \nu_{S}(i,j) \geq \log(n)^{3} $. This implies that $i$ and $j$ have distinct neighbourhoods in $\GA{\alpha}(S) $ and thus also in $\GA{\alpha}$ (see discussion in Section \ref{Sec:UnitIntervalGraphs}). Consequently, for pairs $i, j \in S$ with $|U_{i}-U_{j}| > \log (n)^{-1}$, we have
 \[
 \{ \sigma (i)<\sigma (j) \} \leftrightarrow \{\sigma_{\mathrm{true}}(i)<\sigma_{\mathrm{true}}(j)\}.
  \]
  In other words, $\sigma$ agrees with $\sigma_{\mathrm{true}}$ at precision level $\log (n)^{-1}$, completing the proof.

\end{proof}

Algorithm \ref{AlgSubsamp} is called many times in Algorithm \ref{AlgSketch}, to produce a collection of sets and orderings $(\sigma^{(j)},S^{(j)})_{j=1}^{t}$. Since  $\mathcal{B}'_1$ may not occur \wep, some of the set-ordering pairs in this collection may not agree with $\sigma_{\mathrm{true}}$ or $\sigma_{eurt}$ at the required level of precision. 
We will need a minimal amount of agreement to prove that Algorithm \ref{AlgGlobal}, the alignment algorithm, works correctly based on \emph{all} samples. Therefore, we now define a weaker version of agreement between orderings, and show that this weaker property holds \wep.

\begin{defn}
\label{DefRoughlyAgrees}
Define
\be
\mathcal{L}=\{ i\in V\,:\, U_i<r_\alpha (\epsilon/2)\}\text{ and }\mathcal{R}=\{ i\in V\,:\, U_i>\ell_\alpha(1-\epsilon/2)\},
\ee
where $\epsilon>0$ is as in Assumption \ref{AssumptionGoodGraphon}.
For any permutation $\sigma $ of a subset $S\subset V$, let
\be
\mathcal{L} (\sigma, S) &=\{ i\in S\,:\, 1\leq \sigma (i)\leq \zeta\}, \text{ where } \zeta=4\log (n)^{\czeta}, \text{ and}\\
\mathcal{R} (\sigma ,S) &=\{ i\in S\,:\, |S| \geq \sigma (i) \geq  |S| - \zeta + 1\}.
\ee
We say that $\sigma $ \emph{roughly agrees} with $\sigma_{\mathrm{true}}$ if $\mathcal{L} (\sigma, S)\subset \mathcal{L}$ and $\mathcal{R} (\sigma, S)\subset \mathcal{R} $. Similarly, $\sigma $ \emph{roughly agrees} with $\eurt$ if $\mathcal{L} (\sigma, S)\subset \mathcal{R}$ and $\mathcal{R} (\sigma, S)\subset \mathcal{L} $.
\end{defn}

Note that, by part (4) of Assumption \ref{AssumptionGoodGraphon}, we have that $r_\alpha (\epsilon/2)<\ell_\alpha (1-\epsilon/2)$, and thus $\mathcal{L} \cap \mathcal{R}=\emptyset$. This implies that any permutation $\sigma$ can roughly agree with at most one correct permutation.

\begin{lemma}
\label{LemmaAllRoughlyAgree}
Let $\GA{\alpha}$ be as defined in Equation \ref{EqThreshDef}, and assume $\mathcal{A}^c$ holds. Let $m= \log(n)^{\csketch}$, and let $S\sim Unif(\{ T\subset V\,:\, |T|=m\})$.  Let $\sigma$ be the ordering returned by Algorithm
\ref{AlgSubsamp} if the algorithm succeeds, or let $\sigma $ be equal to $\sigma_{\mathrm{true}}$ restricted to $S$ otherwise. Then
\[
\frac{\P [\{\sigma \text{ roughly agrees with }\sigma_{\mathrm{true}}\text{ or  }\eurt\} \,|\,\mathcal{F}]}{\P [\{ \text{Algorithm \ref{AlgSubsamp} succeeds}\}] | \mathcal{F}}=1-n^{-\Omega (\log n)}
\]
on the $\mathcal{F}$-measurable event $\mathcal{A}^{c}$. That is, if Algorithm \ref{AlgSubsamp} succeeds, then \wep\ it returns a permutation that roughly agrees with $\sigma_{\mathrm{true}}$ or $\eurt$.
\end{lemma}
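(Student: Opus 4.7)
The plan is to condition on the \wep\ event that $\mathcal{A}^{c}$ holds and that the sample $S$ satisfies the good properties of \eqref{Eq:PropertiesS}, and to argue deterministically on this event that, whenever Algorithm \ref{AlgSubsamp} succeeds, the returned interval ordering $\sigma$ of the connected unit interval graph $\GA{\alpha}(S)$ roughly agrees with $\sigma_{\mathrm{true}}$ or $\eurt$. The argument has two main ingredients: a \emph{clique} at each end of $\sigma$, and the near-correctness of the restricted order $\sigma|_{S''}$.

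For the clique property: the first $\zeta$ vertices of $\sigma$ must form a clique in $\GA{\alpha}(S)$, since otherwise the Robinsonian adjacency matrix of $\GA{\alpha}(S)$ in order $\sigma$ has entry $0$ at position $(1, \zeta)$, so vertex $1$ has no neighbour at position $\geq \zeta$ and thus degree at most $\zeta - 2 = O(\log(n)^{4})$, contradicting the minimum degree $m/(8R) = \Omega(\log(n)^{5})$ from property (3) of \eqref{Eq:PropertiesS}. The symmetric argument applies to the last $\zeta$ positions. For the second ingredient, since $\GA{\alpha}(S'') = H_\alpha(S'')$ and the Robinsonian property is preserved under restricting the adjacency matrix to a subset of rows/columns, $\sigma|_{S''}$ is an interval ordering of the connected unit interval graph $H_\alpha(S'')$, and hence agrees with $\sigma_{\mathrm{true}}|_{S''}$ or $\eurt|_{S''}$ up to permutation of pairs with identical $H_\alpha(S'')$-neighbourhoods, all of which lie at $U$-distance at most $1/\log(n)$ by property (5). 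WLOG assume agreement with $\sigma_{\mathrm{true}}|_{S''}$ (the other case yields rough agreement with $\eurt$ by a symmetric argument). A straightforward counting argument using this precision together with the uniform coverage of $S$ (from the intervals $\{I_k\}$, plus a standard concentration bound on $|S \cap [0, \epsilon/4]|$) then shows that every $j \in S''$ with $\sigma|_{S''}(j) \leq \zeta + \log(n)^{3}$ satisfies $U_j \leq c$ for some explicit constant $c < \epsilon/2$ depending only on $\epsilon$.

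Finally, fix any $i \in \mathcal{L}(\sigma, S)$. By the clique property, $i$ is $\GA{\alpha}(S)$-adjacent to every other vertex in the first $\zeta$ positions, and at least $\zeta - |S \setminus S''| \geq \zeta/2$ of these lie in $S''$; pick one such $j$. Then $U_j \leq c$ by the previous paragraph, and since $j \in S''$ we have $\GA{\alpha}(i, j) = H_\alpha(i, j) = 1$, so $U_i \in [\ell_\alpha(U_j), r_\alpha(U_j)]$ and in particular $U_i \leq r_\alpha(c)$. The main obstacle -- and the place where the structural assumptions are used most delicately -- is converting this into the \emph{strict} inequality $U_i < r_\alpha(\epsilon/2) = L$ required for $i \in \mathcal{L}$. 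For this I would use the $(\epsilon, \alpha)$-connectedness of $\WA{}$ from part (3) of Assumption \ref{AssumptionGoodGraphon}, which forces $\ell_\alpha(x) = 0$ for all $x < \epsilon$, combined with the $\epsilon$-separation of $\WA{\alpha}$ applied to the pair $(c, \epsilon/2)$: since both $\ell_\alpha$ terms vanish in the symmetric-difference computation, the separation inequality reduces to $r_\alpha(\epsilon/2) - r_\alpha(c) \geq \epsilon(\epsilon/2 - c) > 0$, so $U_i \leq r_\alpha(c) < r_\alpha(\epsilon/2) = L$, i.e., $i \in \mathcal{L}$. A symmetric argument gives $\mathcal{R}(\sigma, S) \subset \mathcal{R}$, completing rough agreement. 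The delicate step throughout is the handling of bad-witness vertices $i \in \mathcal{L}(\sigma, S) \setminus S''$, whose $\sigma$-position gives no direct control on $U_i$: that control must be transferred from a clean $S''$-neighbour via the clique structure, and then converted into a strict bound by combining $(\epsilon, \alpha)$-connectedness and $\epsilon$-separation.
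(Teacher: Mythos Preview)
Your proposal is correct and follows essentially the same route as the paper: the clique structure at the ends of $\sigma$, the near-correctness of $\sigma|_{S''}$ as an interval order of the connected unit interval graph $H_\alpha(S'')$, and the transfer of control to an arbitrary $i \in \mathcal{L}(\sigma,S)$ via an $H_\alpha$-edge from a clean $S''$-neighbour. Two minor differences are worth noting. First, the paper runs the middle step in the opposite direction: it fixes a specific anchor $s \in S''$ with $U_s \leq 1/\log(n)$ and shows by a three-case count (either $U_k < U_s$, or $k \in S\setminus S''$, or $k$ and $s$ share an $H_\alpha(S'')$-neighbourhood) that $\sigma(s) \leq \zeta$, whereas you argue the dual statement that every $j \in S''$ with small $\sigma|_{S''}$-rank has small $U_j$; the underlying arithmetic is identical. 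Second, your explicit derivation of the strict inequality $r_\alpha(c) < r_\alpha(\epsilon/2)$ from $(\epsilon,\alpha)$-connectedness (which forces $\ell_\alpha \equiv 0$ on $[0,\epsilon)$) together with $\epsilon$-separation is cleaner than the paper, which simply asserts $r_\alpha(1/\log(n)) < r_\alpha(\epsilon/2)$ without justification --- your observation is precisely what makes that step rigorous under Assumption~\ref{AssumptionGoodGraphon} alone. One small slip: in your clique argument the jump to ``entry $0$ at position $(1,\zeta)$'' does not follow directly; the correct version (which the paper gives) is that if two of the first $\zeta$ vertices are non-adjacent, then by the Robinsonian property the one at the smaller position has all its neighbours confined to positions $<\zeta$, contradicting the minimum-degree bound $m/(8R)$.
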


\begin{proof}

Consider  a run of Algorithm  \ref{AlgSubsamp} that succeeds; let $S$ be the returned subset,
and let $\sigma$ be the returned permutation (note that $\sigma$ represents an interval order of $\GA{\alpha}(S)$).
 Let $j\in S$ be so that $\sigma (j)=1$. By \eqref{Eq:PropertiesS}, \wep\  the neighbour set of $j$ has size
\[
|\{ q \in S :\GA{\alpha}(j,q)=1\}|\geq \log(n)^{\csketch}/(8R)>\zeta =|\mathcal{L}(S,\sigma )|.
\]
Since $\sigma$ is an interval ordering of $\GA{\alpha}(S)$, all neighbours of $j$ must form an interval $\mathcal{I}$ in $\sigma$, and  $\mathcal{L}(S,\sigma) \subset \mathcal{I}$. Since $\sigma$ is an interval ordering, this also implies that all vertices in $\mathcal{I}$ (and thus $\mathcal{L}(S,\sigma)$) are mutually adjacent in $\GA{\alpha}$.

We have shown that \wep \ $\mathcal{L}(S,\sigma)$ is a clique, but have not yet shown that even \textit{a single} element $s \in \mathcal{L}(S,\sigma)$ has a small associated latent variable $U_{s}$. Before starting the argument that this is true, we give some basic facts about $S''$. By \eqref{Eq:PropertiesS}, \wep  \ we have both that $\GA{\alpha}(S'')=H_{\alpha}(S'')$, and that $H_\alpha (S'')$ is connected. Since $H_\alpha$ is a unit interval graph, so is $H_\alpha(S'')$.  Since $\sigma$ is an interval order of $\GA{\alpha}(S)$, $\sigma|_{S''}$ is an interval order of $\GA{\alpha}(S'')=H_{\alpha}(S'')$.
Therefore, $\sigma|_{S''}$ on $S''$ must agree with $\sigma_{\mathrm{true}}$ or $\eurt$. We assume \emph{wlog} that it agrees with $\sigma_{\mathrm{true}}$.

The next step is to check that $S''$  has some intersection with the collection
\be \label{def:Lhat}
\hat{\mathcal{L}} = \left\{ s \in S \, : \, U_{s} \leq \frac{1}{\log(n)} \right\}
\ee
of vertices with ``small" latent variables. From \eqref{Eq:PropertiesS}, we have \wep \, the inequalities
\be\label{eq:Lhat}
|\hat{\mathcal{L}}|\geq \frac{m}{\log (n)}- 2\log (n)^{3.5}, \quad |S\setminus S''|\leq \log(n)^3,
\ee
so $|\hat{\mathcal{L}}\cap S''|\geq |\hat{\mathcal{L}}|- |S \setminus S''| >0$ for large enough $n$.

Having shown that $\hat{\mathcal{L}}\cap S'' \neq \emptyset$, we consider  $s\in \hat{\mathcal{L}}\cap S''$ and show that it is in $\mathcal{L}(S,\sigma)$. For any $k \in S$ with $\sigma (k)<\sigma (s)$, one of the following must hold:
\begin{enumerate}
\item $\sigma_{\mathrm{true}}(k)<\sigma_{\mathrm{true}}(s)$. In this case $U_k<U_s$, so $k\in \hat{\mathcal{L}}$. From \eqref{eq:Lhat} above, we have that $|\hat{\mathcal{L}}|\leq \log(n)^4 + 2\log (n)^{3.5}$.
\item $\sigma_{\mathrm{true}}(k)>\sigma_{\mathrm{true}}(s)$. This means that $s,k$ are incorrectly ordered by $\sigma$. Since $\sigma|_{S''}$ agrees with $\sigma_{\mathrm{true}}$, we must be in one of the following subcases:
\begin{enumerate}
\item $k\in S\setminus S''$. As argued above, $|S \setminus S''| \leq \log(n)^{3}$.
\item $s,k$ have the same neighbourhood in $H_\alpha (S'')$, and thus $\nu_S(s,k)\leq |S\setminus S''|\leq \log(n)^3$. From \eqref{Eq:PropertiesS}, this implies that $U_s<U_k\leq U_s+\log (n)^{-1}$. Again from \eqref{Eq:PropertiesS}, there are at most $m\log(n)^{-1}+2\log (n)^{3.5}\leq 2\log(n)^4$ such vertices.
\end{enumerate}
\end{enumerate}
Combining these three cases, we see that there are at most $(\log(n)^4 + 2\log (n)^{3.5})+\log (n)^3+2\log (n)^4$ vertices $k$ with $\sigma (k)<\sigma (s)$. For large $n$, this amount is smaller than $4\log (n)^4=\zeta$, and so $\sigma (s)\leq \zeta$. This implies $s\in \mathcal{L}(S,\sigma)$, and so we have shown that
\be \label{ContHatLSpContLS}
\emptyset \neq \hat{\mathcal{L}}\cap S''\subseteq \mathcal{L}(S,\sigma).
\ee

We are now ready to show that $\mathcal{L}(\sigma,S)\subseteq \mathcal{L}$. For $n$ large enough, $\log(n)^{-1}< r_\alpha(\epsilon/2)$, and thus, by definition, $\hat{\mathcal{L}}\subset\mathcal{L}$.
To complete the argument, let $s \in ( \hat{\mathcal{L}}\cap S'')\subset \mathcal{L}(\sigma,S) $, and consider any other vertex $t \in \mathcal{L}(\sigma,S)$. As argued above, $\mathcal{L}(\sigma,S)$ is a clique in $\GA{\alpha}$, so we know that $s,t$ are connected in that graph. 
Since $s\in S''$ and $t\in S$, we have that $t\not\in\bad{n,s}{\alpha}$. Therefore, on $\mathcal{A}_1$, $H_\alpha (s,t)=\GA{\alpha}(s,t)=1$, so
\[ \label{IneqMainIneqAnnoying28}
U_{t} \leq r_\alpha (U_{s}) \leq r_\alpha \left( \frac{1}{\log(n)} \right) < r_\alpha \left(\frac{\epsilon}{2}\right),
\]  where the last inequality holds for all $n$ sufficiently large. Therefore, $t\in \mathcal{L}$.

Collecting our assumptions throughout the proof of the theorem, we have shown that  \wep \  $\mathcal{L}(\sigma,S)\subset \mathcal{L}$ on $\mathcal{A}^c$. An analogous argument shows that, under the same assumptions, $\mathcal{R}(\sigma,S)\subset \mathcal{R}$. Thus, the  result follows from Corollary \ref{IneqLatentCor}.
\end{proof}

\subsection{Analysis of Algorithm \ref{AlgGlobal}}

In the previous section we showed that \wep\ the output $(\sigma,S)$ from Algorithm \ref{AlgSubsamp} roughly agrees with either $\sigma_{\mathrm{true}}$ or its total reversal, $\eurt$. In this section we show that the function $a$ which is the output of Algorithm \ref{AlgGlobal} correctly identifies whether the alignment is with $\sigma_{\mathrm{true}}$ or with $\eurt$.

\begin{lemma}
\label{LemmaGlobalAlignmentCorrect}
Let Assumptions \ref{AssumptionGoodGraphon} hold.
Let $t\geq (n \log(n))^2$, and let $(\sigma^{(j)},S^{(j)})_{j=1}^{t}$ be the result of repeated calls to Algorithm \ref{AlgSubsamp}, conditioned on non-failure. Define the function $a^*:\{ 1,\dots ,t\}\rightarrow \{ -1,1\}$ by
\be\label{EqTrueAlignment}
a^*(j)=\mathbf{1}_{\{\sigma^{(j)}\text{ roughly agrees with }\sigma_{\mathrm{true}}\}} -\mathbf{1}_{\{\sigma^{(j)}\text{ roughly agrees with }\eurt\} }.
\ee
Suppose Algorithm \ref{AlgGlobal} is called with input $(\sigma^{(j)},S^{(j)})_{j=1}^{t}$, and parameter $
\zeta = 4\log (n)^{\czeta}$. Then on the $\mathcal{F}$-measurable event $\mathcal{A}^c$,
\be \label{EqCorrectAlignment}
\P [ \{ a=a^*\} \cup \{ a=-a^*\} | \mathcal{F}]=1- n^{-\Omega(\log(n))}.
\ee

\end{lemma}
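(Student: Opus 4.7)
The plan is to show two things: (existence) on a high-probability event, the function $a^*$ defined in \eqref{EqTrueAlignment} is itself a valid solution to the constraint in step 8 of Algorithm \ref{AlgGlobal}; and (uniqueness modulo sign) any other valid solution agrees with $\pm a^*$. Let $\mathcal{B}^* = \bigcap_{j=1}^t \{\sigma^{(j)} \text{ roughly agrees with } \sigma_{\mathrm{true}} \text{ or } \eurt\}$; by Lemma \ref{LemmaAllRoughlyAgree} and a union bound over the $t = (n\log n)^2$ samples, $\P[\mathcal{B}^* \mid \mathcal{F}] = 1 - n^{-\Omega(\log n)}$ on $\mathcal{A}^c$. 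On $\mathcal{B}^*$, the definition of roughly agreeing gives $\mathcal{L}^{(j)} \subseteq \mathcal{L}$ and $\mathcal{R}^{(j)} \subseteq \mathcal{R}$ when $a^*(j)=1$, with the reverse containments when $a^*(j)=-1$. Since $\mathcal{L} \cap \mathcal{R} = \emptyset$ (observation following Definition \ref{DefRoughlyAgrees}), the only possible non-empty intersections in the definition of $H(j,k)$ are consistent with $a^*$, so $H(j,k) \in \{0, a^*(j)a^*(k)\}$ for all $j,k$; in particular $a=a^*$ is a valid solution. For the uniqueness piece, let $\Gamma$ be the graph on $\{1,\dots,t\}$ with edges $\{(j,k) : H(j,k)\neq 0\}$: the constraint $H(j,k) = a(j)a(k)$ forces $a/a^*$ to be constant on each connected component of $\Gamma$, so it suffices to prove $\Gamma$ is connected with conditional probability $1 - n^{-\Omega(\log n)}$.

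To establish connectivity I will exhibit ``hub'' vertices that cover every sample. Fix $\delta = 1/\log(n)^2$ and define $\mathcal{L}^* = \{v\in V : U_v<\delta\}$ and $\mathcal{R}^* = \{v\in V : U_v>1-\delta\}$; on $\mathcal{A}^c$, both sets have size $\Theta(n/\log(n)^2)$ w.e.p.\ by Chernoff. Three subclaims suffice. \emph{First}, every sample $S^{(j)}$ meets both $\mathcal{L}^*$ and $\mathcal{R}^*$ w.e.p.: the expected intersection size is $\delta m = \log(n)^3$, a hypergeometric Chernoff bound gives failure probability $e^{-\Omega(\log(n)^3)}$ per sample, and a union bound over the $t$ samples preserves w.e.p. \emph{Second}, on $\mathcal{B}^*$ together with the precision-$\log(n)^{-1}$ event from Lemma \ref{LemmaSubSampleCorrect}, every $v \in \mathcal{L}^* \cap S^{(j)}$ satisfies $v \in \mathcal{L}^{(j)} \cup \mathcal{R}^{(j)}$ (and analogously for $v \in \mathcal{R}^* \cap S^{(j)}$): the rank of $v$ in $\sigma^{(j)}$ is at most its $\sigma_{\mathrm{true}}$-rank in $S^{(j)}$ plus the number of ``swap candidates'' $k \in S^{(j)}$ with $|U_k - U_v| \leq \log(n)^{-1}$, which by standard Chernoff and $(\mathcal{A}_5')^c$ is at most $2m\delta + 2m/\log(n) + O(\log(n)^{3.5}) = O(\log(n)^4)$, comfortably below $\zeta = 4\log(n)^4$ for large $n$. \emph{Third}, for any fixed pair $v_1, v_2 \in \mathcal{L}^* \cup \mathcal{R}^*$ some sample contains both: the expected count is $t(m/n)^2 = \log(n)^{12}$, Chernoff gives w.e.p.\ per pair, and the union bound over at most $n^2$ pairs preserves w.e.p.

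Given these three claims, for any two samples $j_0, j$ we pick $v \in \mathcal{L}^* \cap S^{(j_0)}$, $v' \in \mathcal{L}^* \cap S^{(j)}$, and a sample $j'$ containing both $v$ and $v'$; by the second claim, $v$ lies in the extreme set of both $S^{(j_0)}$ and $S^{(j')}$, and $v'$ lies in the extreme set of both $S^{(j')}$ and $S^{(j)}$, so $(j_0, j')$ and $(j', j)$ are both edges of $\Gamma$, and $\Gamma$ has diameter at most $2$. The main technical obstacle will be the rank calculation in the second subclaim; the argument is not conceptually new (it parallels the final paragraphs of the proof of Lemma \ref{LemmaAllRoughlyAgree}), but it does require a careful choice of $\delta$ small enough that the ``true rank'' contribution is dominated by the swap contribution, and careful bookkeeping to ensure the total stays strictly below $\zeta$.
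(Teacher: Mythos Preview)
Your overall architecture matches the paper's: reduce to showing $H(j,k)\in\{0,a^*(j)a^*(k)\}$ on the event $\mathcal{B}^*$, and then prove connectivity of the graph $\Gamma$ via a ``hub vertex'' argument. The first part and the third subclaim are essentially identical to the paper's treatment.

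The gap is in your second subclaim. You justify the rank bound by invoking ``the precision-$\log(n)^{-1}$ event from Lemma \ref{LemmaSubSampleCorrect},'' but that lemma only gives probability $1-O(n^{-0.49})$ per sample, not w.e.p. With $t=(n\log n)^2$ samples, the union bound fails badly: you expect roughly $n^{1.5}$ samples on which the precision event does \emph{not} hold, and for those samples your rank argument gives no control on where vertices of $\mathcal{L}^*$ land in $\sigma^{(j)}$. This is exactly why the paper introduces the weaker ``rough agreement'' notion and proves it holds w.e.p.\ (Lemma \ref{LemmaAllRoughlyAgree}).

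The fix is the argument you yourself gesture toward when you say it ``parallels the final paragraphs of the proof of Lemma \ref{LemmaAllRoughlyAgree}.'' That proof establishes the containment $\hat{\mathcal{L}}^{(j)}\cap S''^{(j)}\subset \mathcal{L}^{(j)}$ using only the w.e.p.\ events in \eqref{Eq:PropertiesS} and a case analysis on $\sigma|_{S''}$, never the precision event. But this forces you to work with $S''^{(j)}$ rather than $S^{(j)}$, which has two knock-on effects. First, your choice $\delta=1/\log(n)^2$ gives $|\mathcal{L}^*\cap S^{(j)}|\approx\log(n)^3$, which is the same order as $|S^{(j)}\setminus S''^{(j)}|$, so you lose the guarantee that $\mathcal{L}^*\cap S''^{(j)}\neq\emptyset$; the paper uses $\delta=1/\log(n)$ to keep a margin. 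Second, in the third subclaim you now need a sample $j'$ with $\{v,v'\}\subset S''^{(j')}$, not merely $S^{(j')}$; the paper handles this by first observing that $v,v'\in\hat{\mathcal{L}}$ implies $(U_v,U_{v'})\notin\bad{n}{\alpha}$ (both latent positions are small), and then bounding $\P[\{v,v'\}\subset S''\mid\{v,v'\}\subset S]$ from below using the $\mathcal{A}_2^c$ bound on bad witness sets.
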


\begin{proof}
Recall that, for $1\leq j\leq t$, $\mathcal{L}^{(j)}$ and $\mathcal{R}^{(j)}$ as computed in step 4 of Algorithm \ref{AlgGlobal} correspond to the sets $\mathcal{L} (S^{(j)}, \sigma^{(j)})$ as in Definition \ref{DefRoughlyAgrees}. By Lemma \ref{LemmaAllRoughlyAgree}, the condition
\be
\label{EqConditionSketches}
\bigcap_{j=1}^t \{\sigma^{(j)}\text{ roughly agrees with }\sigma_{\mathrm{true}}\text{ or  }\eurt\}
\ee
holds \wep \, on $\mathcal{A}^{c}$.

By Definition \eqref{DefRoughlyAgrees}, if $\sigma^{(j)}$ roughly agrees with $\sigma_{\mathrm{true}}$, then $\mathcal{L}^{(j)}\subset \mathcal{L}=\{ i: U_i<r_\alpha (\epsilon/2)\}$ and $\mathcal{R}^{(j)}\subset \mathcal{R}=\{ i: U_i>\ell(\epsilon/2)\}$. Similarly, if $\sigma^{(j)}$ roughly agrees with $\eurt $ then $\mathcal{L}^{(j)}\subset \mathcal{R}$ and $\mathcal{R}^{(j)}\subset \mathcal{L}$ (as argued, by Assumption \ref{AssumptionGoodGraphon}, $\sigma^{(j)}$ can roughly agree with at most one of $\sigma_{\mathrm{true}}$, $\eurt$).

Thus, if $\mathcal{L}^{(j)}\cap \mathcal{L}^{(k)}\not=\emptyset $ or $\mathcal{R}^{(j)}\cap\mathcal{R}^{(k)}\not=\emptyset$, and consequently $H(j,k)$ is set equal to 1 in step 8 of Algorithm \ref{AlgGlobal}, then $\sigma^{(j)}$ and $\sigma^{(k)}$ agree with the same correct permutation ($\{\sigma_{\mathrm{true}}$ or $\eurt\}$), and thus $a^*(j)a^*(k)=1$. Similarly, if $\mathcal{L}^{(j)}\cap \mathcal{R}^{(k)}\not=\emptyset $ or $\mathcal{R}^{(j)}\cap \mathcal{L}^{(k)}\not=\emptyset$, then $a^*(j)a^*(k)=-1$. Thus, condition (\ref{EqConditionSketches}) implies that the following holds for the function $H$ computed in Algorithm \ref{AlgGlobal} \wep:
\be
\label{EqHisgood}
\bigcap_{1\leq j,k,\leq t} \big(\{ H(j,k)=0 \} \cup \{ H(j,k)=a^*(j)a^*(k)\}\big).
\ee
That is, for each pair of samples $j,k$ for which $H(j,k)$ is non-zero, $H$ correctly identifies whether $j$ and $k$ are aligned with the same true ordering or with opposite orderings.

Now consider the graph $\mathcal{G}_H$ with vertex set $V_H=\{ 1,2,\dots ,t\}$ and $\mathcal{G}_H(j,k)=|H(j,k)|$. That is, $j$ and $k$ are adjacent in $\mathcal{G}_H$ if sketch $j$ and $k$ could be aligned by the algorithm. In light of \eqref{EqHisgood}, to prove the result, it is enough to prove that $\mathcal{G}_{H}$ is connected \wep.

As an extension of \eqref{def:Lhat} in Lemma \ref{LemmaAllRoughlyAgree},  define
\be
S''^{(j)}&=\{ i \in S^{(j)} \, : \, \forall j \in S^{(j)}, \, i \notin  \bad{n,j}{\alpha} \},\text{ and}\\
\hat{\mathcal{L}}^{(j)} &= \left\{ s \in S^{(j)} \, : \, U_{s} \leq \frac{1}{\log(n)} \right\}\text{ and }\hat{\mathcal{R}}^{(j)} = \left\{ s \in S^{(j)} \, : \, U_{s} \geq 1- \frac{1}{\log(n)} \right\}.
\ee

By \eqref{ContHatLSpContLS} in the proof of Lemma \ref{LemmaAllRoughlyAgree}, we have that \wep\  both 
\be\label{eqsubset}
\emptyset \neq \hat{\mathcal{L}}^{(j)}\cap S''^{(j)}\text{ and } \hat{\mathcal{L}}^{(j)}\cap S''^{(j)} \subset \mathcal{L}^{(j)}.
\ee  
Together with an analogous argument for $\hat{\mathcal{R}}$, we have \wep\ that either

\begin{enumerate}
\item  $P^{(j)} \equiv \mathcal{L}^{(j)} \cap \hat{\mathcal{L}}^{(j)} \neq \emptyset$ and $Q^{(j)} \equiv \mathcal{\mathcal{R}}^{(j)} \cap \hat{\mathcal{R}}^{(j)} \neq \emptyset$, or
\item $P^{(j)} \equiv \mathcal{R}^{(j)} \cap \hat{\mathcal{L}}^{(j)} \neq \emptyset$ and $Q^{(j)} \equiv \mathcal{L}^{(j)} \cap \hat{\mathcal{R}}^{(j)} \neq \emptyset$.
\end{enumerate}

Note that in the above we give a two-case definition for the sets $P^{(j)}, Q^{(j)}$; we clarify that we always choose the choice that makes both non-empty when such a choice is possible, choosing arbitrarily otherwise. By Lemma \ref{LemmaAllRoughlyAgree}, \wep\ there will be a unique choice making both non-empty.

There is an edge $(i,j)$ in $\mathcal{G}_{H}$ as long as $(P^{(i)} \cup Q^{(i)}) \cap (P^{(j)} \cup Q^{(j)}) \neq \emptyset$.
We make the following claim: \wep\ for any $a,b \in V$ so that $U_a,U_b<\log (n)^{-1}$, there exists $1\leq \ell\leq k$ so that $\{ a,b\}\subset  P^{(\ell)}\cup Q^{(\ell)}$. Before proving the claim, we show that this will complete the proof. For any $1\leq i,j\leq k$, take $a\in 
(P^{(i)}\cup Q^{(i)})$ and $b \in  
(P^{(j)}\cup Q^{(j)})$. Let $\ell$ be so that $\{ a,b\}\subset  P^{(\ell)}\cup Q^{(\ell)}$. Then there are edges $(i,\ell) $ and $(\ell ,j)$ in $\mathcal{G}_H$. This shows that $\mathcal{G}_H$ is connected.

It remains to show that our claim is true. Fix $a,b \in V$ so that $U_a,U_b<\log (n)^{-1}$. Let $\beta=\inf\{ \WA{}(0,y):0\leq y\leq \epsilon\}$. By part (3) of Assumption \ref{AssumptionGoodGraphon}, 
$\beta >\alpha$. This implies that $\bad{n}{\alpha}\cap [0,\epsilon]=\emptyset$ for all $n$ sufficiently large. 
It follows that $(U_a,U_b)\not \in \bad{n}{\alpha}$, and thus $a\not\in\bad{n,b}{\alpha}$ and $b\not\in\bad{n,a}{\alpha}$. 

To show that $a,b\in P^{(\ell)}\cup Q^{(\ell)}$, by \ref{eqsubset} it suffices to show that $a,b\in S''^{(\ell)}$.

We have that
\[
\P [\{ a,b\}\subset S]\geq \left(\frac{m}{n}\right)^2.
\]
Moreover, on $\mathcal{A}^c$, $|\bad{n,a}{\alpha}\cup\bad{n,b}{\alpha}|\leq 2\sqrt{n}\log(n)^2$, and thus
\[
\P [\{ a,b\}\in S''\,|\,\{ a,b\}\subset S]\geq 1-2(m-2)\frac{\log(n)^2}{\sqrt{n}}\geq \frac12,
\]
on $\mathcal{A}^c$, and for $n$ large enough. 
Thus the expected number of sketches for which $\{ a,b\}\subset S^{(j)}$ is at least $t(m^2/(2n^2))\geq \log(n)^{11}$.  Since the sets $\{S^{(j)}\}$ are independent, by the Chernoff bound we obtain that, \wep\ at least one of the sketches contains both $a$ and $b$. This completes the proof.



\end{proof}

\subsection{Analysis of Algorithm \ref{AlgSketch}}

We will give a bound on the output of Algorithm \ref{AlgSketch}. Define the sets
\be
Q_1 &= \{ (p,q) \in \tilde{V} \, : \, |U_{p} - U_{q}| > \log(n)^{-\cub}\}\\
Q_2 &= \{ (p,q) \in \tilde{V} \, :\, (U_p,U_q)\not\in \bad{n}{\alpha }\}.\\
\ee

For a comparison $F$, define the event:
\be
\label{EventGoodComparisonPair}
\mathcal{B}_{2}'(F) &= \{ \forall (p,q)\in Q_1\cap Q_2,\,  F(p,q) = F_{\mathrm{true}}(p,q)\}
\ee
that
$F$ gives the correct comparison for all pairs of sufficiently-distant vertices in $Q_2$. We have:

\begin{lemma} [Sketch Correctness] \label{LemmaSketchCorrectness}
Let $\GA{\alpha}$ be as defined in Equation \ref{EqThreshDef}, and
let $F$ be the output of Algorithm \ref{AlgSketch}, on input $\GA{\alpha}$ and parameters $m=\log (n)^{\csketch}$ and $t=(n \log(n))^2$. Then on the $\mathcal{F}$-measurable event $\mathcal{A}^{c}$,
\[
\P[ \mathcal{B}_{2}'(F) \cup \mathcal{B}_2'(-F)\,|\,\mathcal{F} ]=1- n^{-\Omega(\log(n))}.
\]
\end{lemma}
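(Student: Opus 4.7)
The plan is to combine the global-alignment result of Lemma \ref{LemmaGlobalAlignmentCorrect} with a per-pair voting argument over the $t$ subsampled orderings. First I would apply Lemma \ref{LemmaGlobalAlignmentCorrect}: \wep\ on $\mathcal{A}^c$ the alignment $a$ returned by Algorithm \ref{AlgGlobal} satisfies $a \in \{a^*,-a^*\}$, where $a^*$ is defined in \eqref{EqTrueAlignment}. Since flipping $a \mapsto -a$ in Algorithm \ref{AlgSketch} simply replaces $F$ by $-F$, it suffices by symmetry to show $\mathcal{B}_2'(F)$ holds \wep\ on the event $\{a = a^*\}$; the other case then gives $\mathcal{B}_2'(-F)$.

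Fix a pair $(p,q) \in Q_1 \cap Q_2$, let $T_{p,q} = \{1 \leq j \leq t : \{p,q\} \subseteq S^{(j)}\}$ be the set of sketches containing both vertices, and let $T_{p,q}^{\mathrm{bad}} \subseteq T_{p,q}$ consist of those $j$ for which $\sigma^{(j)}$ ranks $p$ and $q$ inconsistently with $a^*(j)\sigma_{\mathrm{true}}$. Conditional on $\mathcal{F}$ the samples $S^{(j)}$ are i.i.d.\ from the success-conditioned output distribution of Algorithm \ref{AlgSubsamp}, so $\P[\{p,q\} \subseteq S^{(j)} \mid \mathcal{F}] = \Theta(m^2/n^2)$ and $\E[|T_{p,q}| \mid \mathcal{F}] = \Theta(t m^2/n^2) = \Theta(\log(n)^{12})$; a Chernoff bound on the conditionally independent samples then gives $|T_{p,q}| \geq \tfrac12 \log(n)^{12}$ \wep. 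On the other hand, the conditional form of Lemma \ref{LemmaSubSampleCorrect} bounds the per-sample bad probability by $O(n^{-0.49})$ (using $|U_p - U_q| > \log(n)^{-\cub} \geq \log(n)^{-1}$), so $\E[|T_{p,q}^{\mathrm{bad}}| \mid \mathcal{F}] = O(\log(n)^{12}/n^{0.49}) = o(1)$. Applying the Poisson-tail form of Chernoff to this sum of conditionally independent Bernoullis, I would conclude $|T_{p,q}^{\mathrm{bad}}| \leq \log n$ \wep, since for $k \geq \log n \gg \mu$ the classical bound $(e\mu/k)^k \leq n^{-\Omega(\log n)}$ already kicks in.

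On the intersection of these two events the good samples dominate the bad ones by a factor $\Omega(\log(n)^{11})$, and each good $j$ after realignment by $a^*$ contributes exactly $F_{\mathrm{true}}(p,q)$ to $\mathcal{C}(p,q)$, so $\mathrm{sign}(\mathcal{C}(p,q)) = F_{\mathrm{true}}(p,q)$ and hence $F(p,q) = F_{\mathrm{true}}(p,q)$. A union bound over the at most $n^2$ pairs in $Q_1 \cap Q_2$ preserves the \wep\ estimate and yields $\mathcal{B}_2'(F)$. The hard part will be the bookkeeping around the three overlapping conditionings --- on $\mathcal{F}$, on success of Algorithm \ref{AlgSubsamp}, and on $\{p,q\} \subseteq S^{(j)}$ --- though the success-conditioning only distorts sample probabilities multiplicatively by $1 + O(n^{-0.49})$ and so can be absorbed into constants. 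A secondary subtlety is that ``roughly agrees", used to define $a^*$, is \emph{a priori} weaker than the precision-$\log(n)^{-1}$ agreement guaranteed by Lemma \ref{LemmaSubSampleCorrect}; however the density bound $(\mathcal{A}_5')^c$ already exploited in Lemma \ref{LemmaAllRoughlyAgree} shows precision-$\log(n)^{-1}$ agreement implies rough agreement with the \emph{same} correct permutation, so $a^*(j)$ consistently identifies the alignment that must be used to recover $F_{\mathrm{true}}$ on $(p,q)$.
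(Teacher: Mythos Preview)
Your proposal is correct and follows essentially the same route as the paper: invoke Lemma \ref{LemmaGlobalAlignmentCorrect} to fix the global alignment, then for each $(p,q)\in Q_1\cap Q_2$ show that the pair appears in $\Theta(\log(n)^{12})$ samples while only a negligible fraction of those samples mis-order it (via the conditional $O(n^{-0.49})$ bound from Lemma \ref{LemmaSubSampleCorrect}), and finish with a union bound. The only cosmetic difference is that you bound $|T_{p,q}|$ and $|T_{p,q}^{\mathrm{bad}}|$ separately, whereas the paper argues directly that $\E[\mathcal{C}(p,q)\mid\mathcal{F}] \geq 0.9\,\mathcal{N}(p,q)$ and applies Chernoff to the signed vote; your decomposition is arguably cleaner, and the two subtleties you flag (success-conditioning distorting sample probabilities by $1+O(n^{-0.49})$, and precision-$\log(n)^{-1}$ agreement implying rough agreement with the \emph{same} correct permutation) are exactly the bookkeeping the paper handles implicitly.
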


\begin{proof}

Let $\{(S^{(j)},\sigma_{j})\}_{j=1}^t $ be the sequence of subsets and preorders appearing immediately after the alignment correction in Step 12 of Algorithm \ref{AlgSketch}. Let $\mathcal{C}$ be as in Algorithm \ref{AlgSubsamp} at the end of its run. Let $c_1=\log(n)^7$, and let $\mathcal{N}:\tilde{V}\rightarrow \mathbb{N}$ be the function counting the number of times vertices $s$ and $t$ occurred together in a sample, that is, $\mathcal{N}(s,t)=|\{ j: s,t\in S^{(j)}\}|$.

Next, consider a pair $(p,q)\in Q_2$, so $(U_p,U_q)\not\in \bad{n}{\alpha}$, and let $S$ be a uniformly-chosen size-$m$ subset of $\{1,2,\ldots,n\}$, as sampled by Algorithm \ref{AlgSubsamp} when it is called for the $i$'th time in Step 3 of Alg.~\ref{AlgSketch}. If $p,q\in S$, then $p,q\in S^{(i)}$ if Alg.\ \ref{AlgSubsamp} succeeds. By Lemma \ref{LemmaSubSampleCorrect}, on the event $\mathcal{A}^{c}$ we have

\be
\P [  p,q\in S^{(i)}\,|\,\mathcal{F}] \geq (1-O(n^{-0.49}))\P [ p,q\in S] \geq 0.9 (m/n)^2 .
\ee

Let $e^{(i)}(p,q) = \textbf{1}_{\{p,q \in S^{(i)}\}}$. Then on $\mathcal{A}^{c}$,
\[
\E[ \mathcal{N}(p,q) | \mathcal{F}] =\E[\sum_{i=1}^t e^{(i)}(p,q) | \mathcal{F}]\geq 0.9 (m/n)^2 t=0.9\log(n)^{12} \geq 2c_1.
\]

By the Chernoff bound, on $\mathcal{A}^{c}$ we have
\be\label{IneqCalcManyHits}
\P [ \mathcal{N}(p,q)< c_1 \,|\,\mathcal{F}] =n^{-\Omega (\log(n))}.
\ee
Let $\mathcal{B}_3'$ be the event that, when Algorithm \ref{AlgGlobal} is called with input $(S^{(i)},\sigma^{(i)})_{i=1}^t$ in step 10 of Algorithm \ref{AlgSketch},
the returned function $a$ agrees with one of the true functions $a^*$ or $-a^*$ as defined in the statement of Lemma \ref{LemmaGlobalAlignmentCorrect}. By Lemma \ref{LemmaGlobalAlignmentCorrect},
\be \label{IneqB3HoldsRef}
\P [\mathcal{B}_3'\,|\,\mathcal{F}]=1-n^{-\Omega(\log(n))}
\ee on $\mathcal{A}^{c}$.

Considering the event $\mathcal{B}'_3$ holds, we assume without loss of generality that $a=a^*$ for all arguments. This implies that, after the realignment in step 10 of Algorithm \ref{AlgSketch}, $\sigma^{(i)} $ roughly agrees with $\sigma_{\mathrm{true}}$ for each $i=1, \dots, t$.  Recall the event $\mathcal{B}_1'$ as defined in Lemma \ref{LemmaSubSampleCorrect}; we define $\mathcal{B}_1'(i)$ to be the event that $\mathcal{B}_{1}'$ holds for the call to Algorithm \ref{AlgSubsamp} that produced the pair $(S^{(i)},\sigma^{(i)})$.

Next, fix $(p,q)\in Q_1$, so $|U_p-U_q|\geq \log(n)^{-\cub}$, and assume without loss of generality that $\sigma_{\mathrm{true}}(p) < \sigma_{\mathrm{true}}(q)$. On the event $\mathcal{B}_{1}'(i)$,  $\sigma^{(i)}$ agrees at precision level $\log(n)^{-1}$ with $\sigma_{\mathrm{true}}$. Defining  $f^{(i)}(u,v) = e^{(i)}(p,q) \textbf{1}_{\mathcal{B}'_1(i)}$ and applying Lemma \ref{LemmaSubSampleCorrect}, we have for large enough $n$,
\be
\P[f^{(i)}(u,v) =1 | \mathcal{F}] \geq 0.9\P[e^{(i)}(u,v) =1 | \mathcal{F}]
\ee
on $\mathcal{A}^c$. Thus, $\E [\mathcal{C}(p,q) | \mathcal{F}]\geq 0.9\mathcal{N}(p,q)$ on $\mathcal{A}^{c}$ for all $n$ sufficiently large. Applying the Chernoff bound
and then \eqref{IneqB3HoldsRef},
\be \label{IneqCalcGoodAlignment}
\P[ \mathcal{C}(p,q) > 0 | \mathcal{F}] \geq \P[  \{ \mathcal{C}(p,q) < 0\} \cup (\mathcal{B}_{3}')^c | \mathcal{F}] - \P[\mathcal{B}_{3}^{c} | \mathcal{F}]= 1- n^{-\Omega(\log(n))}
\ee
on $\mathcal{A}^{c}$.
Combining Inequalities \eqref{IneqCalcManyHits} and \eqref{IneqCalcGoodAlignment} and a union bound,
\be \label{IneqCorrectCompSketch1}
\P \left[ \bigcup_{(p,q) \in Q_1\cap Q_2,\, U_p<U_q} ( \{ \mathcal{C}(p,q) \geq  0\}) | \mathcal{F} \right]  =  n^{-\Omega(\log(n))}
\ee
on $\mathcal{A}^{c}$. This event is exactly the complement of $\mathcal{B}_{2}'(F)$, so this completes the proof.
\end{proof}

\subsection{Analysis of Algorithms \ref{AlgUseSketch} and \ref{AlgMerge} and proof of Theorem \ref{ThmMaxOrd}}

Define the set
\be
Q_{3} &= \left\{ (i,j) \in \tilde{V} \, : \, |U_{i} - U_{j}| > \frac{\log(n)^{4}}{\sqrt{n}} \right\} \\
\ee

The following lemma shows that Algorithm \ref{AlgUseSketch}, as called by Alg.~\ref{AlgMerge} and run with the correct input and parameters, gives as output a comparison function that correctly orders all pairs in $Q_3$, i.e., all pairs that are sufficiently far apart. 

\begin{lemma} [Correct refinement]
\label{LemmaLocalRefinementCorrect}
Let $w$ be a graphon and $\alpha\in (0,1)$ be so that Assumptions \ref{AssumptionGoodGraphon} hold, and let $G\sim w$ be a graph of size $n$.  Consider a run of Algorithm \ref{AlgMerge} with parameters as in \eqref{def:GoodParameters}
on input $G,\alpha$; let $F'$ be as it appears on line 3 of Algorithm \ref{AlgMerge}.  Then
\be
\bigcup_{(u,v) \in Q_3} \{ F'(u,v) \neq F_{\mathrm{true}}\}\subset \mathcal{A} \cup (\mathcal{B}_{2}(F)')^{c}.
\ee
\end{lemma}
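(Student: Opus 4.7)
The plan is to show that, on the event $\mathcal{A}^c \cap \mathcal{B}_2'(F)$, we have $F'(u,v) = F_{\mathrm{true}}(u,v)$ for every $(u,v) \in Q_3$. Fix such a pair and assume without loss of generality that $U_u < U_v$, so the goal is $F'(u,v) = +1$. I will introduce ``ideal'' counterparts of the quantities $D_{u,v}, D_{v,u}$ computed in Algorithm \ref{AlgUseSketch}:
\be
D_{u,v}^{\mathrm{ideal}} = \sum_{x \in N_{H_\alpha}(u)\setminus N_{H_\alpha}(v)} F_{\mathrm{true}}(x,v), \qquad D_{v,u}^{\mathrm{ideal}} = \sum_{x \in N_{H_\alpha}(v)\setminus N_{H_\alpha}(u)} F_{\mathrm{true}}(x,u),
\ee
then show the actual values are close to these ideals, and finally verify the algorithm's rule by a short case split.

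A direct computation using the diagonally increasing property of $\WA{\alpha}$ (Assumption \ref{AssumptionGoodGraphon}(1)) gives $D_{u,v}^{\mathrm{ideal}} = A$ and $D_{v,u}^{\mathrm{ideal}} = -B$, where $A = |N_{H_\alpha}(u)\setminus N_{H_\alpha}(v)|$ and $B = |N_{H_\alpha}(v)\setminus N_{H_\alpha}(u)|$. Indeed, monotonicity of $\ell_\alpha, r_\alpha$ forces every $x \in N_{H_\alpha}(u) \setminus N_{H_\alpha}(v)$ to satisfy $U_x < \ell_\alpha(U_v) \leq U_v$, hence $F_{\mathrm{true}}(x,v) = +1$; symmetrically, every $x \in N_{H_\alpha}(v) \setminus N_{H_\alpha}(u)$ satisfies $U_x > r_\alpha(U_u) \geq U_u$, hence $F_{\mathrm{true}}(x,u) = -1$. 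Combining $(u,v) \in Q_3$ with the $\epsilon$-separation of $\WA{\alpha}$ and $\mathcal{A}_4^c$ yields $A + B = \nu(u,v) \geq \sqrt{n}\log(n)^{\cafti}$.

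Next I would bound $|D_{u,v} - A|$ and $|D_{v,u} + B|$ by a common error $E = O(\sqrt{n}\log(n)^{c})$ with exponent $c < \cafti$. Two sources of discrepancy must be controlled: (i) symmetric differences $N_G \triangle N_{H_\alpha}$, contained on $\mathcal{A}_1^c \cap \mathcal{A}_2^c$ in the ``bad witness'' sets $\bad{n,u}{\alpha} \cup \bad{n,v}{\alpha}$ of total size $O(\sqrt{n}\log(n)^{\cat})$; and (ii) vertices $x \in N_G(u)\setminus N_G(v)$ for which $F(x,v) \neq F_{\mathrm{true}}(x,v)$, which by $\mathcal{B}_2'(F)$ forces $(x,v) \notin Q_1 \cap Q_2$. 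The delicate point is $(x,v) \notin Q_1$ (i.e.\ $|U_x - U_v| < \log(n)^{-1}$): naively there can be $\Omega(n/\log n)$ such vertices. Here the $(\epsilon,\alpha)$-connectedness of $\WA{}$ (Assumption \ref{AssumptionGoodGraphon}(3)) is essential: it yields $w^{(2)}(U_x, U_v) \geq \beta > \alpha$ for all $|U_x - U_v| \leq \epsilon$, so for $n$ large $(U_x, U_v) \notin \bad{n}{\alpha}$, and hence by $\mathcal{A}_1^c$ we have $\GA{\alpha}(x,v) = 1$, placing $x$ in $N_G(v)$ and thus outside $N_G(u)\setminus N_G(v)$. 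Hence these vertices do not contribute at all, and only the ``bad region'' vertices $(x,v) \notin Q_2$ remain, numbering $O(\sqrt{n}\log(n))$ by the uniform goodness of $w^{(2)}$ and a Chernoff concentration argument analogous to those used in the proofs underlying Corollary \ref{IneqLatentCor}.

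Given $A + B \geq \sqrt{n}\log(n)^{\cafti}$ and $|D_{u,v} - A|, |D_{v,u} + B| \leq E$ with $E \ll A + B$, a short case split closes the argument. If the algorithm enters the branch $|D_{u,v}| > |D_{v,u}|$, then $A > E$ (for if $A \leq E$ then $|D_{u,v}| \leq 2E$, hence $|D_{v,u}| < 2E$, forcing $B < 3E$ and $A + B < 4E$, contradicting the lower bound); then $D_{u,v} \geq A - E > 0$ and the algorithm returns $+1$. The symmetric argument in the branch $|D_{v,u}| \geq |D_{u,v}|$ gives $B > E$, so $D_{v,u} \leq -B + E < 0$, and again the output is $+1$. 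The main obstacle I expect is step (ii) above: it is precisely the treatment of the potentially very numerous ``close'' vertices that forces the use of the connectedness assumption on $\WA{}$, whereas the other contributions to the error are handled by the same kind of concentration estimates already established in Section \ref{SecMainEstHeuristic}.
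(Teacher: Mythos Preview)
Your approach is correct and rests on the same key observation as the paper's proof: by $(\epsilon,\alpha)$-connectedness of $\WA{}$, any vertex $x\in N_{\GA{\alpha}}(u)\setminus N_{\GA{\alpha}}(v)$ outside the bad-witness sets has $H_\alpha(x,v)=0$, hence $|U_x-U_v|\geq\epsilon$, so $F(x,v)=F_{\mathrm{true}}(x,v)=+1$ under $\mathcal{B}_2'(F)$. The paper packages this more directly: instead of an ideal/error decomposition and a branch-by-branch case split, it sets $B=\bad{n,p}{\alpha}\cup\bad{n,q}{\alpha}$, shows that every $x\in(N(p)\setminus N(q))\setminus B$ contributes $+1$ to $D_{p,q}$, and concludes that whichever of $|N(p)\setminus N(q)|$, $|N(q)\setminus N(p)|$ exceeds $2c_2=10\sqrt{n}\log(n)^2$ (one must, by $\mathcal{A}_4^c$) forces the corresponding $D$ to have the right sign with magnitude $>c_2$; this makes the endgame shorter than your case analysis, but the content is the same.

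Two small remarks. First, your $N_G$ should be $N_{\GA{\alpha}}$ throughout, since Algorithm~\ref{AlgUseSketch} is run on $\GA{\alpha}$, not $G$. Second, the Chernoff step you invoke for the ``$(x,v)\notin Q_2$'' vertices is not literally contained in the event $\mathcal{A}^c$ as defined, so strictly speaking the \emph{deterministic} containment in the lemma statement is not established without enlarging $\mathcal{A}$; the paper's own proof shares this imprecision (it simply does not verify $(v,q)\in Q_2$ before invoking $\mathcal{B}_2'(F)$), so this is a minor defect of the lemma's formulation rather than of your argument.
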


\begin{proof}
We assume now that $\mathcal{A}^{c}$ and $\mathcal{B}_{2}'(F)$ both hold. Fix $p,q\in V$, and assume without loss of generality that $U_p<U_q$, so $F_{\rm true}(p,q)=1$.
Let $D_{p,q}$ and $D_{q,p}$ be as computed in  Alg.\ \ref{AlgUseSketch}, namely
\be
D_{p,q}&=|\{ x\in N(p)\setminus N(q)\,:\, F(x,q)=1\}|-|\{ x\in N(p)\setminus N(q)\,:\, F(x,q)=-1\}|\\
D_{q,p}&=|\{ x\in N(q)\setminus N(p)\,:\, F(x,p)=1\}|-|\{ x\in N(q)\setminus N(p)\,:\, F(x,p)=-1\}|.
\ee

Let $B=\bad{n,p}{\alpha}\cup \bad{n,q}{\alpha}$ and $c_{2} = 5 \sqrt{n} \log(n)^{2}$. Since $\mathcal{A}^c_2$ holds, $|B|\leq 2\sqrt{n}\log(n)^{\cat}< c_2/2$.

We next wish to show that $F'(p,q) = 1$ for all pairs $p,q$ with $\max(|N(p)\setminus N(q)|, |N(q)\setminus N(p)|) > 2 c_{2}$. We begin with the case $|N(p)\setminus N(q)|> 2c_2$. In this case, $(N(p)\setminus N(q))\setminus B \neq \emptyset$. For each $v\in (N(p)\setminus N(q))\setminus B$, we have that $\GA{\alpha}(q,v)=0$, and, since $v\not\in \bad{n,q}{\alpha}$  and we assume $\mathcal{A}^c$ holds, $H_\alpha(q,v)=0$.  Thus, by Conditions \ref{AssumptionGoodGraphon}, $|U_q-U_v|\geq \epsilon \geq \log(n)^{-1}$. Therefore, since $\mathcal{B}_2'(F)$ holds, we have that $F(v,q)=F_{\rm true}(v,q)$.

By a similar argument, $H_\alpha(p,v)=\GA{\alpha}(p,v)=1$. So $v$ is a neighbour of $p$ but not of $q$ in $H_\alpha$. Since $H_\alpha$ is diagonally increasing 
, this implies that
$U_v<U_q$, and so $\sigma_{\rm{true}}(v)<\sigma_{\rm{true}}(q)$, so $F_{\rm true}(v,q)=1$.

In our first case, we have just shown that $F(v,q)=1$ for all $v\in (N(p)\setminus N(q))\setminus B$ satisfying $F_{\mathrm{true}}(v,q)=1$. Moreover, by our assumption  $|N(p)\setminus N(q)| > 2 c_{2} > \frac{1}{2} c_{2} \geq |B|$,
\be \label{IneqSetMinusBStuff}
&|\{ x\in N(p)\setminus N(q)\,:\, F(x,q)=1\}|\geq |(N(p)\setminus N(q))\setminus B| > 2c_2-|B|, \text{ and}\\
&|\{ x\in N(p)\setminus N(q)\,:\, F(x,q)=-1\}|\leq |B| \leq \frac{1}{2} c_{2}.
\ee
Therefore in this case, $D_{p,q} > c_2$ and so $F'(p,q)=1$.

By essentially the same argument, in the case $|N(q)\setminus N(p)|>2c_2$, we have $D_{q,p}<-c_2$. Combining these two cases, we have that
\be \label{IneqFPNMinN}
F'(p,q)=F_{\mathrm{true}}(p,q),\,  \forall p,q \in V \,\text{s.t.}\,\max(|N(p)\setminus N(q)|, |N(q)\setminus N(p)|) > 2 c_{2}.
\ee

Now suppose $(p,q)\in Q_3$, so $|U_{p} - U_{q}| > \frac{\log(n)^{4}}{\sqrt{n}}$.  Since $\mathcal{A}_4^c$ holds, for all large enough $n$
\[
|N(p) \setminus N(q)|+|N(q) \setminus N(p)|=\nu (p,q)\geq \sqrt{n}\log(n)^{\cafti} > 20\sqrt{n}\log(n)^2=4c_2,
\]
and thus $ \max(|N(p)\setminus N(q)|, |N(q)\setminus N(p)|) > 2 c_{2}$. Applying Inequality \eqref{IneqFPNMinN}, this implies
\be \label{IneqRefinementQuasi}
\cup_{(p, q) \in Q_{3}} \{ F'(p,q) \neq F_{\mathrm{true}}(p,q) \}\subset \mathcal{A} \cup (\mathcal{B}_{2}'(F))^{c},
\ee
completing the proof.

\end{proof}

Finally, we check that $\sigma_{F'}$ is never much less accurate than $F'$:

\begin{lemma}\label{LemmaCyclesRemove}
Let $F$ be any comparison that agrees with a permutation $\sigma_{\mathrm{true}}$ at precision level $d$. Then $\sigma_{F'}$ agrees with $\sigma_{\mathrm{true}}$ at precision level $2d$.
\end{lemma}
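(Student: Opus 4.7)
The plan is to prove the lemma directly by a term-by-term analysis of the sum $\gamma_F(i)-\gamma_F(j) = \sum_{k\in V}(F(i,k)-F(j,k))$ for an arbitrary pair $i,j$ with $|U_i-U_j|\ge 2d$. Assume without loss of generality that $U_j \ge U_i + 2d$. The key observation is that the precision hypothesis pins down the sign of $F(i,k)$ and $F(j,k)$ \emph{whenever} $|U_k-U_i|\ge d$ and $|U_k-U_j|\ge d$ respectively, and $2d$ is exactly the separation needed so that, for each $k$, at least one of these two constraints is active and aligns with the correct direction.

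Concretely, I would partition $V$ into the five zones (A) $U_k< U_i-d$, (B) $U_i-d\le U_k<U_i+d$, (C) $U_i+d\le U_k\le U_j-d$, (D) $U_j-d<U_k\le U_j+d$, (E) $U_k>U_j+d$, and show that every contribution $F(i,k)-F(j,k)$ to the sum has a consistent sign. In zones (A), (C), (E) both $F(i,k)$ and $F(j,k)$ equal their $F_{\sigma_{\rm true}}$ values, giving contribution $0$, $+2$, $0$ respectively. In zone (B), the separation $U_j-U_k\ge d$ forces $F(j,k)=-1$, while $F(i,k)\in\{-1,0,1\}$ is unconstrained, so the contribution $F(i,k)+1$ lies in $\{0,1,2\}$; zone (D) is symmetric with $F(i,k)=1$ pinned and contribution $1-F(j,k)\in\{0,1,2\}$.

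Thus every summand is non-negative. To obtain a strict inequality I would isolate the contributions of $k=i$ and $k=j$: since $i\in$ zone (B), its term is $F(i,i)-F(j,i) = 0-(-1)=1$, and since $j\in$ zone (D), its term is $F(i,j)-F(j,j)=1-0=1$. Hence $\gamma_F(i)-\gamma_F(j)\ge 2$, and by the definition of $\sigma_F$ this forces $\sigma_F(i)>\sigma_F(j)$ strictly (no tie-breaking needed). Doing this uniformly over all pairs $(i,j)$ with $U_j\ge U_i+2d$ shows that $\sigma_F$ orders every sufficiently-separated pair in a consistent direction, which is the definition of agreement with a correct permutation (either $\sigma_{\rm true}$ or its reverse $\sigma_{\rm eurt}$, depending on orientation convention) at precision level $2d$.

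There is no serious obstacle here; the only point that needs care is bookkeeping the sign convention so that the terms $F(i,k)-F(j,k)$ really do all have the same sign rather than cancel, and making sure the ``diagonal'' terms $k=i$ and $k=j$ are kept in the sum to obtain the strict inequality that lets us dispense with tie-breaking in the definition of $\sigma_F$.
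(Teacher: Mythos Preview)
Your proof is correct and takes essentially the same approach as the paper: both bound $\sum_k(F(i,k)-F(j,k))$ term by term to show $\gamma_F(i)$ and $\gamma_F(j)$ are strictly ordered whenever $i,j$ are $2d$-separated. The paper's version assumes $\sigma_{\mathrm{true}}(i)=i$ and uses the cruder rank-based estimate $2(j-i)-|\{k:\min(|i-k|,|j-k|)\le d\}|\ge 2(j-i)-4d>0$, whereas your five-zone decomposition on $U$-values observes that even the ``uncertain'' zones (B) and (D) already contribute non-negatively and isolates the diagonal terms $k=i,j$ for strictness; this is a slight refinement (and matches Definition~\ref{DefPrecision} more literally), but the core idea is identical.
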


\begin{proof}
Assume \emph{wlog} that $\sigma_{\mathrm{true}}(i) = i$ for all $i$. Then for $j > i + 2d$, we have

\be
\sigma_{F}(j) - \sigma_{F}(i) &= \sum_{k =1}^{n} (F(j,k) - F(i,k)) \\
&\geq 2 (j-i) -  | \{ k \, : \, \min(|i-k|, |j-k|) \leq d \}|  \\
 &\geq 2 (j-i) -  4 d >0. \\
\ee

\end{proof}

The proof of Theorem \ref{ThmMaxOrd} now follows directly from the lemmas.
Let $\sigma$ be the result returned by Algorithm \ref{AlgMerge}, and set $Q_3$ as above, and
\be
Q_{4} &= \{ (i,j) \in \tilde{V} \, : \, |\sigma_{\mathrm{true}}(i) -\sigma_{\mathrm{true}}(j)|   > 2 \sqrt{n} \log(n)^{\cfin}\}.
\ee
Define:
\be
\mathcal{B}_{4}' &= \{ \forall \, i,j \in Q_{3}, \, \{ \sigma(i) < \sigma(j) \} \longleftrightarrow  \{ \sigma_{\mathrm{true}}(i) < \sigma_{\mathrm{true}}(j) \} \}\\
\mathcal{B}_{5}' &= \{ \forall \, i,j \in Q_{4}, \, \{ \sigma(i) < \sigma(j) \} \longleftrightarrow  \{ \sigma_{\mathrm{true}}(i) < \sigma_{\mathrm{true}}(j) \} \}\\
\ee

to be the event that $\sigma$ is correct for all vertices that are sufficiently distant. We have:

\begin{lemma} [Merge Correctness] \label{LemmaMergeCorrectness}
Let $w$ be a graphon and $\alpha\in (0,1)$ be so that Assumptions \ref{AssumptionGoodGraphon} hold, and let $G\sim (n,w)$. Let $\sigma$ be the output of Algorithm \ref{AlgMerge}, run with parameters as in \eqref{def:GoodParameters}
on input $G, \alpha$. Then

\be
\P[\mathcal{B}_{4}' \cap \mathcal{B}_{5}'] = 1 - n^{-\Omega(\log(n))}.
\ee

\end{lemma}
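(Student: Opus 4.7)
The plan is a straightforward assembly of the lemmas proved earlier in this subsection. I begin by conditioning on the $\mathcal{F}$-measurable event $\mathcal{A}^c$, which holds \wep{} by Corollary \ref{IneqLatentCor}. On this event, Lemma \ref{LemmaSketchCorrectness} gives that the sketch $F$ produced by Algorithm \ref{AlgSketch} satisfies $\mathcal{B}_2'(F) \cup \mathcal{B}_2'(-F)$ conditionally on $\mathcal{F}$ with probability $1 - n^{-\Omega(\log n)}$. By the $\sigma_{\mathrm{true}} \leftrightarrow \eurt$ symmetry of the problem it suffices to handle the case $\mathcal{B}_2'(F)$, and Lemma \ref{LemmaLocalRefinementCorrect} then ensures that on $\mathcal{A}^c \cap \mathcal{B}_2'(F)$ the refined comparison $F'$ agrees with $F_{\mathrm{true}}$ on every pair in $Q_3$.

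The next step is to upgrade the correctness of $F'$ on $Q_3$ to a correctness statement about the permutation $\sigma = \sigma_{F'}$ returned by Algorithm \ref{AlgMerge}. Here the bad event $\mathcal{A}_5^c$ plays a central role: it forces $|\sigma_{\mathrm{true}}(i) - \sigma_{\mathrm{true}}(j)|$ and $n|U_i - U_j|$ to agree up to an error of $O(\sqrt{n}\log n)$, so the latent-distance threshold $\log(n)^4/\sqrt{n}$ defining $Q_3$ translates into a permutation-distance threshold of $\sqrt{n}\log(n)^4 + O(\sqrt{n}\log n)$. Consequently, for some $d = \sqrt{n}\log(n)^4 + O(\sqrt{n}\log n)$, the comparison $F'$ is correct on every pair with permutation distance exceeding $d$, and Lemma \ref{LemmaCyclesRemove} then gives $\sigma$ correct on every pair with permutation distance exceeding $2d$. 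The choice $\cfin = 5$ in \eqref{def:GoodParameters} ensures $2d \leq 2\sqrt{n}\log(n)^{\cfin}$ for $n$ sufficiently large, yielding $\mathcal{B}_5'$. The event $\mathcal{B}_4'$ follows from the same pipeline, using $\mathcal{A}_5^c$ in the opposite direction (via the analogous lower bound on permutation distance, obtained either by symmetric application of the same Chernoff argument to the complementary intervals $[0,\min(U_i,U_j))$ and $(\max(U_i,U_j),1]$ or by a sharpened accounting in Lemma \ref{LemmaCyclesRemove}) to guarantee that pairs in $Q_3$ also have permutation distance above the $2d$ threshold.

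The final probability bound comes from a union bound over these events: $\mathcal{A}^c$ holds \wep{} by Corollary \ref{IneqLatentCor}, while the conditional failure probabilities of Lemmas \ref{LemmaSketchCorrectness} and \ref{LemmaLocalRefinementCorrect} are each $n^{-\Omega(\log n)}$, so the intersection $\mathcal{B}_4' \cap \mathcal{B}_5'$ occurs with probability at least $1 - n^{-\Omega(\log n)}$. The main obstacle I anticipate is the careful bookkeeping of constants through the entire pipeline --- the sketch precision from Algorithm \ref{AlgSketch}, the local refinement in Algorithm \ref{AlgUseSketch}, the factor-of-two inflation in the voting step of Lemma \ref{LemmaCyclesRemove}, and the linear translation between latent and permutation distances all carry their own logarithmic error terms --- and verifying that the choices $\cub = 1$, $\czeta = 4$, and $\cfin = 5$ really do leave enough logarithmic slack for both $Q_3$ and $Q_4$ thresholds to be respected simultaneously.
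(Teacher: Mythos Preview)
Your proposal is essentially the same as the paper's proof: condition on $\mathcal{A}^c$ (Corollary \ref{IneqLatentCor}), invoke Lemma \ref{LemmaSketchCorrectness} to get $\mathcal{B}_2'(F)$ (up to reversal), apply Lemma \ref{LemmaLocalRefinementCorrect} so that $F'$ is correct on $Q_3$, then pass through Lemma \ref{LemmaCyclesRemove} and use $\mathcal{A}_5^c$ to translate between latent and rank distances.

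There is one small difference in the order of operations. The paper applies Lemma \ref{LemmaCyclesRemove} \emph{in latent-distance terms} (precision level as in Definition \ref{DefPrecision}) to conclude $\sigma$ is correct at latent precision roughly $2\log(n)^4/\sqrt{n}$, and only afterwards uses $\mathcal{A}_5^c$ to show that any pair in $Q_4$ has $|U_p - U_q| \geq 2\log(n)^4/\sqrt{n}$, yielding $\mathcal{B}_5'$. You instead convert to rank distance first and then apply the lemma. Both routes are fine for $\mathcal{B}_5'$, and the constant bookkeeping you flag is indeed routine.

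Your argument for $\mathcal{B}_4'$, however, does not quite close: a pair in $Q_3$ has latent distance exceeding $\log(n)^4/\sqrt{n}$, so its rank distance is only about $d \approx \sqrt{n}\log(n)^4$, not $2d$; the ``opposite direction'' use of $\mathcal{A}_5^c$ cannot bridge that factor of two. The paper's own treatment of $\mathcal{B}_4'$ has essentially the same looseness (Lemma \ref{LemmaCyclesRemove} only yields precision $2\log(n)^4/\sqrt{n}$, not $\log(n)^4/\sqrt{n}$). This is harmless: $\mathcal{B}_4'$ is an intermediate event not used outside this lemma, and only $\mathcal{B}_5'$ feeds into Theorem \ref{ThmMaxOrd}. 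If you want $\mathcal{B}_4'$ literally as stated, the cleanest fix is to note that Lemma \ref{LemmaLocalRefinementCorrect} actually delivers $F'$ correct at latent precision $\log(n)^4/\sqrt{n}$ with room to spare (the argument works at any precision above $\sqrt{n}\log(n)^3/n$ once $\mathcal{A}_4^c$ holds), so one can start from precision $\tfrac{1}{2}\log(n)^4/\sqrt{n}$ and absorb the doubling.
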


\begin{proof}


Let $F$ be the output of Alg.\ \ref{AlgSketch} as called in line 2 of Alg.\ \ref{AlgMerge}.  Let $F'$ be the  output of Alg.\ \ref{AlgUseSketch} when called in line 3 of Alg.\ \ref{AlgMerge}. By Lemma \ref{LemmaLocalRefinementCorrect}, if $\mathcal{B}_{2}'(F)$ and $\mathcal{A}^{c}$ both hold, then $F'$ agrees with the true ordering on $Q_{3}$. Applying Lemma \ref{LemmaCyclesRemove}, this implies

\[
(\mathcal{B}_4')^c\subset \mathcal{A}\cup \left(\mathcal{B}_{2}'(F)\right)^c.
\]

Applying Corollary \ref{IneqLatentCor} and Lemma \ref{LemmaSketchCorrectness} to this inclusion gives
\[ \label{IneqGoodOrderQ3}
\P [(\mathcal{B}'_4)^c]=n^{-\Omega(\log(n))}.
\]

We have shown that all pairs in $Q_{3}$ are ordered correctly \wep; now we show that this implies all pairs in $Q_{4}$ are also ordered correctly \wep. Suppose that $(p,q)\in Q_4$ and $U_p<U_q$. By definition of $Q_4$,
$$ |\{ k \, : \, U_{k} \in (U_{p},U_{q}) \}| = \sigma_{\mathrm{true}}(q)-\sigma_{\mathrm{true}}(p) - 1 \geq \sqrt{n} \log(n)^{\cfin} .$$
If $\mathcal{A}_5^c$ holds, then
$ |\{ k \, : \, U_{k} \in (U_{p},U_{q}) \}|\leq n |U_{q}-U_{p}| + \sqrt{n} \log(n) $. Putting these two inequalities together, on $\mathcal{A}^{c}$ we have
\[
|U_q-U_p|\geq \frac{\sqrt{n} \log(n)^{\cfin}-\sqrt{n}\log(n)}{n}\geq \frac{2\log(n)^{4}}{\sqrt{n}}.
\]
Therefore, by Lemmas \ref{LemmaLocalRefinementCorrect} and \ref{LemmaCyclesRemove}, each pair $(p,q)$ is ordered correctly on $\mathcal{A}$, and so by Inequality \eqref{IneqGoodOrderQ3} and Corollary \ref{IneqLatentCor},
\be
\P[\mathcal{B}'_{5} ] \geq \P[\mathcal{B}'_{4} \cup \mathcal{A}^{c}] - \P[\mathcal{A}]= 1 - n^{-\Omega(\log(n))}.
\ee
This completes the proof.

\end{proof}

Theorem \ref{ThmMaxOrd} now follows immediately from Lemma \ref{LemmaMergeCorrectness}.

\section{Iterative Error Reduction} \label{SecErrorRooting}

In this section, we discuss and analyze Algorithm \ref{AlgIterationHolder}, and prove Theorem \ref{ThmReconMainRes2}. Throughout this section, we fix a graphon $w$ that satisfies Assumptions \ref{AssumptionSharpBoundary} and  \ref{AssumptionGoodGraphon}, and use the constants introduced in those assumptions. We assume that the input graph $G$ being analyzed is sampled from $G \sim w$ and we fix the associated notation from this choice as in the start of Section \ref{SubsecInitNot}; for example, we denote by $\{U_{i}\}$, $\{U_{ij}\}$ the random variables used to construct $G$ in representation \eqref{EqGraphonDef}. Finally, we let other notation follow as in  Algorithm \ref{AlgIterationHolder}; for example, we let $\{B_{j}\}$ be the Bernoulli random variables introduced in step 1, and sets $V_{i}$ be the sets constructed in steps 2 and 5. Let $G_i=G[V_i]$. We observe here that, conditional on $\{ B_{j}\}$, we have $G_i \sim w$ is a random graph of size $|V_{i}|$ that is sampled from $w$.

\begin{remarks} [Strengthening of  Theorem \ref{ThmReconMainRes2}]

Just as Theorem \ref{ThmMaxOrd} is a stronger version of Theorem \ref{ThmReconMainRes1}, our arguments can be used to give other versions of Theorem \ref{ThmReconMainRes2}. As there are two interesting versions, and both can be described entirely in terms of a small number of simple substitutions, we describe these substitutions in this remark rather than copying large amounts of nearly-identical text:

\begin{enumerate}
\item \textbf{Non-Uniform Graphons:} All references to Assumption \ref{AssumptionsSimpleWeakIdentifiabilityAssumptions} in the theorem statement and proof  can be replaced by references to the (strictly weaker) Assumption  \ref{AssumptionGoodGraphon}.
\item \textbf{Other Initial Sketches:} Step 2 of Algorithm \ref{AlgIterationHolder} calls  Algorithm \ref{AlgMerge}, but in fact any algorithm can be used for this initial sketch. The conclusions of Theorem \ref{ThmReconMainRes2} remain true as stated if \textit{all} of the following substitutions are made:
\begin{itemize}
\item Replace the call to Algorithm \ref{AlgMerge} in step 2 of Algorithm \ref{AlgIterationHolder} by a call to another algorithm, which we call \textit{OtherSketch}.
\item Replace Assumption \ref{AssumptionsSimpleWeakIdentifiabilityAssumptions} in the theorem statement with the following two assumptions: (i) part (3) of Assumption \ref{AssumptionGoodGraphon}, and (ii) a new assumption, which we will call \textit{Assumption OS}, that the output of \textit{OtherSketch} satisfies the error bound appearing in Theorem \ref{ThmReconMainRes1}.
\item In the proof, replace all references to Theorem \ref{ThmMaxOrd} by references to \textit{Assumption OS}. 
\end{itemize}
\end{enumerate}

\end{remarks}

\subsection{Algorithm \ref{AlgRefine}: Basic Concepts} \label{SecErrorRootingDisc}

We start by discussing Algorithm \ref{AlgRefine}, which builds a new ordering $\sigma_{2}$ on $V_2$ by looking at the edges in $G_{2}$ and the old ordering $\sigma_{1}$ on $V_1 \subset V_{2}$. Our main goal is to show the following: if Algorithm \ref{AlgRefine} is called with the right choice of parameters $C_1,C_2,C_3$, and $\sigma_1$ agrees with $\sigma_{\mathrm{true}}$ at certain precision level $d_1$, then $\sigma_2$ agrees with $\sigma_{\mathrm{true}}$ at a finer precision level $d_{2}<d_1$. We now outline the proof and define the basic concepts. As in earlier informal descriptions, we elide the distinction between a permutation and its total reverse.

First, we need to define the boundaries where $w$ drops to zero. Let $\delta $ be as in Assumption \ref{AssumptionSharpBoundary}, and define new boundary functions $r_\delta, \ell_\delta \, : \, [0,1] \mapsto [0,1]$  by:
\be
r_\delta(x)&=\sup\{ y \in [0,1] \, :\,w(x,y)\geq \delta\},\\
\ell_\delta(x)&=\inf\{ y \in [0,1] \,:\,w(x,y)\geq \delta\}.
\ee
These definitions will replace the very similar boundaries defined in \eqref{Eq:Boundaries}, which were defined with respect to the square $\WA{}$. By Assumption \ref{AssumptionSharpBoundary}, $w(x,y)=0$ if $y>r_\delta (x)$ or $y<\ell_\delta (x)$.

To start the heuristic analysis, assume that $\sigma_1$ is an ordering of $V_1$ which agrees with $\sigma_{\mathrm{true}}$ at precision level $d_1$.
We now show how $\sigma_1$ is used in steps 1--12 of Algorithm \ref{AlgRefine} to find an ordering $\sigma_2$ of $V_2\setminus V_1$ which agrees with $\sigma_{\mathrm{true}}$ at smaller precision level $d_2$, and has certain other helpful properties to be specified later. Fix $i,j\in V_2\setminus V_1$ so that $|U_i-U_j|>d_2$. Assume without loss of generality that $U_i<U_j$, so we have that $\sigma_{\mathrm{true}}(i)<\sigma_{\mathrm{true}}(j)$. Since $w$ is diagonally increasing,
\be
\{ z\,:\, w(U_i,z)=0\not=w(U_j,z)\text{ or }w(U_j,z)=0\not= w(U_i,z)\}= [r_\delta(U_i),r_\delta(U_j)]\cup[\ell_\delta(U_i),\ell_\delta (U_j)].
\ee
From Assumption \ref{AssumptionSharpBoundary} we have that 
\[
\big(r_\delta(U_j)-r_\delta(U_i)\big)+ \big(\ell_\delta (U_j)-\ell_\delta (U_i)\big)\geq B|U_j-U_i|.
\]
Assume first that
\be\label{AssumptionUiUj}
r_\delta(U_j)-r_\delta(U_i)\geq \ell_\delta (U_j)-\ell_\delta (U_i),
\ee
and thus 
\be \label{IneqSimpleRDelta}
r_\delta(U_i) < r_{\delta}(U_{j})- \frac{B}{2} |U_{i}-U_{j}| \leq r_\delta(U_{j}) - d_2\log(n)^{-1},
\ee where the second inequality holds for all $n $ large enough that $\log(n) > \frac{2}{B}$. Let
\be\label{DefItrue}
I_{\mathrm{true}} (i,j) =[r_{\delta}(U_{j}) -d_2\log(n)^{-1}, r_{\delta}(U_{j})].
\ee
Then  $w(U_{i},x) = 0, \, w(U_{j},x) \geq  \delta$ for all $x \in I_{\mathrm{true}}(i,j)$. Because of this ``sharp" distinction between $w(U_{i},x)$ and $w(U_{j},x)$, the region $I_{\mathrm{true}}$ is very informative in distinguishing the ordering of $i,j$.  Moreover, many points in $V_{1}$ will have latent positions in $I_{\mathrm{true}}(i,j)$ (roughly $p_{1} d_{2} n \log(n)^{-1}$ points on average). Step 2 of the algorithm computes the set $R(i,j)$, defined as the $C_1$ neighbours of either $i$ or $j$ in $V_1$ that are ranked highest according to $\sigma_1$. The idea behind the algorithm is that $R(i,j)$ is a small set that must contain many points with latent positions in this strongly-distinguishing set $I_{\mathrm{true}}(i,j)$; when this works, we expect the vertex with larger value $U_j$ to have significantly more neighbours in $R(i,j)$.

The main motivation behind our definition of $R(i,j)$ is thus to include the entire ``signal" in $I_{\mathrm{true}}$, while including as few additional ``noisy" vertices as possible. We will prove the following estimates, which make this intuition rigorous:

\begin{enumerate}
\item \textbf{Large Signal:} Denote by
\be\label{def:Dist}
{\mathbf{Dist}}_{R}(i,j) \equiv \{k \in V_1\, :  U_k\in I_{\mathrm{true}}(i,j)\}
\ee
the set of vertices that strongly distinguish between $i$ and $j$ on the right.
We will show that  ${\mathbf{Dist}}_{R}(i,j)$ is relatively large. In particular, we show in
Lemma \ref{BoundDist} below that \wep\
$|\mathbf{Dist}_R(i,j) | \geq  p_1d_2n\log(n)^{-2}.
$

As pointed out earlier, ${\mathbf{Dist}}_{R}(i,j)\cap N(i)=\emptyset$, so all vertices in $R(i,j)\cap \mathbf{Dist}_R(i,j)$ are neighbours of $j$ and not of $i$.
%
This leads to the definition:
\be \label{def:Signal}
{\mathbf{Signal}}_{R}(i,j) &\equiv {\mathbf{Dist}}_{R}(i,j)\cap N(j).
\ee
We will show in Lemma \ref{LemmaSignalBig} that, for appropriately chosen $C_2$, w.e.p.~
\be \label{Ineq:SigSizeHeuristic}
|\mathbf{Signal}_{R}(i,j)|\geq 2C_2.
\ee
Moreover, we show in Lemma \ref{LemmaSignalContainedinR} that w.e.p.
\be \label{Ineq:SignalSubsetR}
\mathbf{Signal}_{R}(i,j) \subseteq R(i,j).
\ee

\item \textbf{Small Noise:} There are many vertices $k \in R(i,j)$ that are not in the ``high-signal" region $\mathbf{Dist}_{R}(i,j) $. 
Due to the fact that $G$ is sampled from a diagonally increasing graphon, which satisfies inequality \eqref{IneqEmbBasic}, we expect to see at least as many neighbours of $j$ as neighbours of $i$ in $R(i,j)\setminus \mathbf{Dist}_{R}(i,j)$, but there may be very small (or no) signal in this discrepancy. Indeed, the expected discrepancy is \textit{exactly} 0 for our test-case graphon \eqref{EqSimpleGraphon}. Thus, we view these edges as essentially adding noise to our comparison. We define

\be \label{IneqDefNoise}
\mathbf{Noise}_{R}(i,j) &\equiv |R(i,j)\cap N(i)|-|(R(i,j)\cap N(j))\setminus \mathbf{Signal}_{R}(i,j) |.
\ee
When $\mathbf{Signal}_{R}(i,j) \subseteq R(i,j)$, then,

\[
|N(j)\cap R(i,j)|-|N(i)\cap R(i,j)|= |\mathbf{Signal}_R(i,j)|-\mathbf{Noise}_R(i,j).
\]
We will show in Lemma \ref{LemmaSmallNoise} below that, w.e.p.~$\mathbf{Noise}_{R}(i,j)
<C_2$.
In particular, this noise term will always be small compared to the signal term in Inequality \eqref{Ineq:SigSizeHeuristic}. When these high-probability events all occur, the function $F^{(2)}$ as defined in Steps 3--9 of Algorithm \ref{AlgRefine} agrees with $\sigma_{\mathrm{true}}$ if $|U_i-U_j|>d_2$.
\end{enumerate}

Finally, $L$, $ \mathbf{Dist}_L$, $\mathbf{Signal}_L$, $\mathbf{Noise}_L$ are the obvious analogues of $R$, $ \mathbf{Dist}_R$, $\mathbf{Signal}_R$, $\mathbf{Noise}_R$ on the left, and the obvious analogues to the above bounds hold by the same arguments for the case where (\ref{AssumptionUiUj}) does not hold (that is, where $r_\delta(U_j)-r_\delta(U_i)< \ell_\delta (U_j)-\ell_\delta (U_i)$). Therefore, \wep\ the  ordering $\sigma_{2}'$ on $V_2\setminus V_1$ obtained in Step 12 of the algorithm agrees with $\sigma_{\mathrm{true}}$ at precision level $d_2$.

The second part  of the algorithm (steps 13--24) then extends this ordering to all of $V_2$.

\begin{enumerate}
\item[(3)] \textbf{Highest and lowest ranked neighbours indicate ordering} The values $t(i)$ and $b(i)$, computed in step 14, are the ranks, according to $\sigma_2'$, of the top and the bottom element in $N(i)\cap (V_2\setminus V_1)$, respectively. If $|U_i-U_j|>d_2\log(n)^2$, then \wep\ there will be many neighbours $k$ of $j$ with latent position $U_{k}$ greater than that of any neighbour of $i$. Thus the top elements of $N(i)$ and $N(j)$ in $V_2\setminus V_2$, ranked according to $\sigma_{\mathrm{true}}$, will differ significantly in rank. Since $\sigma_2'$ is a good approximation of $\sigma_{\mathrm{true}}$, this will also be true for these same elements when ranked according to $\sigma_2'$. Thus, as will be made precise in Lemma \ref{LemmaExtendOrdering}, the ordering $\sigma$ as returned by the algorithm will agree with $\sigma_{\mathrm{true}}$ at precision level $d_2\log(n)^2$.
\end{enumerate}

\subsection{Algorithm \ref{AlgRefine}: Correctness} \label{Sec:Refine}

We are now ready to prove the statements in the previous section. We retain the same notation, and now give conditions on the parameters which we assume to hold throughout the remainder of this section:

\be \label{def:RefineParameters}
1\geq p_2 &\geq  p_1\log (n)^{3},\\
\log(n)^{-5} \geq d_1 &\geq \log(n)^{10}/(p_1n),\\
d_2 &= \sqrt{d_1/(p_1n)}\log (n)^{-1},\\
C_1 &= \lceil p_1 d_1n\log(n)^{4}\rceil,\\
C_2 &= \lfloor \sqrt{p_1d_1 n}\log(n)^{6}\rfloor,\\
C_3 &= \lfloor \sqrt{p_1d_1 n} \log(n)^{2}\rfloor.
\ee

Note that the definition of $C_1,C_2,C_3$ as given above agrees with the definition of these parameters in Step 5 of Algorithm \ref{AlgIterationHolder} in its first iteration (with $i=1$). The first three conditions are satisfied by our choice of the sequence $p_{i},d_{i}$ in Equation  \eqref{DefParameters}.
We use the following immediate consequences of \eqref{def:RefineParameters}:
\be
d_2&\leq d_1 \log(n)^{-6}<d_1,\\
d_1&\geq \frac{\log(n)^{13}}{n} \\
C_1& \leq p_1n\log(n)^{-1}+1\text{, and}\\
\sqrt{p_1d_1n} &\geq \log(n)^{5}. \\
\ee
Moreover, we assume throughout that $n$ is large enough so that $\log(n)$ exceeds any of the constants appearing in this section.

For the correctness proof of Algorithm \ref{AlgRefine}, we define certain bad events on the variables $U_i$ and $U_{i,j}$, and show that \wep\ they do not occur. Let 
\be 
\mathcal{A}_{6} &=  \bigcup_{k,\ell\in V,\, U_k<U_\ell -\frac{1}{n}} \left\{ \big|\, |\{ i\in V: U_k<U_i<U_\ell\}|- n |U_{\ell}-U_{k}|\,\big|> \sqrt{(U_{\ell}-U_{k}) n}\log^2 (n)\right\}.\\
\ee
On the event $\mathcal{A}_6^c$, the number of vertices in each interval is concentrated around its expected value. The event is a property of variables $U_i$, and is closely related to the event $\mathcal{A}_5$ as given in Definition \ref{DefBadEvents}. It is a straightforward consequence of Chernoff's inequality and a union bound that $\mathcal{A}_{6}^c$  occurs \wep

We will also need this kind of nice behaviour for all our sampled sets $V_i$. For a given $p$, let $S = S(p) \equiv \{ i: B_i\leq p\}$ and define 
\be \label{DefA6subset}
\mathcal{A}'_{6}(p) &=  \bigcup_{k,\ell\in V,\, U_k<U_\ell-\frac{1}{np}} \left\{ \big|\, |\{ i\in S(p) : U_k<U_i<U_\ell\}|-np|U_{\ell}-U_{k}|\,\big|> \right.\\
&\hspace*{8.5cm}\left. 2\sqrt{np(U_{\ell}-U_k)}\log (n)\right\}.\\
\ee

By the Chernoff bound, for any given sequence $p = p(n) \in (0,1)$ satisfying $\limsup_{n \rightarrow \infty} \frac{-\log(p(n))}{\log(n)} < 1$,  $(A_6'(p))^c$ holds \wep \, This also means that it holds simultaneously \wep\ for any sequence $p_{1},p_{2},\ldots,p_{k}$, as long as $k$ grows at most polynomially quickly in $n$.

We can  use $\mathcal{A}_6'$ to establish bounds on the size of $\mathbf{Dist}_R(i,j)$, where $i$ and $j$ are so that $U_i > U_i +d_2$. We assume without loss of generality that $r_\delta(U_j)-r_\delta(U_i)\geq \ell_\delta(U_j)-\ell_\delta(U_i)$. Applying Inequality \eqref{IneqSimpleRDelta}, 

\be \label{IneqGoodUiUj}
U_{j} > U_{i} + d_{2} \quad  \text{and} \quad r_\delta(U_j)-r_\delta(U_i)\geq d_2\log(n)^{-1}.
\ee

\begin{lemma}\label{BoundDist}
W.e.p.\ for all $i,j$ satisfying \eqref{IneqGoodUiUj},
\be
|\mathbf{Dist}_R(i,j) | \geq  p_1d_2n\log(n)^{-2}.
\ee
\end{lemma}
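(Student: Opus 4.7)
The plan is to apply a conditional Chernoff bound for each pair $(i,j)$ and then union-bound over the $O(n^{2})$ pairs. For a fixed pair $i \ne j$, I would first observe that because $r_\delta$ takes values in $[0,1]$ and \eqref{IneqGoodUiUj} forces $r_\delta(U_j)-r_\delta(U_i) \geq d_2/\log(n)$, the interval $I_{\mathrm{true}}(i,j) = [r_\delta(U_j) - d_2/\log(n),\, r_\delta(U_j)]$ has length exactly $d_2/\log(n)$ and is contained in $[r_\delta(U_i), r_\delta(U_j)] \subseteq [0,1]$. Conditional on $U_i, U_j$, this interval is deterministic and
\[
|\mathbf{Dist}_R(i,j)| = \sum_{k \in V\setminus\{i,j\}} \mathbf{1}_{B_k \leq p_1}\,\mathbf{1}_{U_k \in I_{\mathrm{true}}(i,j)}
\]
is a sum of $n-2$ i.i.d.\ Bernoulli random variables of parameter $p_1 d_2/\log(n)$.

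The next step is to check that the conditional mean is already polylogarithmic, so that Chernoff delivers w.e.p.\ tails. Using $d_2 = \sqrt{d_1/(p_1 n)}\log(n)^{-1}$ from \eqref{def:RefineParameters} gives $n p_1 d_2 = \sqrt{n p_1 d_1}\log(n)^{-1}$, and the lower bound $d_1 \geq \log(n)^{10}/(p_1 n)$ from the same display yields $n p_1 d_2 \geq \log(n)^{4}$. Hence the conditional mean $\mu = (n-2)\,p_1 d_2/\log(n)$ satisfies $\mu \geq \tfrac{1}{2}\log(n)^{3}$ for all $n$ sufficiently large, uniformly in $U_i, U_j$.

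Next I would apply the standard multiplicative Chernoff inequality $\mathbb{P}[X \leq (1-\epsilon)\mu] \leq \exp(-\epsilon^{2}\mu/2)$ with $1-\epsilon = 2/\log(n)$, which exceeds the ratio $(p_1 d_2 n/\log(n)^{2})/\mu$ once $n$ is large. This yields
\[
\mathbb{P}\bigl[|\mathbf{Dist}_R(i,j)| \leq p_1 d_2 n/\log(n)^{2} \,\big|\, U_i, U_j\bigr] \leq \exp(-\mu/4) \leq n^{-\log(n)^{2}/8},
\]
uniformly over realizations of $(U_i,U_j)$ satisfying \eqref{IneqGoodUiUj}. Integrating out $(U_i, U_j)$ and then summing over the $\binom{n}{2}$ unordered pairs gives a total failure probability of at most $n^{2}\cdot n^{-\log(n)^{2}/8}$, which is still \wep.

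The only real obstacle is the bookkeeping needed to verify $\mu \geq \log(n)^{3}$ from \eqref{def:RefineParameters} and to carry the constants through Chernoff; no new ideas are required. I expect this calculation to set the template for the subsequent signal-and-noise lemmas in this section, each of which should be a minor variant of the same conditional-Chernoff-plus-union-bound argument.
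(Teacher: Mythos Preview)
Your proof is correct and takes essentially the same approach as the paper: both reduce to a Chernoff-type tail bound on the binomial count of $V_1$-points landing in the length-$d_2\log(n)^{-1}$ interval $I_{\mathrm{true}}(i,j)$, followed by a union bound over pairs. The only difference is packaging---the paper routes the argument through the pre-established concentration event $\mathcal{A}_6'(p_1)$ of \eqref{DefA6subset} rather than invoking Chernoff directly for each pair, and your parameter arithmetic ($np_1d_2 = \sqrt{np_1d_1}\log(n)^{-1} \geq \log(n)^4$) matches what is implicit there.
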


\begin{proof}

Let $i$ and $j$ as stated. By definition, $|I_{true}(i,j)|= d_2\log(n)^{-1}$, and so by the choice of parameters as given in (\ref{def:RefineParameters}) the set $\mathbf{Dist}_R(i,j)=\{  s\in V_1\,:\,s\in I_{true}(i,j)\}$  has expected size
\[
\mathbb{E}(|\mathbf{Dist}_R(i,j)|) = n p_{1} |I_{true}(i,j)| = p_{1} d_2 n\log(n)^{-1}.
\]
We also note that $\frac{1}{p_{1}n} \leq  d_{2} \leq |U_{i} - U_{j}|$. Combining these two facts, for all $i,j\in V_2\setminus V_1$,
\be 
\{ |\mathbf{Dist}_R(i,j) | < p_1d_2n\log(n)^{-2}\}\,\subset \,\mathcal{A}_6'(p_1).
\ee
Since \wep\ $\mathcal{A}_6'(p_1)^c$ holds, the lemma follows.
\end{proof}

We will next check that all moderately-long intervals contain roughly the expected number of neighbours of any given vertex (recall that $N(i)$ is the usual graph neighbourhood of $i$ in $G$). More precisely, for a set $S\subset V$, an ordering $\sigma$ on $S$, and a pair $k,\ell \in S$, define the interval $I(S, \sigma, k,\ell) = \{ s\in S\,:\,\sigma(k)<\sigma (s)<\sigma (\ell)\}$ and define $\mathcal{I}(S,\sigma) = \{I(S,\sigma,k,\ell):k,\ell \in S\}$ to be the collection of such intervals. For any vertex $i \in V$ and set $I \subset V$, let $W(i,I)=\sum_{j\in I} w(U_i,U_j) = \E [|N(i)\cap I|]$.
For sets $S,T\subseteq V$, and ordering $\sigma$ of $S$, define
\be
\mathcal{A}_{7}(S,T,\sigma) &= \bigcup_{I\in {\mathcal{I}}(S,\sigma),\,i\in T }
\left\{\big| \, | I\cap N(i)\big|  - W(i,I)\big| >\sqrt{W(i,I)}\log(n)+\log(n)^2 \right\}\notag \\
\ee
We now use this to show that $\mathbf{Signal}_R(i,j)$ is sufficiently large.

\begin{lemma} \label{LemmaSignalBig}
W.e.p.\ for all $i,j$ satisfying \eqref{IneqGoodUiUj},
\be
|\mathbf{Signal}_R(i,j) | \geq  2C_2.
\ee
\end{lemma}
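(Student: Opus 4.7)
The plan is to apply a conditional Chernoff bound to $|\mathbf{Signal}_R(i,j)|$ after conditioning on the latent positions $\{U_k\}$, using that every vertex in $\mathbf{Dist}_R(i,j)$ is a neighbour of $j$ independently with probability at least $\delta$. I will first work under assumption \eqref{AssumptionUiUj} (the opposite case is handled symmetrically, with $r_\delta$ replaced by $\ell_\delta$).

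First I would verify the pointwise lower bound on link probabilities. For $k \in \mathbf{Dist}_R(i,j)$, by definition $U_k \in I_{\mathrm{true}}(i,j) = [r_\delta(U_j) - d_2 \log(n)^{-1},\, r_\delta(U_j)]$. By \eqref{IneqSimpleRDelta}, the left endpoint satisfies $r_\delta(U_j) - d_2\log(n)^{-1} > r_\delta(U_i)$, so $w(U_i,U_k) = 0$, confirming consistency with the setup. More importantly, $\ell_\delta(U_j) < r_\delta(U_j) - d_2\log(n)^{-1}$ for all $n$ large enough: the length of the support interval $[\ell_\delta(U_j), r_\delta(U_j)]$ is at least $\epsilon$ by Assumption \ref{AssumptionGoodGraphon}(3) together with Assumption \ref{AssumptionSharpBoundary}(1), whereas $d_2\log(n)^{-1} \to 0$. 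Hence $U_k \in (\ell_\delta(U_j), r_\delta(U_j)]$, giving $w(U_j,U_k) \geq \delta$.

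Second, I would condition on $\mathcal{F}_1$ (which, together with $\{B_j\}$, determines $V_1, V_2$ and the set $\mathbf{Dist}_R(i,j)$). Given $\mathcal{F}_1$ (and $\{B_j\}$), the indicators $\{\mathbf{1}_{kj \in E}\}_{k \in \mathbf{Dist}_R(i,j)}$ are independent Bernoulli variables with success probabilities $w(U_j,U_k) \geq \delta$, so $|\mathbf{Signal}_R(i,j)|$ is a sum of these indicators with conditional mean at least $\delta \, |\mathbf{Dist}_R(i,j)|$. A Chernoff bound gives
\begin{equation*}
\P\!\left[\, |\mathbf{Signal}_R(i,j)| < \tfrac{\delta}{2} |\mathbf{Dist}_R(i,j)| \,\big|\, \mathcal{F}_1, \{B_j\} \right] \leq \exp\!\left(-\,c\,\delta\, |\mathbf{Dist}_R(i,j)|\right),
\end{equation*}
for an absolute constant $c$. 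On the event (which holds \wep\ by Lemma \ref{BoundDist}) that $|\mathbf{Dist}_R(i,j)| \geq p_1 d_2 n \log(n)^{-2}$, the exponent is at least $c\delta \, p_1 d_2 n \log(n)^{-2}$. The parameter constraints \eqref{def:RefineParameters} force $p_1 d_2 n \log(n)^{-2}$ to grow faster than any fixed power of $\log n$, so this conditional failure probability is $n^{-\Omega(\log(n))}$.

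Finally, I would take a union bound over the at most $n^2$ pairs $(i,j) \in (V_2 \setminus V_1)^2$ satisfying \eqref{IneqGoodUiUj}, yielding $|\mathbf{Signal}_R(i,j)| \geq \tfrac{\delta}{2} p_1 d_2 n \log(n)^{-2}$ simultaneously \wep. It remains to compare this lower bound to $2C_2$; plugging in $d_2 = \sqrt{d_1/(p_1 n)}\log(n)^{-1}$ reduces the claim $\tfrac{\delta}{2} p_1 d_2 n \log(n)^{-2} \geq 2C_2$ to a purely numerical comparison in terms of $\delta$ and the $\log$-factors of \eqref{def:RefineParameters}, absorbing $\delta$ into the large polylog slack present in the parameter choices. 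The main conceptual obstacle is not the concentration step itself but rather the careful setup of the conditioning: one must be sure that $\mathbf{Dist}_R(i,j)$ depends only on $\mathcal{F}_1$ and $\{B_j\}$ (and not on the edge variables $\{U_{k,\ell}\}$) before invoking independence of the Bernoulli family indexed by $k \in \mathbf{Dist}_R(i,j)$; after that, everything reduces to bookkeeping.
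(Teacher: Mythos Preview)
Your approach is structurally the same as the paper's: both condition on the latent positions (and the $B_j$'s), observe that $\mathbf{Dist}_R(i,j)$ is then deterministic while the edge indicators $\mathbf{1}_{\{j,k\}\in E}$ for $k\in\mathbf{Dist}_R(i,j)$ are independent Bernoullis with success probability at least $\delta$, and apply a Chernoff/Hoeffding bound. The paper packages this concentration into the predefined event $\mathcal{A}_7(V_1,V_2\setminus V_1,\sigma_{\mathrm{true}})$ and then writes the containment
\[
\{|\mathbf{Signal}_R(i,j)|<2C_2\}\subset\{|\mathbf{Dist}_R(i,j)|<3C_2/\delta\}\cup\mathcal{A}_7(\cdots)\subset\mathcal{A}_6'(p_1)\cup\mathcal{A}_7(\cdots),
\]
whereas you apply Chernoff directly and then invoke Lemma~\ref{BoundDist}. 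Conceptually there is no difference.

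However, the step you label ``bookkeeping'' is not routine and in fact does not close with the stated parameters. Substituting $d_2=\sqrt{d_1/(p_1n)}\log(n)^{-1}$ gives
\[
\tfrac{\delta}{2}\,p_1 d_2 n\,\log(n)^{-2}=\tfrac{\delta}{2}\sqrt{p_1 d_1 n}\,\log(n)^{-3},
\qquad
2C_2\approx 2\sqrt{p_1 d_1 n}\,\log(n)^{6},
\]
so your claimed inequality $\tfrac{\delta}{2}p_1 d_2 n\log(n)^{-2}\ge 2C_2$ is off by a factor of order $\log(n)^{9}$ in the wrong direction; there is no ``polylog slack'' to absorb $\delta$ here. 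The paper's own proof has exactly the same gap at the inclusion $\{|\mathbf{Dist}_R(i,j)|<3C_2/\delta\}\subset\mathcal{A}_6'(p_1)$: on $(\mathcal{A}_6'(p_1))^c$ one only gets $|\mathbf{Dist}_R(i,j)|\gtrsim\sqrt{p_1 d_1 n}\log(n)^{-2}$, which is far smaller than $3C_2/\delta$. This looks like an inconsistency in the polylog exponents in the paper's parameter choices (the length of $I_{\mathrm{true}}$, the definitions of $d_2$ and $C_2$) rather than a defect in the method, but you should not describe this step as mere bookkeeping---it is precisely where the argument, as written, breaks.
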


\begin{proof} We first show that $\mathcal{A}_{7}^{c}$ occurs \wep \, for the relevant parameters.
Let $\mathcal{H}$ be the $\sigma$-algebra generated by the variables $\{ U_i\}$ and  $\{ B_i\}$, and let $\mathcal{H}(G_1)$ be the $\sigma$-algebra of the variables $\{ U_i\}$ and  $\{ B_i\}$, and  $\{ U_{i,j}:i,j\in V_1\}$. Recalling that the size of the set $|I\cap N(i)|$ appearing in the definition of $\mathcal{A}_{7}$ can be written as a sum of independent Bernoulli random variables in these $\sigma$-algebras,\footnote{Recall we have assumed that $\sigma_1$ is completely determined by $G_1=G[V_1]$.} then applying Hoeffding's inequality and a union bound, we immediately have
\be\label{BoundA7}
\P [ \mathcal{A}_{7}(V_1,V_2\setminus V_1,\sigma_1)\,|\,\mathcal{H}(G_1)]&=n^{-\Omega(\log (n))},\\
\P [ \mathcal{A}_{7}(V_2,V_2,\sigma_{\mathrm{true}})\cup \mathcal{A}_{7}(V_1,V_2,\sigma_{\mathrm{true}})\,|\,\mathcal{H}]&=n^{-\Omega(\log (n))}.
\ee

By definition $\mathbf{Dist}_R(i,j)$ is an interval in $\mathcal{I}(V_1,\sigma_{\mathrm{true}})$; also recall $|\mathbf{Signal}_R(i,j)|=|\mathbf{Dist}_R(i,j)\cap N(j)|$. By Assumption \ref{AssumptionSharpBoundary}, $w(U_j,U_k)\geq \delta$ for all $k\in \mathbf{Dist}_R(i,j)$, and thus $W(j, \mathbf{Dist}_R(i,j))\geq \delta |\mathbf{Dist}_R(i,j)|$. Also, by \eqref{def:RefineParameters}, $C_2/\delta \geq \log(n)^3$ for $n$ sufficiently large. We can then directly conclude that
\be\label{Eq:LargeSignal}
&\{ |\mathbf{Signal}_R(i,j)|< 2C_2 \}\\
&\subset \, \{ |\mathbf{Dist}_R(i,j) | < 3C_2/\delta\}\,\cup \,\{ |\mathbf{Dist}_R(i,j)\cap N(j)|<(2/3)\delta |\mathbf{Dist}_R(i,j)|\}\\
&\subset {\mathcal{A}'}_6(p_1) \, \cup \, \mathcal{A}_7(V_1,V_2\setminus V_1,\sigma_{\mathrm{true}}).
\ee
Since {the complements of the events} $\mathcal{A}_6'(p_1)$ and $\mathcal{A}_7(V_1,V_2\setminus V_1,\sigma_{\mathrm{true}})$ occur \wep, the result follows.
\end{proof}
In addition, we show that this large signal is indeed contained in $R(i,j)$:

\begin{lemma}\label{LemmaSignalContainedinR}
W.e.p.\ for all $i,j\in V_2\setminus V_1$ satisfying \eqref{IneqGoodUiUj},
\[
\mathbf{Signal}_R(i,j) \subset R(i,j).
\]
\end{lemma}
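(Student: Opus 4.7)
The plan is to use the precision-level hypothesis on $\sigma_1$ to translate the statement ``$k$ is in the top $C_1$ of $V_1\cap(N(i)\cup N(j))$ under $\sigma_1$'' into a statement about $U$-values, and then count using $\mathcal{A}_6'(p_1)^c$. Throughout, fix $i,j\in V_2\setminus V_1$ satisfying \eqref{IneqGoodUiUj} and $k\in\mathbf{Signal}_R(i,j)$. First I would verify that $k$ indeed belongs to the domain $V_1\cap(N(i)\cup N(j))$ on which $R(i,j)$ is computed: by definition $k\in N(j)\cap V_1$, and since $U_k\geq r_\delta(U_j)-d_2\log(n)^{-1}\geq r_\delta(U_i)$ (using the second part of \eqref{IneqGoodUiUj}), we get $w(U_i,U_k)=0$ and hence $k\notin N(i)$. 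If it already happens that $|V_1\cap(N(i)\cup N(j))|\leq C_1$, then $R(i,j)$ is all of $V_1\cap(N(i)\cup N(j))$ and we are done, so henceforth I will assume the domain has more than $C_1$ elements.

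To show $k\in R(i,j)$, it suffices to bound the number of $s\in V_1\cap(N(i)\cup N(j))$ with $\sigma_1(s)>\sigma_1(k)$ by $C_1-1$. Because $\sigma_1$ agrees with $\sigma_{\mathrm{true}}$ at precision level $d_1$, any such $s$ must satisfy $U_s>U_k-d_1$ (otherwise the pair would be compared correctly and would contradict $\sigma_1(s)>\sigma_1(k)$). Combining with $U_k\geq r_\delta(U_j)-d_2\log(n)^{-1}$ and the parameter inequality $d_2\log(n)^{-1}\leq d_1$ from \eqref{def:RefineParameters}, I get $U_s>r_\delta(U_j)-2d_1$. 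On the other hand, any vertex in $N(i)\cup N(j)$ is linked in $G$ to $i$ or $j$, so by Assumption \ref{AssumptionSharpBoundary} its latent value satisfies $w(U_s,U_i)>\delta$ or $w(U_s,U_j)>\delta$; in either case $U_s\leq r_\delta(U_j)$ (using monotonicity of $r_\delta$ and $U_i<U_j$). Therefore
\begin{equation*}
|\{s\in V_1\cap(N(i)\cup N(j)):\sigma_1(s)>\sigma_1(k)\}|\;\leq\;\bigl|\{s\in V_1:U_s\in(r_\delta(U_j)-2d_1,\,r_\delta(U_j)]\}\bigr|.
\end{equation*}

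The final step is the concentration bound. Since $d_1\geq\log(n)^{10}/(p_1 n)$, we have $2d_1\geq 1/(np_1)$, so the event $\mathcal{A}_6'(p_1)^c$, which holds \wep{}, bounds the right-hand side above by $2p_1d_1n+2\sqrt{2p_1d_1n}\log(n)$. Using $\sqrt{p_1d_1n}\leq p_1d_1n/\log(n)^5$, this is at most $3p_1d_1n$ for large $n$, which is strictly less than $C_1=\lceil p_1d_1n\log(n)^4\rceil$. Taking a union bound over all $O(n^2)$ relevant pairs $(i,j)$ keeps us in the \wep{} regime, completing the proof.

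I expect no real obstacle in this lemma: it is a counting argument in which $C_1$ is intentionally chosen generous enough (by a factor $\log(n)^4$) to swamp both the concentration error and the slack introduced by the precision level $d_1$. The only point that requires a moment of care is confirming $U_s\leq r_\delta(U_j)$ for $s\in N(i)\cup N(j)$, which relies on the ``gap'' portion of Assumption \ref{AssumptionSharpBoundary}, and the verification that $d_2\log(n)^{-1}\leq d_1$, which is immediate from the parameter relations in \eqref{def:RefineParameters}.
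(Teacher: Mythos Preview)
Your proposal is correct and follows essentially the same approach as the paper's proof: both confine the relevant vertices to a $U$-interval of width $2d_1$ around $r_\delta(U_j)$ via the precision-level hypothesis on $\sigma_1$, and then invoke $\mathcal{A}_6'(p_1)^c$ to show this interval contains fewer than $C_1$ points of $V_1$. The paper frames it as a contradiction (assume some signal vertex is excluded, deduce $|R(i,j)|<C_1$), while you do it directly (count vertices ranked above $k$), but the core counting argument is identical. One small remark: your appeal to ``monotonicity of $r_\delta$'' to get $r_\delta(U_i)\leq r_\delta(U_j)$ is not quite how this is justified in the surrounding text; the needed inequality is already part of \eqref{IneqGoodUiUj} itself.
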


\begin{proof}
Suppose $\mathbf{Signal}_R(i,j) = \mathbf{Dist}_{R}(i,j)\cap N(j) \not\subset R(i,j)$. Let
\[
k=\text{argmax} \{ \sigma_1(s): s\in (\mathbf{Dist}_{R}(i,j)\cap N(j))\setminus R(i,j)\}.
\]
That is, $k$ is the most highly ranked vertex in the signal that is not included in $R(i,j)$.

For simplicity of notation, let $r_j=r_\delta(U_j)$. By definition, $U_k\in I_{true}$, so $U_k\geq r_j-d_2\log(n)^{-1}$. Each $\ell\in R(i,j)$ is a neighbour of $i$ or $j$, so $U_\ell<r_j$. Moreover, since $\ell\in R(i,j)$ and $k\not\in R(i,j)$, it follows from the definition of $R(i,j)$ that  $\sigma_1(\ell)>\sigma_1(k)$. We assumed that $\sigma_1$ agrees with $\sigma_{\mathrm{true}}$ at precision level $d_1$. So then either $\ell$ and $k$ are correctly ordered by $\sigma_1$ (in which case $U_\ell>U_k\geq r_j-d_2\log(n)^{-1}$), or $|U_\ell-U_k|<d_1$ (in which case $U_\ell>U_k-d_1\geq r_j-d_2\log(n)^{-1}-d_1$). Since $d_2<d_1$,
it follows that for each $\ell\in R(i,j)$, $r_j-2d_1<U_\ell<r_j$. Since we have that $R(i,j)\subset V_1$ and $C_1\geq p_1n d_1\log (n)$ (by \eqref{def:RefineParameters}),
\be
\{  \mathbf{Dist}_{R}(i,j)\cap N(j)\not\subset R(i,j)\}
\subset \{ |\{ s\in V_1: r_j-2d_1<U_s<r_j\}|\geq C_1 \}
\subset  {\mathcal{A}'}_6(p_1).
\ee
We saw immediately following \eqref{DefA6subset} that $(\mathcal{A}'_6(p_1))^{c}$ occurs \wep, completing the proof.
\end{proof}

From this lemma and Lemma \ref{LemmaSignalBig}, we conclude that \wep ,
\be \label{Conclude:LargeSignal}
|\mathbf{Signal}_R(i,j) \cap  R(i,j)|=|\mathbf{Signal}_R(i,j)| \geq 2C_2.
\ee
Finally, we show that the noise in $R(i,j)$ is smaller than this lower bound on the signal.

\begin{lemma}
\label{LemmaSmallNoise}
W.e.p.\ for all $i,j\in V_2\setminus V_1$ satisfying \eqref{IneqGoodUiUj},
\[
\mathbf{Noise}_{R}(i,j) \leq C_2.
\]
\end{lemma}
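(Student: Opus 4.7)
The plan is to express $\mathbf{Noise}_R(i,j)$ as a sum of conditionally independent bounded random variables, apply Hoeffding's inequality to control fluctuations around the conditional mean, and separately bound the conditional mean using the diagonally-increasing structure of $w$ together with the sharp boundary assumption.

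First I would simplify the sum. By \eqref{IneqSimpleRDelta} and Assumption \ref{AssumptionSharpBoundary}, every $s$ with $U_s\in I_{\mathrm{true}}(i,j)$ satisfies $w(U_i,U_s)=0$, so $\mathbf{Dist}_R(i,j)\cap N(i)=\emptyset$ almost surely and $\mathbf{Dist}_R(i,j)\cap N(j)=\mathbf{Signal}_R(i,j)$. Writing $S_0:=R(i,j)\setminus\mathbf{Dist}_R(i,j)$, this gives
\[
\mathbf{Noise}_R(i,j)=|S_0\cap N(i)|-|S_0\cap N(j)|=\sum_{s\in S_0}\bigl(\mathbf{1}_{s\in N(i)}-\mathbf{1}_{s\in N(j)}\bigr).
\]
Next, condition on the $\sigma$-algebra $\mathcal{G}$ generated by $\{U_k\}_{k\in V}$, $\{B_k\}$, $G_1=G[V_1]$, and the union-indicators $\{\mathbf{1}_{s\in N(i)\cup N(j)}:s\in V_1\}$. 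Under $\mathcal{G}$, the permutation $\sigma_1$, the set $R(i,j)$, and $S_0$ are all measurable, while the pairs $(\mathbf{1}_{s\in N(i)},\mathbf{1}_{s\in N(j)})_{s\in V_1}$ are conditionally independent, with each pair's distribution determined by $p_i(s):=w(U_i,U_s)$ and $p_j(s):=w(U_j,U_s)$.

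Since $|S_0|\leq |R|\leq C_1$, Hoeffding's inequality applied conditionally on $\mathcal{G}$ gives
\[
\P\bigl[\,|\mathbf{Noise}_R-\E[\mathbf{Noise}_R\mid\mathcal{G}]|>C_2/2\,\bigm|\,\mathcal{G}\bigr]\leq 2\exp\bigl(-\Omega(C_2^{\,2}/C_1)\bigr)=2\exp\bigl(-\Omega(\log(n)^{8})\bigr),
\]
which is w.e.p.~and survives a union bound over the $O(n^2)$ relevant pairs $(i,j)$. It thus suffices to show that $\E[\mathbf{Noise}_R\mid\mathcal{G}]\leq C_2/2$ w.e.p.~uniformly in $(i,j)$.

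The hard part will be this conditional expectation bound. Writing $\E[\mathbf{Noise}_R\mid\mathcal{G}]=\sum_{s\in S_0}\Delta(s)$ with $\Delta(s):=(p_i(s)-p_j(s))/(p_i(s)+p_j(s)-p_i(s)p_j(s))$, the diagonally-increasing property gives $\Delta(s)\leq 0$ whenever $U_s>U_j$, so the positive contribution comes only from $S_0^-:=\{s\in S_0:U_s\leq U_j\}$. To control $|S_0^-|$, I would establish (by an argument in the spirit of the proof of Lemma \ref{LemmaSignalContainedinR}, using that $\sigma_1$ agrees with $\sigma_{\mathrm{true}}$ at precision $d_1$) the structural fact that w.e.p.\ $R(i,j)\cap(N(i)\cup N(j))\subset\{s\in V_1:U_s\geq\tau_{C_1}(i,j)-d_1\}$, where $\tau_{C_1}(i,j)$ denotes the $C_1$-th largest $U$-value in $V_1\cap(N(i)\cup N(j))$; combined with the lower bound $w(U_j,z)\geq\delta$ on $[\ell_\delta(U_j),r_\delta(U_j)]$ and the event $\mathcal{A}_6'(p_1)^c$, this bounds $|S_0^-|$ by roughly $p_1n(U_j-\tau_{C_1}+d_1)_+$. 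The main obstacle is making this quantity small enough in the regime where $U_j$ is close to $r_\delta(U_j)$: here $S_0^-$ may form a nontrivial portion of $S_0$, and for $s$ with $U_i<U_s<U_j$ the diagonally-increasing inequality does not directly sign $\Delta(s)$, so one must exploit the sharp boundary structure and the geometry of $R$ more carefully (for instance, by sub-dividing the interval $[\tau_{C_1}-d_1,U_j]$ and combining with the event $\mathcal{A}_7(V_1,V_2,\sigma_{\mathrm{true}})^c$ used in the proof of Lemma \ref{LemmaSignalBig}). Once this expectation bound is in hand, combining with the Hoeffding estimate and a union bound over $(i,j)$ completes the proof.
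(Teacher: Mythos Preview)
Your concentration step is fine, and your reduction to bounding the conditional mean is correct: the only positive contribution to $\E[\mathbf{Noise}_R\mid\mathcal{G}]$ indeed comes from $S_0^-=\{s\in S_0:U_s\le U_j\}$. The gap is that you then treat $S_0^-$ as a genuine obstacle and sketch an incomplete workaround, whereas in fact $S_0^-=\emptyset$ w.e.p., so the mean is non-positive and the ``hard part'' disappears entirely.

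The missing observation is this. By part (3) of Assumption~\ref{AssumptionGoodGraphon} (connectedness of $\WA{}$) there is a constant $\epsilon'>0$ with $w(x,y)\ge\delta$ whenever $|x-y|\le\epsilon'$. Hence on $(\mathcal{A}_6'(p_1))^c\cap(\mathcal{A}_7(V_1,V_2,\sigma_{\mathrm{true}}))^c$, the vertex $j$ has at least $(\delta/4)\,\epsilon'\,p_1 n$ neighbours in $V_1$ with $U$-value in $(U_j,U_j+\epsilon')$. Since $C_1\le p_1 n/\log(n)+1$ and $d_1\le\log(n)^{-5}$, this count exceeds $C_1$ by far more than the $O(p_1 n d_1)$ elements within distance $d_1$ of $U_j$. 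Because $\sigma_1$ agrees with $\sigma_{\mathrm{true}}$ at precision $d_1$, these many neighbours are all ranked above any $s$ with $U_s\le U_j$, so no such $s$ can belong to the top-$C_1$ set $R(i,j)$. Thus every $k\in R(i,j)$ has $U_k>U_j$, and by the diagonally-increasing property $\Delta(s)\le 0$ for all $s\in S_0$; your conditional mean is $\le 0$ and Hoeffding with $C_2^2/C_1=\Theta(\log(n)^8)$ finishes the proof.

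The paper establishes exactly this geometric fact first, then bounds the fluctuation by writing $R(i,j)=I\cap(N(i)\cup N(j))$ for a $\sigma_1$-interval $I$ and invoking the pre-packaged event $\mathcal{A}_7(V_1,V_2\setminus V_1,\sigma_1)$ to control $|N(i)\cap I|-W(i,I)$ and $W(j,I)-|N(j)\cap I|$ separately, with $W(i,I)\le W(j,I)$ supplying the sign. Your conditioning on the union indicators is a perfectly valid alternative and arguably cleaner for the fluctuation part; the two approaches differ only cosmetically once you have $U_k>U_j$ for all $k\in R(i,j)$.
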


\begin{proof}
We first show that \wep\ $U_j<U_k$ for all $k\in R(i,j)$. By Assumption \ref{AssumptionGoodGraphon}, $\WA{}$ is $(\epsilon,\alpha)$-connected, and it follows that there must be an $\epsilon'>0$ so that $w(x,y)>0$, and thus $w(x,y)\geq \delta$, for all $x,y$ with $|x-y|\leq\epsilon'$. Consider the interval $J=\{ s\in V_1\,:\, U_j < U_s < U_j+\epsilon'\}$. Then for each $s\in J$, $U_s\leq r_\delta (U_j)$, so $w(U_j,U_s)\geq \delta$. If $\mathcal{A}_7(V_1,V_2\setminus V_1,\sigma_{\mathrm{true}})^c$ and $\mathcal{A}_6'(p_1)^c$ hold, then $|N(j)\cap J|\geq (\delta /2)|J|\geq (\delta/4)\epsilon' p_1n >C_1,$ where the last inequality holds for all $n$ sufficiently large. It follows that \wep\ $j$ has at least $C_1$ neighbours with $U$-values greater than $U_j$, and so  $U_j<U_k$ for all $k\in R(i,j)$ as desired.

We now proceed to bound the noise, i.e.~the excess number of neighbours of $i$ over the neighbours of $j$ in $R(i,j)$. Since $w$ is diagonally increasing, it follows that $W(i,R(i,j))\leq W(j,R(i,j))\leq |R(i,j)|=C_1$. Moreover, by definition of $R(i,j)$, there must be an interval $I\in \mathcal{I}(V_1, \sigma_1)$ so that $R(i,j)=(I\cap N(i))\cup (I\cap N(j))$. By our choice of parameters \eqref{def:RefineParameters}, for all sufficiently large $n$
\be \label{IneqSimpleParamC2C1Comp}
C_2/2\geq  \sqrt{d_1p_1n}\log (n)^6/2 - 1 > 2\sqrt{p_1d_1n}\log(n)^5 \geq \sqrt{C_1}\log(n)+\log(n)^2.
\ee

Let $D_i=|N(i)\cap I|-W(i,I)$ and $D_j=W(j,I)-|N(j)\cap I|$. Then
\[
\mathbf{Noise}_{R}(i,j)\leq |N(i)\cap R(i,j)|-|N(j)\cap R(i,j)|=|N(i)\cap I|-|N(j)\cap I|\leq D_i+D_j.
\]
We then have
\be
\{\mathbf{Noise}_{R}(i,j)>C_2\}\,&\subset \, \{ D_i+D_j\geq C_2\}\\
&\subset\, \{ D_i\geq C_2/2\} \cup \{ D_j\geq C_2/2\}\\
&\subset\, \{ D_i\geq \sqrt{C_1}\log(n)+\log (n)^2\} \cup \{ D_j\geq \sqrt{C_1}\log(n)+\log (n)^2\} \\
&\subset\, \mathcal{A}_{7}(V_1,V_2\setminus V_1,\sigma_1),
\ee
where \eqref{IneqSimpleParamC2C1Comp} is used in the third line. The result follows from \eqref{BoundA7}.
\end{proof}

This completes the proof of the ``correctness" of the first step of Algorithm \ref{AlgRefine}, as summarized in the next lemma. For  sets $S\subseteq V$, parameter $d\in (0,1)$, and ordering $\sigma$ of $S$, define
\be\label{DefA8}
\mathcal{A}_{8}(S,\sigma ,d) &= \bigcup_{i,j\in S,\, |U_i-U_j|>d} \{ \mathbf{1}_{\sigma (i)<\sigma (j)}=\mathbf{1}_{\sigma_{\mathrm{true}}(i)>\sigma_{\mathrm{true}}(j)}\}.
\ee
That is, $(\mathcal{A}_{8}(S,\sigma ,d))^c$ holds if $\sigma$ agrees with $\sigma_{\mathrm{true}}$ at precision level $d$.
 \\

\begin{lemma}\label{LemmaRefineFirst}
Suppose parameters $C_1, C_2,p_1,p_2,d_1,d_2$ are chosen according to (\ref{def:RefineParameters}). Let $V_1,V_2$ be sampled from $V$ at rate $p_1,p_2$, respectively, and assume $\sigma_1$ is an ordering of $V_1$, derived only from $G[V_1]$, which agrees with $\sigma_{\mathrm{true}}$ at precision level $d_1$.
Then, for  $\sigma_2$ as computed in step 12 of Algorithm \ref{AlgRefine},
\[
\P [ \mathcal{A}_{8}(V_2\setminus V_1,\sigma_2 ,d_2)\,|\,\mathcal{H}[G_1]]=n^{-\Omega(\log{n})}.
\]
That is, \wep\, $\sigma_2$ is an ordering of $V_2\setminus V_1$ that agrees with $\sigma_{\mathrm{true}}$ at precision level $d_2$.
\end{lemma}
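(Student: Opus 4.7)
The plan is to assemble the four preceding lemmas (Lemmas \ref{BoundDist}, \ref{LemmaSignalBig}, \ref{LemmaSignalContainedinR}, \ref{LemmaSmallNoise}) into a pointwise correctness statement for the pairwise comparison $F^{(2)}$ built in steps 1--11 of Algorithm \ref{AlgRefine}, then pass from $F^{(2)}$ to the permutation $\sigma_{2}$ produced in step 12 via an argument in the style of Lemma \ref{LemmaCyclesRemove}. Work throughout on the intersection of the \wep\ events identified in those lemmas (in particular, $(\mathcal{A}_{6}'(p_{1}))^{c}$ and $(\mathcal{A}_{7}(V_{1}, V_{2}\setminus V_{1}, \sigma_{1}))^{c}$, whose intersection is also \wep).

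Fix $i,j \in V_{2}\setminus V_{1}$ with $|U_{i}-U_{j}|>d_{2}$, and WLOG $U_{i}<U_{j}$. By Assumption \ref{AssumptionSharpBoundary},
\[
(r_{\delta}(U_{j})-r_{\delta}(U_{i})) + (\ell_{\delta}(U_{j})-\ell_{\delta}(U_{i})) \geq B|U_{j}-U_{i}|,
\]
so for $n$ large at least one of the two differences is at least $(B/2)|U_{i}-U_{j}|\geq d_{2}\log(n)^{-1}$. Pick the dominant side and, in the right-side case, observe that \eqref{IneqGoodUiUj} holds; the left-side case is symmetric, handled by the analogous objects $L(i,j)$, $\mathbf{Dist}_{L}$, $\mathbf{Signal}_{L}$, $\mathbf{Noise}_{L}$ discussed at the end of Section \ref{SecErrorRootingDisc}. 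In the right-side case, Lemmas \ref{LemmaSignalBig}, \ref{LemmaSignalContainedinR}, \ref{LemmaSmallNoise} give, respectively, $|\mathbf{Signal}_{R}(i,j)|\geq 2C_{2}$, $\mathbf{Signal}_{R}(i,j)\subseteq R(i,j)$, and $\mathbf{Noise}_{R}(i,j)\leq C_{2}$. Combining these via the identity immediately preceding \eqref{IneqDefNoise},
\[
|N(j)\cap R(i,j)| - |N(i)\cap R(i,j)| = |\mathbf{Signal}_{R}(i,j)| - \mathbf{Noise}_{R}(i,j) \geq 2C_{2}-C_{2} = C_{2},
\]
so the test in step 4 of Algorithm \ref{AlgRefine} fires and assigns $F^{(2)}(i,j)=1=F_{\sigma_{\mathrm{true}}}(i,j)$. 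The symmetric bound on the dominant side also implies that neither of the two ``reverse'' inequalities in step 6 can hold (the comparison is strictly asymmetric in the correct direction, with margin $\geq C_{2}$), so the ``wrong'' value $F^{(2)}(i,j)=-1$ is never set. A union bound over the $O(n^{2})$ pairs preserves \wep.

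Finally, pass from $F^{(2)}$ to $\sigma_{2}' = \sigma_{F^{(2)}}$ as in step 12. Since $F^{(2)}$ agrees with $F_{\sigma_{\mathrm{true}}}$ on every pair with $|U_{i}-U_{j}|>d_{2}$, an argument identical to that of Lemma \ref{LemmaCyclesRemove} (with $d$ there replaced by $d_{2}$) gives that $\sigma_{2}'$ agrees with $\sigma_{\mathrm{true}}$ at precision level $2d_{2}$; since $d_{2}$ in \eqref{def:RefineParameters} is defined with a logarithmic cushion, any loss of a constant factor is absorbed, yielding the precision-$d_{2}$ claim. Conditioning on $\mathcal{H}[G_{1}]$ is harmless because $\sigma_{1}$ is $\mathcal{H}[G_{1}]$-measurable and all of the underlying \wep\ bounds (Hoeffding on $\{U_{i,j}\}$ for $\mathcal{A}_{7}$, Chernoff on $\{U_{i}\},\{B_{i}\}$ for $\mathcal{A}_{6}'(p_{1})$) either are independent of $\mathcal{H}[G_{1}]$ or were already stated conditionally on it.

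The main bookkeeping hurdle is verifying case (b) of step 5 above: one must confirm that in the right-dominant case the left-side disjunct in step 6 cannot falsely trigger, and symmetrically in the left-dominant case. This is not automatic from the signal/noise bounds on a single side, but it follows because $\mathbf{Signal}_{R}\subseteq N(j)$ forces the $R$-side comparison to favour $j$ by $\geq C_{2}$, while the $L$-side comparison, even without an active signal, is controlled by $\mathcal{A}_{7}(V_{1},V_{2}\setminus V_{1},\sigma_{1})^{c}$ so that its two-sided deviation is at most $\sqrt{C_{1}}\log n + \log(n)^{2} < C_{2}/2$. Spelling this out carefully (and the analogous statement on the other side) is the only delicate point in the proof.
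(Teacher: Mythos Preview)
Your approach is essentially the paper's: invoke Lemmas \ref{LemmaSignalBig}, \ref{LemmaSignalContainedinR}, \ref{LemmaSmallNoise} on the dominant side to get $|N(j)\cap R|-|N(i)\cap R|\geq C_2$ (or the $L$-analogue), conclude $F^{(2)}(i,j)=1$, and union-bound over pairs. Two comments on the extras you added.

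\textbf{The ``step 6'' worry is unnecessary.} Steps 4--9 of Algorithm \ref{AlgRefine} are an \textsc{if}--\textsc{else if} block. Once you have shown that one disjunct in step 4 holds (e.g.\ $|N(j)\cap R|>|N(i)\cap R|+C_2$), the \textsc{if} fires, $F^{(2)}(i,j)=1$ is assigned, and step 6 is never evaluated. There is no need to bound the off-side deviations via $\mathcal{A}_7$; the paper's proof accordingly just says ``$F(i,j)$ is set to $1$ in Steps 4--9.''

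\textbf{The passage from $F^{(2)}$ to $\sigma_2'$.} You are more explicit than the paper here, which simply asserts that $i,j$ are then ordered correctly in $\sigma_2$. However, your invocation of Lemma \ref{LemmaCyclesRemove} is not clean: that lemma measures precision in \emph{permutation positions} (see its proof, where $j>i+2d$), whereas Definition \ref{DefPrecision} and $\mathcal{A}_8$ measure it in $U$-values. The ``logarithmic cushion'' in $d_2=\sqrt{d_1/(p_1n)}\log(n)^{-1}$ is a multiplicative $\log(n)^{-1}$, which does not by itself absorb a constant factor $2$ in the same units; to make your argument rigorous you would need to convert between the two scales using $(\mathcal{A}_6'(p_2))^c$ (or simply rerun the Lemma \ref{LemmaCyclesRemove} computation directly in $U$-coordinates, bounding the number of $k$ with $|U_k-U_i|\leq d_2$ or $|U_k-U_j|\leq d_2$). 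The paper glosses over this point entirely, so you are not behind it; but your stated justification is not quite right as written.
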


\begin{proof}
Suppose $i,j\in V_2\setminus V_1$ and $|U_i-U_j|>d_2$. Assume \emph{wlog}\ that $U_i<U_j$. We consider first the case 
\be \label{IneqRefineFirstCase1}
r_\delta(U_j)-r_\delta(U_i)\geq \ell_\delta(U_j)-\ell_\delta(U_i).
\ee  
In this case, $i,j$ satisfy \eqref{IneqGoodUiUj}, and we conclude from (\ref{Eq:LargeSignal}), Lemma \ref{LemmaSignalContainedinR} and \ref{LemmaSmallNoise} that \wep
\[
|R(i,j)\cap N(j)|-|R(i,j)\cap N(i)| \geq  \mathbf{Signal}_R(i,j)-\mathbf{Noise}_R(i,j)\geq C_2,
\]
and thus $F(i,j)$ is set to 1 in Steps 4--9 of Algorithm \ref{AlgRefine}. 


If \eqref{IneqRefineFirstCase1} does not hold, then there may not be a sufficiently long interval $I_{true}$ to the right of $U_j$, but by Assumption \ref{AssumptionSharpBoundary} there must be a corresponding interval $I_{true}=[\ell_\delta(U_i),\ell_\delta(U_i)+d_2\log(n)^{-1}]$ so that $w(x,i)\geq \delta$ and $w(x,j)=0$ for all $x\in I_{true}$. By applying the arguments above to the values $1-U_i$, we see then that \wep\ $|L(i,j)\cap N(i)|-|L(i,j)\cap N(j)|\geq C_2$, and thus $F(i,j)$ is set to 1 in Steps 4--9 as above. Therefore, in both cases \wep\ $i$ and $j$ will ordered in $\sigma_2$ according to $\sigma_{\mathrm{true}}$.
The result now follows by a union bound.
\end{proof}

Finally, we show that the extension of $\sigma_2$ to all of $V_2$ has the desired precision level.

\begin{lemma}\label{LemmaExtendOrdering}
Suppose $p_1,p_2,d_2,C_3$ are as given in (\ref{def:RefineParameters}), and let $V_1,V_2$ be sets sampled from $V$ at rate $p_1,p_2$, respectively. Suppose $\sigma_{2}'$ is an ordering of $V_2\setminus V_1$ that agrees with $\sigma_{\mathrm{true}}$ at precision level $d_2$. Then
\[
\P [\bigcup_{i,j\in V_2,\, U_j>U_i+d_2\log(n)^{2}} \{t(j)-t(i)<C_3\} \cap \{ b(j)-b(i)<C_3\}]=n^{-\Omega(\log(n))}.
\]
That is, \wep\ any ordering of $V_2$ extending $\sigma_{2}'$ and based on the function $F^{(2)}$ as computed in steps 16--20 of Algorithm \ref{AlgRefine} agrees with $\sigma_{\mathrm{true}}$ at precision level $d_2\log(n)^{2}$.
\end{lemma}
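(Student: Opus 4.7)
\textbf{Proof plan for Lemma \ref{LemmaExtendOrdering}.} The plan is to fix $i,j \in V_2$ with $U_j > U_i + d_2\log(n)^2$ and show that w.e.p.\ either $t(j)-t(i) \geq C_3$ or $b(j)-b(i) \geq C_3$, so that Steps 17--22 of Algorithm \ref{AlgRefine} set $F^{(2)}(i,j)=1$. By Assumption \ref{AssumptionSharpBoundary} and the argument already used in Section \ref{SecErrorRootingDisc}, at least one of the following holds:
\be
r_{\delta}(U_j) - r_{\delta}(U_i) \geq \tfrac{B}{2}(U_j-U_i), \qquad \ell_{\delta}(U_j) - \ell_{\delta}(U_i) \geq \tfrac{B}{2}(U_j-U_i),
\ee
and the two cases are symmetric, so I will treat only the first.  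In that case $r_{\delta}(U_j) - r_{\delta}(U_i) \geq \tfrac{B}{2}d_2\log(n)^2$.

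First I would produce a witness neighbour $v_j' \in N(j)\cap(V_2\setminus V_1)$ with $U_{v_j'} \geq r_\delta(U_j) - d_2$.  On the interval $[r_\delta(U_j)-d_2, r_\delta(U_j)]$ the graphon satisfies $w(U_j,\cdot) \geq \delta$ by the definition of $r_\delta$ and the $(\epsilon,\alpha)$-connectedness ensuring $d_2 < \epsilon$ for large $n$.  Combining $(\mathcal{A}'_6(p_2-p_1))^c$ with an appropriate instance of $\mathcal{A}_7(V_2\setminus V_1, V_2, \sigma_{\mathrm{true}})^c$ shows w.e.p.\ this interval contains at least $\tfrac{\delta}{2}(p_2-p_1)n d_2 \geq 1$ neighbours of $j$ in $V_2\setminus V_1$; pick any one and call it $v_j'$.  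Let $v_i^{*}$ achieve $t(i)$; by the sharp-boundary hypothesis $U_{v_i^{*}} \leq r_{\delta}(U_i) \leq r_\delta(U_j) - \tfrac{B}{2}d_2\log(n)^2$, so
\be
U_{v_j'} - U_{v_i^{*}} \;\geq\; \tfrac{B}{2}d_2\log(n)^2 - d_2 \;\geq\; \tfrac{B}{4}d_2\log(n)^2
\ee
for $n$ large.

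Next I would convert this $U$-gap into a $\sigma_2'$-rank gap. Since $\sigma_2'$ agrees with $\sigma_{\mathrm{true}}$ at precision $d_2$, every $v \in V_2\setminus V_1$ with $U_v \in [U_{v_i^{*}}+d_2, U_{v_j'}-d_2]$ satisfies $\sigma_2'(v_i^{*}) < \sigma_2'(v) < \sigma_2'(v_j')$.  The interval has length at least $\tfrac{B}{8}d_2\log(n)^2$, so by $(\mathcal{A}'_6(p_2-p_1))^c$ the number of such $v$ is w.e.p.\ at least
\be
\tfrac{1}{2}(p_2-p_1)n \cdot \tfrac{B}{8}d_2\log(n)^2 \;\geq\; \tfrac{B}{16}\, p_1 n d_2 \log(n)^{5} \;=\; \tfrac{B}{16}\sqrt{p_1d_1 n}\,\log(n)^{4} \;\geq\; C_3,
\ee
where I used $p_2 \geq p_1\log(n)^3$ and the definitions of $d_2$ and $C_3$ in \eqref{def:RefineParameters}.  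Consequently $t(j) - t(i) \geq \sigma_2'(v_j') - \sigma_2'(v_i^{*}) \geq C_3$ w.e.p., and the symmetric argument on the left gives $b(j)-b(i) \geq C_3$ in the other case.  A union bound over the at most $n^2$ pairs $(i,j)$ closes the proof.

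The main obstacle is the bookkeeping: verifying that the various bad events ($\mathcal{A}'_6(p_2-p_1)$, $\mathcal{A}_7(V_2\setminus V_1, V_2, \sigma_{\mathrm{true}})$, and the event controlling $|N(j)\cap[r_\delta(U_j)-d_2,r_\delta(U_j)]|$) all hold simultaneously w.e.p., together with checking the arithmetic of the parameter relations in \eqref{def:RefineParameters} to guarantee $p_2 n d_2\log(n)^2 \gg C_3$ with slack large enough to absorb the $d_2$-correction coming from the precision of $\sigma_2'$.  None of these steps involves a new idea beyond those appearing in Lemmas \ref{BoundDist}--\ref{LemmaSmallNoise}, but the $\log(n)^2$ blow-up in precision is exactly what is needed to make the concentration bound win by a polylogarithmic factor.
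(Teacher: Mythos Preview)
Your proposal is correct and follows essentially the same strategy as the paper: use the sharp-boundary assumption to convert the gap $U_j-U_i>d_2\log(n)^2$ into a gap $r_\delta(U_j)-r_\delta(U_i)\gtrsim d_2\log(n)^2$, then use the precision of $\sigma_2'$ together with the concentration events $\mathcal{A}_6'$ and $\mathcal{A}_7$ to force $t(j)-t(i)\geq C_3$.

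The only notable difference is in the bookkeeping of the count. The paper works directly with the set $I_2(j)=\{s\in V_2\setminus V_1:U_s>r_\delta(U_j)-d_2\log(n)\}$, shows every element of $I_2(j)$ is ranked above $t(i)$ by $\sigma_2'$, and then bounds $t(j)-t(i)\geq |I_2(j)\cap N(j)|$ in one stroke; the required size of $|I_2(j)\cap N(j)|$ then comes from $\mathcal{A}_6'(p_1)\cup\mathcal{A}_6'(p_2)$ and $\mathcal{A}_7(V_2,V_2\setminus V_1,\sigma_{\mathrm{true}})$. You instead first manufacture a single witness $v_j'$ near $r_\delta(U_j)$ and then count \emph{all} vertices of $V_2\setminus V_1$ (not just neighbours of $j$) lying in the $U$-interval between $v_i^*$ and $v_j'$. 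Your route trades one application of $\mathcal{A}_7$ for an extra use of concentration on $V_2\setminus V_1$, and it uses the full $\log(n)^2$ length of the gap rather than the paper's shorter $d_2\log(n)$; either way the arithmetic clears $C_3$ with the same polylog margin. Two small cosmetic points: the event you call $\mathcal{A}_6'(p_2-p_1)$ is not literally the one defined in \eqref{DefA6subset} (that notation refers to $\{i:B_i\leq p_2-p_1\}$, not $V_2\setminus V_1$), and the justification ``$(\epsilon,\alpha)$-connectedness ensuring $d_2<\epsilon$'' should really invoke the derived constant $\epsilon'$ for $w$ itself (as the paper does in the proof of Lemma~\ref{LemmaSmallNoise}); neither affects the validity of the argument.
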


\begin{proof}
Fix $i,j\in V_2$ satisfying $U_{j} \geq U_{i} + d_2\log(n)^2$ and \eqref{IneqGoodUiUj}.
By the same argument leading to Inequality \eqref{IneqSimpleRDelta}, this gives $r_\delta(U_j)-r_\delta(U_i)> 2d_2\log (n)$.  Define
\[
I_2(j)=\{ s\in V_2\setminus V_1\, :\, U_{s} > r_\delta(U_j)-d_2\log(n)\}.
\]
We will first show that, for all $s\in  I_2(j)$, $\sigma'_2(s)>t(i)$. Let $k \in N(i)$ be the vertex at the top of the range of $N(i)$ according to $\sigma_{2}'$ - that is, $\sigma_{2}'(k)=t(i)$.
 Since $k\in N(i)$, we have that $U_k\leq r_\delta (U_i)$, and so for $s \in I_{2}(j)$
\[
U_{s}-U_{k} \geq (r_\delta(U_j)-d_2\log (n))-r_\delta(U_{i})>d_2.
\]
This implies that $s$ and $k$ are correctly ordered by $\sigma_2'$ and thus $t(i) =\sigma_{2}'(k)<\sigma_{2}'(s)$ as desired.

This correct ordering implies that $k\not\in I_2(j)$, and so we see that $t(j)-t(i)\geq |I_2(j)\cap N(j)|$, so we have the containment
\be \label{IneqAlltogether1}
\{ t(j)-t(i) <C_3 \} &\subset \{ |I_2(j)\cap N(j)|< C_3\}.
\ee

From \eqref{def:RefineParameters}, we know that
\be
C_3 \leq \sqrt{d_1p_1n}\log(n)^2
 = d_2 p_1n\log(n)^3
 \leq  d_2p_2n.
\ee
For large enough $n$,  if $(\mathcal{A}_6'(p_2)\cup \mathcal{A}_6'(p_1))^c$ holds then
\be \label{IneqAlltogether2}
|I_2(j)|\geq \frac14 (d_2\log(n))p_2n>(2/\delta)C_3,
\ee
and if $\mathcal{A}_7(V_2,V_2\setminus V_1,\sigma_{\mathrm{true}})$ holds, then
\be \label{IneqAlltogether3}
|I_2\cap N(j)|\geq \frac12 W(j,I_2(j))\geq \frac{\delta}{2} |I_2(j)|.
\ee
Putting together \eqref{IneqAlltogether1}, \eqref{IneqAlltogether2} and \eqref{IneqAlltogether3},
\be
\{ t(j)-t(i) <C_3 \} \subset \mathcal{A}_6'(p_2)\cup \mathcal{A}_6'(p_1)\cup \mathcal{A}_7(V_2,V_2\setminus V_1,\sigma_{\mathrm{true}}).
\ee
Since we have already shown $\P [\mathcal{A}_6'(p_2)\cup \mathcal{A}_6'(p_1)\cup\mathcal{A}_7(V_2,V_2\setminus V_1,\sigma_{\mathrm{true}})]=n^{-\Omega(\log(n))}$, it holds \wep\ that $t(j)-t(i)\geq C_3$, and thus $F^{(2)}(i,j)$ is set to 1 in Step 19 of Algorithm \ref{AlgRefine}.

Similarly, if $U_i<U_j$ and the opposite of \eqref{IneqSimpleRDelta} holds,
then $\P [\{ b(j)-b(i)<C_3 \}]\leq n^{-\Omega (\log (n))}$. Therefore, \wep\ $F^{(2)}(i,j)$ is set to 1 in Step 19 of Algorithm \ref{AlgRefine}.
The result follows by a union bound.
\end{proof}

The correctness of Algorithm \ref{AlgRefine} now follows directly from the results in this section.

\begin{lemma}\label{LemmaRefineCorrect}
Let $V_1,V_2$ be sets sampled at probabilities $p_1,p_2$, respectively, according to the variables $\{ B_i\}$, and suppose $\sigma_1$ is an ordering of $V_1$ that agrees with $\sigma_{\mathrm{true}}$ at precision level $d_1$.
Suppose Algorithm \ref{AlgRefine} is executed with parameters $C_1,C_2,C_3$. Assume that the conditions (\ref{def:RefineParameters}) hold. Let
$\sigma = Refine(V_1,V_2,\sigma_1)$. Then
\[
\P [ \mathcal{A}_{8}(V_2,\sigma ,\sqrt{ d_1/(p_1n)}\log (n) )]=n^{-\Omega(\log{n})}.
\]
That is, \wep\ $\sigma$
 is a total order of $V_2$ that agrees with $\sigma_{\mathrm{true}}$ at precision level $\sqrt{ d_1/(p_1n)}\log (n)$.
\end{lemma}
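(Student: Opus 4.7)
The plan is to combine Lemma \ref{LemmaRefineFirst} with Lemma \ref{LemmaExtendOrdering} via a union bound. Lemma \ref{LemmaRefineFirst} establishes that the intermediate permutation $\sigma_2'$ on $V_2 \setminus V_1$ produced in Step 12 of Algorithm \ref{AlgRefine} agrees with $\sigma_{\mathrm{true}}$ at precision level $d_2 = \sqrt{d_1/(p_1 n)}\log(n)^{-1}$, while Lemma \ref{LemmaExtendOrdering} takes any such $\sigma_2'$ as input and concludes that the extension of $\sigma_2'$ to an ordering of all of $V_2$, based on the $F^{(2)}$ computed in Steps 16--23 of Algorithm \ref{AlgRefine}, agrees with $\sigma_{\mathrm{true}}$ at precision level $d_2 \log(n)^2 = \sqrt{d_1/(p_1 n)}\log(n)$, which is exactly the claimed precision.

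More concretely, I would let $\mathcal{E}_1 = \mathcal{A}_8(V_2 \setminus V_1, \sigma_2', d_2)$ be the bad event from Lemma \ref{LemmaRefineFirst} and let $\mathcal{E}_2$ denote the bad event from Lemma \ref{LemmaExtendOrdering}. The assumptions of Lemma \ref{LemmaRefineFirst} (including the assumption that $\sigma_1$ depends only on $G[V_1]$ and agrees with $\sigma_{\mathrm{true}}$ at precision $d_1$, together with the parameter conditions \eqref{def:RefineParameters}) are exactly those of Lemma \ref{LemmaRefineCorrect}, so we get $\P[\mathcal{E}_1 \mid \mathcal{H}(G_1)] = n^{-\Omega(\log n)}$ immediately. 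On the event $\mathcal{E}_1^c$, the hypothesis of Lemma \ref{LemmaExtendOrdering} on $\sigma_2'$ is satisfied, so that lemma applies and gives $\P[\mathcal{E}_2 \mid \mathcal{E}_1^c] = n^{-\Omega(\log n)}$.

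On the intersection $\mathcal{E}_1^c \cap \mathcal{E}_2^c$, the permutation $\sigma_2$ returned in Step 24 of Algorithm \ref{AlgRefine} agrees with $\sigma_{\mathrm{true}}$ at precision level $d_2 \log(n)^2$. Substituting the definition of $d_2$ yields the precision $\sqrt{d_1/(p_1 n)}\log(n)$ claimed in the statement. A final union bound over $\mathcal{E}_1$ and $\mathcal{E}_2$ gives $\P[\mathcal{A}_8(V_2,\sigma,\sqrt{d_1/(p_1 n)}\log(n))] \leq \P[\mathcal{E}_1] + \P[\mathcal{E}_2] = n^{-\Omega(\log n)}$, closing the proof. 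I do not anticipate any real obstacle: Lemmas \ref{LemmaRefineFirst} and \ref{LemmaExtendOrdering} were designed precisely to decompose the claim into the two matched halves, and all of the probabilistic content has already been established in Section \ref{Sec:Refine}.
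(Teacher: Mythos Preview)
Your proposal is correct and matches the paper's own approach: the paper states that the lemma ``follows directly from the results in this section,'' meaning precisely the combination of Lemma \ref{LemmaRefineFirst} (giving precision $d_2$ on $V_2\setminus V_1$) with Lemma \ref{LemmaExtendOrdering} (extending to precision $d_2\log(n)^2=\sqrt{d_1/(p_1n)}\log(n)$ on all of $V_2$), which is exactly the decomposition and union-bound argument you outline.
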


\subsection{Proof of Theorem \ref{ThmReconMainRes2}}

Finally, we show the correctness of Algorithm \ref{AlgIterationHolder} and prove Theorem \ref{ThmReconMainRes2}. That is, we show that, if Algorithm \ref{AlgIterationHolder} is executed with a particular set of parameters, and given as input a large enough graph $G$, then \wep\  the algorithm will return a total order of $V$ with error at most $n^{\epsilon}$, where $\epsilon >0$ is any desired constant error exponent. The proof will follow directly from Theorem \ref{ThmMaxOrd} and Lemma \ref{LemmaRefineCorrect}.

First, we define the parameters. For any $0<\epsilon < 0.5$, let integer $k$ and sequence $\{ p_i,d_i\}_{i=1}^k$ be as follows:

\be\label{DefParameters}
k &= \lfloor -\log_2(\epsilon )\rfloor +1,\\
\beta &= \frac{\epsilon - 2^{-k}}{k},\\
p_i &= n^{-(k-i)\beta},\,i=1,\dots, k,\\
d_{1} &= n^{-0.5\,(1 -k\beta)}, \\
d_{i+1} &= \sqrt{d_{i}/(p_{i} n)}\log (n),\, i=1,\dots , k-1.
\ee

The definition of $d_2$ according to \eqref{DefParameters} above differs by a factor $\log^2(n)$ from the definition of $d_2$ as given in \eqref{def:RefineParameters}; this extra factor corresponds to the extra $\log(n)^{2}$ appearing in the statement of in Lemma \ref{LemmaExtendOrdering}, which bounds the error introduced when extending the ordering. Note that $\{p_i\}$ is an increasing sequence, with $p_k=1$.  We will use the lemmas from previous sections to show that, after each iteration of Algorithm \ref{AlgIterationHolder}, the returned ordering of the subgraph induced by $V_i$ agrees with the true ordering at precision level $d_i$. The lemma below will have as corollary that $d_k\ll n^{-1+\epsilon} $, from which our result will follow.

\begin{lemma}\label{LemParameters}
For the parameters as defined in \eqref{DefParameters}, and $n$ sufficiently large, we have that for all $1\leq i\leq k$,
\[
d_i\leq n^{-(1-2^{-i})(1-k\beta)}\log (n).
\]
In particular, there exists $0 < \epsilon' < \epsilon$ so that $d_k\leq  n^{-1+\epsilon'}$.
\end{lemma}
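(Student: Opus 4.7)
The plan is to prove the first inequality by induction on $i$, and then obtain the ``in particular'' statement by a direct algebraic calculation at $i=k$ using the defining identity $k\beta=\epsilon-2^{-k}$. For convenience, I would write $\alpha=k\beta$ and $e_i=-(1-2^{-i})(1-\alpha)$, so that the target inequality becomes $d_i\leq n^{e_i}\log(n)$ for all $1\leq i\leq k$.

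The base case $i=1$ is immediate from the definition $d_1=n^{-0.5(1-\alpha)}=n^{e_1}$. For the inductive step, I would substitute the bound on $d_i$ into the recursion $d_{i+1}=\sqrt{d_i/(p_in)}\log n$, yielding
\[
d_{i+1}\leq n^{(e_i+(k-i)\beta-1)/2}\log(n)^{3/2}.
\]
The key computation (which I expect to do once and reuse) is to verify the algebraic identity
\[
\tfrac{1}{2}\bigl(e_i+(k-i)\beta-1\bigr)=e_{i+1}-\tfrac{i\beta}{2},
\]
which follows from expanding $(1-2^{-(i+1)})(1-\alpha)$ in terms of $(1-2^{-i})(1-\alpha)$. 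This gives the strengthened bound $d_{i+1}\leq n^{e_{i+1}-i\beta/2}\log(n)^{3/2}$, which is strictly better than the target $n^{e_{i+1}}\log(n)$ by a factor of $n^{-i\beta/2}\log(n)^{1/2}$. Since $\beta>0$ (see next paragraph), this factor is at most $1$ for $n$ sufficiently large, closing the induction.

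For the final conclusion, setting $i=k$ and substituting $k\beta=\epsilon-2^{-k}$ into $e_k$ gives
\[
-(1-2^{-k})(1-k\beta)=-1+\epsilon(1-2^{-k})+2^{-2k}.
\]
Letting $\epsilon''=\epsilon(1-2^{-k})+2^{-2k}$, we have $\epsilon''<\epsilon$ iff $2^{-k}<\epsilon$, which holds by the definition $k=\lfloor-\log_2\epsilon\rfloor+1>-\log_2\epsilon$. Taking any $\epsilon'$ with $\epsilon''<\epsilon'<\epsilon$ absorbs the $\log(n)$ factor for $n$ sufficiently large, giving $d_k\leq n^{-1+\epsilon'}$ as required.

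The only subtle point is justifying $\beta>0$, which is needed for the $n^{-i\beta/2}$ slack in the induction. Since $k$ is defined as $\lfloor-\log_2\epsilon\rfloor+1$, we always have $k>-\log_2\epsilon$, so $2^{-k}<\epsilon$ strictly, hence $\beta=(\epsilon-2^{-k})/k>0$. This is the same inequality that drives the final step, so it really is the single ``numerical'' content of the lemma; everything else is mechanical induction bookkeeping.
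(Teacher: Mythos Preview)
Your proof is correct and follows essentially the same approach as the paper: induction on $i$ using the recursion for $d_{i+1}$, the same algebraic identity showing the exponent equals $e_{i+1}-i\beta/2$ (the paper writes this as $(1-2^{-(i+1)})(1-k\beta)+0.5i\beta$), and the same endgame computation at $i=k$ using $k\beta=\epsilon-2^{-k}$. Your explicit verification that $2^{-k}<\epsilon$ (hence $\beta>0$) is slightly more careful than the paper, which simply asserts the corresponding $\delta\in(0,\epsilon)$.
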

\begin{proof}
We prove the statement by induction on $i$. The statement for $i=1$ can be directly verified from the definition of $d_1$. Suppose then that the statement holds for $1\leq i<k$. Then
\be 
d_{i+1} &= \sqrt{d_{i}/(p_{i} n)}\log(n) \\
&\leq n^{-0.5\,((1-2^{-i})(1-k\beta)-(k-i)\beta +1)}\log^{3/2}(n).
\ee
Now
\be 
 0.5\,((1-2^{-i})(1-k\beta)-(k-i)\beta +1)
 &= 0.5\, (2-2^{-i}-2k\beta +i\beta + 2^{-i}k\beta)\\
&= (1-2^{-(i+1)}-k\beta +2^{-(i+1)}k\beta) +0.5i\beta\\
&\geq (1-2^{-(i+1)})(1-k\beta) +0.5\beta.
\ee
For $n$ large enough, $n^{-0.5\beta}\geq \log^{1/2} (n)$. This completes the proof of the first statement. 
To prove the second statement, observe first that
\be
(1-2^{-k})(1-k\beta) &= (1-2^{-k})(1-(\epsilon -2^{-k}))\\
&= 1 -\epsilon + 2^{-k}(\epsilon -2^{-k}).\\
\ee
Let $\delta= 2^{-(k+1)}(\epsilon -2^{-k}) \in (0,\epsilon)$ and $\epsilon'=\epsilon -\delta$. It then follows that 
\[
d_k\leq n^{-1+\epsilon-2\delta}\log (n)=(n^{-\delta}\log(n))n^{-1+\epsilon'}.
\]
The result then follows for $n$ large enough so that $n^{-\delta}\log(n)\leq 1$.

\end{proof}

In fact, a computation very similar to that used in the above lemma shows that $\{ d_i\}_{i=1}^k$ is a decreasing sequence. 
 
In the following, we fix a value $\epsilon >0$ and take the sequence $\{ p_i,d_i\}_{i=1}^k$ as defined in \eqref{DefParameters}. We assume the Assumptions of Theorem \ref{ThmReconMainRes2} hold, and for all $i=1,\dots ,k$ the set $V_i=\{ s\,:\, B_s\leq p_i\}$ is as in Algorithm \ref{AlgIterationHolder}.

Algorithm \ref{AlgIterationHolder} first calls Algorithm \ref{AlgMerge} with input $G_1=G[V_1]$, where $V_1$ is sampled from $V$ at rate $p_1$. The algorithm returns a total order $\sigma_1$ of $V_1$. We will first show that, if $p_1,d_1$ are as defined above in \eqref{DefParameters}, then \wep\ $\sigma_1$ agrees with $\sigma_{\mathrm{true}}$ at precision level $d_1$.

\begin{lemma}\label{LemmaValidp1d1}
W.e.p., the ordering $\sigma_1$ as returned by Algorithm \ref{AlgMerge} in Step 3 of Algorithm \ref{AlgIterationHolder}, or its total reverse $\sigma_1^{trev}$, agrees with $\sigma_{\mathrm{true}}$ at precision level $d_1$. That is,
\[
P [ \mathcal{A}_{8}(V_1,\sigma_1,d_1 )\cap \mathcal{A}_{8}(V_1,\sigma_1^{trev},d_1 ) ]= n^{-\Omega(\log(n))}.
\]
Moreover, $\sigma_1$ only depends on $G_1=G[V_1]$.
\end{lemma}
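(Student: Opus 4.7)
The plan is to reduce Lemma \ref{LemmaValidp1d1} to Theorem \ref{ThmMaxOrd} applied to the subsample $G_1$, and then translate the rank-based error bound of that theorem into the ``precision level'' language of Definition \ref{DefPrecision}. The second claim of the lemma is immediate: by inspection of Algorithm \ref{AlgMerge} (and the subroutines \ref{AlgSubsamp}--\ref{AlgUseSketch} it calls), the output depends only on the input graph and the internal randomness, not on the latent variables $\{U_i\}, \{U_{i,j}\}$; once we condition on the vertex set $V_1$, the algorithm only examines $G_1 = G[V_1]$.

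For the first claim, set $n_1 = |V_1|$ and condition on $\{B_i\}_{i=1}^n$. A standard Chernoff bound on the independent Bernoullis $\{B_i\}$ gives $n_1 \in [np_1/2, 2np_1]$ w.e.p., and since $(k-1)\beta < 1$ we have $np_1 \to \infty$ polynomially in $n$. Conditional on $V_1$, the subgraph $G_1$ is distributed as a sample from $w$ on $n_1$ vertices (with latent positions $\{U_i\}_{i \in V_1}$), and the graphon $w$ still satisfies Assumption \ref{AssumptionsSimpleWeakIdentifiabilityAssumptions}. Theorem \ref{ThmMaxOrd} therefore applies, giving w.e.p.\ a correct permutation $\sigma_{\mathrm{correct}}$ of $V_1$ (either $\sigma_{\mathrm{true}}$ restricted to $V_1$ or its reverse) such that every pair $i,j \in V_1$ with
\[
|\sigma_{\mathrm{correct}}(i) - \sigma_{\mathrm{correct}}(j)| > \sqrt{n_1}\log(n_1)^{\cfin}
\]
is ordered by $\sigma_1$ in the same direction as by $\sigma_{\mathrm{correct}}$.

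Next, we translate this rank bound into a $|U_i - U_j|$ bound using the event $(\mathcal{A}_6'(p_1))^c$ from \eqref{DefA6subset}, which we have already noted holds w.e.p. On this event, for any pair $i,j \in V_1$ with $U_i < U_j$ and $|U_i - U_j| \geq (np_1)^{-1}$,
\[
|\sigma_{\mathrm{correct}}(i) - \sigma_{\mathrm{correct}}(j)| \geq np_1 |U_j - U_i| - 2\sqrt{np_1 |U_j - U_i|}\log(n).
\]
In particular, if $|U_i - U_j| \geq d_1$, then for $n$ large enough the right-hand side is at least $\tfrac{1}{2} np_1 d_1$. It remains to verify the polynomial inequality
\[
\tfrac{1}{2} np_1 d_1 \;>\; \sqrt{n_1}\log(n_1)^{\cfin}
\]
for all large $n$. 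Substituting $d_1 = n^{-0.5(1-k\beta)}$, $p_1 = n^{-(k-1)\beta}$, and $n_1 \leq 2np_1$, the left side is of order $n^{0.5 + \beta(1 - k/2)}$ while the right side is of order $n^{0.5 - (k-1)\beta/2}$ times polylogarithmic factors; the exponent gap is $\beta/2 > 0$, which dominates any polylogarithmic factor for $n$ large enough. Combining the three steps yields the precision-level $d_1$ statement.

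The only conceptual subtlety, which I expect to be the mildest of obstacles, is bookkeeping the polynomial versus polylogarithmic scales: one must be careful that the exponent margin $\beta/2$ is uniformly bounded away from zero, which is guaranteed by the definitions in \eqref{DefParameters} (where $\beta$ is a fixed positive constant determined by $\epsilon$ and $k$). Aside from this, the proof is a straightforward composition of Theorem \ref{ThmMaxOrd} and the standard concentration estimate for the empirical CDF of $\{U_i\}_{i \in V_1}$.
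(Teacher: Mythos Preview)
Your proposal is correct and follows essentially the same route as the paper's proof: apply Theorem \ref{ThmMaxOrd} to the subsample $G_1$, use the concentration event $(\mathcal{A}_6'(p_1))^c$ to control $n_1$ and convert rank error to precision level, and then verify the resulting precision beats $d_1$ via the exponent margin $\beta/2$. Your version is in fact more careful than the paper's, which asserts the precision-level statement directly from Theorem \ref{ThmMaxOrd} and leaves the rank-to-$|U_i-U_j|$ conversion implicit.
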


\begin{proof}
From \eqref{DefParameters}, we have that $p_1=n^{-(k-1)\beta}$ and $d_1=n^{-0.5\, (1-k\beta)}$.
Note that $V_1$ is a set of size $n_1=|V_1|$ randomly sampled from $[0,1]$. By Theorem \ref{ThmMaxOrd}, \wep\ the returned ordering $\sigma_1$ agrees with $\sigma_{\mathrm{true}}$ or its reverse at precision level $n_1^{-0.5}\log (n_1)^4$. If $\mathcal{A}_6'(p_1)^c$ holds and $n$ is large enough, then $n_1\geq 0.5\, p_1n$, and
\[
n_1^{-0.5}\log (n_1)^4 \leq \sqrt{2}n^{-0.5(1-(k-1)\beta)} \log (n)^4= (\sqrt{2} n^{-0.5\,\beta} \log(n)^4) d_1,
\]

which is smaller than $d_{1}$ for all $n$ sufficiently large. The result follows.
\end{proof}

The next lemma shows that Algorithm \ref{AlgRefine}, when called by \ref{AlgIterationHolder} to extend the ordering from $V_i$ to $V_{i+1}$, improves the precision from $d_i$ to $d_{i+1}$.

\begin{lemma} \label{LemmaValidSeqs}
Let $1\leq i<k$. Suppose $\sigma_i$ is an ordering of $V_i$ which agrees with $\sigma_{\mathrm{true}}$ at precision level $d_i$, and which depends only on $G[V_i]$. Let $\sigma_{i+1}$ be the ordering returned by Algorithm \ref{AlgRefine} as it is called in Step  6 of Algorithm \ref{AlgIterationHolder}. Then \wep\ $\sigma_{i+1}$ agrees with $\sigma_{\mathrm{true}}$ at precision level $d_{i+1}$.
\end{lemma}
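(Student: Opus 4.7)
The plan is to apply Lemma \ref{LemmaRefineCorrect} directly to the call of \textit{Refine} executed in Step 6 of Algorithm \ref{AlgIterationHolder}, making the identifications $\tilde p_1 = p_i$, $\tilde p_2 = p_{i+1}$, $\tilde d_1 = d_i$, and taking $C_1, C_2, C_3$ as prescribed in Step 5. Under these substitutions Lemma \ref{LemmaRefineCorrect} concludes that, \wep, $\sigma_{i+1}$ agrees with $\sigma_{\mathrm{true}}$ at precision level $\sqrt{d_i/(p_i n)}\log(n)$, which is exactly $d_{i+1}$ by the recursion in \eqref{DefParameters}. Its hypothesis that ``$\sigma_1$ depends only on $G[V_1]$'' becomes ``$\sigma_i$ depends only on $G[V_i]$,'' which is part of the standing hypothesis of Lemma \ref{LemmaValidSeqs}. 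The entire task therefore reduces to verifying that the parameter conditions \eqref{def:RefineParameters} hold under these substitutions.

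Three of the conditions are routine. First, $p_{i+1}/p_i = n^\beta \geq \log(n)^3$ for $n$ large. Second, the upper bound $d_i \leq \log(n)^{-5}$ follows from Lemma \ref{LemParameters}, because $(1 - 2^{-i})(1 - k\beta) \geq (1/2)(1 - \epsilon) > 1/4$ uniformly in $i \geq 1$. Third, the $C_j$'s specified in Step 5 of Algorithm \ref{AlgIterationHolder} match those in \eqref{def:RefineParameters} verbatim, and the defining equation $d_2 = \sqrt{d_1/(p_1 n)} \log(n)^{-1}$ is simply a definition of an internal bookkeeping parameter. I note that this $d_2$ is smaller by a factor of $\log(n)^2$ than our iterate $d_{i+1}$; the extra factor arises because Algorithm \ref{AlgRefine} first produces a partial order on $V_2 \setminus V_1$ at precision $d_2$ (Lemma \ref{LemmaRefineFirst}) and then loses a $\log(n)^2$ factor in extending to $V_2$ (Lemma \ref{LemmaExtendOrdering}), yielding the final precision $\sqrt{d_1/(p_1 n)}\log(n)$.

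The remaining and nontrivial check---the anticipated main obstacle---is the lower bound $d_i \geq \log(n)^{10}/(p_i n)$. The plan is to track the product $\rho_i := d_i\, p_i\, n$ across iterations. A direct calculation from \eqref{DefParameters} yields $\rho_{i+1} = \sqrt{\rho_i}\, n^\beta \log(n)$, so, writing $\rho_i = n^{a_i}$ up to polylog factors, this becomes the affine recursion $a_{i+1} = a_i/2 + \beta$, a contraction toward the fixed point $2\beta$. The base case computes to $a_1 = 1/2 + \beta - k\beta/2$; using $k \geq 2$ (forced by $\epsilon < 1/2$) together with $\beta < \epsilon/k \leq \epsilon/2 < 1/(k+2)$, one checks $\beta(1 + k/2) < 1/2$, which rearranges exactly to $a_1 > 2\beta$. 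By induction $a_i - 2\beta = (a_{i-1} - 2\beta)/2$ remains strictly positive, so $\rho_i \geq n^{2\beta}$ up to polylog factors, and in particular $\rho_i > \log(n)^{10}$ for all $n$ sufficiently large. Combined with the other checks, Lemma \ref{LemmaRefineCorrect} then delivers the stated bound.
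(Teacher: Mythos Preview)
Your proposal is correct and follows essentially the same approach as the paper: both proofs reduce to verifying that the parameter substitutions satisfy the hypotheses \eqref{def:RefineParameters} of Lemma \ref{LemmaRefineCorrect} and then invoke that lemma. The paper simply asserts that the parameters satisfy the required conditions without further comment, whereas you supply the nontrivial verification of the lower bound $d_i \geq \log(n)^{10}/(p_i n)$ via the contraction recursion for $\rho_i = d_i p_i n$, which is a welcome addition.
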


\begin{proof}
Fix $1\leq i<k$ and assume  $\sigma_i$ agrees with $\sigma_{\mathrm{true}}$ at precision level $d_i$. Note that the parameters $C_1,C_2,C_3$ as set in Step 6 of Algorithm \ref{AlgIterationHolder} and used  in the call to Algorithm \ref{AlgRefine}, satisfy the conditions (\ref{DefParameters}), with $p_i$ and $d_i$ taking the role of $p_1$ and $d_1$, respectively. Lemma \ref{LemmaRefineCorrect} then shows that \wep\ $\sigma_{i+1}=Refine(V_i,V_{i+1},\sigma_i)$ agrees with $\sigma_{\mathrm{true}}$ at precision level $\sqrt{d_i/(p_in)}\log (n)=d_{i+1}$.

\end{proof}

Finally, we prove Theorem \ref{ThmReconMainRes2}:

\begin{proof} [Proof of Theorem \ref{ThmReconMainRes2} ]

By definition, $V_k=V$, and Algorithm \ref{AlgIterationHolder} returns the ordering $\sigma=\sigma_{k}$ of $V$. By the previous lemmas and a union bound,  \wep\ $\sigma$ will agree with $\sigma_{\mathrm{true}}$ (or its reverse) at precision level $d_k$. By Lemma \ref{LemParameters},
 $d_k=n^{1-\epsilon'}$,
where $0 < \epsilon'<\epsilon$. 

The techniques from the previous section readily show that, \wep, if an ordering agrees with $\sigma_{\mathrm{true}}$ at precision level $d$, then it has error at most $d n\log(n)$. Since $n^{-(\epsilon -\epsilon')} \log (n) <1$ for large enough $n$, it follows that \wep\ $\sigma$ has error at most $n^\epsilon$.
\end{proof}

\section{Finding $\alpha$} \label{SecSeekingAlpha}

The first part of our results relies on knowing a good thresholding value of $\alpha \in [0,1]$. To implement the algorithm and obtain our guarantees, we need to find a value $\alpha$ for which Assumption \ref{AssumptionGoodGraphon} holds. Fortunately, for the general class of graphons satisfying the stronger conditions of Assumption \ref{AssumptionsSimpleWeakIdentifiabilityAssumptions}, the set of possible values for $\alpha$  will contain an interval of positive measure. We now list a few simple consequences of our properties that allow us to estimate a suitable value of $\alpha$ from the observed graph $G$. It is straightforward to check that for certain classes of graphons (including \textit{e.g.} \eqref{EqSimpleGraphon}), these tests will be able to find a suitable value of $\alpha$; in general, however, the following tests are \textit{necessary} but not \textit{sufficient} for the value of $\alpha$ to be suitable.

We begin with condition (1) of Assumption \ref{AssumptionGoodGraphon}, namely the requirement that $\WA{\alpha}$ is diagonally increasing. We have seen that, if $\WA {\alpha}$ is diagonally increasing, then with high probability, a sample of size $\log(n)^5 (n)$ from $G^{(2)}_\alpha$ will be a proper interval graph. Such samples are taken repeatedly in Algorithm \ref{AlgSketch}, and these samples are then tested by a proper interval graph recognition algorithm. Thus, if Step 3 of Algorithm \ref{AlgSketch} fails more than a small percentage of the times, then it is likely that $\WA{\alpha}$ is not diagonally increasing. When this occurs, the algorithm should be restarted with a new value of $\alpha$.

The second condition is more straightforward: we can test whether $\WA{}$ is uniformly $(A,\delta)$- good by testing if
\be
\max_{v\in V} \left| \left\{ w\in V\,:\, \left|\alpha - \frac{\GA{}(w,v)}{n-2} \right|  \leq \delta' \right\}\right| \leq A\delta' n,
\ee
for several test values of $0<\delta' <\delta$. Moreover, we can restrict the choice of $\delta$ and $\delta'$ to values in the range of $\frac{\GA{}}{n-2}$.

To test whether $\WA{\alpha}$ is $(\epsilon,\alpha)$-connected, we can use the test
\be
\min_{v\in V} |\{ w\in V\,:\, \GA{}(w,v)\geq \alpha (n-2)\} > \epsilon n.
\ee

To test whether $\WA{\alpha}$ is $B$-separated or has an $\epsilon$-split is harder, since we do not have the ordering of the vertices.
To test whether $\WA{\alpha}$ is $B$-separated, we could define for any vertex $v$ the vector
\be 
s_{v}[u] = |\{w \in V \, : \, \GA{}(v,w) > \alpha(n-2) > \GA{}(u,w) \text{ or } \GA{}(v,w) < \alpha(n-2) < \GA{}(u,w) \}.
\ee 
This vector should be close to the volume in the definition of $B$-separation. Let $\tilde{s}_{v}$ be the elements of $s_{v}$, sorted in increasing order. If $\WA{\alpha}$ is $B$-separated, then we would expect the inequality
\be 
\tilde{s}_{v}[j] -\tilde{s}_{v}[i] \geq B(j-i)
\ee 
to hold for typical $v \in V$ and $1 \leq i < j \leq n$ (and to be close to true for all such values). 

If $\WA{\alpha}$ has an $\epsilon$-split, then pairs of vertices at extreme ends of the ordering cannot have any common neighbours. Precisely, for pairs of vertices $i,j$ with $|U_i-U_j|\geq 1-\epsilon$, $N(i) \cap N(j) = \emptyset$. We expect there to be about $\epsilon^2n^2/2$ such pairs. On the other hand, 
if 
$\inf_{|x|, |y| < \delta} \WA{}(U_{i} + x, U_{j}+y) > 0$ for some $\delta >0$, then by the law of large numbers $N(i) \cap N(j) \neq \emptyset$ for sufficiently large $n$. This suggest that we should check 
\[ 
| \{ (i,j) \, : \, N(i) \cap N(j) = \emptyset \}| \geq \epsilon^{2}n^2/2.
\]

It is easy to integrate these tests in the algorithm, so that a suitable value of $\alpha$ can be found in a reasonable amount of trials.


We note that we have not searched for a value of $\delta$ that satisfies Assumption \ref{AssumptionSharpBoundary}, as we don't need to know $\delta$ to run Algorithm \ref{AlgIterationHolder}. In practice, we expect that it is often easy to diagnose graphons that don't satisfy Assumption \ref{AssumptionSharpBoundary} as follows. Let $F^{(2)}$ be as in step (12) of Algorithm \ref{AlgRefine} on the first time that it is called by Algorithm \ref{AlgIterationHolder}; if  Assumption \ref{AssumptionSharpBoundary} is satisfied, then with extreme probability the maximum size of the set on which disagreement occurs
\be 
\max_{i,j \in V_{2} \backslash V_{1} \, : \, F^{(2)}(i,j) = -1} | \{ k \in V_{2} \backslash V_{1} \, : \, F^{(2)}(i,k) = F^{(2)}(k,j) = 1\}|
\ee 
will not be much larger than $d_{2} n$. In other words, we can diagnose a failure of Assumption \ref{AssumptionSharpBoundary} by simply checking whether $F^{(2)}$ could plausiblY have precision better than $d_{2}$ or so.

\section{Large Error of Embeddings} \label{SecLargeError}

It is common to study the seriation problem using something like the following two-step procedure:

\begin{enumerate}
\item Compute an estimate $\hat{U} = (\hat{U}_{1}, \hat{U}_{2},\ldots,\hat{U}_{n})$ of the latent positions $U = (U_{1},U_{2} \ldots,U_{n})$ of the vertices $1,2,\ldots,n$, using \textit{e.g.}  a spectral embedding of the graph $G$.
\item Compute an estimate $\hat{\sigma}$ of $\sigma_{\mathrm{true}}$ based \textit{only} on $\hat{U}$, typically using the formula:
\be
\hat{\sigma}(i) = | \{ j \in [1:n] \, : \, \hat{U}_{j} \leq \hat{U}_{i} \} |.
\ee
\end{enumerate}

It is then straightforward to show that, if $\hat{U}$ has small error, the error of the induced ordering $\hat{\sigma}$ will inherit a similarly-small error. See \textit{e.g.} \cite{little2018analysis} for recent work that conducts this sort of analysis.

This approach can only give a near-optimal estimate of the error of $\hat{\sigma}$ if the error of an optimal estimator $\hat{\sigma}$ is not much smaller than the error of an optimal estimator $\hat{U}$ (after appropriate scaling). We will show in this section that this is rarely the case for the seriation problem studied in this paper.

To make this precise, we need some notation that is not used in the rest of the paper. For a graphon $w$ and sequences $u^{(1)} = \{u[j]^{(1)}\}_{j=1}^{n}$, $u^{(2)} = \{u[i,j]^{(2)}\}_{i,j=1}^{n}$ define $G = G(w; u^{(1)}, u^{(2)})$ to be the usual graph obtain from the graphon $w$ and these sequences from the formula \eqref{EqGraphonDef}.

Next, for constant $\delta \in [0,1)$ and graphon $w$, define
\be
\tilde{w}_{\delta}(x,y) = w(x^{1-\delta}, y^{1-\delta}).
\ee
Similarly, for any sequence $u \in \mathbb{R}^{d}$, define
\be
\tilde{u}_{\delta} = u^{1-\delta}.
\ee
Notice that taking the $(1-\delta)$'th power is a monotone bijection on $[0,1]$, and in particular these transformations preserve both (i) the diagonally-increasing property of $w$ and (ii) the ordering of $u$. We have the following theorem:

\begin{thm} \label{ThmNoDownstreaming}
Fix $\epsilon > 0$. There exist constants $c = c(\epsilon), c' = c'(\epsilon), c'' = c''(\epsilon) > 0, $ and couplings of two sequences $U = (U_{1},\ldots,U_{n}), V = (V_{1},\ldots,V_{n}) \stackrel{i.i.d.}{\sim} \mathrm{Unif}([0,1])$ such that, for all $n$ sufficiently large, the following events all occur simultaneously with probability at least $(1-\epsilon)$:

\begin{enumerate}
\item For all graphons $w$ and sequences $u^{(2)} = \{u_{i,j}^{(2)}\}_{i,j=1}^{n}$,
\be \label{ConcEqualGraphs}
G(w; V, u^{(2)}) = G\left(\tilde{w}_{\frac{c}{\sqrt{n}}}; U, u^{(2)}\right).
\ee
\item There exists a single permutation $\sigma \in S_{n}$ such that
\be \label{ConcEqualPerms}
U_{\sigma(1)} < U_{\sigma(2)} < \ldots < U_{\sigma(n)}, \, V_{\sigma(1)} < V_{\sigma(2)} < \ldots < V_{\sigma(n)};
\ee
that is, the two sequences have the same ordering.
\item Finally,
\be \label{ConcBigDisp}
\left| \left\{1 \leq i \leq n \, : \, |U_{i} - V_{i}| \geq \frac{c'}{\sqrt{n}} \right\} \right| \geq c'' n.
\ee
\end{enumerate}
\end{thm}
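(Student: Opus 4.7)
The plan is to exhibit an explicit coupling built around the deterministic transformation $T_\delta(x) = x^{1-\delta}$ with $\delta = c/\sqrt{n}$, where $c = c(\epsilon)$ is a small constant to be chosen. If we could simply set $V_i = T_\delta(U_i)$ then all three conclusions would be essentially immediate: Event~(1) would be the pointwise identity $w(T_\delta(U_i),T_\delta(U_j)) = w(U_i^{1-\delta}, U_j^{1-\delta}) = \tilde w_{\delta}(U_i, U_j)$; Event~(2) would follow from monotonicity of $T_\delta$ on $[0,1]$; and Event~(3) from the Taylor estimate $|x - x^{1-\delta}| \geq c_{0}\,\delta$ valid on a fixed subinterval $[a,b] \subset (0,1)$, combined with the law of large numbers applied to $\mathbf{1}\{U_i \in [a,b]\}$. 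The only obstruction is that $T_\delta(U_i)$ has density $f_\delta(y) = (1-\delta)^{-1} y^{\delta/(1-\delta)}$ on $[0,1]$ rather than the uniform density, so the candidate $V := T_\delta(U)$ fails the marginal requirement.

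The main technical step is a total-variation estimate that lets us repair this failure. A direct computation gives
\[
\mathrm{KL}(f_\delta \,\|\, 1) \;=\; -\log(1-\delta) - \delta \;=\; \tfrac{1}{2}\delta^2 + O(\delta^3),
\]
and tensorizing, $\mathrm{KL}(f_\delta^{\otimes n} \,\|\, 1^{\otimes n}) = n \cdot O(\delta^2) = O(c^2)$. Pinsker's inequality then yields $d_{\mathrm{TV}}(f_\delta^{\otimes n}, \mathrm{Unif}([0,1])^{\otimes n}) = O(c)$, uniformly in $n$. So $T_\delta$ perturbs the product measure by $O(c)$ in total variation, which is the key quantitative input.

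The coupling is assembled as follows. Sample $Z = (Z_1, \dots, Z_n)$ with joint law $f_\delta^{\otimes n}$, and let $V \sim \mathrm{Unif}([0,1])^{\otimes n}$ be jointly defined with $Z$ via the maximal coupling, so that $\mathbb{P}(V = Z) \geq 1 - O(c)$. Define $U_i := Z_i^{1/(1-\delta)} = T_\delta^{-1}(Z_i)$; a one-line CDF computation shows $U_i \sim \mathrm{Unif}([0,1])$, and independence of the $Z_i$ transfers to independence of the $U_i$. Choose $c$ small enough that $O(c) \leq \epsilon/2$. On the event $\mathcal{E} := \{V = Z\}$ we have $V_i = Z_i = U_i^{1-\delta} = T_\delta(U_i)$ for every $i$, so Events~(1) and (2) hold deterministically on $\mathcal{E}$, and Event~(3) reduces to a standard Hoeffding bound on $|\{i : U_i \in [a,b]\}|$, which is at least $c'' n$ except on an event of probability $\leq \epsilon/2$ for $n$ large.

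The only substantive obstacle is the tension flagged at the start: ``$V$ marginally uniform'' is incompatible with ``$V = T_\delta(U)$ almost surely.'' The resolution above exploits the fact that this incompatibility is small in TV precisely when $\delta$ scales like $n^{-1/2}$, so that the KL gap per coordinate, of order $\delta^2$, equals $1/n$. Any slower scaling would make the TV distance blow up with $n$ and break the maximal-coupling step; any faster scaling would make $|U_i - V_i| = |U_i - U_i^{1-\delta}|$ smaller than $c'/\sqrt{n}$ and break Event~(3). Thus $\delta \asymp n^{-1/2}$ is the unique scale making the two-sided balance work, which is precisely the scale at which the theorem is stated.
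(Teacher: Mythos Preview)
Your proof is correct and follows essentially the same strategy as the paper: both use the transformation $T_\delta(x)=x^{1-\delta}$ with $\delta=c/\sqrt{n}$, establish that the law of $T_\delta(U)$ is within $O(c)$ in total variation of $\mathrm{Unif}([0,1])^{\otimes n}$, take a maximal coupling so that $V=T_\delta(U)$ with probability $1-O(c)$, and then verify all three conclusions deterministically (plus a Hoeffding bound for (3)) on that event. The only difference is how the TV bound is obtained---the paper passes to exponentials via $-\log$ and cites Corollary~1.8 of Chatterjee, whereas your direct computation $\mathrm{KL}(f_\delta\|1)=-\log(1-\delta)-\delta=\tfrac12\delta^2+O(\delta^3)$ followed by Pinsker is more elementary and self-contained.
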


\begin{proof}
See Appendix \ref{SecAppAntiConc}.
\end{proof}

We give the following informal interpretation: it is possible to couple samples $G, \tilde{G}$ from the two graphons $w, \tilde{w}_{\frac{c}{\sqrt{n}}}$ so that with high probability both (i) the graphs themselves are identical, but (ii) a positive fraction of the ``true" latent vertex embeddings  are at least $\frac{c'}{\sqrt{n}}$ apart. In particular, since the observed graphs $G, \tilde{G}$ are identical, there is no way to simultaneously estimate both of ``true" latent vertex embeddings without a positive fraction of errors being at least $\frac{c'}{\sqrt{n}}$.

In principle this conclusion could be avoided if \textit{e.g.} $w$ is assumed to belong to a class that does not include $\tilde{w}_{\delta}$, even for very small values of $\delta$. While possible, such an assumption seems to be \textit{extremely} strong - indeed, we have shown that sequences of samples from $w$, $\tilde{w}_{\frac{c}{\sqrt{n}}}$ can't be told apart by looking at their associated graphs! Furthermore, even if the \textit{particular} perturbation $\tilde{w}_{\delta}$ can be ruled out, a quick inspection of the proof of Theorem \ref{ThmNoDownstreaming} will convince the reader that similar results hold for a variety of other small perturbations of $w$.

\appendix

\section{Proof of Theorem \ref{ThmReconMainRes1}} \label{SubsecWeakStrongAssumptionsCheck}

In this section we show that Theorem \ref{ThmMaxOrd} implies Theorem \ref{ThmReconMainRes1} by checking that Assumption \ref{AssumptionsSimpleWeakIdentifiabilityAssumptions} implies Assumption \ref{AssumptionGoodGraphon}. We will be using notation from those assumptions throughout this section, so we repeat them for ease of reference. Assumption \ref{AssumptionsSimpleWeakIdentifiabilityAssumptions}:

\begin{assumption} 

Let $w$ be a uniformly embedded graphon with link probability function $f:[0,1]\rightarrow [0,1]$. In addition, $f$ is decreasing, and there exist constants $0<d <0.5$ and $0 \leq c<f(0)$ so that $f(z)=c$ for all $z\geq d$. Finally, there exists $\alpha \in (0,1)$ so that $f$ satisfies
\be\label{eq:alpha1}
\inf_{s\in [0,d]}\int_0^1 f(z)f(|s-z|) dz  > \alpha > \int_0^1 f\left( \left|z-\frac{1-d'}{2}\right|\right) f\left(\left|z-\frac{1+d'}{2}\right|\right) dz,
\ee
where $d'=\min\{ 0.5,2d\}$.
\end{assumption}

Assumption \ref{AssumptionGoodGraphon}: 

\begin{assumptions} [Weak Assumptions] 
Let $w$ be a graphon, and $\alpha, \epsilon \in (0,1)$ and $A > 0$ be constants, so that:
\begin{enumerate}
\item $\WA{\alpha}$ is diagonally increasing, and
\item $w^{(2)}$ is uniformly $(A,\epsilon)$-good at $\alpha$, and
\item $\WA{}$ is $(\epsilon ,\alpha )$-connected, and
\item $\WA{\alpha}$ has an $\epsilon$-split, and
\item $\WA{\alpha}$ is $\epsilon $-separated.

\end{enumerate}
\end{assumptions}

Repeating the details of these five component assumptions would take quite a bit of space; we refer to Section \ref{SectionSlightlyStronger} for details.

\subsubsection{Some Basic Lemmas}\label{Sec:BasicLemmas}

Under Assumption \ref{AssumptionsSimpleWeakIdentifiabilityAssumptions}, $w$ is a uniformly embedded graphon, with decreasing probability function $f$. We also assume that, for some value $d<0.5$, $f(x)=c$ for $x\geq d$. In the following, we assume \emph{wlog} that $d$ is minimal with respect to this property, so $f(x)>c$ for $x<d$. We now show that for uniformly embedded graphons, $\WA{}$ is continuous.

\begin{lemma}
\label{Lemma:SquareContinuous}
Let $w$ be a uniformly embedded graphon, with decreasing link probability function $f$. Then $w^{(2)}$ is uniformly continuous.
\end{lemma}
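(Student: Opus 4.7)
The plan is to reduce the claim to continuity of $w^{(2)}$ on the compact square $[0,1]^2$; uniform continuity then follows immediately from the standard fact that continuity on a compact set implies uniform continuity. So I only need to establish continuity at an arbitrary fixed point $(x_0,y_0) \in [0,1]^2$.

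To prove continuity at $(x_0,y_0)$, I will fix a sequence $(x_n,y_n) \to (x_0,y_0)$ and pass to the limit inside the integral
\[
w^{(2)}(x,y) = \int_0^1 f(|x-u|) \, f(|u-y|) \, du
\]
by dominated convergence. The dominating function is just the constant $1$, since $f$ takes values in $[0,1]$, so only pointwise a.e.\ convergence of the integrand needs justification. The key observation is that $f$ is monotone (decreasing), hence its set of discontinuities $D \subset [0,1]$ is at most countable.

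For any $u \in [0,1]$ such that $|x_0 - u| \notin D$ and $|y_0 - u| \notin D$, continuity of $f$ at those two points gives $f(|x_n - u|) \to f(|x_0 - u|)$ and $f(|y_n - u|) \to f(|y_0 - u|)$, hence convergence of the product. The set of excluded $u$'s is contained in $\{x_0 \pm a : a \in D\} \cup \{y_0 \pm a : a \in D\}$, which is countable and therefore of Lebesgue measure zero. So the integrand converges pointwise a.e., and dominated convergence yields $w^{(2)}(x_n,y_n) \to w^{(2)}(x_0,y_0)$.

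The only subtlety, and what I expect to be the main (though very mild) obstacle, is handling the fact that $f$ need not itself be continuous. Resolving this just requires the elementary observation above that monotone functions have countably many discontinuities; no approximation by continuous functions or $L^1$-continuity-of-translation machinery is needed. Everything else (dominated convergence, compactness $\Rightarrow$ uniform continuity) is standard and short.
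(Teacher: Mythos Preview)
Your argument is correct. Dominated convergence with the constant bound $1$, together with the observation that a monotone function on $[0,1]$ has at most countably many discontinuities, cleanly gives sequential continuity of $w^{(2)}$ at every point; compactness of $[0,1]^2$ then upgrades this to uniform continuity.

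This is a genuinely different route from the paper's. The paper argues quantitatively: given $\epsilon>0$ it sets $\delta=(\epsilon/12)^2$, defines the ``big-jump'' set $D=\{x:f(x)-f(x+\delta)\ge \epsilon/6\}$, and uses monotonicity plus the fact that $f$ has range in $[0,1]$ to bound $\mathrm{Vol}(D)\le \epsilon/12$. It then splits the integral defining $w^{(2)}(x,y+\delta)-w^{(2)}(x,y)$ into the contribution over (translates of) $D$ and its complement, bounding each piece separately. This yields an explicit modulus of continuity $\delta(\epsilon)$. Your argument is shorter and more conceptual, trading the explicit modulus for the soft machinery of dominated convergence and compactness. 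Since the paper only ever invokes uniform continuity qualitatively in the sequel (Lemmas~\ref{Lemma:d-d+} and~\ref{Lemma:Goodness}), nothing is lost by your approach, and it is arguably the cleaner one here.
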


\begin{proof}
Fix $ 0 < \epsilon < 1$, and let $\delta =(\epsilon/12)^2$. Let $D=\{ x\,:\, f(x)-f(x+\delta)\geq  \epsilon/6\}$ be the set of points where  $f$ makes a big jump. Since the range of $f$ is contained in $[0,1]$, and $f$ is decreasing, $D$ can include at most $\frac{6}{\epsilon}$ points that are mutually at distance at least $\delta$ from each other. Thus, $D$ is contained in the union of at most $\frac{6}{\epsilon}$ intervals of length $2\delta$, and thus $\text{Vol}(D) \leq  (6/\epsilon)(2\delta)=\epsilon/12$.  Fix $0\leq y\leq 1-\delta$,
 and let 
\[
D'=\{ z<y: y-z \in D \},\quad D''=\{ z>y\,:\,z-y-\delta\in D \}.
\]
Then $\text{Vol}(D'), \, \text{Vol}(D'') \leq \text{Vol}(D)$. Note that, for all $0 \leq x \leq y$,
\be
&|w^{(2)}(x,y+\delta)-w^{(2)}(x,y)|\\
&\leq \int_0^{y-\delta} f(|z-x|)\left(f(y-z) -f(y-z+\delta)\right) dz \\
&+ \int_{y+\delta}^1 f(|z-x|)\left(f(z-y-\delta)-f(z-y)\right)dz + 2\delta.
\ee
Moreover,
\be
&\int_0^{y-\delta} f(|z-x|)\left(f(y-z)-f(y-z+\delta)\right) dz \\
&\leq  \int_{D'}f(|z-x|)\left(f(y-z)-f(y-z+\delta)\right) dz+\int_{[0,y-\delta]\setminus D'}f(z-x)\frac{\epsilon}6 dz  \\
&\leq \text{Vol}(D^\prime) +\left( \frac{\epsilon}6 \right) y\text{, and}\\
&\int_{y+\delta}^1 f(|z-x|)\left(f(z-y-\delta)-f(z-y)\right)dz \leq \text{Vol}(D^{\prime\prime})+\left(\frac{\epsilon}6\right)(1-y).
\ee
Therefore, $|w^{(2)}(x,y+\delta)-w^{(2)}(x,y)|\leq \text{Vol}(D')+\text{Vol}(D'')+\frac{\epsilon}{6}+2\delta <\epsilon/2$.

Similarly, we can show that $|w^{(2)}(x,y-\delta)-w^{(2)}(x,y)|< \epsilon/2$. Using the fact that $w^{(2)}$ is symmetric, this shows that $|w^{(2)}(x',y')-w^{(2)}(x,y)|<\epsilon $ whenever $||(x,y)-(x',y')||_\infty<\delta$, so $w^{(2)}$ is uniformly continuous.
\end{proof}

Next we consider an alternative representation of diagonally increasing graphons, which will be helpful in some cases. We saw earlier (see discussion before Equation \eqref{Eq:Boundaries}) that, if  $w$ is a $\{ 0,1\}$-valued graphon, then $w$ is diagonally increasing if and only if there exist two non-decreasing boundary functions $\ell:[0,1]\rightarrow [0,1]$ and $r:[0,1]\rightarrow [0,1]$ which demarcate the regions in $[0,1]^2$ where $w$ assumes the value 1. Precisely, $w$ is  of the form
\be
\label{AltRep01w}
w(x,y)=\left\{ \begin{array}{ll} 1&\text{if }y\in I(x)\\
0&\text{otherwise, }\end{array}\right.
\ee
where $(\ell (x),r(x)) \subset  I(x)\subset [\ell(x),r(x)]$. Thus, the boundaries define $w$ up to the values exactly on points of the form $(\ell(x),x)$ and $(x,r(x))$; of course this collection of points has measure 0, and thus does not influence the distribution of samples from $w$.

More generally, $w$ is diagonally increasing if and only if, for each $s\in [0,1]$, the $\{ 0,1\}$-valued graphon $w_s$ is diagonally increasing. Thus, if $w$ is diagonally increasing, then for each $s\in [0,1]$, there exist boundaries $\ell_s$ and $r_s$ that define the thresholded graphon $w_s$ (up to a set of measure zero) as in \eqref{AltRep01w}. Let
\be \label{Eq:Ialpha}
I_s(x)=\{ y:w_s (x,y)=1\}=\{ y:w(x,y)\geq s\}
\ee
as given above. This leads to an alternative representation of $w$:

\begin{lemma}
For any diagonally increasing graphon $w$,
\be \label{AltRepw}
w(x,y)=\int_{0}^{1} \mathbf{1}_{\{y\in I_{s}(x)\}}ds.
\ee

\end{lemma}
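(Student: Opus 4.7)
The identity \eqref{AltRepw} is simply the layer-cake representation of $w(x,y)$ as a function taking values in $[0,1]$, and the diagonally-increasing hypothesis plays no role in this particular step (it is only needed to guarantee that each set $I_s(x)$ is actually an interval $[\ell_s(x), r_s(x)]$, which is what justifies the earlier sentence about the existence of the boundary functions $\ell_s, r_s$).

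The plan is to fix $(x,y) \in [0,1]^2$ and swap the roles of $s$ and $(x,y)$ via the definition of $I_s(x)$. By the defining equation \eqref{Eq:Ialpha}, for each fixed $(x,y)$ we have the pointwise identity
\[
\mathbf{1}_{\{y \in I_s(x)\}} \;=\; \mathbf{1}_{\{w(x,y) \geq s\}},
\]
viewed as a function of $s \in [0,1]$. Substituting this into the right-hand side of \eqref{AltRepw} gives
\[
\int_0^1 \mathbf{1}_{\{y \in I_s(x)\}}\,ds \;=\; \int_0^1 \mathbf{1}_{\{s \leq w(x,y)\}}\,ds \;=\; w(x,y),
\]
where the last equality uses that $w(x,y) \in [0,1]$ so the indicator integrates to exactly $w(x,y)$. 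This computation is elementary and needs no further estimate.

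There is no serious obstacle: the only thing to be careful about is the measurability of $(s,x,y) \mapsto \mathbf{1}_{\{w(x,y) \geq s\}}$ so that the integral on the right of \eqref{AltRepw} is well-defined as a function of $(x,y)$; this follows because $w$ is a measurable graphon, so $\{(s,x,y) : w(x,y) \geq s\}$ is a measurable subset of $[0,1]^3$, and Fubini then guarantees that the slice integral is a measurable function of $(x,y)$. No further properties of $w$ are used in this step, so the proof is essentially a one-line verification after unfolding the definition of $I_s(x)$.
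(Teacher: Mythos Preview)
Your proof is correct and follows exactly the same route as the paper: both unfold the definition $I_s(x)=\{y:w(x,y)\geq s\}$ to rewrite $\mathbf{1}_{\{y\in I_s(x)\}}=\mathbf{1}_{\{s\leq w(x,y)\}}$ and then integrate over $s\in[0,1]$. Your additional remarks that the diagonally-increasing hypothesis is not used here and that measurability follows from $w$ being a graphon are accurate and worth noting, though the paper omits them.
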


\begin{proof}

For all $0\leq x<y\leq 1$, by definition of $I_s$,
\be
w(x,y)=\int_0^1 \mathbf{1}_{\{s \leq w(x,y)\}} ds =\int_0^1 \mathbf{1}_{\{ y\in I_s (x)\}}ds.
\ee
\end{proof}

The representation of $w$ as given in (\ref{AltRepw}) leads to the alternative formula of $w^{(2)}$ :
\be\label{Eq:w2Alternative}
w^{(2)}(x,y)&=\int_0^1\left(\int_s \mathbf{1}_{\{ z\in I_s(x)\}}ds\right)\,\left(\int_t \mathbf{1}_{\{ z\in I_t(y)\}}dt\right) dz\\
&=\int_s\int_t |I_s(x)\cap I_t(y)|dsdt,
\ee
where $I_s(x)$, $I_t(y)$ are as defined in (\ref{Eq:Ialpha}).

In the case of graphons satisfying Assumptions \ref{AssumptionGoodGraphon}, the length of the interval $I_s(x)$ is determined only by $s$. Precisely, for all $s,x\in [0,1]$, $(x-d_s,x+d_s)\subset I_s(x)\subset [x-d_s,x+d_s]$, where
\be
d_s=\sup\{ \delta\,:\,f(\delta )\geq s\}.
\ee 
Under Conditions \ref{AssumptionsSimpleWeakIdentifiabilityAssumptions}, we also know that $d_s\leq d$ for all $s>c$, and $d_s=1$ otherwise.

In Lemma \ref{lem:decreaseW2}, to follow, we use this alternative representation to show a crucial property of $\WA{}$. First, we make a simple observation, which will help us to conclude the second half of the lemma. 
\be\label{Eq:W1-x}
\WA{}(1-x,1-y)&=\int_0^1 f(|1-x-z|)f(|1-y-z|)dz\\
& =\int_0^1 f(|\zeta-x|)f(|y-\zeta|) d\zeta \\
& =\WA{}(x,y).
\ee

\begin{lemma}\label{lem:decreaseW2}
Let $w$ be a graphon satisfying Conditions \ref{AssumptionsSimpleWeakIdentifiabilityAssumptions}, and let $f,d,c,\alpha$ be as in the statement of these conditions. Then for all $x\in [0,1]$, 
\begin{itemize}
\item[(i)] $w^{(2)}(x,\cdot)$ is decreasing on $[x+d,1]$, 
\item[(ii)] for all $x+d<y'<y<x+2d$,
\be\label{eq:minslopeW2}
(y-y')\geq \WA{}(x,y')-w(x,y)\geq (y-y') \frac12\left(f(\frac{y-x}{2})-c\right)^2.
\ee
In particular,  $w^{(2)}(x,\cdot)$ is strictly decreasing on $(x+d,x+2d)$.
\item[(iii)]  $w^{(2)}(x,\cdot)$  is increasing on $[0,x-d]$.

\item[(iv)] For all $x-2d<y<y'<x-d$,
\be\label{eq:minslopeW2symm}
(y'-y)\geq \WA{}(x,y')-w(x,y)\geq (y'-y) \frac12\left(f(\frac{x-y'}{2})-c\right)^2.
\ee
In particular,  $w^{(2)}(x,\cdot)$ is strictly increasing on $(x-2d,x-d)$.
\end{itemize}
Moreover, since $w$ is symmetric, equivalent statements hold for $\WA{}(\cdot, x)$.
\end{lemma}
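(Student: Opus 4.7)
The plan is to isolate the nontrivial $y$-dependence of $w^{(2)}$ via a decomposition that pulls out the constant background. Setting $g(u) = f(u) - c$, so that $g$ is non-negative, decreasing, and supported on $[0,d]$ with $g(0) = f(0)-c > 0$, expanding $(c + g(|x-z|))(c + g(|y-z|))$ and integrating over $z$ gives
\[
w^{(2)}(x,y) = c^2 + c\,G(x) + c\,G(y) + g^{(2)}(x,y),
\]
with $G(y) = \int_0^1 g(|y-z|)\,dz$ and $g^{(2)}(x,y) = \int_0^1 g(|x-z|)\,g(|y-z|)\,dz$. A short computation using the support of $g$ shows that $G$ is constant on $[d, 1-d]$ and non-increasing on $[1-d, 1]$, so the work reduces to analyzing $g^{(2)}$.

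For $g^{(2)}$, I would apply the layer-cake formula in analogy with the representation of $w$ discussed before the lemma: $g(u) = \int_0^{f(0)-c}\mathbf{1}_{u \leq d_{s+c}}\,ds$, where $d_{s+c} \in (0,d]$ is the threshold defined as in the paragraph preceding the lemma. This yields
\[
g^{(2)}(x,y) = \int_0^{f(0)-c}\!\!\int_0^{f(0)-c} |J_s(x) \cap J_t(y)|\,ds\,dt, \qquad J_s(x) = [x-d_{s+c},\, x+d_{s+c}] \cap [0,1].
\]
For $y \geq x+d$, an interval-intersection calculation shows $|J_s(x) \cap J_t(y)| = \max\!\bigl(0,\, d_{s+c}+d_{t+c} - (y-x)\bigr)$ in the bulk, with analogous expressions near the boundary of $[0,1]$; in all cases the length is non-increasing in $y$ with pointwise rate at most $1$ where positive. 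Combined with the monotonicity of $G$, this proves (i).

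For (ii), fix $x+d < y' < y < x+2d$. The contribution of each $(s,t)$ to $g^{(2)}(x,y') - g^{(2)}(x,y)$ equals $(y-y')$ when $d_{s+c}+d_{t+c} \geq y-x$, lies in $(0,\,y-y']$ when $d_{s+c}+d_{t+c} \in (y'-x,\, y-x)$, and is zero otherwise. Whenever $s, t < f((y-x)/2) - c$ we have $d_{s+c},\, d_{t+c} \geq (y-x)/2$; since $(y-x)/2 < d$ forces $f((y-x)/2) > c$, the fully-contributing region has measure at least $(f((y-x)/2)-c)^2 > 0$. Combined with $G(y') - G(y) \geq 0$, this gives the stated lower bound (with slack factor $2$). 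For the upper bound, each individual contribution is at most $(y-y')$ and the integration region has area at most $(f(0)-c)^2$, so $g^{(2)}(x,y') - g^{(2)}(x,y) \leq (y-y')(f(0)-c)^2$; a direct estimate using $g(0) = f(0)-c$ gives $G(y')-G(y) \leq (f(0)-c)(y-y')$; summing and using $(f(0)-c)\,f(0) \leq 1$ yields the upper bound $(y-y')$.

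Claims (iii) and (iv) will then follow from (i) and (ii) via the symmetry $w^{(2)}(x,y) = w^{(2)}(1-x, 1-y)$ in \eqref{Eq:W1-x}: the substitution $\tilde x = 1-x$, $\tilde y = 1-y$ maps $y \in [0, x-d]$ onto $\tilde y \in [\tilde x+d, 1]$ with the direction of monotonicity reversed, and $(x-2d,\, x-d)$ onto $(\tilde x+d,\, \tilde x+2d)$, so the conclusions transfer directly. The main obstacle throughout will be the boundary bookkeeping when the intervals $J_s(x)$ or $J_t(y)$ are truncated by $[0,1]$; the clean separation of the boundary contribution in $y$ into the one-variable function $G(y)$ is precisely what keeps the case analysis manageable and the explicit constants in (ii) sharp.
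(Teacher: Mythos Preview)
Your argument is correct and follows a route that is close in spirit to the paper's but with a genuinely different decomposition. The paper layer-cakes $f$ directly, writing $w^{(2)}(x,y)=\int_0^1\!\int_0^1 |I_s(x)\cap I_t(y)|\,ds\,dt$ and analysing the piecewise-linear functions $\phi_{s,t}(y)=|I_s(x)\cap I_t(y)|$. Because $d_s=1$ whenever $s\le c$, some of these intervals are all of $[0,1]$, and the paper must rule out the possibility that $\phi_{s,t}$ has slope $+1$ on $(x+d,1]$ via a separate case analysis (their $R^+(y)$ set). Your subtraction of the constant background, $f=c+g$, eliminates this case entirely: after the split $w^{(2)}(x,y)=c^2+cG(x)+cG(y)+g^{(2)}(x,y)$, all threshold intervals $J_s$ have half-width at most $d$, so for $y>x+d$ the intersection length is automatically $\max(0,\min(x+d_{s+c},1)-(y-d_{t+c}))$, with slope in $\{-1,0\}$ and no further cases. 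The price you pay is the extra one-variable term $cG(y)$, which you correctly observe is non-increasing on $[d,1]$ and hence only helps for (i) and for the lower bound in (ii); for the upper bound in (ii) you need the Lipschitz estimate on $G$ and the short algebraic step $(f(0)-c)f(0)\le 1$, whereas the paper gets the upper bound in one line from $\phi_{s,t}'\ge -1$ integrated over the unit square. Your lower bound in (ii) is in fact sharper by a factor of $2$ (you integrate over the full square $[0,f((y-x)/2)-c)^2$ rather than the paper's triangle $d_t\le d_s$), which is why you note the slack. Parts (iii) and (iv) via the symmetry $w^{(2)}(x,y)=w^{(2)}(1-x,1-y)$ are handled identically in both.
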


\begin{proof}

Inspecting the alternative representation given in \eqref{Eq:w2Alternative}, we see that the contribution to $\WA{}(x,y)$ due to any particular pair of values $s,t$ is determined by the size of the intersection $I_s(x)\cap I_t(y)$. We can derive the size of this intersection  from the values of $d_s,d_t$.
Fix $x\in [0,1-d)$, and consider $w^{(2)}(x,y)$ as a function of $y\in [x,1]$. For $s,t\in [0,1]$, define
\be
\label{Def:phi}
\phi_{s,t} (y)&= |I_s(x)\cap I_t(y)|.
\ee

The set $I_s(x)\cap I_t(y)$ has measure zero if and only if $x+d_s\leq y-d_t$ (i.e. $d_s+d_t\leq y-x$). If $|I_s(x)\cap I_t(y)|>0$, then 
\be 
\phi_{s,t}(y)=|I_s(x)\cap I_t(y)| = \min\{ x+d_s, y+d_t, 1\}-\max\{ x-d_s,y-d_t, 0\}.
\ee
Thus $\phi_{s,t}$ is piecewise linear, with each of the linear pieces having slope in $\{ -1,0,1\}$.

We are almost ready to prove that $w^{(2)}(x,\cdot)$ is decreasing on $[x+d,1]$, based on the following proposition:

\begin{prop} \label{PropNPSlope}
Fix $x \in [0,1]$. Then for all $s,t \in [0,1]$, $\phi_{s,t}$  has non-positive slope for almost every $y \in (x+d,1]$.
\end{prop}

\begin{proof}
For $y \in (x,1]$, define
\be
R^+(y)=\{ (s,t)\,:\,y<d_t<1-y,\,d_s> d_t+(y-x)\}.
\ee

Suppose that for some $s,t \in [0,1]$, $y$ lies in the interior of a region on which $\phi_{s,t}$ has slope 1; we will check that $(s,t)\in R^+(y)$. So see this, note that, if  $\phi_{s,t}$ has slope 1 at $y$, then the left boundary of $I_s(x)\cap I_t(y)$ cannot be given by $y-d_t$, while the right boundary must be $y+d_t$. The right boundary condition (and the fact that we are in the interior of a region with slope 1) implies that $y+d_t<1$ and $y+d_t<x+d_s$. This implies that $d_s> d_t+(y-x)\geq d_t$, and thus $x-d_s\leq y-d_t$. Thus we must have that the left boundary of $I_s(x)\cap I_t(y)$ equals 0, and thus $y-d_t<0$, and so $(s,t)\in R^+(y)$.

Next, we check that, if $y>x+d$, then $R^+(y)=\emptyset$. As argued earlier, it follows from Conditions \ref{AssumptionsSimpleWeakIdentifiabilityAssumptions} that for all $s$, $d_s\leq d$ or $d_s=1$ otherwise. Suppose then that  $y-x>d$ and $(s,t)\in R^+(y)$. By definition, $d_s>d_t+(y-x)>d$ and thus $d_s=1$. But then, $d_t<1-y<1$ so $d_t\leq d\leq x+d<y$, which contradicts the condition $y>x+d$. 

Combining the conclusions in these two paragraphs completes the proof.

\end{proof}

By Proposition \ref{PropNPSlope} if $x+d<y'<y$, then $\phi_{s,t}(y)-\phi_{s,t}(y')\leq 0$, and consequently,
\[
w^{(2)}(x,y)-w^{(2)}(x,y')=\int_s\int_t \phi_{s,t}(y)-\phi_{s,t}(y') dsdt\leq 0.
\]
This shows part $(i)$ of Lemma \ref{lem:decreaseW2}, namely that $\WA{}(x,\cdot )$ is decreasing on $[x+d,1]$.

We will next prove Inequality \eqref{eq:minslopeW2}. The lower bound  follows immediately from the fact that for all $s,t$, $\phi_{s,t}$ has slope at least $-1$. To prove the upper bound, we begin by defining
\be
R^-(z)=\{(s,t)\,:d_t\leq d_s\leq d, d_s+d_t\geq z-x\}
\ee
for all $z \geq x$. Note that $R^-(y)\subset R^-(z)$ for all $y>z$.

We now check that 
\be \label{IneqPhiNegSlope1}
\phi_{s,t}(y)-\phi_{s,t}(y')=-(y-y')
\ee
holds for all $x+d<y'<y<x+2d$ and $(s,t)\in R^-(y)$. We will do this by showing that, for all $(s,t)\in R^-(y)$,  
$\phi_{s,t}$ has slope $-1$ at any point $z\in [y',y]$.
Fix $(s,t)\in R^-(y)$ and $z\in [y',y ]$. Then $(s,t)\in R^-(z)$. The condition $ d_s+d_t\geq z-x$ in the definition of $R^{-}(z)$ guarantees that $I_s(x)\cap I_t(z)$ is non-empty. Since $d_s\geq d_t$ and $x<z$, $x-d_s < z-d_t$. Since $z> x+d\geq d\geq d_t$, $z-d_t>0$. Thus, the left boundary of $I_s(x)\cap I_t(z)$ equals $z-d_t$. On the other hand, $z+d_t\geq z>x+d\geq x+d_s$ so the right boundary is not given by $z+d_t$. Thus, $\phi_{s,t}$ has slope $-1$ at $z$. This implies that $\phi_{s,t}$ is linear with slope $-1$ on the interval $[y',y]$, which implies  \eqref{IneqPhiNegSlope1}.

%
Finally, we bound the size of $R^-(y)$ for $x+d<y<x+2d$. Fix $y\in (x+d,x+2d)$, and define $c_1 \equiv f(\frac{y-x}{2})$. In this case, $(y-x)/2<d$ and thus $c_1 >c$. Then for all $c_1\geq t\geq s>c$ and for all $y'\leq z\leq y$, $(y-x)/2\leq d_t\leq d_s<d$, and thus $(s,t)\in R^-(y)$. Therefore, $|R^-(z)|\geq (c_1-c)^2/2$. 

Applying Equations \eqref{Eq:w2Alternative} and \eqref{Def:phi} and then Inequality \eqref{IneqPhiNegSlope1},  we have for $x+d<y'<y<x+2d$,
\be
\label{Eq:DerivativeSquare}
w^{(2)}(x,y)-w^{(2)}(x,y')&=\int_s\int_t (\phi_{s,t}(y)-\phi_{s,t}(y')) dsdt \\
&\leq  - (y-y') |R^-(y)| \\
&\leq -\frac12 (f(\frac{y-x}{2})-c)^2 (y-y').
\ee
In particular, $w^{(2)}(x,\cdot)$ is strictly decreasing on $(x+d,x+2d)$. This proves the upper bound in \eqref{eq:minslopeW2}, completing our proof of item $(ii)$ of the lemma.

Due to the symmetry shown in \eqref{Eq:W1-x} we have that $(iii), (iv)$ follow immediately from $(i), (ii)$ respectively.

\end{proof}

Finally, we establish a general lemmas about the behaviour of $w^{(2)}(x,y)$ when $w$ is uniformly embedded and diagonally increasing.

\begin{lemma}
\label{Lemma:DiagonalValues}
Let $w$ be a uniformly embedded graphon, with decreasing link probability function $f$. Fix $\delta\in [0,\frac12]$. Then for all $x\in [0,1-\delta]$,
\be
\label{Eq:DiagonalValues}
w^{(2)}(0,\delta)\leq w^{(2)}(x,x+\delta)\leq w^{(2)}(\frac{1-\delta}2,\frac{1+\delta}2).
\ee
\end{lemma}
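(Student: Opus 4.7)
\textbf{Proof plan for Lemma \ref{Lemma:DiagonalValues}.} Let $g(x) = w^{(2)}(x,x+\delta)$ for $x \in [0,1-\delta]$. The plan is to show that $g$ is unimodal on $[0,1-\delta]$ with maximum at the midpoint $(1-\delta)/2$ and (by symmetry) minimum at the endpoints $0$ and $1-\delta$. This immediately gives both inequalities in \eqref{Eq:DiagonalValues}.

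First, I would rewrite $g$ in a translation-friendly form. Using the substitution $u = z - x$,
\[
g(x) = \int_{-x}^{1-x} f(|u|)\,f(|u-\delta|)\, du.
\]
The symmetry $g(x) = g(1-\delta-x)$ follows either from identity \eqref{Eq:W1-x} or by the change of variables $u \mapsto \delta - u$ in the displayed integral. So it suffices to prove $g$ is non-decreasing on $[0,(1-\delta)/2]$.

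The key step is a direct comparison. For $0 \le x_1 < x_2 \le (1-\delta)/2$, subtracting the integral representations gives
\[
g(x_2) - g(x_1) = \int_{-x_2}^{-x_1} f(|u|)f(|u-\delta|)\,du \;-\; \int_{1-x_2}^{1-x_1} f(|u|)f(|u-\delta|)\,du.
\]
On each of these intervals the arguments $|u|$ and $|u-\delta|$ have the same sign (since $x_2 \le 1-\delta$), so the absolute values can be removed. Reparametrising both integrals by $s \in [0, x_2 - x_1]$ yields
\[
g(x_2) - g(x_1) = \int_0^{x_2-x_1} \bigl[F(x_1+s) - F(1 - x_2 + s - \delta)\bigr]\, ds,
\]
where $F(t) := f(t)\,f(t+\delta)$. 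Since $f$ is non-negative and decreasing, $F$ is also decreasing on $[0,1-\delta]$. The condition $x_2 \le (1-\delta)/2$ ensures $x_1 + s \le 1 - x_2 + s - \delta$ for every $s \in [0, x_2-x_1]$, so the integrand is non-negative. Hence $g$ is non-decreasing on $[0,(1-\delta)/2]$; by symmetry it is non-increasing on $[(1-\delta)/2, 1-\delta]$, giving the maximum at $(1-\delta)/2$ and minimum at $0$.

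I do not foresee any serious obstacle. The only point requiring a small bit of care is justifying the manipulation of the integrals when $f$ is merely monotone (hence only a.e. differentiable); using the direct finite-difference comparison above rather than pointwise differentiation avoids any regularity issue entirely, since everything reduces to monotonicity of $F$ together with Fubini on bounded measurable integrands.
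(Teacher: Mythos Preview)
Your proof is correct and is essentially the same as the paper's. Both arguments reduce to the monotonicity of $F(t)=f(t)f(t+\delta)$ and compare the contributions at the two ends of the sliding window; the paper obtains this via the explicit three-piece decomposition $w^{(2)}(x,x+\delta)=\int_0^x F(s)\,ds + \int_0^\delta f(s)f(\delta-s)\,ds + \int_0^{1-\delta-x} F(s)\,ds$, which after differencing yields exactly your integral $\int_0^{x_2-x_1}[F(x_1+s)-F(1-x_2+s-\delta)]\,ds$.
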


\begin{proof}
As shown in Equation \eqref{Eq:W1-x}, we have that $w^{(2)}(1-x,1-y)=w^{(2)}(x,y)$ for all $x,y$. Thus we may assume without loss of generality that $x\leq 1-(x+\delta)$, so $x\in [0, \frac12-\frac{\delta}2]$. According to the definition, and making the appropriate substitutions,
\be 
w^{(2)}(x,x+\delta)&=\int_0^1 f(|x-z|)f(|x+\delta-z|)dz\\
&=\int_0^x f(s)f(s+\delta)ds+\int_0^\delta f(s)f(\delta-s)ds+ \int_0^{1-\delta-x} f(s)f(s+\delta)ds.\\
\ee
Note that the middle integral does not depend on $x$, while the first and last integral are taken over the same function. Since
$f$ is decreasing, so is $f(s)f(s+\delta)$ (taken as a function of $s$). Therefore, this expression is maximized when
$x=1-\delta-x$, so $x=\frac{1-\delta}2$, and minimized when $x=0$.
\end{proof}

\subsubsection{Assumption \ref{AssumptionsSimpleWeakIdentifiabilityAssumptions} implies Assumption \ref{AssumptionGoodGraphon}}

We start by showing that Assumption \ref{AssumptionsSimpleWeakIdentifiabilityAssumptions} implies that $\WA{\alpha}$ is diagonally increasing. 

\begin{remark}\label{rem:counterexample}

It is not true that $w^{(2)}$ is diagonally increasing for all diagonally-increasing $w$. We consider as simple counterexamples some graphons of the form \eqref{EqSimpleGraphon}, which are uniformly embedded with  link probability function
\[
f(x)=\left\{ \begin{array}{ll}p,& 0\leq x\leq d,\\
q, & d<x\leq 1.
\end{array}\right.
\]
We consider parameters $0\leq q<p\leq 1$, and $0<d<\frac12$.
Then, for $y\in [0,d]$,  $w^{(2)}(0,y)= q^2+(p^2-q^2)d+q(p-q)y$. This function is increasing as $y$ moves away from 0 and thus is further removed from the diagonal. This contradicts the defining property of diagonally increasing graphons as given in (\ref{IneqEmbBasic}).

Even if $w^{(2)}$ itself is not diagonally increasing, there may still exist a value $\alpha $ so that $\WA{\alpha}$ is diagonally increasing. In the above example, for $y\in [d,2d]$, $w^{(2)}(0,y)$ is linearly decreasing with slope $-(p-q)^2$, while for $y\in [2d,1-d]$, $w^{(2)}(0,y)$ is constant equal to $q^2+3dq(p-q)$. If there exists a value $w^{(2)}(0,0)>\alpha> w^{(2)}(0,2d)$ then  $\WA{\alpha}(0,\cdot) $ is  decreasing, even if $w^{(2)}(0,\cdot)$ is not. However, it is possible to choose values for $p$ and $q$ (e.g.~$p=1/3$, $q=1/6$, $d=0.3$) for which $\WA{}(0,2d)\geq \WA{}(0,0)$, and thus such a value $\alpha$ does not exist.

\end{remark}

We now show that, for a graphons $w$ satisfying our assumptions, $\WA{\alpha}$ is a diagonally increasing graphon.

\begin{lemma}\label{lem:wLinEmbw2Also}
Let $w$ and $\alpha$ be a graphon and constant satisfying Conditions \ref{AssumptionsSimpleWeakIdentifiabilityAssumptions}. Then $w^{(2)}_{\alpha}$ is diagonally increasing. Moreover, $w^{(2)}_{\alpha}(x,y)=1$ if and only if $y\in [\ell_\alpha(x),r_\alpha (x)]$. 

\end{lemma}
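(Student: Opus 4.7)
The plan is to split the argument into two steps: first verify the pointwise bound $w^{(2)}(x,y)\geq \alpha$ on the "central strip" $|x-y|\leq d$, then combine this with the monotonicity of $w^{(2)}(x,\cdot)$ on the tails (already proved in Lemma \ref{lem:decreaseW2}) to deduce both the Robinsonian property and the characterization in terms of the boundary functions $\ell_\alpha,r_\alpha$.

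\emph{Step 1 (central strip).} I would use the left-hand side of inequality \eqref{eq:alpha1}, rewritten as $\inf_{s\in[0,d]} w^{(2)}(0,s)>\alpha$, together with Lemma \ref{Lemma:DiagonalValues}, which says that $w^{(2)}(0,\delta)\leq w^{(2)}(x,x+\delta)$ for every $\delta\in[0,\tfrac{1}{2}]$ and every admissible $x$. Since $d<\tfrac{1}{2}$, this immediately yields $w^{(2)}(x,y)\geq \alpha$ whenever $|x-y|\leq d$ and $x,y\in[0,1]$.

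\emph{Step 2 (endpoints of the level set).} For fixed $x$, combine the central-strip bound with the monotonicity parts (i) and (iii) of Lemma \ref{lem:decreaseW2}: $w^{(2)}(x,\cdot)$ is increasing on $[0,x-d]$ and decreasing on $[x+d,1]$, while it is $\geq\alpha$ on $[x-d,x+d]\cap[0,1]$. By Lemma \ref{Lemma:SquareContinuous} the function $w^{(2)}(x,\cdot)$ is continuous, so $\{y\in[0,1]:w^{(2)}(x,y)\geq \alpha\}$ is a closed interval $[\ell_\alpha(x),r_\alpha(x)]$ with $\ell_\alpha(x)\leq x-d$ and $r_\alpha(x)\geq x+d$ (trimmed at $0$ and $1$). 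This matches the definitions in \eqref{Eq:Boundaries} and gives the "moreover" clause, since $w^{(2)}_\alpha(x,y)=1\Leftrightarrow w^{(2)}(x,y)\geq\alpha\Leftrightarrow y\in[\ell_\alpha(x),r_\alpha(x)]$.

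\emph{Step 3 (Robinsonian property).} To verify that $w^{(2)}_\alpha$ is diagonally increasing I would fix $x<z<y$ and show both that $w^{(2)}(x,y)\geq \alpha$ implies $w^{(2)}(z,y)\geq \alpha$ and that it implies $w^{(2)}(x,z)\geq \alpha$. For the first: if $x\geq y-d$ then $[x,y]\subset[y-d,y+d]$ and Step 1 applies to $(z,y)$; otherwise $w^{(2)}(\cdot,y)$ is increasing on $[0,y-d]$ by the symmetric form of Lemma \ref{lem:decreaseW2}, so $w^{(2)}(z,y)\geq w^{(2)}(x,y)\geq\alpha$ for $z\in[x,y-d]$, while $z\in[y-d,y]$ is covered by Step 1. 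The second implication is proved the same way, now using that $w^{(2)}(x,\cdot)$ is decreasing on $[x+d,1]$.

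No step is a serious obstacle: the right-hand side of \eqref{eq:alpha1} is not actually needed for this particular lemma (it is used later to verify the split and separation conditions of Assumption \ref{AssumptionGoodGraphon}); the only subtlety is making sure to invoke the continuity of $w^{(2)}$ from Lemma \ref{Lemma:SquareContinuous} before identifying the level set with a closed interval, and to handle the trimming $[\ell_\alpha(x),r_\alpha(x)]\cap[0,1]$ cleanly at the endpoints $x\in[0,d]\cup[1-d,1]$.
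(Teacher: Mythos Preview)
Your proposal is correct and follows essentially the same approach as the paper: both use the left-hand side of \eqref{eq:alpha1} together with Lemma \ref{Lemma:DiagonalValues} to get $w^{(2)}>\alpha$ on the strip $|x-y|\leq d$, then invoke the monotonicity from Lemma \ref{lem:decreaseW2} and continuity from Lemma \ref{Lemma:SquareContinuous} to identify the super-level set as a closed interval. Your Step~3 is in fact more explicit than the paper's version, which simply records the interval characterization and leaves the reader to extract the Robinsonian inequalities \eqref{IneqEmbBasic} from it (via symmetry and the fact that each interval contains its diagonal point); your direct verification using the symmetric monotonicity of $w^{(2)}(\cdot,y)$ is a clean way to spell this out.
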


\begin{proof}

Fix $x\in [0,1]$. By Lemma \ref{Lemma:SquareContinuous}, $\WA{}$ is continuous, so $\WA{}(x,r_\alpha (x))=\alpha$.
By Assumptions \ref{AssumptionsSimpleWeakIdentifiabilityAssumptions} and Lemma \ref{Lemma:DiagonalValues}, we have that  for all $0\leq \delta\leq d$,
\[
\WA{}(x,x+\delta)\geq \WA{}(0,\delta)>\alpha,
\]
and thus $ r_\alpha (x)> x+d$.
By Lemma \ref{lem:decreaseW2}, $\WA{}(x,\cdot)$ is decreasing on $[x+d,1]$, and thus $\WA{}(x,y)\geq \WA{}(x,r_\alpha (x))=\alpha$ for $y\in [d,r_\alpha (x)]$. Thus for $y\geq x$, $\WA{}(x,y)\geq \alpha $ if and only if  $y\leq r_\alpha (x)$.

An analogous argument applied to $\ell_\alpha (x)$ shows that, for $y\leq x$, $\WA{}(x,y)\geq \alpha $ if and only if  $y\geq \ell_\alpha (x)$. This completes the proof.

\end{proof}

The following lemma shows that $r_\alpha (x)-x$ and $x-\ell_\alpha(x)$ are bounded well away from their lower and upper bounds $d$ and $d'$. 

\begin{lemma}\label{Lemma:d-d+}
Let $w$ be a graphon satisfying Conditions \ref{AssumptionsSimpleWeakIdentifiabilityAssumptions}, and let $f,d,c,\alpha$ be as in the statement of these conditions. Let
\be\label{eq:d-d+}
d^{-} &=\inf \{ r_\alpha(x)-x\,:\, x\in [0,\ell_\alpha(1)]\},\text{ and}\\
d^+ &=\sup \{ r_\alpha(x)-x\,:\, x\in [0,1]\}\}.
\ee
Then $d<d^-\leq d^+<d'$, where $d'=\min\{ 2d, 0.5\}$ as in Assumption \ref{AssumptionsSimpleWeakIdentifiabilityAssumptions}.
\end{lemma}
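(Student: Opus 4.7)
The plan is to quantify the strict inequalities in Assumption \ref{AssumptionsSimpleWeakIdentifiabilityAssumptions} as uniform slack constants
\[
\gamma := \inf_{s \in [0,d]} w^{(2)}(0,s) - \alpha > 0, \qquad \gamma' := \alpha - w^{(2)}\!\left(\tfrac{1-d'}{2}, \tfrac{1+d'}{2}\right) > 0,
\]
and then propagate these slacks through the structure of $\WA{}$ built up in Section \ref{Sec:BasicLemmas} to obtain the two strict inequalities $d < d^-$ and $d^+ < d'$. The lower bound will come from a direct Lipschitz-plus-slack argument, while the upper bound will be cleanest through a compactness argument that avoids constructing a uniform slack near the boundary of the domain.

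For $d^- > d$, I would first apply Lemma \ref{Lemma:DiagonalValues} to get $w^{(2)}(x,x+d) \geq w^{(2)}(0,d) \geq \alpha + \gamma$ for all $x \in [0, 1-d]$. The proof of Lemma \ref{lem:decreaseW2} shows each $\phi_{s,t}$ is piecewise linear with slopes in $\{-1,0,1\}$, so integrating gives the global Lipschitz bound $|w^{(2)}(x,y) - w^{(2)}(x,y')| \leq |y-y'|$. Taking $\eta = \gamma/2$, this yields $w^{(2)}(x, x + d + \eta) \geq \alpha + \gamma/2 > \alpha$, and hence $r_\alpha(x) \geq x + d + \eta$ whenever $x + d + \eta \leq 1$. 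The symmetry $w^{(2)}(1-u,1-v) = w^{(2)}(u,v)$ from Eq.~\eqref{Eq:W1-x} gives $\ell_\alpha(1) = 1 - r_\alpha(0)$, and the case $x = 0$ of the previous step forces $\ell_\alpha(1) \leq 1 - d - \eta$. Thus every $x \in [0, \ell_\alpha(1)]$ satisfies $x + d + \eta \leq 1$, so $d^- \geq d + \eta > d$.

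For $d^+ < d'$, the cleanest route is via compactness. Since $w^{(2)}$ is continuous (Lemma \ref{Lemma:SquareContinuous}), the super-level set $\{w^{(2)} \geq \alpha\}$ is closed, so $r_\alpha$ is upper semi-continuous on $[0,1]$ and $x \mapsto r_\alpha(x) - x$ attains its supremum at some $x^* \in [0,1]$. If $x^* + d' > 1$, then $r_\alpha(x^*) \leq 1 < x^* + d'$ directly. Otherwise $x^* + d' \leq 1$, and Lemma \ref{Lemma:DiagonalValues} yields $w^{(2)}(x^*, x^* + d') \leq w^{(2)}(\tfrac{1-d'}{2}, \tfrac{1+d'}{2}) = \alpha - \gamma'$; combined with the monotonicity of $w^{(2)}(x^*, \cdot)$ on $[x^*+d, 1]$ from Lemma \ref{lem:decreaseW2}(i) together with continuity of $w^{(2)}$ at $r_\alpha(x^*)$, this forces $r_\alpha(x^*) < x^* + d'$. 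Either way $d^+ < d'$, and $d^- \leq d^+$ follows from $[0, \ell_\alpha(1)] \subseteq [0,1]$. The main subtlety is precisely the boundary case $x^* + d' > 1$, which blocks a uniform $\eta$-gap argument across all $x$; attacking the supremum through upper semi-continuity neatly sidesteps it by letting one handle the two sides with disjoint arguments.
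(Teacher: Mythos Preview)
Your proof is correct and follows essentially the same route as the paper's: use Lemma \ref{Lemma:DiagonalValues} to push the slack $\gamma$ (resp.\ $\gamma'$) from the endpoint values to all $x$, then use a small-perturbation argument to get strict separation of $r_\alpha(x)-x$ from $d$ (resp.\ $d'$), handling the domain edges via $\ell_\alpha(1)=1-r_\alpha(0)$ and compactness. The only tactical difference is that for $d^->d$ you extract the explicit $1$-Lipschitz bound from the $\phi_{s,t}$ slope analysis to get $\eta=\gamma/2$, whereas the paper invokes uniform continuity of $w^{(2)}$ for an unspecified $\delta$; and for $d^+<d'$ you make the upper semi-continuity of $r_\alpha$ and the boundary case $x^*+d'>1$ explicit, where the paper is terser.
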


\begin{proof}
By our choice of $\alpha$, we have that
\be \label{IneqRAlphaSimp}
r_\alpha(x) \geq \min(x+d,1)
\ee for all $x\in [0,1]$. Let $\gamma = \inf\{ \WA{}(0,s)\,:\, 0\leq s \leq d\}$. By Assumption \ref{AssumptionsSimpleWeakIdentifiabilityAssumptions}, 
\[
\gamma =\inf_{s\in [0,d]}\WA{}(0,s)=\inf_{s\in [0,d]}\int_0^1 f(z)f(|s-z|) dz  > \alpha.
\]
By Lemma \ref{Lemma:DiagonalValues}, this implies that, for all $x\in [0,1-d]$,
$
\WA{}(x,x+d)\geq \WA{} (0,d)\geq \gamma >\alpha.
$

By Lemma \ref{Lemma:SquareContinuous}, $\WA{}$ is uniformly continuous. Let $0<\epsilon < \gamma -\alpha $, and let $\delta >0$ be so that $\WA{}(x,y)< \epsilon$ whenever $|x-y|<\delta$. Then for all $x\in [0,1-d-\delta/2]$, 
$$
\WA{}(x,x+d+\delta/2)>\WA{}(x,x+d)+\epsilon\geq \gamma +\epsilon >\alpha.
$$
Therefore, $r_\alpha (x)\geq x+d+\delta/2$ for all $x\in [0,1-d-\delta/2]$. Moreover, since $\WA{}(1-d-\delta/2,1)>\alpha$, $\ell_{\alpha }(1)<1-d-\delta/2$. By compactness, $d^- \geq d+\delta/2>d$.

Let $\gamma' = \WA{}(\frac{1-d'}{2},\frac{1+d'}{2})$.
By Assumption \ref{AssumptionsSimpleWeakIdentifiabilityAssumptions}, 
$$
\gamma'=\int_0^1 f(|z-\frac{1-d'}{2}|) f(|z-\frac{1+d'}{2}|) dz <\alpha.
$$
By Lemmas \ref{Lemma:DiagonalValues} and \ref{lem:decreaseW2}, we have that  for all $x\in [0,0.5]$,
\[
\WA{}(x,x+d')\leq \WA{}(\frac{1-d'}{2},\frac{1+d'}{2})=\gamma' <\alpha,
\]
and thus $ r_\alpha (x)< x+ d'$.
By compactness, $d^+\leq d'$. As argued in the previous case, since $\WA{}(x,r_\alpha (x))=\alpha $ and $\WA{}$ is continuous, we have that $d^+<d'$.

\end{proof}

\begin{lemma}\label{Lemma:Connected}

Let $w$ and $\alpha$ be a graphon and constant satisfying Conditions \ref{AssumptionsSimpleWeakIdentifiabilityAssumptions}.  Then there exists an $\epsilon >0$ so that $\WA{\alpha}$ is $(\epsilon ,\alpha)$-connected.
\end{lemma}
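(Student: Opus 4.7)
The plan is to reduce the $(\epsilon,\alpha)$-connectedness of $\WA{}$ (or, equivalently, of $\WA{\alpha}$) to the ``on-diagonal'' inequality that forms the left-hand side of Equation \eqref{eq:alpha} in Assumption \ref{AssumptionsSimpleWeakIdentifiabilityAssumptions}. The key observation is that, thanks to Lemma \ref{Lemma:DiagonalValues}, the value $\WA{}(x,x+\delta)$ is minimized, over the range $x\in[0,1-\delta]$, at the endpoint $x=0$; and by the symmetry relation $\WA{}(1-x,1-y)=\WA{}(x,y)$ established in \eqref{Eq:W1-x}, the same infimum controls any pair $(u,v)$ with $|u-v|=\delta$.

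Concretely, first I would set $\epsilon=d$, where $d<\tfrac12$ is the constant appearing in Assumption \ref{AssumptionsSimpleWeakIdentifiabilityAssumptions}. Then for any $u,v\in[0,1]$ with $|u-v|\leq\epsilon$, write $\delta=|u-v|\leq d\leq \tfrac12$, and apply Lemma \ref{Lemma:DiagonalValues} to obtain $\WA{}(u,v)\geq \WA{}(0,\delta)$. Second, I would define $\gamma:=\inf_{s\in[0,d]}\WA{}(0,s)$. The left-hand side of \eqref{eq:alpha} in Assumption \ref{AssumptionsSimpleWeakIdentifiabilityAssumptions} says exactly that this infimum is strictly greater than $\alpha$; compactness of $[0,d]$ combined with the uniform continuity of $\WA{}$ (Lemma \ref{Lemma:SquareContinuous}) guarantees the infimum is attained, so $\gamma>\alpha$ is a strict gap. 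Combining these two steps yields
\[
\inf\{\WA{}(u,v)\,:\,|u-v|\leq \epsilon\}\geq \gamma>\alpha,
\]
which is precisely the $(\epsilon,\alpha)$-connectedness condition for $\WA{}$. Since the bound $\WA{}(u,v)\geq\gamma>\alpha$ forces $\WA{\alpha}(u,v)=1$ on the same region, the conclusion holds for $\WA{\alpha}$ as well.

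There is no real obstacle: essentially all the work has already been done in the preparatory lemmas of Section \ref{Sec:BasicLemmas}. The only care required is to verify that $x\in[0,1-\delta]$ in the hypothesis of Lemma \ref{Lemma:DiagonalValues} can always be arranged; this follows either directly (by taking $u$ to be the smaller of the two points and noting $u\leq 1-\delta$) or via the reflection $(u,v)\mapsto(1-v,1-u)$ from \eqref{Eq:W1-x}. I would note in passing that the argument is the same one used in the proof of Lemma \ref{Lemma:d-d+}, where the quantity $\gamma$ was already introduced and shown to exceed $\alpha$ for exactly this reason; essentially, Lemma \ref{Lemma:Connected} repackages that inequality in the language of Definition \ref{DefGraphonComponents}.
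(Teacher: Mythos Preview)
Your argument is correct. Both your proof and the paper's rest on the same underlying fact---the left-hand inequality in \eqref{eq:alpha} together with the diagonal comparison of Lemma \ref{Lemma:DiagonalValues}---but you apply it more directly. The paper routes the argument through Lemma \ref{Lemma:d-d+}: it defines $d^{-}=\inf_{x\in[0,\ell_\alpha(1)]}(r_\alpha(x)-x)$, invokes that lemma to get $d^{-}>d$, takes $\epsilon\le d^{-}$, and then argues via the boundary function $r_\alpha$ (splitting into the cases $x\le\ell_\alpha(1)$ and $x>\ell_\alpha(1)$) that $y<r_\alpha(x)$ whenever $y-x<\epsilon$. You instead set $\epsilon=d$ and appeal to Lemma \ref{Lemma:DiagonalValues} directly to get $\WA{}(u,v)\ge\WA{}(0,|u-v|)\ge\gamma>\alpha$, bypassing the boundary functions and the case split entirely. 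Your observation that this is ``the same argument used in the proof of Lemma \ref{Lemma:d-d+}'' is exactly right: you are extracting the one inequality from that proof that is actually needed here. One minor point: the compactness/continuity remark you make to justify that $\gamma>\alpha$ is a ``strict gap'' is unnecessary, since Assumption \ref{AssumptionsSimpleWeakIdentifiabilityAssumptions} already asserts the strict inequality for the infimum; but the extra justification does no harm.
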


\begin{proof}
Define $d^{-}$ as in the statement of Lemma \ref{Lemma:d-d+} and take $0<\epsilon \leq d^{-}$. If $0\leq x\leq y\leq 1$ and $y-x<\epsilon$, then either $x\in [0,\ell_\alpha(1)]$ and $y< x+\epsilon\leq r_\alpha(x)$, or $x>\ell_\alpha (1)$. In both cases, $\WA{}(x,y)>\alpha$. Since $\WA{}$ is continuous (Lemma \ref{Lemma:SquareContinuous}), the result follows.
\end{proof}

\begin{lemma}\label{Lemma:Split}
Let $w$ and $\alpha$ be a graphon and constant satisfying Conditions \ref{AssumptionsSimpleWeakIdentifiabilityAssumptions}. Then there exists an $\epsilon >0$ so that  so that $\WA{\alpha}$ has an $\epsilon$-split.
\end{lemma}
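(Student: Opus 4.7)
The plan is to reduce the $\epsilon$-split condition to a quantitative bound on the ``widths'' of horizontal slices of $\WA{\alpha}$, which is then read off directly from Lemma \ref{Lemma:d-d+}. By Lemma \ref{lem:wLinEmbw2Also}, $\WA{\alpha}(x,z) = 1$ if and only if $z \in [\ell_\alpha(x), r_\alpha(x)]$, so the condition that $\{z : \WA{\alpha}(x,z) = \WA{\alpha}(y,z) = 1\} = \emptyset$ is equivalent to the two intervals $[\ell_\alpha(x), r_\alpha(x)]$ and $[\ell_\alpha(y), r_\alpha(y)]$ being disjoint.

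Before doing that, I would establish a simple symmetric bound on the left boundary. Using the identity $\WA{}(1-u, 1-v) = \WA{}(u,v)$ from Equation \eqref{Eq:W1-x}, a direct computation (substituting $v = 1 - v'$ in the defining infimum of $\ell_\alpha(1-x)$) gives $\ell_\alpha(1-x) = 1 - r_\alpha(x)$, hence
\[
\sup_{x \in [0,1]} \bigl( x - \ell_\alpha(x) \bigr) \;=\; \sup_{u \in [0,1]} \bigl( r_\alpha(u) - u \bigr) \;=\; d^{+},
\]
where $d^+$ is as in Lemma \ref{Lemma:d-d+}.

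Next, invoking Lemma \ref{Lemma:d-d+}, we have $d^+ < d' \leq \tfrac{1}{2}$, and therefore $2d^+ < 1$. Choose
\[
\epsilon \;=\; \tfrac{1}{2}\bigl( 1 - 2 d^{+}\bigr) \;>\; 0.
\]
For any $x \leq y - (1 - \epsilon)$ we then have $y - x \geq 1 - \epsilon > 2 d^+$, so
\[
r_\alpha(x) \;\leq\; x + d^{+} \;<\; y - d^{+} \;\leq\; \ell_\alpha(y),
\]
which gives $[\ell_\alpha(x), r_\alpha(x)] \cap [\ell_\alpha(y), r_\alpha(y)] = \emptyset$, hence $\WA{\alpha}$ has an $\epsilon$-split by Lemma \ref{lem:wLinEmbw2Also}.

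There is no real obstacle here; essentially all the work has already been absorbed into Lemma \ref{Lemma:d-d+} (which gave the strict separation $d^+ < d' \leq 1/2$). The only step requiring any care is verifying the symmetric bound on $x - \ell_\alpha(x)$, and that is an immediate consequence of the reflection symmetry of $\WA{}$ across the anti-diagonal noted in \eqref{Eq:W1-x}.
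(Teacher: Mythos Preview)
Your proof is correct and follows essentially the same route as the paper's: both choose $\epsilon = \tfrac{1}{2}(1-2d^{+}) = 0.5 - d^{+}$ and conclude $r_\alpha(x) \leq x + d^{+} < y - d^{+} \leq \ell_\alpha(y)$ whenever $y - x \geq 1 - \epsilon$. The only difference is that you make explicit, via the reflection symmetry $\ell_\alpha(1-x) = 1 - r_\alpha(x)$, why $y - \ell_\alpha(y) \leq d^{+}$, whereas the paper uses this bound without comment.
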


\begin{proof}
Define $d^{+}$ as defined in \eqref{eq:d-d+} and let $\epsilon =0.5-d^+ $. 
Fix $x<y$ so that $y-x\geq 1-\epsilon$. Then
\[
r_\alpha (x)+\epsilon\leq x+d^+ +\epsilon \leq y-(1-\epsilon) + d^+ +\epsilon = y-1+2\epsilon +d^+ = y-d^+\leq \ell_\alpha (y),
\]
and thus $r_\alpha(x)<\ell_\alpha(y)$. Thus $\WA{\alpha}$ has an $\epsilon$-split.

\end{proof}

\begin{lemma}
\label{Lemma:Goodness}
Let $w$ be a graphon satisfying Conditions \ref{AssumptionsSimpleWeakIdentifiabilityAssumptions}, and let $f,d,c,\alpha$ be as in the statement of these conditions.
Then  there exist $A,\delta >0$ so that $\WA{}$ is uniformly $(A,\delta )$-good at $\alpha$.
\end{lemma}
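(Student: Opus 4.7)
The plan is to show that, uniformly in $x$, the function $y \mapsto w^{(2)}(x,y)$ crosses the level $\alpha$ transversally with a uniform lower bound on its slope, while being uniformly bounded away from $\alpha$ everywhere else. By Lemma \ref{lem:wLinEmbw2Also} and Lemma \ref{Lemma:SquareContinuous}, the only roots of $w^{(2)}(x,\cdot)-\alpha$ in $[0,1]$ are $\ell_\alpha(x)$ and $r_\alpha(x)$, and Lemma \ref{Lemma:d-d+} provides the crucial strict inequalities $d<d^{-}\leq d^{+}<d'$, which will keep all estimates uniform in $x$. Throughout I will use that $f(s)>c$ for all $s<d$, by the minimality assumption on $d$ imposed at the start of Section \ref{Sec:BasicLemmas}.

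First I would produce a linear lower bound on $|w^{(2)}(x,y)-\alpha|$ near each root. Set $\eta=\frac{1}{2}\min\{d^{-}-d,\, d'-d^{+}\}>0$. Whenever $|y-r_\alpha(x)|\leq \eta$, one has $y\in(x+d+\eta,\, x+d'-\eta)\subseteq(x+d,\, x+2d)$ (the last inclusion since $d'\leq 2d$), and $\frac{y-x}{2}\leq \frac{d^{+}+\eta}{2}<\frac{d'}{2}\leq d$. Hence $f((y-x)/2)-c\geq f((d^{+}+\eta)/2)-c>0$, and Lemma \ref{lem:decreaseW2}(ii) applied with one endpoint at $r_\alpha(x)$ (using $w^{(2)}(x,r_\alpha(x))=\alpha$) gives
\[
|w^{(2)}(x,y)-\alpha|\;\geq\; s_{0}\,|y-r_\alpha(x)|,\qquad s_{0}:=\frac{1}{2}\bigl(f((d^{+}+\eta)/2)-c\bigr)^{2}>0.
\]
Lemma \ref{lem:decreaseW2}(iv) yields the analogous bound $|w^{(2)}(x,y)-\alpha|\geq s_{0}|y-\ell_\alpha(x)|$ on $|y-\ell_\alpha(x)|\leq \eta$.

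Next I would show that $w^{(2)}(x,y)-\alpha$ is uniformly bounded away from $0$ in the complement of these two neighborhoods. On $y\in[x+d,1]$, Lemma \ref{lem:decreaseW2}(i) gives monotonicity, so for $y>r_\alpha(x)+\eta$ one obtains $w^{(2)}(x,y)\leq w^{(2)}(x,r_\alpha(x)+\eta)\leq \alpha-s_{0}\eta$, and symmetrically $w^{(2)}(x,y)\geq \alpha+s_{0}\eta$ on $y\in[x+d,\, r_\alpha(x)-\eta]$; Lemma \ref{lem:decreaseW2}(iii) handles $y\in[0,x-d]$ in the same way. On the central strip $y\in[x-d,x+d]$, where no monotonicity of $w^{(2)}(x,\cdot)$ is available, I would invoke Lemma \ref{Lemma:DiagonalValues} to write
\[
w^{(2)}(x,y)\;\geq\; w^{(2)}(0,|y-x|)\;\geq\; \inf_{s\in[0,d]}w^{(2)}(0,s)\;=:\;\gamma,
\]
and the first half of Assumption \ref{AssumptionsSimpleWeakIdentifiabilityAssumptions} directly supplies $\gamma>\alpha$.

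To conclude, set $\delta:=\min\{s_{0}\eta,\,\gamma-\alpha\}>0$ and $A:=4/s_{0}$. For any $\delta'\in(0,\delta)$, the bounds from the previous paragraph force $|w^{(2)}(x,y)-\alpha|>\delta'$ outside the two $\eta$-neighborhoods of the roots, while inside those neighborhoods the linear lower bounds force $|y-\ell_\alpha(x)|\leq \delta'/s_{0}$ or $|y-r_\alpha(x)|\leq \delta'/s_{0}$. Hence
\[
\Vol\bigl(\{y\in[0,1]:|w^{(2)}(x,y)-\alpha|\leq \delta'\}\bigr)\;\leq\; 4\delta'/s_{0}\;=\; A\delta',
\]
uniformly in $x$, which is the required $(A,\delta)$-goodness. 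The only step with genuine content is the handling of the central strip $[x-d,x+d]$, but Lemma \ref{Lemma:DiagonalValues} cleanly reduces the uniform-in-$x$ lower bound there to the single ``worst-case'' inequality $\inf_{s\in[0,d]}w^{(2)}(0,s)>\alpha$ built directly into Assumption \ref{AssumptionsSimpleWeakIdentifiabilityAssumptions}.
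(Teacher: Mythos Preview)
Your proof is correct and follows the same strategy as the paper's: use the slope bound from Lemma~\ref{lem:decreaseW2}(ii),(iv) near the two level crossings $\ell_\alpha(x),r_\alpha(x)$ to trap the sublevel set in two short intervals, and show $w^{(2)}(x,\cdot)$ is uniformly bounded away from $\alpha$ elsewhere. Your write-up is in fact tighter than the paper's on two points: your slope constant $s_0=\tfrac12\bigl(f((d^{+}+\eta)/2)-c\bigr)^2$ is the right one (the paper's $\beta=\tfrac12(f(d^{-}/2)-c)^2$ uses the wrong endpoint of the interval, since a lower bound on $f((y-x)/2)$ requires an \emph{upper} bound on $(y-x)/2$), and you explicitly handle the central strip $[x-d,x+d]$ via Lemma~\ref{Lemma:DiagonalValues} and both boundaries, whereas the paper treats only $r_\alpha$ and appeals loosely to diagonal monotonicity.

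One small imprecision: the containment ``$|y-r_\alpha(x)|\le\eta\Rightarrow y\in(x+d+\eta,x+d'-\eta)$'' uses $r_\alpha(x)-x\ge d^{-}$, which Lemma~\ref{Lemma:d-d+} guarantees only for $x\le\ell_\alpha(1)$. For $x>\ell_\alpha(1)$ one has $r_\alpha(x)=1$ and $1-x$ can be below $d^{-}$. This does no harm to the argument: any $y$ in that $\eta$-neighborhood with $y\le x+d$ lies in the central strip and is already handled; for $y>x+d$ the slope bound with right endpoint $1$ gives $w^{(2)}(x,y)-\alpha\ge (w^{(2)}(x,1)-\alpha)+s_0(1-y)\ge s_0\,|y-r_\alpha(x)|$, exactly what you need. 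The symmetric remark applies near $\ell_\alpha(x)$ when $x<r_\alpha(0)$.
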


\begin{proof}
We need to show that there exist constants $\delta,A>0$ so that, for all $x\in [0,1]$ and $0<\delta'<\delta$, the size of the set where $\WA{}$ takes values within $\delta'$ of $\alpha$ is bounded by $A\delta'$. 

Let $d^{-}, d^{+}$ be as in Equation \eqref{eq:d-d+}. 
By our earlier assumption on the minimality of $d$ (see beginning of this Section \ref{Sec:BasicLemmas}), we have that $f(x)>c$ whenever $x<d$.  Let
\be\label{eq:beta}
\beta = \frac12( f(d^-  /2) -c)^2.
\ee
We first show that $\beta >0$. By Lemma \ref{Lemma:d-d+}, $d^-<d'\leq 2d$, and thus $d^-/2<d$. It follows from our assumptions that $f(d^-/2)>c$, so $\beta > 0$.

By Lemma \ref{lem:decreaseW2} and the fact that $f$ is decreasing, for all $x\in [0,1]$, $x+d^-\leq y<x+2d$, and all $x+d<y'<y$,
\be \label{eq:slope}
\WA{}(x,y')-\WA{}(x,y)\geq (y-y')\frac12\left(f \left(\frac{y-x}{2} \right)-c\right)^2\geq \beta (y-y').
\ee

Let  $ \epsilon >0$ be so that $d^- -\epsilon >d$ and $d^+ +\epsilon < d' \leq 2d$; such a value exists by Lemma \ref{Lemma:d-d+}. Fix $0<\epsilon' <\epsilon$.
Taking $y=r_\alpha (x)$ and $y'=r_\alpha (x)-\epsilon'$ in (\ref{eq:slope}), we have that
\be\label{eq:left}
\WA{} (x,r_\alpha (x)-\epsilon')\geq  \WA{}(x,r_\alpha (x))+ \beta \epsilon' =\alpha +\beta\epsilon',
\ee
where the equality uses the fact that $\WA{}$ is uniformly continuous (see Lemma \ref{Lemma:SquareContinuous}).  Similarly, taking $y=r_\alpha (x)+\epsilon'$ and $y'=r_\alpha (x)$, we obtain that
\be\label{eq:right}
\WA{} (x,r_\alpha (x)+\epsilon')\leq  \alpha - \beta\epsilon'.
\ee

The above, together with the fact that $\WA{}$ is diagonally increasing shows that, for values $y$ outside of the interval $(r_\alpha (x)-\epsilon, r_\alpha(x)+\epsilon)$, $\WA{}(x,y)$ takes values that differ from $\alpha$ by at least $\beta |x-y|$.  This leads to our definition of $\delta$ and $A$ for which $\WA{}$ is uniformly $(A,\delta )$-good at $\alpha$.

Namely, let $\delta = \epsilon \beta $ and $A=2/\beta$. Fix any $0<\delta'<\delta$. By setting $\epsilon'=\delta'/\beta$, we can conclude from \eqref{eq:left} and \eqref{eq:right} that 
\[
\{ y \in [0,1] \, : \, |\alpha - w(x,y)| \leq  \delta' \}\subseteq \{ y\,:\, |r_\alpha(x)-y|<\delta'/\beta\}.
\]
The conclusion follows since the superset has volume at most $2(\delta'/\beta)=A\delta'$.

\end{proof}

The fact that $\WA{\alpha}$ is diagonally increasing implies that the boundaries $r_\alpha$ and $\ell_\alpha$ are increasing. The following lemma establishes bounds on their slope.

\begin{lemma}\label{Lemma:MinSlopeBoundaries}
Let $w$ be a graphon satisfying Conditions \ref{AssumptionsSimpleWeakIdentifiabilityAssumptions}, and let $f,d,c,\alpha$ be as in the statement of these conditions. Then the boundary $r_\alpha$ is strictly increasing on the domains $[0,\ell_\alpha (1)]$, and has slope bounded away from zero.

Precisely, there exists $\epsilon >0$ so that for all $0\leq x'<x<\ell_\alpha (1)$ with $x-x'<\epsilon$,
\[
r_\alpha (x)-r_\alpha (x')\geq \beta (x-x'),
\]
where $\beta >0 $ is as defined in (\ref{eq:beta}). 

Similarly $\ell_\alpha $ is strictly increasing on  $[r_\alpha (0),1]$,  and there exists $\epsilon >0$ so that, for all $r_\alpha (0)\leq y'<y< 1$ with $y-y'<\epsilon$,
\[
\ell_\alpha (y)-\ell_\alpha (y')\geq \beta (y-y').
\]
\end{lemma}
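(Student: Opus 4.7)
The plan is to work with the implicit equation $\WA{}(x, r_\alpha(x)) = \alpha$, which holds by the continuity of $\WA{}$ (Lemma \ref{Lemma:SquareContinuous}) together with the characterization of $r_\alpha$ from Lemma \ref{lem:wLinEmbw2Also}. Fixing $0 \leq x' < x \leq \ell_\alpha(1)$ and writing $y = r_\alpha(x)$, $y' = r_\alpha(x')$, I have $\WA{}(x, y) = \WA{}(x', y') = \alpha$, and because $\WA{\alpha}$ is diagonally increasing (Lemma \ref{lem:wLinEmbw2Also}) the boundary $r_\alpha$ is non-decreasing, giving $y \geq y'$. The backbone of the argument is the identity
\[
0 \;=\; \WA{}(x, y) - \WA{}(x', y') \;=\; \bigl(\WA{}(x, y) - \WA{}(x, y')\bigr) \,+\, \bigl(\WA{}(x, y') - \WA{}(x', y')\bigr),
\]
which says that the decrease of $\WA{}(x, \cdot)$ from $y'$ to $y$ is cancelled by the increase of $\WA{}(\cdot, y')$ from $x'$ to $x$. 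I will convert this into a slope bound by upper-bounding the first bracket and lower-bounding the second via Lemma \ref{lem:decreaseW2}.

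First I would verify that $y$ and $y'$ sit inside $(x + d,\, x + 2d)$, so that Lemma \ref{lem:decreaseW2}(ii) applies to $\WA{}(x, \cdot)$, and that the symmetric range $y' - 2d < x' < x < y' - d$ holds, so that Lemma \ref{lem:decreaseW2}(iv) applies to $\WA{}(y', \cdot)$ (via the symmetry $\WA{}(x, y') = \WA{}(y', x)$). By Lemma \ref{Lemma:d-d+}, $y - x$ and $y' - x'$ each lie in $[d^-, d^+] \subset (d, 2d)$; choosing $\epsilon < d^- - d$ and requiring $x - x' < \epsilon$ gives $y' - x \geq d^- - (x - x') > d$, while $y - x' \leq d^+ + (x - x') < 2d$ after possibly shrinking $\epsilon$ further.

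Lemma \ref{lem:decreaseW2}(ii) then gives the trivial Lipschitz upper bound $\WA{}(x, y') - \WA{}(x, y) \leq y - y'$, and Lemma \ref{lem:decreaseW2}(iv) yields
\[
\WA{}(x, y') - \WA{}(x', y') \;\geq\; (x - x') \cdot \tfrac{1}{2}\bigl(f((y' - x)/2) - c\bigr)^2.
\]
Because $(y' - x)/2 \leq d^+/2 < d$ and $f > c$ strictly on $[0, d)$ (by the minimality of $d$), the factor $\tfrac{1}{2}(f((y' - x)/2) - c)^2$ is bounded below by a positive constant depending only on $w$ and $\alpha$. Plugging both estimates into the identity above yields $\beta(x - x') \leq y - y'$, which also establishes strict monotonicity of $r_\alpha$. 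The statement for $\ell_\alpha$ follows either from the symmetry $\WA{}(1 - x, 1 - y) = \WA{}(x, y)$ (Equation (\ref{Eq:W1-x})) applied to the result for $r_\alpha$, or by repeating the argument above using Lemma \ref{lem:decreaseW2}(iii)--(iv) in place of (i)--(ii).

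The main subtlety is matching the explicit constant $\beta = \tfrac{1}{2}(f(d^-/2) - c)^2$ from Equation (\ref{eq:beta}): the natural argument above produces a lower bound of the form $\tfrac{1}{2}(f(d^+/2) - c)^2$, since the relevant argument $(y' - x)/2$ to $f$ may reach up to $d^+/2$ rather than $d^-/2$. Since $d^+ < 2d$, both expressions are strictly positive by the minimality of $d$, so the downstream consequences of this lemma (which only demand existence of a uniform positive slope bound) are unaffected; the discrepancy is a matter of bookkeeping rather than a genuine analytic obstacle.
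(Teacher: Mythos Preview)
Your proposal is correct and follows essentially the same route as the paper: both use the implicit relation $\WA{}(x,r_\alpha(x))=\alpha$, split the difference $\WA{}(x,y')-\alpha$ into a $y$-direction piece and an $x$-direction piece, and appeal to the two-sided slope bounds of Lemma~\ref{lem:decreaseW2}; the paper simply swaps which direction receives the Lipschitz upper bound and which receives the quadratic lower bound, thereby deriving the $\ell_\alpha$ inequality first. Your observation about the constant is well taken---the paper's own argument (via \eqref{eq:slope}) naturally yields $\tfrac12(f(d^+/2)-c)^2$ rather than the stated $\beta=\tfrac12(f(d^-/2)-c)^2$, so this is a shared bookkeeping slip that does not affect any downstream use of the lemma.
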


\begin{proof}
We show first that $\ell_\alpha$ is strictly increasing on the domain  $[r_\alpha (0),1]$. Suppose, by contradiction, that $\ell_\alpha (y_1)=\ell_\alpha(y_2)=x$ for some $r_\alpha (0) \leq y_1<y_2$.  Then $\WA{}(x,y_1)=\WA{}(x,y_2)=\alpha$, and thus $\WA{}(x,\cdot)$ is constant on $[y_1,y_2]$.  This contradicts  Lemma \ref{Lemma:Goodness}. A similar argument shows that $r_\alpha$ is strictly increasing on the domain $[0,\ell_\alpha (1)]$.

Let $d^-,d^+$ be as in \eqref{eq:d-d+}. By Lemma \ref{Lemma:d-d+}, we conclude that for all $x\in [0,\ell_\alpha (1)]$,
\be
x+d < x+d^-\leq r_\alpha (x) \leq x + d^+ <x+ d'.
\ee
Fix $0\leq x'<x\leq  \ell_{\alpha}(1)$ so that $x-x'< d^- -d$, and thus $x+d< x'+d^-$.
Let $y=r_\alpha (x)$ and $y'=r_\alpha (x')$. Since $r_\alpha $ is strictly increasing, $y'<y$. 
Then 
\[
x+d < x'+d^- \leq  y'\leq y\leq x+d^+<x+d'.
\]
Note further that $y',y\in [r_\alpha(0),1]$, and $x=\ell_\alpha (y)$, $x'=\ell_\alpha (y')$. Lemma \ref{lem:decreaseW2} gives bounds on the slope of $\WA{}(x,\cdot)$ on the interval $[x+d,x+d']$, and the above shows that $y,y'$ are in this interval. Therefore the bounds apply, and we have that
\be
\WA{}(x,y')-\alpha &=\WA{}(x,y')-\WA{}(x,y) \geq \beta (y-y')\text{, and}\\
\WA{}(x,y')-\alpha  &= \WA{}(x,y')-\WA{}(x',y')\leq (x-x').
\ee
Combining these inequalities we obtain that $\ell_\alpha(y)-\ell_\alpha (y')=x-x'\geq \beta (y-y')$. This completes the proof that $\ell_\alpha $ has slope at least $\beta$ on $ [r_\alpha (0),1]$.

A similar argument shows that $r_\alpha$ is increasing with slope at least $\beta$ on $[0,\ell_\alpha(1)]$.
\end{proof}

\begin{lemma}
\label{Lemma:Separation}
Let $w$ be a graphon satisfying Conditions \ref{AssumptionsSimpleWeakIdentifiabilityAssumptions}, and let $f,d,c,\alpha$ be as in the statement of these conditions. Then there exists $B>0$ so that $\WA{\alpha}$ is $B$-separated.
\end{lemma}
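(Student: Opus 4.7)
The plan is to reduce the claim to a statement about the two boundary functions $\ell_\alpha$ and $r_\alpha$ of the $\{0,1\}$-valued graphon $\WA{\alpha}$, using the characterization from Lemma \ref{lem:wLinEmbw2Also}. Since $\WA{\alpha}(x,z)=1$ exactly on $z\in[\ell_\alpha(x),r_\alpha(x)]$, the set in the definition of $B$-separation is (for $x<y$) the symmetric difference of two intervals, whose volume is at least $(\ell_\alpha(y)-\ell_\alpha(x))+(r_\alpha(y)-r_\alpha(x))$ regardless of whether the intervals overlap (one checks the two cases separately). So the goal becomes showing $(\ell_\alpha(y)-\ell_\alpha(x))+(r_\alpha(y)-r_\alpha(x))\geq B(y-x)$ for some $B>0$ and all $0\leq x<y\leq 1$.

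The main tool will be Lemma \ref{Lemma:MinSlopeBoundaries}, which gives a \emph{local} lower bound $\beta$ on the slope of $r_\alpha$ on $[0,\ell_\alpha(1)]$ and of $\ell_\alpha$ on $[r_\alpha(0),1]$, valid only when the two input points are within some $\epsilon$ of each other. The first step is to promote these local bounds to global integrated ones: given any subinterval $[x',y']$ of $[0,\ell_\alpha(1)]$, partition it into pieces of length $<\epsilon$ and telescope to conclude $r_\alpha(y')-r_\alpha(x')\geq \beta(y'-x')$; similarly for $\ell_\alpha$ on $[r_\alpha(0),1]$. Combined with the monotonicity of $r_\alpha$ and $\ell_\alpha$, this yields the two bounds
\be
r_\alpha(y)-r_\alpha(x)\geq \beta\,|[x,y]\cap[0,\ell_\alpha(1)]|,\qquad \ell_\alpha(y)-\ell_\alpha(x)\geq \beta\,|[x,y]\cap[r_\alpha(0),1]|.
\ee

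The final ingredient is to note that these two domains cover $[0,1]$. By Lemma \ref{Lemma:d-d+}, $r_\alpha(0)\leq d^+<d'\leq 1/2$ and $\ell_\alpha(1)\geq 1-d^+>1/2$, so $r_\alpha(0)<\ell_\alpha(1)$ and $[0,\ell_\alpha(1)]\cup[r_\alpha(0),1]=[0,1]$. Adding the two inequalities then gives $(\ell_\alpha(y)-\ell_\alpha(x))+(r_\alpha(y)-r_\alpha(x))\geq\beta(y-x)$, so $B=\beta$ works. There is no serious obstacle here: the only subtlety is the book-keeping of remembering that Lemma \ref{Lemma:MinSlopeBoundaries} only gives slope bounds on restricted domains, which is exactly why one needs the domain-overlap fact $r_\alpha(0)<\ell_\alpha(1)$ from Lemma \ref{Lemma:d-d+} to conclude that the two partial slope bounds together cover every $[x,y]\subseteq[0,1]$.
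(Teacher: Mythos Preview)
Your reduction to the boundary functions and the telescoping argument to globalize Lemma \ref{Lemma:MinSlopeBoundaries} are both fine, and in fact the telescoping step is more carefully stated than what the paper does. But there is a genuine gap in the very first step. You assert that the symmetric difference of $[\ell_\alpha(x),r_\alpha(x)]$ and $[\ell_\alpha(y),r_\alpha(y)]$ has volume at least $(\ell_\alpha(y)-\ell_\alpha(x))+(r_\alpha(y)-r_\alpha(x))$ ``regardless of whether the intervals overlap.'' This is false in the disjoint case. When $r_\alpha(x)<\ell_\alpha(y)$ the symmetric difference is the whole union, with volume $(r_\alpha(x)-\ell_\alpha(x))+(r_\alpha(y)-\ell_\alpha(y))$; subtracting your claimed lower bound gives $2(r_\alpha(x)-\ell_\alpha(y))<0$. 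And the disjoint case does occur here: by Lemma \ref{Lemma:Split} it holds whenever $y-x$ is close to $1$. So the single inequality you want to add up over the two domains simply is not available in that regime, and the argument as written does not close.

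The fix is easy and keeps your structure: treat the two cases separately. In the overlapping case your formula is an \emph{equality} and your $B=\beta$ argument goes through verbatim. In the disjoint case, bound the volume directly: each interval $[\ell_\alpha(\cdot),r_\alpha(\cdot)]$ has length bounded below by a positive constant (for instance by $(\epsilon,\alpha)$-connectedness from Lemma \ref{Lemma:Connected}, or from the bounds $d^->d$ in Lemma \ref{Lemma:d-d+}), so the symmetric difference has volume at least some $c_0>0\geq c_0(y-x)$. Then take $B=\min(\beta,c_0)$. The paper's own proof proceeds instead by a three-way case split on whether $x>r_\alpha(0)$, $y<\ell_\alpha(1)$, or neither; in the first two cases it uses only \emph{one} of the boundary differences rather than the sum, and in the third it bounds the volume by a constant using Lemma \ref{Lemma:d-d+}. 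Your covering-by-two-domains idea is cleaner than this case split once the disjoint case is patched.
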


\begin{proof}
Fix $0\leq x<y\leq 1$. We need to show that there exists a constant $B>0$ so that the region where $\WA{\alpha}(x,\cdot )$ and $\WA{\alpha}(y,\cdot)$ take different values is bounded below by $B(y-x)$. 

For any $z\in [0,1]$,  $\WA{\alpha}(z,x)=1$ and $\WA{\alpha} (z,y)=0$ precisely when $\ell_\alpha (x)\leq z<\ell_\alpha (y)$. Similarly, $\WA{\alpha}(z,x)=0$ and $\WA{\alpha} (z,y)=1$ precisely when $r_\alpha (x)\leq z<r_\alpha (y)$.
 We have that 

\be
\mathrm{Vol}(\{z  \in [0,1] \, : \, |\WA{\alpha}(z,x) - \WA{\alpha}(z,y)| \geq 1\}) &\geq \max( \ell_\alpha(y)-\ell_\alpha (x),r_\alpha(y)-r_\alpha(x)) \\
&\geq \frac{1}{2}(\ell_\alpha(y)-\ell_\alpha (x) + r_\alpha(y)-r_\alpha(x)).
\ee

Let $d^-,d^+$ be as in \eqref{eq:d-d+} and fix  $0 < B\leq  \min\{ \beta , 2(1-2d^+)\}$; this is possible because $d^+<d'\leq 0.5$. We now repeatedly apply Lemma \ref{Lemma:MinSlopeBoundaries}. If $y>x>r_\alpha (0)$, then $\ell_\alpha (y)-\ell_\alpha (x)\geq \beta (y-x)\geq B(y-x)$. If $x<y<\ell_\alpha (1)$, then $r_\alpha(y)-r_\alpha (x)\geq \beta (y-x)\geq B(y-x)$. If $x\leq r_\alpha (0)$ and $y\geq \ell_\alpha (1)$, then
\be
\ell_\alpha(y)-\ell_\alpha (x)+r_\alpha(y)-r_\alpha(x) &= \ell_\alpha (y) -0 + 1-r_\alpha (x)\\
&\geq (y-d^+)+1-(x+d^+)\\
&\geq (1-2d^+) +1-(2d^+)\\
&= 2(1-2d^+)\,\geq B\,\geq B(y-x).
\ee
Therefore, $\WA{\alpha}$ is $B$-separated.

\end{proof}

Combining the above results, we can conclude
\begin{lemma}\label{Lemma:WeakAssumptionsImplyThm}
Let $w$ and $\alpha$ be a graphon and constant satisfying Conditions \ref{AssumptionsSimpleWeakIdentifiabilityAssumptions}.  Then Assumption \ref{AssumptionGoodGraphon} also holds for $w,\alpha$.
\end{lemma}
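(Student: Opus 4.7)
The plan is simply to assemble the five component conditions of Assumption \ref{AssumptionGoodGraphon} from the auxiliary lemmas already established earlier in this appendix. Each of Lemmas \ref{lem:wLinEmbw2Also}, \ref{Lemma:Goodness}, \ref{Lemma:Connected}, \ref{Lemma:Split}, and \ref{Lemma:Separation} is tailored to verify exactly one of the five conditions under Assumption \ref{AssumptionsSimpleWeakIdentifiabilityAssumptions}, so all that remains is to match them up and to choose a single pair of constants $(\epsilon, A)$ that works for all five conditions simultaneously.

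Concretely, I would verify condition (1) ($\WA{\alpha}$ is diagonally increasing) directly by citing Lemma \ref{lem:wLinEmbw2Also}; condition (2) (uniform $(A,\delta)$-goodness at $\alpha$) by citing Lemma \ref{Lemma:Goodness}, which provides explicit constants $A_2, \delta_2 > 0$; condition (3) ($(\epsilon,\alpha)$-connectedness of $\WA{}$) by citing Lemma \ref{Lemma:Connected}, yielding $\epsilon_3 > 0$; condition (4) (existence of an $\epsilon$-split for $\WA{\alpha}$) by citing Lemma \ref{Lemma:Split}, yielding $\epsilon_4 > 0$; and condition (5) ($B$-separation of $\WA{\alpha}$) by citing Lemma \ref{Lemma:Separation}, yielding $B_5 > 0$. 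Finally, setting $\epsilon := \min\{\delta_2, \epsilon_3, \epsilon_4, B_5\}$ and $A := A_2$, all five conditions hold simultaneously with these constants, which verifies Assumption \ref{AssumptionGoodGraphon}.

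Since every step reduces to invoking a previously proved lemma, there is essentially no obstacle in this proof; all of the technical work was done in establishing the auxiliary lemmas (most notably in Lemmas \ref{lem:decreaseW2} and \ref{Lemma:d-d+}, which underlie the proofs of Lemmas \ref{lem:wLinEmbw2Also} and \ref{Lemma:Goodness}). The proof is therefore essentially a matter of bookkeeping.
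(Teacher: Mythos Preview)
Your proposal is correct and matches the paper's own proof essentially verbatim: the paper also invokes the auxiliary lemmas (Lemmas \ref{lem:wLinEmbw2Also}, \ref{Lemma:Connected}, \ref{Lemma:Split}, \ref{Lemma:Goodness}, \ref{Lemma:Separation}) to obtain the individual constants and then takes $\epsilon$ to be their minimum. The only difference is that you spell out the citations explicitly whereas the paper compresses this into a single sentence.
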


\begin{proof}
Let $A>0$, $\epsilon_1,\epsilon_2,\epsilon_3,\epsilon_4\in (0,1)$ be so that $w$ is $(A,\epsilon_1)$-good at $\alpha$, and $\WA{\alpha}$ is $\epsilon_2$-separated, has an $\epsilon_3$-split, and is $(\epsilon_{4}, \alpha)$-connected. Such values exist by the earlier lemmas in this section. Taking $\epsilon=\min\{\epsilon_1,\epsilon_2,\epsilon_3,\epsilon_4\}$ completes the proof.
\end{proof}

We are ready to prove our the first of our main results:

\begin{proof} [Proof of Theorems \ref{ThmReconMainRes1}]

As noted earlier, Theorem \ref{ThmMaxOrd} follows immediately from Lemma \ref{LemmaMergeCorrectness}. Theorem \ref{ThmReconMainRes1} then follows immediately from Theorem \ref{ThmMaxOrd} and Lemma \ref{Lemma:WeakAssumptionsImplyThm}, which relates their assumptions.

\end{proof}

\section{Bad events are rare: proofs}

We give proofs deferred from Section \ref{SecMainEstHeuristic}.  This appendix will use the notation from Section \ref{SecMainEstHeuristic} freely, and in particular it will rely on the relationship between the observed random graph $G_{n}$ and the driving randomness $\{U_{i}\}, \, \{U_{ij}\}$.

\subsection{Bad Events for Latent Variables}

Section \ref{SecMainEstHeuristic} introduces the collection of ``bad" events  $\mathcal{A}_1,\dots ,\mathcal{A}_5$ in Definition \ref{DefBadEvents}. We recall the definition for convenience:

\begin{defn}[Bad Events] 

Let graphon $w$, value $\alpha$, random variables $\{ U_i\}$, $\{ U_{i,j}\}$, and graphs $\GA{\alpha}$ and $H_{\alpha}$ be as defined above. Moreover, let $R$, $\{ I_k\}_{k=0}^{4R-3}$ be as in (\ref{DefIntervals}).
We define:
\be
\mathcal{A}_{1} &= \cup_{1 \leq i < j \leq n}(\{(U_{i},U_{j}) \notin \bad{n}{\alpha}\} \cap \{ \GA{ \alpha}(i,j) \neq H_{ \alpha}(i,j)\}) \\
\mathcal{A}_{2} &= \left\{\max_{1 \leq i \leq n} | \bad{n,i}{\alpha} | > \sqrt{n } \, \log(n)^{\cat}\right\} \\
\mathcal{A}_{3} &= \left\{ \min_{0 \leq k  \leq 4R-3} |\{ i\,:\, U_i\in I_k\}|< \frac{n}{2R} \right\} \\
\mathcal{A}_{4} &= \left\{\min_{1 \leq i,j \leq n \, : \, |U_{i}-U_{j}| > \frac{1}{\log(n)^{\cafo}}} \nu (i,j) < \frac{2n}{\log (n)^{\caft}}\right\} \\
&\quad \cup \left\{\min_{1 \leq i,j \leq n \, : \, |U_{i}-U_{j}| > \frac{\log(n)^{\cafi}}{\sqrt{n}}} \nu (i,j) < \sqrt{n} \log(n)^{\cafti} \right\} \\
\mathcal{A}_{5} &= \left\{  \max_{1 \leq i < j \leq n} |\{ k \, : \, U_{k} \in (U_{i},U_{j}) \}| \geq n |U_{i}-U_{j}| +  \sqrt{n}\log(n)  \right\}.\\
\ee

\end{defn}

In this section we prove Corollary \ref{IneqLatentCor}, which says that with high probability none of these events occur. Again, we repeat the statement for convenience:

\begin{corollary} 
Let $\mathcal{A}= \mathcal{A}_1\cup \mathcal{A}_2\cup \mathcal{A}_3\cup \mathcal{A}_4\cup \mathcal{A}_5$. Then $\mathcal{A}^{c}$ holds \wep.
\end{corollary}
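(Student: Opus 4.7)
The plan is to prove each of the five inclusions $\mathcal{A}_i^c$ holds w.e.p.\ separately, using standard concentration inequalities (Hoeffding/Chernoff) for sums of independent Bernoulli random variables together with union bounds over the $O(n^2)$ relevant pairs of vertices, and then conclude by a final union bound over $i \in \{1,\ldots,5\}$. Since w.e.p.\ is closed under taking intersections of polynomially many events, this approach suffices.

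First I would handle $\mathcal{A}_1^c$ and $\mathcal{A}_2^c$ together, as both concern the discrepancy between $\GA{\alpha}$ and the ``ideal'' graph. For each fixed pair $i \neq j$, conditional on $\mathcal{F}_1$, the count $A^{(2)}[i,j]$ is a sum of $n-2$ independent Bernoulli random variables with conditional means $w(U_i, U_k) w(U_j, U_k)$ summing to $(n-2) w^{(2)}(U_i, U_j) + O(1)$. When $(U_i, U_j) \notin \bad{n}{\alpha}$, the threshold $\alpha(n-2)$ is separated from the conditional mean by at least $\log(n)^{\cbad} \sqrt{n}/2$, and Hoeffding's inequality gives failure probability $n^{-\Omega(\log(n)^{2\cbad - 1})}$; a union bound over the $\binom{n}{2}$ pairs establishes $\mathcal{A}_1^c$ w.e.p. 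For $\mathcal{A}_2$, on $\mathcal{A}_1^c$ every bad witness $j \in \bad{n,i}{\alpha}$ must satisfy $(U_i, U_j) \in \bad{n}{\alpha}$; by the $(A,\epsilon)$-goodness hypothesis the conditional expected number of such $j$ is at most $A n \log(n)^{\cbad}/\sqrt{n}$, which is much less than $\sqrt{n} \log(n)^{\cat}$, so Chernoff gives w.e.p.\ concentration.

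Events $\mathcal{A}_3, \mathcal{A}_5$ are purely properties of the uniform sequence $\{U_i\}$. For $\mathcal{A}_3$, each $|\{i \, : \, U_i \in I_k\}|$ is a binomial with mean at least $3n/(4R) \gg n/(2R)$, so Chernoff plus a union bound over the $O(1)$ intervals $I_k$ suffices. For $\mathcal{A}_5$, for each fixed pair $(i,j)$ the count $|\{k \, : \, U_k \in (U_i, U_j)\}|$ is conditionally binomial with mean $(n-2)|U_i - U_j|$, and Bernstein's or Hoeffding's inequality shows that the deviation exceeds $\sqrt{n}\log(n)$ with probability $n^{-\Omega(\log(n))}$; union bound over pairs.

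Finally, $\mathcal{A}_4^c$ is the most delicate since the bound changes regime at $|U_i - U_j| \sim 1/\sqrt{n}$. The quantity $\nu(i,j)$ is again a sum of independent indicators $\mathbf{1}[\WA{\alpha}(U_i, U_k) \neq \WA{\alpha}(U_j, U_k)]$, whose conditional mean is $(n-2) \cdot \mathrm{Vol}(\{z \, : \, \WA{\alpha}(U_i,z) \neq \WA{\alpha}(U_j,z)\}) \geq \epsilon (n-2) |U_i - U_j|$ by the $\epsilon$-separation property of $\WA{\alpha}$ (Assumption \ref{AssumptionGoodGraphon}(5)). In the regime $|U_i - U_j| > \log(n)^{-\cafo}$ the mean is $\Omega(n/\log(n)^{\cafo})$, and in the regime $|U_i - U_j| > \log(n)^{\cafi}/\sqrt{n}$ the mean is $\Omega(\sqrt{n}\log(n)^{\cafi})$; in both regimes Chernoff gives $n^{-\Omega(\log(n))}$ deviation probability, with $\cafi$ chosen large enough that the variance is dominated by the required gap between the mean and $\sqrt{n}\log(n)^{\cafti}$. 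A union bound over pairs concludes. The only mildly subtle point in the entire argument is verifying that the gap in $\mathcal{A}_4$'s second regime is large enough for Chernoff; all other steps are direct applications of standard concentration.
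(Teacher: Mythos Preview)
Your proposal is correct and follows essentially the same approach as the paper: the paper proves each $\mathcal{A}_i^c$ separately via Hoeffding/Chernoff bounds conditional on the latent positions, invoking $(A,\epsilon)$-goodness for $\mathcal{A}_2$ and $\epsilon$-separation for $\mathcal{A}_4$, followed by union bounds over the $O(n^2)$ pairs. Your identification of the two-regime argument for $\mathcal{A}_4$ and the dependency of $\mathcal{A}_2$ on $\mathcal{A}_1^c$ matches the paper exactly.
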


 The proofs mainly follow from standard concentration bounds, applied to functions of the independent variables $\{ U_i\}$ and $\{ U_{i,j}\}$. See \textit{e.g.} \cite{boucheron2013concentration} for an overview of concentration phenomena.

%

We begin by checking that the ``approximate" thresholded graph $\GA{ \alpha}$ agrees with the ``correct" thresholded graph $H_{ \alpha} \sim \WA{\alpha}$ outside of the bad set defined in Equation \eqref{EqDefBadSet}:

\begin{lemma} \label{LemmaGoodPointsEq}
Fix $0 < \alpha < 1$ and $n \in \mathbb{N}$. Then $\P [\mathcal{A}_{1}]=n^{-\Omega(\log (n))}$.
That is, \wep\ for all $i,j$ such that $(U_i,U_j)\not \in \bad{n}{\alpha}$, $\GA{\alpha}(i,j)=H_\alpha (i,j)$.
\end{lemma}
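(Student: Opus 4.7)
The plan is to prove the lemma via a pointwise conditional Hoeffding bound on the common-neighbor count, followed by a union bound over the $O(n^{2})$ pairs. First I would fix a pair $i < j$ and condition on $\mathcal{F}_{1}(n)$, i.e. on the latent positions $\{U_{k}\}_{k=1}^{n}$. For each $k \in \{1,\ldots,n\} \setminus \{i,j\}$, set $Y_{k} = \mathbf{1}_{\{(i,k) \in E\}} \mathbf{1}_{\{(j,k) \in E\}}$. Conditional on $U_{i}$ and $U_{j}$ (but integrating out $U_{k}$ and $U_{i,k}, U_{j,k}$), the random variables $\{Y_{k}\}_{k \neq i,j}$ are i.i.d., take values in $[0,1]$, and have common mean
\be
\E[Y_{k} \mid U_{i}, U_{j}] = \int_{0}^{1} w(U_{i},z)\, w(U_{j},z)\, dz = w^{(2)}(U_{i},U_{j}).
\ee
Since $A^{(2)}[i,j] = \sum_{k \neq i,j} Y_{k}$, Hoeffding's inequality applied conditionally on $U_{i}, U_{j}$ yields
\be
\P\left[ \left| \frac{A^{(2)}[i,j]}{n-2} - w^{(2)}(U_{i},U_{j}) \right| > \frac{\log(n)^{\cbad}}{2\sqrt{n}} \;\middle|\; U_{i}, U_{j} \right] \leq 2 \exp\!\left( - \tfrac{1}{2}(n-2)\, n^{-1} \log(n)^{2\cbad} \right),
\ee
and the right-hand side is $n^{-\Omega(\log n)}$ provided $\cbad \geq 1$.

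Next I would use this concentration statement to conclude that $\GA{\alpha}(i,j) = H_{\alpha}(i,j)$ for pairs outside the bad set. By definition of $\bad{n}{\alpha}$ in Equation~\eqref{EqDefBadSet}, the event $(U_{i},U_{j}) \notin \bad{n}{\alpha}$ means $|w^{(2)}(U_{i},U_{j}) - \alpha| > \log(n)^{\cbad}/\sqrt{n}$. On the complement of the Hoeffding deviation event above, the empirical quantity $A^{(2)}[i,j]/(n-2)$ lies within $\log(n)^{\cbad}/(2\sqrt{n})$ of $w^{(2)}(U_{i},U_{j})$, and therefore remains on the same side of $\alpha$ as $w^{(2)}(U_{i},U_{j})$. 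Recalling that $\GA{\alpha}(i,j) = \mathbf{1}_{A^{(2)}[i,j] > \alpha(n-2)}$ and $H_{\alpha}(i,j) = \mathbf{1}_{w^{(2)}(U_{i},U_{j}) > \alpha}$ (with the $w^{(2)}(U_{i},U_{j}) = \alpha$ boundary not in play, since $(U_{i},U_{j}) \notin \bad{n}{\alpha}$), we conclude that the two indicators agree.

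Finally I would take a union bound. There are at most $\binom{n}{2} \leq n^{2}$ pairs, and for each the conditional failure probability is $n^{-\Omega(\log n)}$ uniformly in $U_{i}, U_{j}$; integrating over $\mathcal{F}_{1}(n)$ and summing gives $\P[\mathcal{A}_{1}] \leq n^{2} \cdot n^{-\Omega(\log n)} = n^{-\Omega(\log n)}$, which is the claimed \wep\ statement. The only mildly delicate point in the argument is the independence step for Hoeffding: one must observe that conditional on $U_{i}, U_{j}$ alone (and not on the remaining $U_{k}$), the summands $Y_{k}$ remain independent because each $Y_{k}$ is a function only of $U_{k}$, $U_{i,k}$, $U_{j,k}$, and these triples are independent across $k$. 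Everything else is a bookkeeping calculation.
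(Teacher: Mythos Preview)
Your proposal is correct and follows essentially the same route as the paper: condition on $(U_{i},U_{j})$, write $A^{(2)}[i,j]$ as a sum of i.i.d.\ $[0,1]$-valued indicators with mean $w^{(2)}(U_{i},U_{j})$, apply Hoeffding, observe that on the good event the empirical average stays on the same side of $\alpha$, and finish with a union bound over pairs. The only slip is cosmetic: your opening sentence says you condition on $\mathcal{F}_{1}(n)$ (all latent positions), but the computation that follows---and the correct one---conditions only on $U_{i},U_{j}$ so that the $Y_{k}$ are i.i.d.\ with mean $w^{(2)}(U_{i},U_{j})$; just drop or amend that sentence.
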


\begin{proof}
Recall the definition of the square-threshold graph $\GA{\alpha}$ from Equation \eqref{EqThreshDef}. For $0 \leq a \neq b \leq n$, define $\ell_{a,b} = \textbf{1}_{\{ U_{a,b} < w(U_{a},U_{b})\}}$ to be the indicator function for the edge $(a,b)$ being in $G$. With this notation we have

\be
G^{(2)}(1,2) = \sum_{a=3}^{n} \ell_{1,a} \ell_{a,2},
\ee
Recall that $\GA{\alpha}(1,2)=1$ if and only the above sum is greater than $\alpha (n-2)$. Let $\mathcal{H}$ be the $\sigma$-algebra generated by $(U_{1},U_{2})$; for all $a\geq 3$,
$\E [\ell_{1,a} \ell_{a,2}\, |\, \mathcal{H}]= w^{(2)} (U_1,U_2)$. For $(U_1,U_2)\in \bad{n}{\alpha}$, the bound is clear. In the remaining case, we assume that $(U_1,U_2)\not \in \bad{n}{\alpha}$, and thus $|w^{(2)}(U_1,U_2) - \alpha| > \frac{ \log(n)^{\cbad}}{\sqrt{n }}$. If $w^{(2)}(U_1,U_2)=0$, then $\GA {\alpha}(1,2)=H_\alpha (1,2)=0$, so we also assume $w^{(2)}(U_1,U_2)>0$.
By Hoeffding's bound on the sum of independent variables,
\be 
\P[ \GA{\alpha}(1,2) \neq H_{\alpha}(1,2) | \mathcal{H}] &\leq \P[|\sum_{a=3}^{n} \ell_{1,a} \ell_{a,2} - (n-2)  w^{(2)}(U_{1},U_{2})| > {\log(n)^{\cbad}\sqrt{n}} \, | \, \mathcal{H}] \notag\\
&\leq \P[|\sum_{a=3}^{n} \ell_{1,a} \ell_{a,2} - \E[\sum_{a=3}^{n} \ell_{1,a} \ell_{a,2} \, | \, \mathcal{H}] | >  {\log(n)^{\cbad}\sqrt{n-2}}  \, | \, \mathcal{H}] \\
&=n^{-\Omega(\log(n))},
\ee

Adding back in the trivial cases discounted above, we have the unconditional bound
\be
\{ \GA{\alpha}(1,2) = H_{\alpha}(1,2) \} \cup \{ (U_{1},U_{2}) \in \bad{n}{\alpha}\}
\ee
holds \wep. There is nothing special about the indices $(1,2)$, so we have shown that \wep, $\GA{\alpha}(i,j) = H_{\alpha}(i,j)$ for any choice of $1\leq i<j\leq n$. Therefore, \wep\ the same holds for all pairs $i,j$ simultaneously.
This completes the proof.

\end{proof}

If Assumption \ref{AssumptionGoodGraphon} (item 4) holds, then there exist $A,\delta >0$ so that $w^{(2)}$ is uniformly $(A,\delta)$-good at $\alpha$. For $n$ large enough, $\frac{\log (n)}{\sqrt{n}}<\delta$ and $8A\leq \log (n)$, so for all $y\in [0,1]$,
\[
Vol ( \{ y\in [0,1]\,:\, |\alpha - w^{(2)}(x,y)|\leq \frac{\log (n)}{\sqrt{n}}\}) \leq A\frac{\log (n)}{\sqrt{n}}\leq \frac{\log (n)^2}{8\sqrt{n}}.
\]
Moreover, since $w$ is diagonally increasing, for each $x\in [0,1]$, the set $\{ y\in [0,1]\,:\, |\alpha - w^{(2)}(x,y)|\leq \frac{\log (n)}{\sqrt{n}}\}$ is the union of two intervals, one each around the endpoints $\ell_{\alpha} (x), r_{\alpha}(x)$ of the interval $I_{\alpha}(x)$ associated to $w_{\alpha}$ in Equation \eqref{AltRep01w}. This implies that, for all $x\in [0,1]$,
\be
\label{EqBadRegionWidth}
\bad{n}{\alpha}\subset \{ (x,y): |r_\alpha(x)-y|\leq \frac{\log (n)^2}{8\sqrt{n}}\}\cup \{ (x,y): |\ell_{\alpha}(x)-y|\leq \frac{\log (n)^2}{8\sqrt{n}}\}.
\ee
In the following lemma, this is used to bound the number of pairs of vertices in $\bad{n}{\alpha}$.

\begin{lemma} \label{LemmaFewBadWitnesses}
Let Assumptions \ref{AssumptionGoodGraphon} hold. Then \wep , $\mathcal{A}_{2}^{c}$ holds.
That is, \wep\ for all $i$, $| \bad{n,i}{\alpha} | \leq  \sqrt{n } \, \log(n)^{\cat}$.
\end{lemma}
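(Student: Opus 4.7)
The plan is to reduce the combinatorial bound on $|\bad{n,i}{\alpha}|$ to a volume bound on the subset of $[0,1]$ consisting of ``bad'' second coordinates, and then apply Chernoff and a union bound. The starting point is Lemma \ref{LemmaGoodPointsEq}, which already shows w.e.p.\ that whenever $(U_i,U_j)\notin \bad{n}{\alpha}$, we have $\GA{\alpha}(i,j)=H_\alpha(i,j)=w^{(2)}_\alpha(U_i,U_j)$. In particular, w.e.p.\ we have the deterministic inclusion
\be
\bad{n,i}{\alpha} \subseteq \{ j\in \{1,\ldots,n\}\setminus \{i\} \, : \, (U_i,U_j) \in \bad{n}{\alpha}\}
\ee
for every $i$, so it suffices to bound the right-hand side.

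Next I would use Equation \eqref{EqBadRegionWidth}, which is the key geometric consequence of Assumption \ref{AssumptionGoodGraphon}(2): for each fixed $x\in[0,1]$, the one-dimensional slice $\{y : (x,y)\in \bad{n}{\alpha}\}$ lies in the union of two intervals around $\ell_\alpha(x)$ and $r_\alpha(x)$, each of half-width $\log(n)^2/(8\sqrt{n})$. Hence the Lebesgue measure of this slice is at most $\log(n)^2/(4\sqrt{n})$ uniformly in $x$.

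Now fix $i$ and condition on $U_i$. The random variables $\mathbf{1}_{\{(U_i,U_j)\in \bad{n}{\alpha}\}}$ for $j\ne i$ are i.i.d.\ Bernoullis whose mean is the slice volume above, so it is at most $\log(n)^2/(4\sqrt{n})$. Their sum therefore has conditional mean at most $(n-1)\log(n)^2/(4\sqrt{n}) \le \tfrac14 \sqrt{n}\log(n)^2$. A multiplicative Chernoff bound then shows that the conditional probability that this sum exceeds $\sqrt{n}\log(n)^{\cat} = \sqrt{n}\log(n)^2$ is at most $n^{-\Omega(\log n)}$. Since the bound is uniform in the conditioning, this estimate also holds unconditionally.

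Combining the previous two paragraphs, for each fixed $i$ we have $|\bad{n,i}{\alpha}|\le \sqrt{n}\log(n)^2$ w.e.p.\ (the exceptional set being the union of the Chernoff-bad event and the exceptional event from Lemma \ref{LemmaGoodPointsEq}). Finally, a union bound over $i\in\{1,\ldots,n\}$ preserves the w.e.p.\ property, since the intersection of $n$ events each holding w.e.p.\ still holds w.e.p. This yields $\mathcal{A}_2^c$ and completes the proof. The only minor subtlety is tracking constants: one must choose the definition of $\bad{n}{\alpha}$ with a $\log(n)^{\cbad}$ factor that is consistent with the $\log(n)^2/\sqrt{n}$ width estimate used in \eqref{EqBadRegionWidth}, so that the two applications of the Chernoff/Hoeffding bound (here and in Lemma \ref{LemmaGoodPointsEq}) are compatible, but no single step is an obstacle.
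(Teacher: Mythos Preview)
Your argument is correct and follows essentially the same route as the paper: use Lemma \ref{LemmaGoodPointsEq} to reduce $\bad{n,i}{\alpha}$ to the set of $j$ with $(U_i,U_j)\in\bad{n}{\alpha}$, invoke the slice-volume bound \eqref{EqBadRegionWidth}, then apply a concentration inequality conditional on $U_i$ and finish with a union bound over $i$. The only discrepancy is a harmless factor of two in your computation of the slice measure (two intervals of half-width $\log(n)^2/(8\sqrt n)$ have combined length $\log(n)^2/(2\sqrt n)$, not $\log(n)^2/(4\sqrt n)$), which does not affect the conclusion.
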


\begin{proof}
For all $1 \leq i,j \leq n$, let $e_{i,j} = \textbf{1}_{\{j \in \bad{n,i}{\alpha}\}}$, $d=\frac{\log(n)^{2}}{8\sqrt{n}}$, and  $f_{i,j} = \textbf{1}_{\{ |r_{\alpha}(U_{i}) - U_{j}| \leq  d\}\cup \{ |\ell_{\alpha}(U_{i}) - U_{j}| \leq  d\}}$.  If $\mathcal{A}_1^c$ holds, then by Lemma \ref{LemmaGoodPointsEq} and by \eqref{EqBadRegionWidth}, $e_{i,j}\leq f_{i,j}$.

Fix $1\leq i\leq n$ and let $\mathcal{H}=\sigma (U_i)$. Note that $\E [f_{i,j}\,|\, \mathcal{H}] \leq 4d = \frac{\log(n)^{2}}{2\sqrt{n}}$.  By Hoeffding's bound, Equation \eqref{EqBadRegionWidth}, and Lemma \ref{LemmaGoodPointsEq},

\be
\P[\sum_{j \neq i} e_{i,j} > \sqrt{n} \log(n)^{\cat}\,|\, \mathcal{H}]
&\leq \P[\sum_{j \neq i} f_{i,j} > \sqrt{n} \log(n)^{\cat}\,|\, \mathcal{H}] + \P[\mathcal{A}_{1}] \\
&= n^{-\Omega (\log(n))}
\ee

Therefore, \wep\ $| \bad{n,i}{\alpha} |\leq  \sqrt{n } \, \log(n)^{\cat}$ for any particular choice of $i$, and thus also for all $i$ simultaneously.

\end{proof}

\begin{lemma}\label{LemmaTightlyConnected}
We have $\P [\mathcal{A}_{3}]=n^{-\Omega(\log(n))}$.
That is, \wep\ for all $0\leq k\leq 4R-3$, $|\{ i:U_i\in I_k\}|\geq \frac{n}{2R}$.
\end{lemma}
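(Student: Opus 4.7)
The plan is to use a straightforward Chernoff/Hoeffding concentration argument, since $|\{i : U_i \in I_k\}|$ is a sum of $n$ i.i.d.\ Bernoulli random variables with parameter $|I_k|$. Recall from \eqref{DefIntervals} that each interval $I_k = [k/(4R), (k+3)/(4R)] \cap [0,1]$ has length exactly $3/(4R)$ for every $0 \leq k \leq 4R-3$, and that $R$ is a fixed constant satisfying $R \in [\epsilon^{-1}, 2\epsilon^{-1}]$; in particular, the number of intervals $4R-2$ is bounded by a constant depending only on $\epsilon$.

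First I would observe that $\mathbb{E}[|\{i : U_i \in I_k\}|] = n \cdot |I_k| = 3n/(4R)$, so the threshold $n/(2R) = 2n/(4R)$ lies a constant fraction below the mean; precisely, the gap is $n/(4R)$. I would then apply Hoeffding's inequality (or equivalently a Chernoff bound for sums of i.i.d.\ Bernoullis) to the indicators $\mathbf{1}_{U_i \in I_k}$, obtaining a bound of the form
\[
\P\!\left[|\{i : U_i \in I_k\}| < n/(2R)\right] \leq \exp(-c n)
\]
for some constant $c = c(R) > 0$ depending only on $\epsilon$. This exponential-in-$n$ bound is much stronger than the $n^{-\Omega(\log n)}$ rate required for w.e.p.

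Finally, a union bound over the at most $4R-2$ values of $k$ (a constant) preserves this estimate, giving $\P[\mathcal{A}_3] \leq (4R-2) \exp(-cn) = n^{-\Omega(\log n)}$. There is no real obstacle here: the random variables are sums of independent Bernoullis, the number of intervals is a fixed constant, and the target bound is much weaker than what standard concentration delivers. The only mild care needed is to confirm that the interval lengths are all $3/(4R)$ (including the truncated endpoint interval $I_{4R-3}$), which is immediate from the definition.
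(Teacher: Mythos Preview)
Your proposal is correct and essentially identical to the paper's own proof: the paper introduces $s_{k,i} = \mathbf{1}_{\{U_i \in I_k\}}$, notes $\E[s_{k,i}] = 3/(4R)$ and that the $s_{k,i}$ are i.i.d.\ for fixed $k$, and then invokes Hoeffding's inequality and a union bound. Your write-up simply spells out the arithmetic (the gap $n/(4R)$ and the resulting $\exp(-cn)$ bound) in slightly more detail.
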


\begin{proof}
For all $0\leq k\leq 4R-3$ and all $0\leq i\leq n$, let $s_{k,i}=\mathbf{1}_{\{ U_i\in I_k\}}$. Then $\E [s_{k,i}]=\frac{3}{4R}$ and, for fixed $k$, the variables $s_{k,i}$, $1\leq i\leq n$ are i.i.d.\ variables. Thus the result follows from Hoeffding's inequality and a union bound.
\end{proof}

Recall that $\nu (i,j)$ is the number of good witnesses of $i$ and $j$, i.e.~the number of vertices that are adjacent to precisely one of $i,j$ in $H_\alpha$, and thus can be used to distinguish $i$ and $j$. Next, we show that vertices $i,j$ with $|U_{i} - U_{j}|$ large enough have many good witnesses, and thus have substantially different neighbourhoods in $H_{\alpha}$:

\begin{lemma} \label{LemmaManyWitnesses}
Let Assumptions \ref{AssumptionGoodGraphon} hold. Then $\P[\mathcal{A}_4]=n^{-\Omega(\log(n))}$. Precisely, \wep\ for all $i,j$ so that  $|U_i-U_j|> \frac{1}{\log (n)^{\cafo} }$ holds, we also have $\nu (i,j)\geq \frac{2n}{\log(n)^{\caft}}$. Similarly, for all
$i,j$ so that $ |U_{i}-U_{j}| > \frac{\log(n)^{\cafi}}{\sqrt{n}}$, we also have $ \nu (i,j) \geq  \sqrt{n} \log(n)^{\cafti}$.
\end{lemma}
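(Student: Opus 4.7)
The plan is to bound $\nu(i,j)$ by writing it as a sum of conditionally i.i.d.\ Bernoulli indicators and applying a multiplicative Chernoff bound, followed by a union bound over pairs. Throughout, let $\epsilon>0$ be the constant from Assumption \ref{AssumptionGoodGraphon}, so in particular $w^{(2)}_\alpha$ is $\epsilon$-separated in the sense of Definition \ref{DefGraphonSeparation}.

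Fix $i\neq j$ and condition on the $\sigma$-algebra $\mathcal{H}_{i,j}=\sigma(U_i,U_j)$. For each $k\notin\{i,j\}$, let $X_k = \mathbf{1}_{\{w^{(2)}_\alpha(U_i,U_k)\neq w^{(2)}_\alpha(U_j,U_k)\}}$, so that $\nu(i,j)=\sum_{k\neq i,j} X_k$. Conditional on $\mathcal{H}_{i,j}$, the $X_k$ are i.i.d.\ Bernoulli with mean
\[
q_{i,j} := \mathrm{Vol}\bigl(\{z\in[0,1]\,:\, w^{(2)}_\alpha(U_i,z)\neq w^{(2)}_\alpha(U_j,z)\}\bigr) \;\geq\; \epsilon\,|U_i-U_j|,
\]
where the inequality uses Assumption \ref{AssumptionGoodGraphon}(5). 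Hence $\mu_{i,j}:=\E[\nu(i,j)\mid \mathcal{H}_{i,j}] = (n-2)\,q_{i,j} \geq \epsilon(n-2)|U_i-U_j|$.

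Now apply the multiplicative Chernoff bound for sums of independent Bernoullis: for any $0<\delta\leq 1/2$,
\[
\P\bigl[\nu(i,j) < (1-\delta)\mu_{i,j}\,\bigm|\,\mathcal{H}_{i,j}\bigr] \;\leq\; \exp\!\bigl(-\delta^2\mu_{i,j}/2\bigr).
\]
For the first bound, suppose $|U_i-U_j|>1/\log(n)$ (recall \texttt{\char`\\cafo} is empty so $\log(n)^{\cafo}=\log(n)$). Then $\mu_{i,j}\geq \epsilon(n-2)/\log(n)$, which for large $n$ exceeds $\tfrac{4n}{\log(n)^{\caft}}=4n/\log(n)^{2}$ (since $\epsilon$ is fixed). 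Taking $\delta=1/2$ yields $\P[\nu(i,j)<2n/\log(n)^{2}\mid\mathcal{H}_{i,j}] \leq \exp(-\epsilon(n-2)/(8\log n))$, which is $n^{-\omega(\log n)}$. For the second bound, suppose $|U_i-U_j|>\log(n)^{\cafi}/\sqrt{n}=\log(n)^{4}/\sqrt n$; then $\mu_{i,j}\geq \epsilon(n-2)\log(n)^{4}/\sqrt n$, which for large $n$ is at least $2\sqrt n\log(n)^{\cafti}=2\sqrt n\log(n)^{3}$. Again with $\delta=1/2$ we obtain $\P[\nu(i,j)<\sqrt n\log(n)^{3}\mid\mathcal{H}_{i,j}] \leq \exp(-\epsilon\sqrt n\,\log(n)^{4}/8) = n^{-\omega(\log n)}$.

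Finally, remove the conditioning by integration and union-bound over the at most $n^2$ ordered pairs $(i,j)$, noting that both tail bounds above are independent of the conditioning event. The cost of the union bound is a factor $n^2$, which is absorbed by the $n^{-\omega(\log n)}$ tail; hence
\[
\P[\mathcal{A}_4] \;\leq\; n^2 \cdot n^{-\omega(\log n)} \;=\; n^{-\Omega(\log n)},
\]
which is exactly the claim that $\mathcal{A}_4^c$ holds w.e.p. The main step is the lower bound $q_{i,j}\geq \epsilon|U_i-U_j|$ coming from $\epsilon$-separation; once this is in hand, the rest is a standard Chernoff-plus-union-bound computation with no real obstacles beyond checking that the chosen constants $\cafo,\caft,\cafi,\cafti$ leave a Chernoff gap large enough to survive the union bound, which they comfortably do.
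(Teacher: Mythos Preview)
Your proposal is correct and follows essentially the same approach as the paper: condition on $(U_i,U_j)$, write $\nu(i,j)$ as a sum of conditionally i.i.d.\ Bernoulli indicators, lower-bound the mean via the $\epsilon$-separation assumption on $w^{(2)}_\alpha$, apply a concentration inequality, and union-bound over pairs. The only cosmetic difference is that the paper invokes Hoeffding's inequality while you use the multiplicative Chernoff bound; either suffices here.
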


\begin{proof}
Fix $0 < i,j < 1$ and let $\mathcal{H}$ be the $\sigma$-algebra generated by $(U_{i},U_{j})$. We consider the $\mathcal{H}$-measurable event $|U_{i} - U_{j}| > d$, where $d=d(n)=\frac{1}{\log(n)^{}}$. Let $e_{k} = \textbf{1}_{\{\WA{\alpha}(U_{i},U_{k}) \neq \WA{\alpha}(U_{j},U_{k})\} }$ be the indicator function that vertex $k$ distinguishes the vertices $i,j$.  By Part 6 of Assumption \ref{AssumptionGoodGraphon}, there exists $\epsilon >0$ so that $w^{(2)}_\alpha$ is $\epsilon$-separated. Assume that $n$ is large enough so that $\epsilon \geq 4\log (n)^{-1}$. Then:
\be
\E[e_{k} \, | \, \mathcal{H}] \geq \epsilon |U_{i}-U_{j}|> \epsilon d \geq \frac{4d}{\log (n)}.
\ee
Now conditional on $\mathcal{H}$, $\nu (i,j)=\sum_{k \in S\setminus\{i,j\}} e_{k}$ is the sum of $(n-2)$ i.i.d.\ Bernouilli variables. Thus, by Hoeffding's inequality,
\be
\P[\{ \sum_{k \in V(G)\setminus\{i,j\}} e_{k} <  \frac{2dn}{\log (n)} \} \, | \, \mathcal{H}] & = n^{-\Omega(\log(n))}.
\ee
Applying a union bound completes the proof of the first part. For the second part, the same argument applies, but with $d=
\frac{\log(n)^{\cafi}}{\sqrt{n}}$.

\end{proof}

Finally, we show that the number of vertices between $i,j$ in the true order is roughly proportional to $|U_{i}-U_{j}|$ with high probability:

\begin{lemma} \label{LemmaCompOrders}
For all $i,j$ so that $U_i<U_j$,
we have that $|\{ k \, : \, U_{k} \in (U_{i},U_{j}) \}| <  n |U_{i}-U_{j}| + \sqrt{n}\log (n) \log(n) \}$ \wep. Thus,  \wep\ $\mathcal{A}_5^c$ holds.
\end{lemma}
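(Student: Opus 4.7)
The plan is to fix a pair $(i,j)$ with $U_i < U_j$, condition on the $\sigma$-algebra $\mathcal{H}_{i,j} = \sigma(U_i, U_j)$, and apply Hoeffding's inequality to the remaining $n-2$ independent variables. Concretely, I would introduce the indicator
\be
e_{k}^{(i,j)} = \mathbf{1}_{\{U_k \in (U_i, U_j)\}}, \qquad k \notin \{i,j\},
\ee
and observe that, conditional on $\mathcal{H}_{i,j}$, the variables $\{e_k^{(i,j)}\}_{k \neq i,j}$ are i.i.d.\ Bernoulli with mean $|U_i - U_j|$. In particular,
\be
\mathbb{E}\left[\sum_{k \neq i,j} e_k^{(i,j)} \,\Big|\, \mathcal{H}_{i,j}\right] = (n-2)|U_i - U_j|.
\ee

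Next I would apply Hoeffding's inequality to this sum, with deviation threshold $t = \tfrac{1}{2}\sqrt{n}\log(n)$. This yields
\be
\P\!\left[\sum_{k \neq i,j} e_k^{(i,j)} \geq (n-2)|U_i - U_j| + \tfrac{1}{2}\sqrt{n}\log(n) \,\Big|\, \mathcal{H}_{i,j} \right] \leq \exp\!\left(-\tfrac{\log(n)^2}{2}\right) = n^{-\Omega(\log n)}.
\ee
Since $(n-2)|U_i - U_j| \leq n|U_i - U_j|$, and since the contribution of $i,j$ themselves to the count $|\{k : U_k \in (U_i,U_j)\}|$ is $0$, the conditional bound above integrates to an unconditional bound of the same form for the fixed pair $(i,j)$.

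Finally, I would take a union bound over the at most $\binom{n}{2} \leq n^2$ ordered pairs $(i,j)$; since each individual failure has probability $n^{-\Omega(\log n)}$, the union bound loses only a polynomial factor and the result still holds w.e.p. The main ``obstacle'' is essentially bookkeeping: the endpoints $U_i, U_j$ are themselves random, so one must be careful to condition on them before invoking Hoeffding, and the deviation threshold $\sqrt{n}\log(n)$ must be chosen so that the bound survives the union bound. Neither step presents any real difficulty, which is why this lemma is stated without proof in the main text and only recorded here for completeness.
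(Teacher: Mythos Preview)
Your proposal is correct and matches the paper's approach exactly: the paper's entire proof reads ``This is just the rephrased version of Hoeffding's inequality,'' and you have simply written out the conditioning-then-Hoeffding-then-union-bound argument that this sentence encodes. There is nothing to add.
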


\begin{proof}
This is just the rephrased version of Hoeffding's inequality.
\end{proof}


%
%

\subsection{Bad Events for Subsampled Graphs} \label{AppSecBadSmallSets}

We give the proofs from Section \ref{sec:PropertiesSamples}, using the notation from that section. Recall that $\mathcal{A}= \mathcal{A}_1\cup \mathcal{A}_2\cup \mathcal{A}_3\cup \mathcal{A}_4\cup \mathcal{A}_5$, where $\mathcal{A}_1\cup \mathcal{A}_2\cup \mathcal{A}_3\cup \mathcal{A}_4\cup \mathcal{A}_5$ are the ``bad events" described in Definition \ref{DefBadEvents}. Most of this section is concerned with analogous ``bad events for samples" given in Definition \ref{BadEventsSub}, repeated here for convenience:

\begin{defn}[Bad Events for Samples] 

Let graphon $w$, value $\alpha$, random variables $\{ U_i\}$, $\{ U_{i,j}\}$, and graphs $\GA{\alpha}$ and $H_{\alpha}$ be as defined above.  Moreover, let $R$, $\{ I_k\}_{k=0}^{4R-3}$ be as in (\ref{DefIntervals}). Finally, we let $S \subset V$ and $m=|S|$.

\be
\mathcal{A}_{1}' &= \{\GA{ \alpha}(S) \not= H_{\alpha}(S)\}\\
\mathcal{A}_{3}' &= \left\{ \min_{0 \leq k  \leq 4R-3} |\{ i\in S\,:\, U_i\in I_k\}|< \frac{m}{4R}  \right\} \cup
\{ |S \backslash S''| > \log(n)^{\cS}\} \\
%
\mathcal{A}_{4}' &= \left\{\min_{i,j \in S \, : \, |U_{i}-U_{j}| > \frac{1}{\log(n)}} \nu_{S}(i,j) <  \log(n)^3 \right\} \\
\mathcal{A}_{5}' &= \left\{  \max_{i,j \in S} |\{ k \, : \, U_{k} \in (U_{i},U_{j}) \}| \geq m |U_{i}-U_{j}| + 2 \log(n)^{3.5}  \right\}.\\
\ee
\end{defn}

We first prove Lemma \ref{LemmaSampleEqual}, restated here for convenience:

\newtheorem*{lemma9}{Lemma \ref{LemmaSampleEqual}}

\begin{lemma9}
Let $S$ be a uniformly chosen subset of $V$, of size  $m = \log(n)^{\csketch}$. Then on the $\mathcal{F}$-measurable event $\mathcal{A}^{c}$,
\[
\P [ \mathcal{A}_1'\,|\,\mathcal{F}]=O( n^{-0.49}).
\]
Furthermore, for any fixed $p,q\in V(G)$ satisfying $(U_p,U_q)\not\in\bad{n}{\alpha}$, we have
\[
\frac{\P [ \mathcal{A}_1'\cap\{ p,q\in S\}\,|\,\mathcal{F}]}{\P [\{ p,q\in S\} | \mathcal{F}]}=O( n^{-0.49}),
\]
where again both probabilities are on $\mathcal{A}^{c}$.
\end{lemma9}

\begin{proof} 
It follows from Definition \ref{EqDefBadWitness} that $\GA{\alpha}(i,j)\not= H_\alpha (i,j)$ precisely when $j\in \bad{n,i}{\alpha}$.  If $\mathcal{A}_2^c$ holds, then the size of $\bad{n,i}{\alpha}$ is bounded by $\sqrt{n}\log (n)^{\cat}$.
Using this bound we obtain that, for a fixed $1\leq i\leq n$,
\be
\label{eqnNotEqual}
&  \P [\bigcup_{j\in S\setminus\{ i\}}\{\GA{ \alpha}(S)(i,j) \not= H_{\alpha}(S)(i,j)\}\,|\,\sigma (\{i\in S\}),\,\mathcal{F}]\\
=& \P [\bad{n,i}{\alpha}\cap S\not= \emptyset \,|\,\sigma (\{ i\in S\}),\,\mathcal{F}]\\
\leq &(m-1) \left( \frac{|\bad{n,i}{\alpha}|}{n}\right) \\
\leq &\frac{\log (n)^{7}}{\sqrt{n}},
\ee
where all conditional probabilities are on the event $\mathcal{A}^{c}$. By a union bound, $\P [\mathcal{A}'_1\,|\,\mathcal{F}]\leq m\left(\frac{\log (n)^{7}}{\sqrt{n}}\right)=O(n^{-0.49})$ on $\mathcal{A}^{c}$.

Fix $p,q\in V(G)$,  so that $(U_p,U_q)\not\in\bad{n}{\alpha}$. If $(\mathcal{A}_1\cup \mathcal{A}_2)^c$ holds, then $q\not
\in \bad{n,p}{\alpha}$ and $p\not\in \bad{n,q}{\alpha}$. Thus, for a fixed $1\leq i\leq n$, $i\not\in \{ p,q\}$, Equation \ref{eqnNotEqual} holds for $S\setminus \{ i,p,q\}$, so
\[
\P [\cup_{j\in S\setminus\{ i,p,q\}} \{ \GA{ \alpha}(S)(i,j) \not= H_{\alpha}(S)(i,j)\}\,|\,\sigma (\{i\in S\}),\,\mathcal{F}] \textbf{1}_{\{p,q\in S\}}\leq \frac{\log (n)^{7}}{\sqrt{n}}
\]
on $\mathcal{A}^{c}$. Again by a union bound,
$$\P [\mathcal{A}_1'\,|\,\mathcal{F}]\textbf{1}_{\{p,q\in S\}}=O(n^{-0.49})$$
on $\mathcal{A}^{c}$, from which the result follows.
\end{proof}

We next prove Lemma \ref{LemmaIndGraphConnected}, restated here for convenience:

\newtheorem*{lemma10}{Lemma \ref{LemmaIndGraphConnected}}
\begin{lemma10}
Let $S$ be a uniformly chosen subset of $V$, of size  $m = \log(n)^{\csketch}$, and let $S''$ be as defined in \eqref{DefS''}.
We have $\P [ \mathcal{A}_3'\,|\,\mathcal{F}]=n^{-\Omega(\log(n))}$ on the $\mathcal{F}$-measurable event $\mathcal{A}^{c}$. That is, \wep\ for all $0\leq k\leq 4R-3$, both $|S\cap V_k|\geq \frac{\log (n)^{\csketch}}{4R}$ and also $|S\setminus S''|\leq \log(n)^{\cS}$ on $\mathcal{A}^{c}$. Moreover, for any fixed $p,q\in V(G)$,
\[
\frac{\P [ \mathcal{A}_3'\cap\{ p,q\in S\}\,|\,\mathcal{F}]}{\P [\{ p,q\in S \} | \, \mathcal{F}]}=O( n^{-\Omega (\log (n)) }),
\]
where again both probabilites are on the event $\mathcal{A}^{c}$.
\end{lemma10}

\begin{proof}  
Assume $\mathcal{A}_3^c$ holds, so for all $0\leq k\leq 4R-3$, $|V_k|\geq \frac{n}{2R}$.  For all $1\leq i\leq n$, let $e_{i}=\mathbf{1}_{\{ i\in S\}}$.
Fix $0\leq k\leq 4R-3$. Then $|\{ i\in S\,:\, U_i\in I_k\}|=\sum_{\{ i\in V_k\}}e_i$ and  $\E [\sum_{\{ i\in V_k\}}e_i\,|\, \mathcal{F},\mathcal{A}^c]\geq \frac{m}{2R}$.
By Azuma's inequality, we obtain that
\be
\label{Eq1}
\P [\{\sum_{\{ i\in V_k\}} e_i < \frac{m}{4R}\}]=n^{-\Omega(\log (n))}.
\ee
Applying a union bound, we obtain the first part of the result.

To prove the second part, note first that, for fixed $1\leq i\leq n$, by definition
\[
\{ i\in S''\}=\{ i\in S\}\cap \{ i\not\in\cup_{j\in S\setminus \{ i\}} \bad{n,j}{\alpha}\}.
\]
If $\mathcal{A}_2^c$ holds, then for all $j$, $| \bad{n,j}{\alpha}|\leq \sqrt{n}\log (n)^{\cat}$, and thus
\be
\P[ \{i \in S''\} \, | \, \sigma(S \backslash \{i\}),\,\mathcal{F}]  \geq 1 - |S| \frac{\log(n)^{\cat}}{\sqrt{n}}\geq 1-\frac{2 \log(n)^{7}}{\sqrt{n}}
\ee
on $\mathcal{A}^c$. Applying Azuma's inequality,
\be \label{IneqADoublePrime2}
\P[\{|S''| < m - \log^{3}(n)\}\,|\, \mathcal{F}] = n^{-\Omega(\log(n))}
\ee
on $\mathcal{A}^c$. This completes the proof of the bound on $\P[\mathcal{A}_{3}' | \mathcal{F}]$.

To prove the second bound, fix $p,q\in V(G)$. An analogous argument applied to the set $S-\{ p,q\}$ gives the result.
\end{proof}

We next prove Lemma \ref{LemmaDegreeConnected}, restated here for convenience:

\newtheorem*{lemma11}{Lemma \ref{LemmaDegreeConnected}}
\begin{lemma11}

Let $S$ be a uniformly chosen subset of $V$, of size  $m = \log(n)^{\csketch}$, and let $S''$ as defined in \eqref{DefS''}. For each $i\in S$, let $D_S(i)=|\{ j\in S: \GA{\alpha}(i,j)=1\}|$ be the number of neighbours of $i$ in $\GA{\alpha}(S)$. Then for all $n$ sufficiently large,
\[
\{ H_{\alpha} (S) \text{ is not connected}\}\, \cup \, \{ H_{\alpha} (S'') \text{ is not connected}\}\cup \left\{ \min_{i\in S} D_S(i) < \frac{m}{8R} \right\}\subset \mathcal{A}_3'.
\]

\end{lemma11}

\begin{proof}  
For any set $S\subset V$, if $H_\alpha (S)$ is not connected then there must be $0\leq k\leq 4R-3$ so that $S\cap V_k=\emptyset$. Thus it follows immediately that $\{ H_{\alpha} (S) \text{ is not connected}\}\subset \mathcal{A}_3'$.

Moreover, if $(\mathcal{A}_3')^c$ holds, then $|S\setminus S''|\leq \log (n)^3$ and for each $0\leq k\leq 4R-3$,
\[
|S\cap V_k|\geq \log(n)^{\csketch}/(4R).
\]
Thus we get that $|S''\cap V_k|\geq |S\cap V_k|-|S/S''|\geq \log (n)^{\csketch}/(8R)$ for $n$ sufficiently large. Thus $V_k\cap S''\not=\emptyset$ for all $k$, which implies that $H_\alpha (S'')$ is connected.

By the choice of $R$, for any two vertices $i,j\in V_k$, $H_\alpha (U_i,U_j)=1$. Fix $i\in S$. By definition, for each $j\in S''$, $H_{\alpha}(i,j)=\GA{\alpha}(i,j)$. Let $k$ be so that $U_i\in I_k$. Then for all vertices $j\in S''\cap V_k$, $\GA{\alpha} (i,j)=1$. As argued above, there are at least $ \log (n)^{\csketch}/(8R)$ such vertices. This shows that $\{ \min_{i\in S} D_S(i) < \frac{\log(n)^{\csketch}}{8R}\}\subset \mathcal{A}_3'$.
\end{proof}

We next prove Lemma \ref{LemmaCondProbGoodSub}, restated here for convenience:

\newtheorem*{lemma12}{Lemma \ref{LemmaCondProbGoodSub}}

\begin{lemma12}
Let $S$ be a uniformly chosen subset of $V$, of size  $m = \log(n)^{\csketch}$. Then
\[
\P [\mathcal{A}_4'\,|\,\mathcal{F}]= n^{-\Omega (\log n)}
\]
on the $\mathcal{F}$-measurable event $\mathcal{A}^c$.
That is, \wep\ for all $i,j\in S$ so that $|U_{i}-U_{j}| > \displaystyle\frac{1}{\log (n)^{\cafo}}$, $ \nu_{S}(i,j) \geq \log(n)^3  $.
\end{lemma12}

\begin{proof}  
Fix $1\leq i<j\leq n$ so that $|U_{i}-U_{j}| > \frac{1}{\log(n)^{\cafo}}$.
If $\mathcal{A}_4^c$ holds, then $ \nu (i,j) \geq  \frac{2n}{\log (n)^{\caft}} $. Thus $\E [\nu_{S}(i,j)\,|\mathcal{F},\mathcal{A}^c] \geq 2m/\log(n)^{\caft}=2\log(n)^{3}$. The result follows from Azuma's inequality and a union bound.
\end{proof}

Finally, we prove Lemma \ref{LemmaCompOrdersS}, restated here for convenience:

\newtheorem*{lemma13}{Lemma \ref{LemmaCompOrdersS}}
\begin{lemma13} 
On the $\mathcal{F}$-measurable event $\mathcal{A}^{c}$,
\[
\P [\mathcal{A}_5'\,|\,\mathcal{F}]= n^{-\Omega (\log n)}.
\]
That is, \wep\ for all $i,j\in S$, $|\{ k\,:\, U_k\in (U_i,U_j)\}|\leq  m |U_{i}-U_{j}| + 2 \log(n)^{3.5}$.
\end{lemma13}

\begin{proof} 
Fix $1\leq i,j\leq n$, so that $U_i<U_j$, and let $W_{i,j}=\{ k\,:\, U_k\in (U_i,U_j)\}$. For each $k\in W_{i,j}$, let  $e_k=\mathbf{1}_{\{  k\in S \}}$. Then $|S\cap W_{i,j}|=\sum_{i\in W_{i,j}} e_k$, and $\E [\sum_{i\in W_{i,j}}e_k\,|\,\mathcal{F}]=(m/n)|W_{i,j}|$.

If $\mathcal{A}_5^c$ holds, then $(m/n)|W_{i,j}|\leq m|U_j-U_i|+ m\frac{\log (n)}{\sqrt{n}}\leq m|U_i-U_j|+ 1$ for $n$ sufficiently large.  By Azuma's inequality, on $\mathcal{A}^{c}$
\[
\P[\mathcal{A}_5'\,|\,\mathcal{F}] \leq  \P[ |\sum_{i\in V_k} e_k - \E [\sum_{i\in V_k} e_k \, |\, \mathcal{F}]| > \log(n)^{3.4} \,|\,\mathcal{F}]=n^{-\Omega(\log (n))}.
\]
The result then follows by a union bound.
\end{proof}

\section{A Simple Anticoncentration Lemma} \label{SecAppAntiConc}

We need the follow technical lemma, which follows almost immediately from Corollary 1.8 of \cite{chatterjee2017general}:

\begin{lemma} \label{LemmaImmediateFromChat}
Let $U[1],U[2],\ldots \stackrel{i.i.d.}{\sim} \mathrm{Unif}([0,1])$, and for $n \in \mathbb{N}$ and $c > 0$ let $\tilde{U}_{\frac{c}{\sqrt{n}}}[i] = e^{1 - \frac{c}{\sqrt{n}}} U[i]$. Then there exists a universal constant $A > 0$ not depending on $c$ or $n$ so that\footnote{Note that the following bound refers to only the \textit{distribution} of the vectors $U$, $U_{\frac{c}{\sqrt{n}}}$. In particular, it ``forgets" the coupling $\tilde{U}_{\frac{c}{\sqrt{n}}}[i] = e^{1 - \frac{c}{\sqrt{n}}} U[i]$ between the vectors. }
\be  \label{IneqUVers}
\| \mathcal{L}((U[1],\ldots,U[n])) - \mathcal{L}((\tilde{U}_{\frac{c}{\sqrt{n}}}[1],\tilde{U}_{\frac{c}{\sqrt{n}}}[1])) \|_{\mathrm{TV}} \leq Ac.
\ee
\end{lemma}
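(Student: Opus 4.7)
The claim is a total variation bound between two $n$-fold product measures on $[0,1]^n$, and the natural approach is to pass through Kullback--Leibler divergence and apply Pinsker's inequality. This is exactly the setting in which one can exploit the fact that KL tensorizes additively across independent coordinates while TV does not. (I will assume that the coupling is the natural one compatible with the way the lemma will be used in Theorem \ref{ThmNoDownstreaming}, so that each $\tilde{U}_{c/\sqrt n}[i]$ is a smooth perturbation of $U[i]$ supported on $[0,1]$ with an explicit density; the argument is the same for any such perturbation where the single-coordinate KL is quadratic in the perturbation scale.)

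The plan is as follows. First, write down the density $f_n$ of a single coordinate $\tilde U_{c/\sqrt n}[1]$ with respect to Lebesgue measure on $[0,1]$, parameterized by $\alpha = 1-c/\sqrt n$. Second, compute the per-coordinate KL divergence
\[
D_n \;=\; D_{\mathrm{KL}}\bigl(\mathrm{Unif}[0,1]\;\big\|\;\mathcal L(\tilde U_{c/\sqrt n}[1])\bigr) \;=\; -\mathbb{E}_{V\sim\mathrm{Unif}[0,1]}[\log f_n(V)].
\]
Using the identity $\mathbb{E}[\log V]=-1$, a short computation gives $D_n=\log\alpha+\tfrac{1-\alpha}{\alpha}$. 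A Taylor expansion around $\alpha=1$ yields $D_n=\tfrac{(1-\alpha)^2}{2}+O\!\bigl((1-\alpha)^3\bigr)=\tfrac{c^2}{2n}+O\!\bigl(c^3/n^{3/2}\bigr)$, with an explicit remainder that is uniform in $n$ for $c$ bounded.

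Third, by tensorization of KL divergence across independent coordinates,
\[
D_{\mathrm{KL}}\bigl(\mathrm{Unif}[0,1]^{\otimes n}\;\big\|\;\mathcal L(\tilde U_{c/\sqrt n})\bigr) \;=\; nD_n \;\le\; \tfrac{c^2}{2}+O(c^3/\sqrt n).
\]
Finally, apply Pinsker's inequality (or the Bretagnolle--Huber inequality, if we want a bound valid for all $c$ rather than only small $c$) to conclude
\[
\|\mathcal L(U)-\mathcal L(\tilde U_{c/\sqrt n})\|_{\mathrm{TV}} \;\le\; \sqrt{\tfrac12\, nD_n} \;\le\; Ac
\]
for a universal constant $A$ (any $A>1/\sqrt 2$ works asymptotically).

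The content of the lemma is really that the per-coordinate TV distance is of order $c/\sqrt n$, so a naive coordinatewise union bound would give the useless estimate $O(c\sqrt n)$; the main point is that the per-coordinate KL is of order $c^2/n$, a full power of $n$ smaller, precisely because $\alpha\mapsto D_n$ vanishes to second order at $\alpha=1$. So the only genuine step is verifying this quadratic vanishing with an explicit, $n$-uniform remainder; everything else (tensorization, Pinsker) is standard. The potential obstacle is purely bookkeeping: one must check that the $O(c^3/\sqrt n)$ remainder in the expansion of $\log\alpha+(1-\alpha)/\alpha$ is genuinely of that order for all $n\ge n_0(c)$, and handle small $n$ separately by the trivial bound $\|\cdot\|_{\mathrm{TV}}\le 1$. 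Equivalently, as the authors note, the entire argument is encapsulated in Corollary 1.8 of \cite{chatterjee2017general}, which is designed to carry out exactly this quadratic KL tensorization step for perturbations of product measures.
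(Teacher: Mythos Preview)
Your proof is correct and follows essentially the same underlying mechanism as the paper's, namely KL tensorization plus Pinsker, but the execution differs. The paper does not compute the per-coordinate KL directly; instead it applies the monotone change of variables $V[i]=-\log U[i]$, turning the power perturbation $U[i]\mapsto U[i]^{1-c/\sqrt n}$ into the scale perturbation $V[i]\mapsto (1-c/\sqrt n)V[i]$ of i.i.d.\ exponentials, and then invokes Corollary~1.8 of \cite{chatterjee2017general} as a black box for the TV bound on scale-perturbed i.i.d.\ products. Your route is more self-contained: you write down the density $f_n(v)=\alpha^{-1}v^{1/\alpha-1}$, compute $D_n=\log\alpha+(1-\alpha)/\alpha$ explicitly, and do the quadratic Taylor expansion by hand. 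What your approach buys is that the reader need not consult the external reference and sees exactly where the $c^2/n$ scaling comes from; what the paper's approach buys is brevity and a cleaner explanation of why Chatterjee's general fluctuation machinery applies (scale perturbations of exponentials being the canonical example there). Both arguments are sound; yours is arguably the more transparent of the two.
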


\begin{proof}
Let $V[i] = -\log(U[i])$, and note that $V[1], V[2],\ldots$ are i.i.d. random variables with exponential distribution and mean 1. Define $\tilde{V}_{\frac{c}{\sqrt{n}}}[i] = (1 - \frac{c}{\sqrt{n}}) V[i]$. By Corollary 1.8 of \cite{chatterjee2017general}, there exists a universal constant $A> 0$ such that
\be \label{IneqVVers}
\| \mathcal{L}((V[1],\ldots,V[n])) - \mathcal{L}((\tilde{V}_{\frac{c}{\sqrt{n}}}[1],\tilde{V}_{\frac{c}{\sqrt{n}}}[1])) \|_{\mathrm{TV}} \leq Ac.
\ee

But the random variables appearing in Inequality \eqref{IneqUVers} and \eqref{IneqVVers} differ only by the monotone 1-1 transformations
\be
V[i] = -\log(U[i]), \, \hat{V}_{\frac{c}{\sqrt{n}}}[i] = -\log(\hat{U}_{\frac{c}{\sqrt{n}}}[i]),
\ee
and so the TV distances are in fact equal.
\end{proof}

We give the proof of Theorem \ref{ThmNoDownstreaming}:

\begin{proof} [Proof of Theorem \ref{ThmNoDownstreaming}]

Our coupling is based on the following three observations:

\begin{enumerate}
\item Fix $c > 0$ and define $\delta_{n} = \frac{c}{\sqrt{n}}$. By Lemma \ref{LemmaImmediateFromChat}, there exists a universal constant $0 < A < \infty$ so that
\be \label{EqSwitchHatAppr}
\| \mathcal{L}(U^{(1)}) - \mathcal{L}(\hat{U}_{\delta_{n}}^{(1)}) \|_{\mathrm{TV}} \leq Ac.
\ee

\item For \textit{any} value of $\delta, w$ and sequences $u^{(1)}, u^{(2)}$, we have
\be \label{EqSwitchHatEq}
G(w;\hat{u}^{(1)}_{\delta}, u^{(2)}) = G(\hat{w};u^{(1)}_{\delta}, u^{(2)}).
\ee
\item For \textit{any} value of $u,\delta \in (0,1)$ we have by Taylor's theorem
\be
|u - \tilde{u}_{\delta}| \geq | \delta \log(u)| - \frac{e}{2} \delta^{2} \log(u)^{2}.
\ee
In particular, for $\delta_{n}$ as above and any $\eta > 0$, there exists a universal constant $B = B(\eta)$ so that
\be \label{IneqMostlyBigPert}
\P[|U[i] - \tilde{U}_{\delta_{n}}[i]| \geq B \frac{c}{\sqrt{n}}] \geq 1 - \eta.
\ee
\end{enumerate}

Define the event $\mathcal{A}_{c} = \{V = \tilde{U}_{\frac{c}{\sqrt{n}}} \}$. By Equation \eqref{EqSwitchHatAppr}, there is a coupling of $(U,V)$ such that
\be
\P[\mathcal{A}_{c}] \geq Ac.
\ee

On this event, Equation \eqref{ConcEqualGraphs} holds from Equation \eqref{EqSwitchHatEq}, Equation \eqref{ConcEqualPerms} is immediate from the definition, and Inequality \eqref{ConcBigDisp} follows immediately from Inequality \eqref{IneqMostlyBigPert} and Hoeffding's inequality. This completes the proof.
\end{proof}


\end{document}